\newcommand\C{\mathbb{C}}
\newcommand\Z{\mathbb{Z}}
\newcommand\N{\mathbb{N}}
\newcommand\kk{\Bbbk}
\newcommand\cA{\mathcal{A}}  
\newcommand\cC{\mathcal{C}}
\newcommand\cB{\mathcal{B}}
\newcommand\cH{\mathcal{H}}
\newcommand\cR{\mathcal{R}}
\newcommand\cS{\mathcal{S}}
\newcommand\bb{\mathbf{b}}
\newcommand\bc{\mathbf{c}}
\newcommand\bC{\mathbf{C}}
\newcommand\bsf{\mathbf{f}}
\newcommand\bsg{\mathbf{g}}
\newcommand\bF{\mathbf{F}}
\newcommand\gr{\mathrm{gr}}
\newcommand\tr{\mathrm{tr}}
\newcommand\sa{\mathsf{a}}
\newcommand\ssb{\mathsf{b}}
\newcommand\sE{\mathsf{E}}
\newcommand\sL{\mathsf{L}}
\newcommand\tM{\mathtt{M}}  
\newcommand\tQ{\mathtt{Q}}  
\newcommand{\md}{\textup{-mod}}
\newcommand{\smod}{\textup{-mod}^\textup{s}}
\newcommand\Cl{\textup{Cl}}
\newcommand\flip{\textup{flip}}
\newcommand\op{\textup{op}}
\newcommand\even{\textup{even}}
\newcommand\odd{\textup{odd}}
\newcommand\irtimes{\circledast} 
\DeclareMathOperator{\Ann}{Ann}
\DeclareMathOperator{\Aut}{Aut}
\DeclareMathOperator{\End}{End}
\DeclareMathOperator{\END}{END}
\DeclareMathOperator{\grdim}{grdim} 
\DeclareMathOperator{\Hom}{Hom}
\DeclareMathOperator{\HOM}{HOM}
\DeclareMathOperator{\id}{id}
\DeclareMathOperator{\Ind}{Ind}
\DeclareMathOperator{\Res}{Res}
\DeclareMathOperator{\Span}{Span}
\newtheorem{theo}{Theorem}[section]
\newtheorem{prop}[theo]{Proposition}
\newtheorem{lem}[theo]{Lemma}
\newtheorem*{lem*}{Lemma}
\newtheorem{cor}[theo]{Corollary}
\theoremstyle{definition}
\newtheorem{defin}[theo]{Definition}
\newtheorem{rem}[theo]{Remark}
\newtheorem{eg}[theo]{Example}
\numberwithin{equation}{section}
  \newcommand{\comments}[1]{
    \ \\
    {\color{red}
      \textbf{AS:} #1
    }
    \\
    }
  \newcommand{\comments}[1]{}
  \newcommand{\details}[1]{
      \ \\
      {\color{OliveGreen}
        \textbf{Details:} #1
      }
      \\
  }
  \newcommand{\details}[1]{}
  \newcommand{\prelim}{\textsc{Preliminary version} \bigskip}
  \newcommand{\prelim}{}
\begin{document}
%

\title{Affine wreath product algebras}

\author{Alistair Savage}
\address{Department of Mathematics and Statistics, University of Ottawa}
\urladdr{\href{http://alistairsavage.ca}{alistairsavage.ca}, \textrm{\textit{ORCiD}:} \href{https://orcid.org/0000-0002-2859-0239}{orcid.org/0000-0002-2859-0239}}
\email{alistair.savage@uottawa.ca}

\begin{abstract}
  We study the structure and representation theory of affine wreath product algebras and their cyclotomic quotients.  These algebras, which appear naturally in Heisenberg categorification, simultaneously unify and generalize many important algebras appearing in the literature.  In particular, special cases include degenerate affine Hecke algebras, affine Sergeev algebras (degenerate affine Hecke--Clifford algebras), and wreath Hecke algebras.  In some cases, specializing the results of the current paper recovers known results, but with unified and simplified proofs.  In other cases, we obtain new results, including proofs of two open conjectures of Kleshchev and Muth.
\end{abstract}

\subjclass[2010]{Primary: 20C08; Secondary: 16S35, 16W50, 16W55}
\keywords{Wreath product algebra, Frobenius algebra, Hecke algebra, Sergeev algebra, Hecke--Clifford algebra, Mackey Theorem, cyclotomic quotient}

\prelim

\maketitle
\thispagestyle{empty}

\tableofcontents

%
\section{Introduction}
%

Affine Hecke algebras and their degenerate analogs play a vital role in the representation theory of Lie algebras.  Over the past couple of decades, several modified versions of degenerate affine Hecke algebras have appeared.  In particular, a super analog arose in the work of Nazarov \cite{Naz97} studying the projective representation theory of the symmetric group, and Wan and Wang \cite{WW08} introduced the so-called wreath Hecke algebras associated to an arbitrary finite group $G$.

Recently, degenerate affine Hecke algebras and their analogs have appeared in Heisenberg categorification.  The degenerate affine Hecke algebra itself appeared in the endomorphism algebras of certain objects in Khovanov's conjectural categorification of the Heisenberg algebra \cite{Kho14}.  Then, a certain $\N$-graded superalgebra appeared in the Heisenberg categorification of Cautis and Licata \cite[\S10.3]{CL12} satisfying relations similar to those of the degenerate affine Hecke algebra.  More generally, Rosso and the author developed a Heisenberg categorification depending on an arbitrary $\N$-graded Frobenius superalgebra $F$.  Specializing $F$ recovers the categorifications of Khovanov and Cautis--Licata.  Again, in the endomorphism algebras of these categories, one sees algebras depending on $F$, and satisfying relations similar to those of the degenerate affine Hecke algebra.  In the special case where $F$ is a symmetric algebra and purely even, these algebras have recently been studied by Kleshchev and Muth \cite{KM15}. Related algebras were also considered in \cite{CG03}.

In the current paper, we investigate the aforementioned algebras appearing in the categorification of \cite{RS17}.  In particular, to every nonnegative integer $n$ and every $\N$-graded Frobenius superalgebra $F$ over a commutative ring $\kk$, we associate an \emph{affine wreath product algebra} $\cA_n(F)$, which is an affine version of the wreath product algebra $F^{\otimes n} \rtimes S_n$.  As an $\N \times \Z_2$-graded $\kk$-module we have $\cA_n(F) \cong \kk[x_1,\dotsc,x_n] \otimes_\kk (F^{\otimes n} \rtimes S_n)$.  The factors $\kk[x_1,\dotsc,x_n]$ and $F^{\otimes n} \rtimes S_n$ are subalgebras, and the relations between these two factors are given in Definition~\ref{def:affine-wreath}.  In particular, the relations between the $x_i$ and the elements of $F^{\otimes n}$ involve the Nakayama automorphism of $F$, which is trivial if and only $F$ is a symmetric algebra.  Remarkably, affine wreath product algebras turn out to provide an appropriate setting for the simultaneous unification and generalization of all of the analogs of degenerate affine Hecke algebras mentioned above.  In particular, we have the following:
\begin{enumerate}
  \item \label{intro-item:degAHA} When $F = \kk$, $\cA_n(F)$ is the degenerate affine Hecke algebra.
  \item \label{intro-item:affSergeev} When $F$ is the two-dimensional Clifford superalgebra $\Cl$, $\cA_n(F)$ is the affine Sergeev algebra (also called the degenerate affine Hecke--Clifford algebra) of Nazarov \cite{Naz97}.  Here the Nakayama automorphism of $F$ is nontrivial.
  \item \label{intro-item:wreathHecke} When $F$ is the group algebra of a group, $\cA_n(F)$ is the wreath Hecke algebra of Wan and Wang \cite{WW08}.  For $n=2$, this algebra was also defined in \cite[\S3]{Pus97}.
  \item \label{intro-item:symmetric} When $F$ is a symmetric algebra (i.e.\ the Nakayama automorphism is trivial) concentrated in even parity, $\cA_n(F)$ is the affinized symmetric algebra considered by Kleshchev and Muth \cite[\S3]{KM15}.  If $F$ is commutative, closely related algebras were defined in \cite[\S4.2]{CG03}.  (See \cite[Rem.~3.6]{KM15} for the precise relation.)
\end{enumerate}

We show in the current paper that, despite their great level of generality, the setting of wreath product algebras turns out to be very tractable.  In particular, it is possible to deduce a great deal of the structure and representation theory of these algebras.  Specializing the Frobenius algebra $F$ then recovers known results in some cases while, in other cases, yields new results and proofs of open conjectures.

We now give an overview of the structure of the paper and highlight the main results.  After discussing some necessary background on superalgebras, Frobenius algebras, and smash products in Section~\ref{sec:prelim}, we give the definition of affine wreath product algebras in Section~\ref{sec:affine-wreath}.  We also explain there how these algebras recover the special cases mentioned above, and deduce some automorphisms and additional relations that will be used in proofs throughout the paper.

In Section~\ref{sec:structure-theory}, we study the structure theory of the affine wreath product algebras $\cA_n(F)$.  We begin by introducing some operators that can be viewed as Frobenius algebra deformations of divided difference operators.  We then describe an explicit basis of $\cA_n(F)$ in Theorem~\ref{theo:AnF-basis}.  After a brief discussion of a natural filtration on $\cA_n(F)$ and the associated graded algebra (Section~\ref{subsec:filtration}), we describe the center of $\cA_n(F)$ in Theorem~\ref{theo:center}.  We then introduce analogs of Jucys--Murphy elements (see \eqref{eq:JM-elements}), state a Mackey Theorem for induction/restriction (Theorem~\ref{theo:Mackey}), and introduce intertwining elements (Section~\ref{subsec:interwining-elements}).  For certain choices of Frobenius algebra $F$, the results of this section recover results for the above-mentioned special cases of affine wreath product algebras.  However, the benefit of the current approach is that the proofs are completely uniform.  For example, we are able to unify the treatments of these subjects in Parts I and II of the book \cite{Kle05} by keeping track of the Nakayama automorphism that appears in our general definition.

In Section~\ref{sec:simples}, we turn our attention to representation theory.  Inspired by the methods of \cite{WW08}, we classify the simple $\cA_n(F)$-modules when $\kk$ is an algebraically closed field (Theorem~\ref{theo:simple-modules}) by giving an explicit equivalence of categories that reduces the classification to the known cases of degenerate affine Hecke algebras, affine Sergeev algebras, and wreath product algebras.  Outside of cases \ref{intro-item:degAHA}, \ref{intro-item:affSergeev}, and \ref{intro-item:wreathHecke} above, this classification appears to be new.

In Section~\ref{sec:cyclotomic}, we define cyclotomic quotients $\cA_n^\bC(F)$ of the algebras $\cA_n(F)$ and find explicit bases of these quotients (Theorem~\ref{theo:basis-cyclotomic}).  In cases \ref{intro-item:degAHA}, \ref{intro-item:affSergeev}, and \ref{intro-item:wreathHecke} above, Theorem~\ref{theo:basis-cyclotomic} recovers known results, but with a unified proof.  In other cases, it appears to be new.  In particular, in case \ref{intro-item:symmetric} above, it implies an open conjecture of Kleshchev and Muth (Corollary~\ref{cor:KM3.21}).  We then prove that the cyclotomic quotients are themselves $\N$-graded Frobenius superalgebras, and give an explicit description of the associated Nakayama automorphism (Theorem~\ref{theo:cyclotomic-Frobenius-algebra}).  In case \ref{intro-item:degAHA}, this recovers a known result, but with a more direct proof.  When $F$ is the group algebra of a finite cyclic group, Theorem~\ref{theo:cyclotomic-Frobenius-algebra} also recovers a known result.  However, in other cases, even where $F$ is $\Cl$ (case \ref{intro-item:affSergeev}) or the group algebra of a more general finite group, the result appears to be new.  In particular, in case \ref{intro-item:symmetric}, Theorem~\ref{theo:cyclotomic-Frobenius-algebra} implies another open conjecture of Kleshchev and Muth (Corollary~\ref{cor:KM3.22}).  We then state a cyclotomic Mackey Theorem (Theorem~\ref{theo:cyclotomic-Mackey}).  Finally, under some restrictions on the cyclotomic quotient parameter $\bC$ allowing us to view $\cA_n^\bC(F)$ as a subalgebra of $\cA_{n+1}^\bC(F)$, we prove that $\cA_{n+1}^\bC(F)$ is a Frobenius extension of $\cA_n^\bC(F)$ (Theorem~\ref{theo:cyclotomic-Frobenius-extension}).  In particular, induction is both left and right adjoint to restriction up to degree shift (Corollary~\ref{cor:biadjoint})---a property that plays a vital role in categorification.

The fact that affine wreath product algebras are so general, yet one is able to deduce so much of their structure and representation theory explicitly, leads us to believe that these algebras will play an important role in many of the areas where degenerate affine Hecke algebras and their analogs appear.  We conclude the paper in Section~\ref{sec:future} with a brief discussion of some such future directions.  For the benefit of the reader, in Appendix~\ref{app:notation} we summarize the notation used in the paper, including an index of notation.

\iftoggle{detailsnote}{
\subsection*{Note on the arXiv version} For the interested reader, the tex file of the \href{https://arxiv.org/abs/1709.02998}{arXiv version} of this paper includes hidden details of some straightforward computations and arguments that are omitted in the pdf file.  These details can be displayed by switching the \texttt{details} toggle to true in the tex file and recompiling.
}{}

\subsection*{Relation to published version}

This version of the paper contains some corrections of errors that were noticed after publication.  Most are minor typos.  The exception to this is the correction of \eqref{eq:t-L-hat-n-action}.  We thank Eduardo \mbox{Mendon\c{c}a} for pointing out the error in this equation.

\subsection*{Acknowledgements}

We thank Jonathan Brundan, Alexander Kleshchev, and Weiqiang Wang for useful conversations.  We also thank the referees for helpful comments that improved the paper.   This research was supported by Discovery Grant RGPIN-2017-03854 from the Natural Sciences and Engineering Research Council of Canada.

%
\section{Preliminaries\label{sec:prelim}}
%

We recall here some basic facts about graded associative superalgebras and graded Frobenius superalgebras.  Throughout, $\kk$ is an arbitrary commutative ring of characteristic not equal to $2$, unless otherwise stated.

\subsection{Graded superalgebras and their modules\label{subsec:superalg-background}}

Throughout this paper, we will use the term \emph{algebra} to mean $\Z$-graded associative $\kk$-superalgebra.  Similarly, the term \emph{module} will mean $\Z$-graded left supermodule.  (In the statement of theorems, etc.\ we will sometimes include the words ``graded'' and ``super'' for reference purposes.)  We use the term \emph{degree} to refer to the $\Z$-grading and \emph{parity} to refer to the $\Z_2$-grading.  Similarly, \emph{degree shift} refers to a shift in the $\Z$-grading.  We assume that algebras are concentrated in nonnegative degree (i.e.\ they are $\N$-graded).  For a homogeneous element $a$ of an algebra or module, we denote its parity by $\bar a$\label{bar-def} and its degree by $|a|$.\label{degree-notation}  Whenever we give definitions involving the parity, it is understood that we extend by linearity to nonhomogeneous elements.

Let $A$ be an algebra.  The \emph{opposite} algebra $A^\op$ has the same underlying $\kk$-module structure, but multiplication is given by
\begin{equation} \label{eq:opposite-alg-def}
  a \cdot b = (-1)^{\bar a \bar b} ba,\quad a,b \in A^\op,
\end{equation}
where juxtaposition denotes the original multiplication in $A$.

For $A$-modules $M$ and $N$, we define $\HOM_\kk(M,N)$ to be the space of all $\kk$-linear maps from $M$ to $N$.  For $i \in \Z$ and $\varepsilon \in \Z_2$, we let
\begin{multline*}
  \HOM_A(M,N)_{i,\varepsilon} := \{\alpha \in \HOM_\kk(M,N) \mid \alpha(am) = (-1)^{\varepsilon \bar a} a \alpha(m) \ \forall\ a \in A,\ m \in M,\\
  \alpha(M_{j,\varepsilon'}) \subseteq N_{i+j,\varepsilon + \varepsilon'} \ \forall\ j \in \Z,\ \varepsilon' \in \Z_2\}
\end{multline*}
denote the space of all homogeneous $A$-module maps of degree $i$ and parity $\varepsilon$.  We then define
\begin{gather}
  \HOM_A(M,N) := \bigoplus_{i \in \Z,\, \varepsilon \in \Z_2} \HOM_A(M,N)_{i,\varepsilon}, \label{eq:HOM-def} \\
  \Hom_A(M,N) := \HOM_A(M,N)_{0,0}. \label{eq:hom-def}
\end{gather}
The notation $\END$ and $\End$ is defined similarly.  We will use the terms \emph{homomorphism} and \emph{isomorphism} to mean elements of $\HOM_A(M,N)$.  The terms \emph{even homomorphism} and \emph{even isomorphism} refer to elements of $\Hom_A(M,N)$.  We use the symbol $\cong$ to denote an isomorphism (i.e.\ an invertible element of $\HOM_A(M,N)$) and $\simeq$ to denote an even isomorphism (i.e.\ an invertible element of $\Hom_A(M,N)$).\label{cong-simeq-def}  We let $A\md$ denote the category of finitely-generated $A$-modules with even homomorphisms.

We have a parity shift functor
\[
  \Pi \colon A\md \to A\md,\quad M \mapsto \Pi M,
\]
that switches the $\Z_2$-grading, that is, $(\Pi M)_{i,\varepsilon} = M_{i,\varepsilon+1}$.  The action of $A$ on $\Pi M$ is given by $a \cdot m = (-1)^{\bar a} am$, where $am$ is the action on $M$.

We let $\cS(A)$\label{cS_simeq-def} denote the set of \emph{even} isomorphism classes of simple $A$-modules (i.e.\ isomorphism classes in $A\md$) up to degree shift.  By a slight abuse of notation, we will also use the notation $\cS(A)$ to denote a set of representatives of these isomorphism classes, where we shift degrees so that each representative has nonzero degree zero piece and is concentrated in nonnegative degree.

Given an $A$-module $V$ and an algebra automorphism $\alpha$ of $A$, we define the twisted module $\prescript{\alpha}{}{V}$ to be the $A$-module with underlying $\kk$-module $V$ and with action
\begin{equation} \label{eq:twisted-module-def}
  a \cdot v = \alpha^{-1}(a) v,\quad a \in A,\ v \in V.
\end{equation}
\details{
  We take the inverse above so that $\prescript{\alpha_1}{}{\left(\prescript{\alpha_2}{}{V} \right)} \simeq \prescript{\alpha_1 \alpha_2}{}{V}$ for $\alpha_1, \alpha_2 \in \Aut A$.  Thus we have an action on $\Aut A$ on the class of $A$-modules.
}
Here and in what follows we use $\cdot$ to denote the twisted action and juxtaposition to denote the original (untwisted) action.  Since $\prescript{\alpha}{}{V}$ is simple if $V$ is, the twisting induces a permutation of $\cS(A)$.

A simple $A$-module is said to be of type $\tQ$ if it is evenly isomorphic to its own parity shift.  Otherwise it is said to be of type $\tM$.  Schur's Lemma for superalgebras states that, for simple $A$-modules $M$ and $N$ with $M \not \cong N$, we have $\HOM_A(M,N) = 0$.  On the other hand, if $M$ is simple and $\kk$ is an algebraically closed field, then
\[
  \END_A(M) \simeq
  \begin{cases}
    \kk & \text{if } M \text{ is of type } \tM, \\
    \Cl & \text{if } M \text{ is of type } \tQ.
  \end{cases}
\]
Here $\Cl$ is the two-dimensional Clifford algebra with one odd generator $c$ satisfying $c^2=1$.\label{Cl-def}

Unadorned tensor products will be understood to be over $\kk$.  Recall that multiplication in the tensor product of algebras $A_1$ and $A_2$ is given by
\[
  (a_1 \otimes a_2) (b_1 \otimes b_2) = (-1)^{\bar a_2 \bar b_1} (a_1 a_2 \otimes b_1 b_2),\quad a_1,b_1 \in A_1,\ a_2, b_2 \in A_2.
\]
The outer tensor product of an $A_1$-module $M$ and an $A_2$-module $N$ is denoted $M \boxtimes N$.\label{boxtimes-def}  As $\kk$-modules, we have $M \boxtimes N = M \otimes N$.  The action is given by
\[
  (a_1 \otimes a_2)(m \otimes n) = (-1)^{\bar a_2 \bar m} a_1 m \otimes a_2 n,\quad a_1 \in A_1,\ a_2 \in A_2,\ m \in M,\ n \in N.
\]

For the remainder of this subsection, we assume $\kk$ is an algebraically closed field.  Suppose $M$ and $N$ are simple modules.  If at least one of them is of type $\tM$, then $M \boxtimes N$ is a simple $A_1 \otimes A_2$-module.  However, if $M$ and $N$ are both of type $\tQ$, then
\[
  M \boxtimes N \simeq (M \irtimes N) \oplus \Pi(M \irtimes N)
\]
for some simple submodule $M \irtimes N \subseteq M \boxtimes N$ of type $\tM$.\label{irtimes-def}
\details{Here one needs that the characteristic of $\kk$ is not equal to two.}
The simple submodule $M \irtimes N$ is unique up to even isomorphism and parity shift.  We will also use the notation $M \irtimes N$ to denote $M \boxtimes N$ in the case where $M$ or $N$ is of type $\tM$.  It follows that, if $M_1,N_1$ are simple $A_1$-modules, and $M_2,N_2$ are simple $A_2$-modules, then for any nonzero even $A_1 \otimes A_2$-module homomorphism $M_1 \boxtimes M_2 \to N_1 \boxtimes N_2$, we have a nonzero induced even homomorphism $M_1 \irtimes M_2 \to N_1 \irtimes N_2$, (making a different choice for $M_1 \irtimes M_2$ or $N_1 \irtimes N_2$ if necessary).  Furthermore, every simple $A_1 \otimes A_2$-module is isomorphic to $M_1 \irtimes M_2$ for some simple $A_1$-module $M_1$ and simple $A_2$-module $M_2$.

\subsection{Frobenius algebras\label{subsec:Frobenius-algebras}}

Fix an $\N$-graded Frobenius $\kk$-superalgebra $F$\label{F-def} with homogeneous (in the $\N$-degree) parity-preserving linear trace map $\tr \colon F \to \kk$\label{tr-def}.  In other words, the map
\begin{equation} \label{eq:trace-nondegen}
  F \to \Hom_\kk(F,\kk),\quad f \mapsto \left( g \mapsto (-1)^{\bar f \bar g} \tr(gf) \right),
\end{equation}
is a homogeneous parity-preserving isomorphism.  (One could also allow the trace map $\tr$ to be parity-reversing, but we will not do so in the current paper.  See Remark~\ref{rem:odd-trace-map}.)  We will assume that $F$ is free as a $\kk$-module.  Let $\psi$\label{Nakayama-def} denote the Nakayama automorphism, so that
\[
  \tr(fg) = (-1)^{\bar f \bar g} \tr \left( g\psi(f) \right) \quad \text{for all } f,g \in F.
\]
Let $\delta \in \N$\label{delta-def} be the maximum degree of elements of $F$, so that $\tr$ is of degree $-\delta$.  We make the following assumptions:
\begin{itemize}
  \item The Nakayama automorphism has finite order $\theta$.\label{theta-def}

  \item The characteristic of $\kk$ does not divide $\theta$.

  \item The action of the Nakayama automorphism is diagonalizable (i.e.\ there exists a basis of $F$ consisting of eigenvectors of $\psi$).
\end{itemize}
Note that if $\kk$ is an algebraically closed field, the first two assumptions imply the third.  (These assumptions are not necessary in the current section, but are needed later in the paper.)

\details{
  For the curious reader not familiar with Frobenius algebras, we give here an example of a Frobenius algebra with Nakayama automorphism of infinite order.  Fix $\zeta \in \C^\times$, and let $F$ be the $\C$-algebra with generators $y,z$, subject to the relations
  \[
    y^2 = z^2 = 0,\quad yz = \zeta zy.
  \]
  Then $F$ has a basis given by $y^k z^\ell$, $k,\ell \in \{0,1\}$, and is a Frobenius algebra under the trace map determined by $\tr(y^k z^\ell) = \delta_{k,1} \delta_{\ell,1}$.

  The Nakayama automorphism is determined by
  \[
    \psi(z) = \zeta^{-1} z,\quad \psi(y) = \zeta y.
  \]
  Indeed, first we compute $\psi(z)$.  For $\ell \in \{0,1\}$, we have
  \[
    \tr \left( z z^\ell \right) = 0 = \tr \left( z^\ell (\zeta^{-1} z) \right) \quad \text{and} \quad
    \tr \left( z(y^\ell z) \right) = 0 = \tr \left( (y^\ell z)(\zeta^{-1} z) \right).
  \]
  Finally we have $\tr (zy) = \tr \left( y (\zeta^{-1} z) \right)$.  Now we compute $\psi(y)$.  For $\ell \in \{0,1\}$, we have
  \[
    \tr \left( y y^\ell \right) = 0 = \tr \left( y^\ell(\zeta y) \right)
    \quad \text{and} \quad
    \tr \left( y (y z^\ell) \right) = 0 = \tr \left( (y z^\ell)(\zeta y) \right).
  \]
  Finally, we have $\tr(yz) = \tr \left( z (\zeta y) \right)$.

  Thus, the order of $\psi$ is the multiplicative order of $\zeta$.  In particular, if $\zeta$ is not a root of unity, then $\psi$ has infinite order.
}

Let $B$\label{B-def} be a basis of $F$ consisting of homogeneous elements, and let $B^\vee = \{b^\vee \mid b \in B\}$\label{dual-basis-def} be the left dual basis.  Thus
\[
  \tr(b^\vee c) = \delta_{b,c} \quad b,c \in B.
\]
It follows that, for all $f \in F$, we have

\noindent\begin{minipage}{0.5\linewidth}
  \begin{equation} \label{eq:f-in-basis}
    \sum_{b \in B} b \tr(b^\vee f) = f,
  \end{equation}
\end{minipage}%
\begin{minipage}{0.5\linewidth}
  \begin{equation} \label{eq:f-in-dual-basis}
    \sum_{b \in B} \tr(fb) b^\vee = f.
  \end{equation}
\end{minipage}\par\vspace{\belowdisplayskip}
\noindent
\details{
  We can write $f = \sum_{c \in B} f_c c$ for $f_c \in \kk$.  Then
  \[
    \sum_{b \in B} b \tr(b^\vee f)
    = \sum_{b,c \in B} f_c b \tr(b^\vee c)
    = \sum_{c \in B} f_c c
    = f.
  \]
  We can also write $f = \sum_{c \in B} f_{c^\vee} c^\vee$ for $f_{c^\vee} \in \kk$.  Then
  \[
    \sum_{b \in B} \tr(fb) b^\vee
    = \sum_{b,c \in B} f_{c^\vee} \tr(c^\vee b) b^\vee
    = \sum_{b,c \in B} f_{c^\vee} c^\vee
    = f.
  \]
}
We also have
\begin{equation} \label{eq:double-dual}
  \left( b^\vee \right)^\vee = (-1)^{\bar b} \psi^{-1}(b).
\end{equation}
\details{
  For $b, c \in B$, we have
  \[
    \delta_{b,c}
    = \tr (b^\vee c)
    = (-1)^{\bar b \bar c} \tr \left( \psi^{-1}(c) b^\vee \right),
  \]
  and so $(b^\vee)^\vee = (-1)^{\bar b} \psi^{-1}(b)$.
}

\begin{rem}
  When comparing statements in the current paper to those of \cite{RS17}, the reader should be aware that the notation $\vee$ denotes \emph{right} duals in \cite{RS17}, as opposed to left duals as in the current paper.
\end{rem}

Suppose $F$ and $F'$ are Frobenius algebras with trace maps $\tr$ and $\tr'$, respectively.  A \emph{homomorphism of Frobenius algebras} is an algebra homomorphism $\xi \colon F \to F'$ such that $\tr = \tr' \circ \xi$.  If $\xi$ is invertible, we call it an \emph{isomorphism of Frobenius algebras}.  Note that all homomorphisms of Frobenius algebras are injective.
\details{
  Suppose $\xi(f)=0$.  Then, for all $g \in F$, we have
  \[
    \tr(gf)
    = \tr' (\xi(gf))
    = \tr' (\xi(g) \xi(f))
    = \tr'(0)
    = 0.
  \]
  It then follows from the nondegeneracy of $\tr$ that $f=0$.
}
If $\xi \colon F \to F'$ is an isomorphism of Frobenius algebras, then $\xi \circ \psi = \psi' \circ \xi$, where $\psi$ and $\psi'$ are the Nakayama automorphisms of $F$ and $F'$, respectively.
\details{
  For $f, g \in F$, we have
  \begin{multline*}
    \tr' \big( \xi(f) \xi (\psi(g)) \big)
    = \tr' \big( \xi(f \psi(g)) \big)
    = \tr \big( f\psi(g) \big)
    = (-1)^{\bar f \bar g} \tr(gf)
    \\
    = (-1)^{\bar f \bar g} \tr' \big( \xi(gf) )
    = (-1)^{\bar f \bar g} \tr' \big( \xi(g) \xi(f) \big)
    = \tr' \big( \xi(f) \psi'(\xi(g) \big).
  \end{multline*}
  Since $\xi$ is surjective, it follows from the nondegeneracy of $\tr'$ that $\xi \circ \psi = \psi' \circ \xi$.
}
The opposite algebra $F^\op$ is also a Frobenius algebra, with trace map $\tr$ and Nakayama automorphism $\psi^{-1}$.
\details{
  Let $\tr$ and $\psi$ denote the trace map and Nakayama automorphism of $F$.  Then $\tr$ is clearly a linear map $F^\op \to \kk$.  Let $\cdot$ denote multiplication in $F^\op$.  Suppose $f \in F$ has the property that $\tr(g \cdot f)=0$ for all $g \in F$.  Then, for all $g \in F$, we have
  \[
    0
    = \tr(g \cdot f)
    = (-1)^{\bar f \bar g} \tr(fg)
    = \tr(\psi^{-1}(g)f).
  \]
  Thus, by the nondegeneracy of $\tr$, we have $f=0$.  Thus the map
  \[
    F^\op \to \Hom_\kk(F^\op,\kk),\quad f \mapsto \left( g \mapsto (-1)^{\bar f \bar g} \tr(g \cdot f) \right),
  \]
  is injective.  A similar argument shows that is also surjective.

  Now, for $f,g \in F$, we have
  \[
    \tr(f \cdot g)
    = (-1)^{\bar f \bar g} \tr(gf)
    = \tr \left( \psi^{-1}(f)g \right)
    = (-1)^{\bar f \bar g} \tr \left( g \cdot \psi^{-1}(f) \right).
  \]
  Thus $\psi^{-1}$ is the Nakayama automorphism of $F^\op$.
}

\begin{lem} \label{lem:Frob-antiautom-properties}
  Suppose $\tau \colon F \to F^\op$ is an isomorphism of Frobenius algebras.  Then $\tau \circ \psi = \psi^{-1} \circ \tau$.  Furthermore, the left duals of the elements of the basis $\{\tau(b^\vee) \mid b \in B\}$ of $F$ are $\tau(b^\vee)^\vee = (-1)^{\bar b} \tau(b)$.
\end{lem}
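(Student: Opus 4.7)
The plan is to dispatch the first claim by invoking the general intertwining principle already recorded for Frobenius-algebra isomorphisms, and to verify the second claim by a direct pairing computation.

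For the first assertion, I would simply quote the fact (stated in the paragraph immediately preceding the lemma) that any isomorphism $\xi$ of Frobenius algebras satisfies $\xi \circ \psi = \psi' \circ \xi$, where $\psi,\psi'$ are the Nakayama automorphisms of source and target. Applying this with $\xi = \tau \colon F \to F^\op$ and recalling that the Nakayama automorphism of $F^\op$ was computed to be $\psi^{-1}$, the identity $\tau \circ \psi = \psi^{-1} \circ \tau$ falls out immediately.

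For the dual-basis identification, my plan is to check directly that the proposed dual $(-1)^{\bar b}\tau(b)$ satisfies the defining pairing condition
\[
  \tr\bigl((-1)^{\bar b}\tau(b)\,\tau(c^\vee)\bigr) = \delta_{b,c}
\]
against the basis $\{\tau(c^\vee) \mid c \in B\}$. The computation will rest on three ingredients: the antihomomorphism identity $\tau(x)\tau(y) = (-1)^{\bar x \bar y}\tau(yx)$ in $F$ (which is just $\tau$ being an algebra map into $F^\op$ unpacked via \eqref{eq:opposite-alg-def}); the relation $\tr \circ \tau = \tr$ (from $\tau$ being a Frobenius-algebra homomorphism, the trace on $F^\op$ being the same linear map as that on $F$); and the observation $\overline{b^\vee} = \bar b$ (since $\tr$ is parity-preserving and $\tr(b^\vee b) = 1$ is even). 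Chaining these,
\[
  \tr\bigl(\tau(b)\tau(c^\vee)\bigr)
  = (-1)^{\bar b \bar c}\tr\bigl(\tau(c^\vee b)\bigr)
  = (-1)^{\bar b \bar c}\tr(c^\vee b)
  = (-1)^{\bar b \bar c}\delta_{b,c},
\]
and multiplication by $(-1)^{\bar b}$ yields $\delta_{b,c}$ because on the diagonal $b=c$ the total exponent $\bar b + \bar b\cdot\bar b$ is $2\bar b$, hence even.

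The only thing requiring care is the Koszul sign bookkeeping when moving $\tau$ through products and through the trace; beyond that I do not foresee any genuine obstacle.
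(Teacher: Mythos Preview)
Your proposal is correct and follows essentially the same approach as the paper: the first claim is dispatched by invoking the general intertwining property of Frobenius-algebra isomorphisms together with the identification of the Nakayama automorphism of $F^{\op}$ as $\psi^{-1}$, and the second by the same direct pairing computation using $\tr\circ\tau=\tr$ and the anti-homomorphism identity. The only cosmetic difference is that the paper starts from $\delta_{b,c}=\tr(b^\vee c)$ and pushes $\tau$ through, whereas you start from the candidate dual and verify the pairing; the underlying calculation is identical.
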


\begin{proof}
  That $\tau \circ \psi = \psi^{-1} \circ \tau$ follows from the fact that $\psi^{-1}$ is the Nakayama automorphism of $F^\op$.  To prove the second assertion note that, for all $b,c \in B$, we have
  \[
    \delta_{b,c}
    = \tr(b^\vee c)
    = \tr(\tau(b^\vee c))
    = (-1)^{\bar b \bar c} \tr(\tau(c) \tau(b^\vee)),
  \]
  which implies that $\tau(b^\vee)^\vee = (-1)^{\bar b} \tau(b)$.
\end{proof}

The following example will help us illustrate some of the concepts to be discussed later in the paper.

\begin{eg}[Taft Hopf algebra] \label{eg:Taft}
  Fix $q \in \N$ and a primitive $q$-th root of unity $\omega \in \C$.  Consider the $\C$-algebra $T_q$ with generators $g,y$ and relations $g^q=1$, $y^q=0$, and $yg = \omega g y$.  This algebra can be given the structure of a Hopf algebra, and is called the \emph{Taft Hopf algebra}.  We can define a grading on $T_q$ by declaring $g$ to be even of degree zero and $y$ to be even of arbitrary degree.  The algebra $T_q$ has basis given by $y^k g^\ell$, $0 \le k,\ell \le q-1$.  It is straightforward to verify that the linear map $\tr \colon T_q \to \kk$ determined by $\tr(y^k g^\ell) = \delta_{k,q-1} \delta_{\ell,0}$ is nondegenerate and hence makes $T_q$ a Frobenius algebra.  The Nakayama automorphism is given by $\psi(g) = \omega g$ and $\psi(y)=y$.
  \details{
    For $k \ne q-1$ or $0 \le \ell < q-1$, we have
    \[
      \tr \left( g y^k g^\ell \right) = 0 = \tr \left( y^k g^\ell \omega g \right).
    \]
    We also have
    \[
      \tr \left( g y^{q-1} g^{q-1} \right)
      = \omega^{1-q} \tr \left( y^{q-1} \right)
      = \omega
      = \tr \left( y^{q-1} g^{q-1} w g \right).
    \]
    Thus $\psi(g) = \omega g$.

    For $k \ne q-2$ or $0 < \ell \le q-1$, we have
    \[
      \tr \left( y y^k g^\ell \right) = 0 = \tr \left( y^k g^\ell y \right).
    \]
    We also have
    \[
      \tr \left( y y^{q-2} \right)
      = 1
      = \tr \left( y^{q-2} y \right).
    \]
    Thus $\psi(y) = y$.
  }

  Since the element $y$ is nilpotent, simple modules for $F$ are equivalent to simple modules for $T_q/\langle y \rangle$, which is isomorphic to the group algebra of the cyclic group of order $q$.  Thus, a complete list of simple $T_q$-modules, up to isomorphism, is given by $L_0,\dotsc,L_{q-1}$, where, for $0 \le k \le q-1$, we have $L_k \simeq \C$ as $\C$-modules, and
  \[
    g \cdot z = e^{2 k \pi i/q} z,\quad y \cdot z = 0,\quad z \in L_k.
  \]
\end{eg}

\details{
  Another interesting example (not used in the current paper) is the cohomology ring of a compact oriented manifold.  This is a graded commutative Frobenius algebra with trace map given by evaluation on the fundamental homology class of the manifold.
}

\subsection{Smash products and wreath product algebras\label{subsec:smash}}

Suppose we have a left action of a group $G$ on a $\kk$-algebra $A$ via algebra automorphisms.  Then we can form the \emph{smash product algebra} $A \rtimes G$.  As a $\kk$-module, we have
\[
  A \rtimes G = A \otimes \kk G.
\]
The multiplication is determined by the fact that $A$ and $\kk G$ are subalgebras, and
\begin{equation} \label{rel:smash-product}
  w a = (w \cdot a) w,\quad w \in \kk G,\ a \in A.
\end{equation}
As a special case of the above construction, we have the natural action of $S_n$ on $F^{\otimes n}$ by superpermuting the factors:
\[
  s_i \cdot (f_1 \otimes \dotsb \otimes f_n)
  = (-1)^{\bar f_i \bar f_{i+1}} f_1 \otimes \dotsb \otimes f_{i-1} \otimes f_{i+1} \otimes f_i \otimes f_{i+2} \otimes \dotsb \otimes f_n,
\]
where $s_i$, $1 \le i \le n-1$, denotes the simple transposition of $i$ and $i+1$.  We use $\prescript{\pi}{}{a}$\label{perm-notation} to denote $\pi \cdot a$ in this case.  We call $F^{\otimes n} \rtimes_\rho S_n$ the \emph{wreath product algebra} associated to $F$ where, here and in what follows, we use the subscript $\rho$ when the action is by superpermutations.\label{rtimes_rho-def}

The above construction is a special case of the more general notion of a Hopf smash product.  Another example that will be important for us is the following.  An algebra automorphism $\alpha$ of an algebra $A$ gives rise to a natural right action of the polynomial algebra $\kk[x]$ on $A$.  Then we can form the algebra $\kk[x] \ltimes A$.  Again, we have
\[
  \kk[x] \ltimes A = \kk[x] \otimes A
\]
as $\kk$-modules.  The multiplication is determined by the fact that $\kk [x]$ and $A$ are subalgebras, and
\[
  a x = x \alpha(a),\quad a \in A.
\]

%
\section{Affine wreath product algebras\label{sec:affine-wreath}}
%

In this section we introduce our main object of study, the affine wreath product algebras.  In this section $\kk$ is an arbitrary commutative ring of characteristic not equal to two.

\subsection{Definition}

Recall that $F$ is an $\N$-graded Frobenius superalgebra with basis $B$, Nakayama automorphism $\psi$, and top degree $\delta$.  Fix $n \in \N_+$.  For $f \in F$ and $1 \le i \le n$, we define
\begin{gather}
  f_i = 1^{\otimes (i-1)} \otimes f \otimes 1^{\otimes (n-i)} \in F^{\otimes n}, \label{eq:f_i-def} \\
  \psi_i = \id^{\otimes (i-1)} \otimes \psi \otimes \id^{\otimes (n-i)} \colon F^{\otimes n} \to F^{\otimes n}. \label{eq:psi_i-def}
\end{gather}

\begin{defin}[Affine wreath product algebra $\cA_n(F)$] \label{def:affine-wreath}
  We define the \emph{affine wreath product algebra} $\cA_n(F)$ to be the $\N$-graded superalgebra that is the free product of $\kk$-algebras
  \[
    \kk[x_1,\dotsc,x_n] \star F^{\otimes n} \star \kk S_n,
  \]
  modulo the relations
  \begin{align}
    \bsf x_i &= x_i \psi_i(\bsf),& 1 \le i \le n,\ \bsf \in F^{\otimes n}, \label{rel:xF-commutation} \\
    s_i x_j &= x_j s_i,& 1 \le i \le n-1,\ 1 \le j \le n,\ j \ne i,i+1, \label{rel:sx-triv-commutation} \\
    s_i x_i &= x_{i+1} s_i - t_{i,i+1},& 1 \le i \le n-1, \label{rel:sx-commutation} \\
    \pi \bsf &= \prescript{\pi}{}{\bsf} \pi,& \pi \in S_n,\ \bsf \in F^{\otimes n}, \label{rel:SF-commutation}
  \end{align}
  where
  \begin{equation} \label{def:tij}
    t_{i,j} := \sum_{b \in B} b_i b_j^\vee
    \quad \text{for} \quad
    1 \le i,j \le n,\ i \ne j,
  \end{equation}
  and where we always interpret $b^\vee_j$ as $\left( b^\vee \right)_j$.  The degree and parity on $\cA_n(F)$ are determined by
  \begin{gather}
    |x_i| = \delta,\quad \bar x_i = 0,\quad 1 \le i \le n, \label{eq:x-degrees} \\
    |f_i| = |f|,\quad \bar f_i = \bar f,\quad 1 \le i \le n,\ f \in F, \nonumber \\
    |\pi| = 0,\quad \bar \pi = 0,\quad \pi \in S_n. \nonumber
  \end{gather}
  By convention, we set $\cA_0(F) = \kk$.
\end{defin}

Note that $|t_{i,j}| = \delta$ and $\overline{t_{i,j}} = 0$ for all $i,j$.  It is straightforward to verify that $t_{i,j}$ is independent of the choice of basis $B$.
\details{
  Enumerate the elements of $B$ so that $B = \{b^{(1)},\dotsc,b^{(a)}\}$.  Let $B' = \{c^{(1)},\dotsc,c^{(a)}\}$ be another basis of $F$.  Then there exist invertible $a \times a$ matrices $M = (m_{k \ell})$ and $M' = (m_{k \ell}')$ with entries in $\kk$ such that, for $k = 1, \dotsc, a$,
  \[
    b^{(k)}
    = \sum_{\ell=1}^a m_{k \ell} c^{(\ell)},
    \quad
    {b^{(k)}}^\vee
    = \sum_{\ell=1}^a m_{k \ell}' {c^{(\ell)}}^\vee.
  \]
  Then, for $k,\ell \in \{1,\dotsc,a\}$, we have
  \[
     \delta_{k,\ell}
     = \tr \left( b^{(k)} {b^{(\ell)}}^\vee \right)
     = \sum_{r,s=1}^a m_{k r} m_{\ell s}' \tr \left( c^{(r)} {c^{(s)}}^\vee \right)
     = \sum_{r,s=1}^a m_{k r} m_{\ell s}' \delta_{r,s}
     = \sum_{r=1}^a m_{k r} m_{\ell r}'.
  \]
  It follows that $(M')^T = M^{-1}$.  Thus
  \[
    \sum_{k=1}^a b^{(k)}_i {b^{(k)}}_j^\vee
    = \sum_{k,\ell,r=1}^a m_{k \ell} m_{k r}' c^{(\ell)}_i {c^{(r)}}_j^\vee
    = \sum_{\ell=1}^a c^{(\ell)}_i {c^{(\ell)}}_j^\vee.
  \]
}
Conjugating by $s_i$, we see that \eqref{rel:sx-commutation} is equivalent to
\begin{equation} \label{rel:sx-commutation2}
  s_i x_{i+1} = x_i s_i + t_{i+1,i}.
\end{equation}

It follows from Definition~\ref{def:affine-wreath} that we have an algebra homomorphism
\[
  F^{\otimes n} \rtimes_\rho S_n \to \cA_n(F),\quad
  \bsf \mapsto \bsf,\ \pi \mapsto \pi, \quad
  \bsf \in F^{\otimes n},\ \pi \in S_n.
\]
We also have a natural algebra homomorphism
\[
  \kk[x_1,\dotsc,x_n] \to \cA_n(F).
\]
We will use these homomorphisms to view elements of $F^{\otimes n} \rtimes_\rho S_n$ and $\kk[x_1,\dotsc,x_n]$ as elements of $\cA_n(F)$.  In fact, we will see in Theorem~\ref{theo:AnF-basis} that both of the above homomorphisms are injective, allowing us to view $F^{\otimes n} \rtimes_\rho S_n$ and $\kk[x_1,\dotsc,x_n]$ as subalgebras of $\cA_n(F)$.

\begin{lem} \label{lem:independence-of-trace-map}
  Up to isomorphism, $\cA_n(F)$ depends only on the underlying algebra $F$, and not on the trace map $\tr$.
\end{lem}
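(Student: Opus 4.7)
The plan is to show that any two admissible trace maps on $F$ differ by multiplication by a unit, and then to construct the desired isomorphism by rescaling the generators $x_i$ by this unit.

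Suppose $\tr'$ is another trace making $F$ a Frobenius algebra satisfying the running hypotheses; in particular $\tr'$ is even and of degree $-\delta$. Nondegeneracy of $\tr$ (in the form \eqref{eq:trace-nondegen}) produces a unique $u \in F$, necessarily even and of degree $0$, such that $\tr'(f) = \tr(uf)$ for all $f \in F$. Comparing the two $\kk$-linear isomorphisms $F \to \Hom_\kk(F,\kk)$ associated with $\tr$ and $\tr'$, and using cyclicity of $\tr$, one finds that the composition of these two isomorphisms is right multiplication by $\psi(u)$, so right multiplication by $\psi(u)$ is a $\kk$-linear automorphism of $F$; hence $u$ is a unit. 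Short computations from the definitions of the Nakayama automorphism and the dual basis then yield
\[
  \psi'(f) = \psi(u)\,\psi(f)\,\psi(u)^{-1}, \qquad b^{\vee'} = u^{-1}\,b^\vee, \qquad t'_{i,j} = u_j^{-1}\,t_{i,j},
\]
where primed symbols denote the structure coming from $\tr'$.

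Next, I would define $\phi \colon \cA_n(F,\tr') \to \cA_n(F,\tr)$ on generators by $\phi(f_i) = f_i$, $\phi(\pi) = \pi$, and $\phi(x_i) = u_i^{-1}\,x_i$. This preserves the $\Z \times \Z_2$-grading, so it suffices to verify the defining relations \eqref{rel:xF-commutation}--\eqref{rel:SF-commutation}. The key ingredients are two identities already available in $\cA_n(F,\tr)$: the commutation $s_i\,u_i^{-1} = u_{i+1}^{-1}\,s_i$ (an instance of \eqref{rel:SF-commutation}) and the identity $x_i\,\psi(u)_i = u_i\,x_i$ (an instance of \eqref{rel:xF-commutation}, since $u$ is even). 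Combined with the conjugation formula for $\psi'$ and with $t'_{i,i+1} = u_{i+1}^{-1}\,t_{i,i+1}$, each deformed relation for $\tr'$ collapses, after applying $\phi$, to the corresponding relation for $\tr$. The map defined symmetrically by $x_i \mapsto u_i\,x_i$ is a two-sided inverse.

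The main obstacle is really only the careful bookkeeping of tensor positions, the Nakayama automorphism, and the inner twist relating $\psi'$ to $\psi$; once the unit $u$ has been produced, every remaining verification is a mechanical manipulation using the established relations in $\cA_n(F,\tr)$.
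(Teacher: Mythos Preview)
Your proposal is correct and follows essentially the same approach as the paper's proof: both produce a unit $u \in F$ relating the two trace maps and then rescale the generators $x_i$ by $u_i$. The only cosmetic differences are that the paper writes $\tr'(f) = \tr(fu)$ with $u$ on the right and defines the isomorphism in the other direction via $x_i \mapsto x_i u_i$, whereas you place $u$ on the left and send $x_i \mapsto u_i^{-1} x_i$; these choices are interchangeable up to replacing $u$ by $\psi(u)^{\pm 1}$.
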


\begin{proof}
  Trace maps for a given algebra differ by multiplication by an invertible element.  (For the graded super setting of the current paper, see \cite[Prop.~4.7]{PS16}.)  Thus, if we have another trace map $\tr'$, there exists an invertible $u \in F^\times$ (which must be even since both trace maps are even) such that $\tr'(f) = \tr(fu)$ for all $f \in F$.  It is then straightforward to verify that the map determined by
  \begin{equation} \label{eq:trace-change-isom}
    x_i \mapsto x_i u_i,\
    \bsf \mapsto \bsf,\
    \pi \mapsto \pi,\quad
    1 \le i \le n,\ \bsf \in F^{\otimes n},\ \pi \in S_n,
  \end{equation}
  is an isomorphism of algebras from $\cA_n(F)$ to $\cA_n(F')$, where $F'$ is the Frobenius algebra that is the same underlying algebra as $F$, but with trace map $\tr'$.
  \details{
    Under $\tr'$, the left dual of the basis $\{bu^{-1} \mid b \in B\}$ is $\{b^\vee \mid b \in B\}$, where $b^\vee$ denotes, as usual, the dual of $b$ (in the basis $B$) with respect to $\tr$.  Thus, using primes to denote quantities in $\cA_n(F')$, we have
    \[
      t_{i,j}'
      = \sum_{b \in B} b_i u_i^{-1} b_j^\vee
      = \sum_{b \in B} b_i b_j^\vee u_i^{-1}
      = t_{i,j} u_i^{-1}.
    \]
    Now, for $f,g \in F$, we have
    \[
      \tr'(fg)
      = \tr(fgu)
      = (-1)^{\bar f \bar g} \tr \left( gu \psi(f) \right)
      = (-1)^{\bar f \bar g} \tr' \left( gu\psi(f)u^{-1} \right).
    \]
    Therefore, the Nakayama automorphism for $\tr'$ is given by $\psi'(f) = u \psi(f) u^{-1}$.

    Let $\phi$ be the map \eqref{eq:trace-change-isom}.  Then
    \[
      \phi (\bsf x_i)
      = \bsf x_i u_i
      \stackrel{\eqref{rel:xF-commutation}}{=} x_i \psi_i'(\bsf) u_i
      = x_i u_i \psi_i(\bsf)
      = \phi \left( x_i \psi_i(\bsf) \right).
    \]
    So $\phi$ respects \eqref{rel:xF-commutation}.  We also have
    \[
      \phi (s_i x_i)
      = s_i x_i u_i
      \stackrel{\eqref{rel:sx-commutation}}{=} x_{i+1} s_i u_i - t_{i,i+1}' u_i
      \stackrel{\eqref{rel:SF-commutation}}{=} x_{i+1} u_{i+1} s_i - t_{i,i+1}
      = \phi(x_{i+1} s_i - t_{i,i+1}).
    \]
    So $\phi$ respects \eqref{rel:sx-commutation}.  It is clear that $\phi$ respects \eqref{rel:sx-triv-commutation} and \eqref{rel:SF-commutation}.  Hence $\phi$ is a homomorphism of algebras.  Since it is clearly invertible, the proof is complete.
  }
\end{proof}

\begin{rem} \label{rem:odd-trace-map}
  We could allow the trace map of $F$ to be parity-reversing.  Then we would define the $x_i$ to be odd and we would require them to anticommute.  In addition, \eqref{rel:xF-commutation} would involve signs.  While it would be interesting to investigate the resulting ``odd wreath product algebras'', for simplicity we restrict our attention in the current paper to the case where the trace map of $F$ is parity-preserving.
\end{rem}

\subsection{Examples}

Before investigating the properties of the algebra $\cA_n(F)$, we first discuss how various choices of the Frobenius algebra $F$ recover well-studied algebras.  In its full generality, the algebra $\cA_n(F)$ first appeared in \cite{RS17}, where it occurred naturally in the endomorphism space of the object $\mathsf{Q}^n$ of the diagrammatic category $\cH_F$.  More precisely, $\cA_n(F)$ is isomorphic to the opposite algebra of the algebra $D_n$ defined in \cite[Def.~8.12]{RS17}, after relabelling indices by interchanging $i$ and $n-i$.

\begin{eg}[Degenerate affine Hecke algebra] \label{eg:daHa}
  The algebra $\cA_n(\kk)$ is the usual degenerate affine Hecke algebra.  We have $t_{i,j}=1$ for all $i,j$.
\end{eg}

\begin{eg}[Wreath Hecke algebra] \label{eg:wreath-Hecke}
  Fix a finite group $G$ and consider the group algebra $\kk G$ (with trivial grading) with trace map $\tr \colon \kk G \to \kk$ given by $\tr \left( \sum_{g \in G} a_g g \right) = a_e$, where $e$ is the identity element of $G$.  Then
  \[
    t_{i,j} = \sum_{g \in G} g_i g_j^{-1},\quad i \ne j,
  \]
  and the Nakayama automorphism $\psi$ is trivial.  Thus, $\cA_n(\kk G)$ is the wreath Hecke algebra of \cite[Def.~2.4]{WW08}.  Of course, there are other trace maps for $G$, given by projecting onto the coefficients of other elements of the group.  However, by Lemma~\ref{lem:independence-of-trace-map}, these yield isomorphic affine wreath product algebras.
\end{eg}

\begin{eg}
  Let $F = \kk[z]/(z^2)$, with $|z|=2$ and $\bar z = 0$.  This is an $\N$-graded Frobenius superalgebra with trace map $\tr(a+bz) = b$ for $a,b \in \kk$.  The Nakayama automorphism is trivial, $1^\vee = z$, and $z^\vee = 1$.  Thus, $t_{i,j} = z_i + z_j$.  Then $\cA_n(F)$ is precisely the algebra $H_i^n$ defined in \cite[\S10.3]{CL12}.
\end{eg}

\begin{eg}[Affine Sergeev algebra] \label{eg:affine-Sergeev}
  Consider the Clifford superalgebra $\Cl$ generated by a single odd generator $c$ satisfying $c^2=1$.  This is a Frobenius superalgebra with trace given by $\tr(c)=0$, $\tr(1)=1$.  Then the Nakayama automorphism satisfies $\psi(c)=-c$.  If we choose $B = \{1,c\}$, then $1^\vee = 1$ and $c^\vee = c$.  Thus,
  \[
    t_{i,j} = 1_i 1_j^\vee + c_i c_j^\vee = 1 + c_i c_j.
  \]
  It follows that $\cA_n(\Cl)$ is the algebra introduced in \cite[\S3]{Naz97}, where it was called the degenerate affine Sergeev algebra.  It is also sometimes called the degenerate affine Hecke--Clifford algebra.  Note that \cite[\S3]{Naz97} uses a slightly different presentation, where $c^2=-1$.  Here we follow the conventions used in \cite[\S14.1]{Kle05}.
\end{eg}

\begin{eg}[Affine zigzag algebra]
  When $F$ is a certain skew-zigzag algebra (see \cite[\S3]{HK01} and \cite[\S5]{Cou16}), the algebras $\cA_n(F)$ appear in the endomorphism algebras of the categories constructed in \cite{CL12} to study Heisenberg categorification and the geometry of Hilbert schemes.  They were then also considered in \cite{KM15}, where they were related to imaginary strata for quiver Hecke algebras (also known as KLR algebras).
  \details{
    The skew-zigzag algebras considered in \cite{CL12} are symmetric (i.e.\ $\psi = \id$) in the super sense, but not in the usual (i.e.\ non-super) sense.  The zigzag algebras appearing in \cite{KM15} (which assumes all algebras are even) differ by some signs from those of \cite{CL12}, and are symmetric in the usual sense.
  }
\end{eg}

\subsection{Automorphisms\label{subsec:automs}}

It is straightforward to verify that we have an algebra automorphism of $\cA_n(F)$ given by
\begin{equation} \label{eq:reverse-automorphism}
  x_i \mapsto x_{n+1-i},\ f_i \mapsto f_{n+1-i},\ s_j \mapsto -s_{n-j},\quad
  1 \le i \le n,\ 1 \le j \le n-1.
\end{equation}
\details{
  Let $\phi$ be the given map.  It suffices to verify that $\phi$ is a homomorphism of algebras, since it squares to the identity.  The only nontrivial relation to verify is \eqref{rel:sx-commutation}.  For $1 \le i \le n-1$, we compute
  \[
    \phi(s_i x_i)
    = -s_{n-i} x_{n+1-i}
    \stackrel{\eqref{rel:sx-commutation2}}{=} - x_{n-i}s_{n-i} - t_{n+1-i,n-i}
    = \phi \left( x_{i+1} s_i - t_{i,i+1} \right).
  \]
}

Any automorphism $\xi \colon F \to F$ of Frobenius algebras induces an algebra automorphism of $\cA_n(F)$ given by
\begin{equation}
  x_i \mapsto x_i,\quad
  \bsf \mapsto \xi^{\otimes n}(\bsf),\quad
  s_j \mapsto s_j,\qquad
  1 \le i \le n,\ \bsf \in F^{\otimes n},\ 1 \le j \le n-1.
\end{equation}
\details{
  Let $\phi$ be the given map.  It suffices to verify that $\phi$ is a homomorphism of algebras, since it is obviously invertible (with inverse given by the automorphism associated to $\xi^{-1}$).  The only nontrivial relation to verify is \eqref{rel:sx-commutation}.  For $1 \le i \le n-1$, we compute
  \[
    \phi(s_i x_i)
    = s_i x_i
    \stackrel{\eqref{rel:sx-commutation}}{=} x_{i+1} s_i - t_{i,i+1}
    = \phi \left( x_{i+1} s_i - t_{i,i+1} \right),
  \]
  where the last equality follows from the fact that $\xi$ fixes $t_{i,j}$ since it preserves the trace map (so $\xi(b^\vee) = \xi(b)^\vee$ for $b \in B$).
}

\begin{lem}
  Suppose $\tau \colon F \to F^\op$ is an isomorphism of Frobenius algebras.  Then
  \begin{gather*}
    \hat \tau \colon \cA_n(F) \to \cA_n(F)^\op,\\
    \hat \tau (x_i) = x_i,\ \hat \tau(\bsf) = \tau^{\otimes n}(\bsf),\ \hat \tau(\pi) = \pi^{-1},\quad
    1 \le i \le n,\ \bsf \in F^{\otimes n},\ \pi \in S_n,
  \end{gather*}
  is an isomorphism of algebras.
\end{lem}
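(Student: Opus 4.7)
The plan is to verify that $\hat\tau$ respects each defining relation from Definition~\ref{def:affine-wreath}, bearing in mind that multiplication in the target satisfies $a \cdot b = (-1)^{\bar a \bar b} ba$. Since $x_i$, $s_i$, and $\pi$ are all even, a relation $uv = wz$ translates, after applying $\hat\tau$, to an equality $\hat\tau(v)\hat\tau(u) = \hat\tau(z)\hat\tau(w)$ back in $\cA_n(F)$, with supercommutation signs appearing only when $F$-valued factors are involved. A preliminary check, using $\tau(fg) = (-1)^{\bar f \bar g}\tau(g)\tau(f)$ slot-by-slot together with the standard supercommutation signs in tensor products, shows that $\tau^{\otimes n}$ is an algebra anti-homomorphism on $F^{\otimes n}$. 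Combined with the identity $\prescript{\pi}{}{\tau^{\otimes n}(\bsf)} = \tau^{\otimes n}(\prescript{\pi}{}{\bsf})$ (which holds because $\tau$ preserves parity, so the $S_n$-action signs are unaffected), this shows $\hat\tau$ restricts to a well-defined algebra anti-homomorphism of $F^{\otimes n} \rtimes_\rho S_n$, which handles relation \eqref{rel:SF-commutation}.

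Among the mixed relations, \eqref{rel:sx-triv-commutation} is immediate. Relation \eqref{rel:xF-commutation}, after applying $\hat\tau$ and rearranging, reduces to the identity $\psi_i^{-1}(\tau^{\otimes n}(\bsf)) = \tau^{\otimes n}(\psi_i(\bsf))$, which is exactly $\tau \circ \psi = \psi^{-1} \circ \tau$ from Lemma~\ref{lem:Frob-antiautom-properties} applied in the $i$-th tensor slot. The main obstacle is relation \eqref{rel:sx-commutation}: applying $\hat\tau$ and then invoking \eqref{rel:sx-commutation2} reduces it to the identity
\[
  \tau^{\otimes n}(t_{i,i+1}) = t_{i+1,i}.
\]
Direct computation yields $\tau^{\otimes n}(t_{i,i+1}) = \sum_{b \in B}\tau(b)_i\,\tau(b^\vee)_{i+1}$. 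For the right-hand side, we use the basis-independence of $t_{i+1,i}$ together with the basis $\{\tau(b^\vee) \mid b \in B\}$, whose left duals are $(-1)^{\bar b}\tau(b)$ by Lemma~\ref{lem:Frob-antiautom-properties}, obtaining $t_{i+1,i} = \sum_{b \in B}(-1)^{\bar b}\tau(b^\vee)_{i+1}\,\tau(b)_i$. Since $\tau(b^\vee)_{i+1}$ and $\tau(b)_i$ lie in different tensor slots, they supercommute in $F^{\otimes n}$ up to the sign $(-1)^{\bar b \cdot \bar b} = (-1)^{\bar b}$, which cancels the leading $(-1)^{\bar b}$ and produces exactly $\sum_{b \in B}\tau(b)_i\,\tau(b^\vee)_{i+1}$, completing the verification.

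Finally, $\hat\tau$ is bijective because the same construction applied to the inverse Frobenius isomorphism $\tau^{-1} \colon F^{\op} \to F$ yields a two-sided inverse on generators, which extends uniquely to an inverse algebra homomorphism. The main subtlety throughout is the careful bookkeeping of the supercommutation signs arising in the tensor product $F^{\otimes n}$, which conspire precisely with the twist $\tau(b^\vee)^\vee = (-1)^{\bar b}\tau(b)$ from Lemma~\ref{lem:Frob-antiautom-properties} to swap $t_{i,i+1}$ and $t_{i+1,i}$.
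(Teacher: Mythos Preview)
Your proof is correct and follows essentially the same approach as the paper's: both verify the defining relations, reduce the key relation \eqref{rel:sx-commutation} via \eqref{rel:sx-commutation2} to the identity $\tau^{\otimes n}(t_{i,i+1}) = t_{i+1,i}$, and establish that identity using the basis $\{\tau(b^\vee) \mid b \in B\}$ together with Lemma~\ref{lem:Frob-antiautom-properties} and a supercommutation of different tensor slots. The paper's writeup is slightly more terse (it simply asserts the restriction of $\hat\tau$ to $\kk[x_1,\dotsc,x_n]$, $F^{\otimes n}$, and $\kk S_n$ is a homomorphism), but the substance is identical.
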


\begin{proof}
  It is clear that $\hat \tau$ is a homomorphism when restricted to the subalgebras $\kk[x_1,\dotsc,x_n]$, $F^{\otimes n}$, and $\kk S_n$.  Using Lemma~\ref{lem:Frob-antiautom-properties}, it is straightforward to verify that it preserves relations \eqref{rel:xF-commutation}, \eqref{rel:sx-triv-commutation}, and \eqref{rel:SF-commutation}.
  \details{
    Suppose $1 \le i \le n$ and $\bsf \in F^{\otimes n}$.  Then
    \begin{gather*}
      \hat \tau (\bsf x_i)
      = \hat \tau (x_i) \hat \tau (\bsf)
      = x_i \tau^{\otimes n}(\bsf)
      = \psi_i^{-1} \left( \tau^{\otimes n}(\bsf) \right) x_i
      = \tau^{\otimes n} (\psi_i(\bsf)) x_i
      = \hat \tau (x_i \psi_i(\bsf)),
    \end{gather*}
    where we used Lemma~\ref{lem:Frob-antiautom-properties} in the fourth equality.  So $\hat \tau$ preserves \eqref{rel:xF-commutation}.

    Now suppose $1 \le i \le n-1$, $1 \le j \le n$, and $j \ne i,i+1$.  Then
    \[
      \hat \tau (s_i x_j)
      = \hat \tau(x_j) \hat \tau(s_i)
      = x_j s_i
      \stackrel{\eqref{rel:sx-triv-commutation}}{=} s_i x_j
      = \hat \tau(s_i) \hat \tau(x_j)
      = \hat \tau (x_j s_i),
    \]
    which proves that $\hat \tau$ preserves \eqref{rel:sx-triv-commutation}.

    If $\pi \in S_n$ and $\bsf \in F^{\otimes n}$, then
    \[
      \hat \tau (\pi \bsf)
      = \hat \tau (\bsf) \hat \tau(\pi)
      = \tau^{\otimes n}(\bsf) \pi^{-1}
      = \pi^{-1} \prescript{\pi}{}{(\tau^{\otimes n}(\bsf))}
      = \pi^{-1} (\tau^{\otimes n}(\prescript{\pi}{}{\bsf}))
      = \hat \tau (\prescript{\pi}{}{\bsf} \pi),
    \]
    which proves that $\hat \tau$ preserves \eqref{rel:SF-commutation}.
  }
  To verify that it preserves \eqref{rel:sx-commutation}, we compute
  \[
    \hat \tau (s_i x_i)
    = \hat \tau(x_i) \hat \tau(s_i)
    = x_i s_i
    \stackrel{\eqref{rel:sx-commutation2}}{=} s_i x_{i+1} - t_{i+1,i}
  \]
  and
  \[
    \hat \tau (x_{i+1} s_i - t_{i,i+1})
    = s_i x_{i+1} - \sum_{b \in B} \tau(b)_i \tau(b^\vee)_{i+1}
    = s_i x_{i+1} - \sum_{c \in B'} (-1)^{\bar c} c^\vee_i c_{i+1}
    = s_i x_{i+1} - t_{i+1,i},
  \]
  where, in the second equality, we introduced the basis $B' = \{\tau(b^\vee) \mid b \in B\}$ of $F$ and used Lemma~\ref{lem:Frob-antiautom-properties}.  It follows that $\hat \tau$ is a homomorphism.  Since it has inverse $\widehat{\tau^{-1}}$, it is an isomorphism.
\end{proof}

\begin{rem}
  In \cite[Lem.~3.11]{KM15}, which assumes that $\psi = \id$ (i.e.\ $F$ is a symmetric algebra) and $F$ is purely even, it is asserted that the map $\hat \tau$ is an isomorphism of algebras for any algebra isomorphism $\tau \colon F \to F^\op$, without the assumption that $\tau$ preserves the trace map.  However, this appears to be false as one can see by considering the example $F = \C[z]/(z^2)$, with trace map $\tr(a+bz) = b$, $a, b \in \C$, and $\tau$ determined by $\tau(z)=2z$.
  \details{
    We have $t_{i,i+1} = t_{i+1,i} = z_i + z_{i+1}$.  Hence
    \[
      \hat \tau(s_i x_i)
      = x_i s_i
      \stackrel{\eqref{rel:sx-commutation2}}{=} s_i x_{i+1} - z_i - z_{i+1},
    \]
    whereas
    \[
      \hat \tau(x_{i+1} s_i - t_{i,i+1})
      = s_i x_{i+1} - 2z_i - 2z_{i+1}.
    \]
    The problem in \cite{KM15} seems to stem from their use of $\varphi \circ \nu = \nu^* \circ \varphi$ in the first line (second equality) after ``Therefore'' in the proof of \cite[Lem.~3.11]{KM15} and their use of $(\varphi^{-1} \otimes \varphi^{-1}) \circ (\nu \otimes \nu)^* = (\nu \otimes \nu) \circ (\varphi^{-1} \otimes \varphi^{-1})$ in the fifth equality there.
  }
\end{rem}

\subsection{Additional relations\label{subsec:additional-relations}}

We now deduce some relations that will be useful in our computations to follow.  For $k \in \N_+$, $1 \le i,j \le n$, $i \ne j$, define
\begin{equation} \label{eq:t^k-def}
  t_{i,j}^{(k)} := \sum_{b \in B} b_i \frac{x_i^k - x_j^k}{x_i-x_j} b_j^\vee.
\end{equation}
(Note that the rational expression is actually a polynomial in $x_i$ and $x_j$.)  Hence $\left| t_{i,j}^{(k)} \right| = k \delta$, $\overline{t_{i,j}^{(k)}} = 0$, and $t_{i,j}^{(1)} = t_{i,j}$.  Note that
\begin{equation} \label{eq:t-symmetry-psi-id}
  t_{i,j}^{(k)} = t_{j,i}^{(k)}, \quad \text{if $\psi=\id$}.
\end{equation}
\details{
  We have
  \[
    t_{i,j}^{(k)}
    = \sum_{b \in B} b_i \frac{x_i^k - x_j^k}{x_i - x_j} b_j^\vee
    \stackrel{\eqref{eq:double-dual}}{=} \sum_{b \in B} (-1)^{\bar b} \left( b^\vee \right)^\vee_i \frac{x_i^k - x_j^k}{x_i - x_j} b_j^\vee
    = \sum_{b \in B} b_j^\vee \frac{x_j^k - x_i^k}{x_j - x_i} \left( b^\vee \right)^\vee_i
    = t_{j,i}^{(k)},
  \]
  where, in the last equality, we used the fact that the definition of $t_{j,i}^{(k)}$ is independent of the basis.
}

\begin{lem} \label{lem:sx-higher-commutation}
  For $k \in \N_+$ and $1 \le i \le n-1$, we have
  \begin{gather}
    s_i x_i^k
    = x_{i+1}^k s_i - t_{i,i+1}^{(k)}, \label{eq:sx^k-commutation1}
    \\
    s_i x_{i+1}^k
    = x_i^k s_i + t_{i+1,i}^{(k)}. \label{eq:sx^k-commutation2}
  \end{gather}
\end{lem}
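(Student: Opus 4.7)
The plan is to prove both identities simultaneously by induction on $k$. The base case $k=1$ is immediate: since $t_{i,i+1}^{(1)} = t_{i,i+1}$ and $t_{i+1,i}^{(1)} = t_{i+1,i}$, the first identity is just the defining relation \eqref{rel:sx-commutation} and the second is \eqref{rel:sx-commutation2}.

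For the inductive step, assume both identities hold for some $k \geq 1$. Write $x_i^{k+1} = x_i \cdot x_i^k$, apply \eqref{rel:sx-commutation} to move $s_i$ past the leading $x_i$, then apply the inductive hypothesis:
\begin{equation*}
  s_i x_i^{k+1}
  = (x_{i+1} s_i - t_{i,i+1}) x_i^k
  = x_{i+1}\bigl(x_{i+1}^k s_i - t_{i,i+1}^{(k)}\bigr) - t_{i,i+1} x_i^k
  = x_{i+1}^{k+1} s_i - x_{i+1} t_{i,i+1}^{(k)} - t_{i,i+1} x_i^k.
\end{equation*}
To close the induction for the first identity, I need the recursion
\begin{equation*}
  t_{i,i+1}^{(k+1)} = x_{i+1}\, t_{i,i+1}^{(k)} + t_{i,i+1}\, x_i^k,
\end{equation*}
which comes from the polynomial identity $\frac{X^{k+1} - Y^{k+1}}{X-Y} = Y \cdot \frac{X^k - Y^k}{X-Y} + X^k$ applied inside the definition \eqref{eq:t^k-def}. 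What makes this legitimate (and is essentially the only point requiring care) is that $x_{i+1}$ can be pulled past every $b_i$, and $x_i^k$ past every $b_{i+1}^\vee$: this is because $\psi_j$ in \eqref{rel:xF-commutation} acts as the identity on tensor factors other than $j$, so $x_{i+1} b_i = b_i x_{i+1}$ and $x_i b_{i+1}^\vee = b_{i+1}^\vee x_i$. No signs appear because each $x_j$ is even.

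The second identity is proved by an entirely parallel induction: start from $s_i x_{i+1}^{k+1} = s_i x_{i+1} \cdot x_{i+1}^k$, apply \eqref{rel:sx-commutation2} and the inductive hypothesis, and use the analogous recursion $t_{i+1,i}^{(k+1)} = x_i\, t_{i+1,i}^{(k)} + t_{i+1,i}\, x_{i+1}^k$, which is verified just as above. The only genuine content of the proof is thus the polynomial recursion for $t^{(k)}_{i,j}$; everything else is formal manipulation inside the defining relations.
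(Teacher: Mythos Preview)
Your proof is correct and takes essentially the same inductive approach as the paper. The only cosmetic difference is that the paper writes $s_i x_i^{k+1} = (s_i x_i^k)\,x_i$ and applies the inductive hypothesis before the base relation, whereas you write $s_i x_i^{k+1} = (s_i x_i)\,x_i^k$ and reverse the order; this leads to a slightly different-looking recursion for $t_{i,i+1}^{(k+1)}$, but the content is the same.
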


\begin{proof}
  This follows from \eqref{rel:sx-commutation} and \eqref{rel:sx-commutation2} by a straightforward induction.
  \details{
    We prove the results by induction on $k$.  For $k=1$, \eqref{eq:sx^k-commutation1} is simply \eqref{rel:sx-commutation}.  Assuming \eqref{eq:sx^k-commutation1} holds for $k$, we compute
    \[
      s_i x_i^{k+1}
      = x_{i+1}^k s_i x_i - \sum_{\ell = 0}^{k-1} x_{i+1}^\ell t_{i,i+1} x_i^{k-\ell}
      \stackrel{\eqref{rel:sx-commutation}}{=} x_{i+1}^{k+1} s_i - \sum_{\ell = 0}^k x_{i+1}^\ell t_{i,i+1} x_i^{k-\ell}
      = x_{i+1}^{k+1} s_i - t_{i,i+1}^{(k+1)}.
    \]
    Similarly, for $k=1$, \eqref{eq:sx^k-commutation2} is simply \eqref{rel:sx-commutation2}.  Assuming \eqref{eq:sx^k-commutation2} holds for $k$, we compute
    \[
      s_i x_{i+1}^{k+1}
      = x_i^k s_i x_{i+1} + \sum_{\ell=0}^{k-1} x_i^\ell t_{i+1,i} x_{i+1}^{k-\ell}
      \stackrel{\eqref{rel:sx-commutation2}}{=} x_i^{k+1} s_i + \sum_{\ell=0}^k x_i^\ell t_{i+1,i} x_{i+1}^{k-\ell}
      = x_i^{k+1} s_i + t_{i+1,i}^{(k+1)}.
    \]
  }
\end{proof}

\begin{lem} \label{lem:Ft-commutation}
  For $1 \le i,j \le n$, $i \ne j$, $k \in \N_+$, and $\bsf \in F^{\otimes n}$, we have
  \begin{equation} \label{eq:Ft-commutation}
    \bsf t_{i,j}^{(k)}
    = t_{i,j}^{(k)} \psi_i^k \left( \prescript{s_{i,j}}{}{\bsf} \right)
    = t_{i,j}^{(k)} \prescript{s_{i,j}}{}{\left( \psi_j^k(\bsf) \right)},
  \end{equation}
  where $s_{i,j} \in S_n$ is the transposition of $i$ and $j$.
\end{lem}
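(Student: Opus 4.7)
\emph{Plan.} The equality of the two right-hand sides is immediate from the observation that, as automorphisms of $F^{\otimes n}$, $\psi_i^k \circ \prescript{s_{i,j}}{}{(\cdot)} = \prescript{s_{i,j}}{}{(\cdot)} \circ \psi_j^k$; indeed, $\prescript{s_{i,j}}{}{(\cdot)}$ exchanges the contributions at positions $i$ and $j$, so applying $\psi^k$ at position $i$ after the swap equals applying $\psi^k$ at position $j$ before the swap. Denote this common algebra automorphism by $\alpha$. To prove $\bsf t_{i,j}^{(k)} = t_{i,j}^{(k)} \alpha(\bsf)$, observe that if the identity holds for $\bsf_1$ and $\bsf_2$ then it holds for $\bsf_1 \bsf_2$ via $\bsf_1 \bsf_2\, t_{i,j}^{(k)} = \bsf_1 t_{i,j}^{(k)} \alpha(\bsf_2) = t_{i,j}^{(k)} \alpha(\bsf_1)\alpha(\bsf_2) = t_{i,j}^{(k)} \alpha(\bsf_1 \bsf_2)$. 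Hence it suffices to check the identity when $\bsf = f_\ell$ for $f \in F$ and $1 \le \ell \le n$.

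The case $\ell \notin \{i,j\}$ is immediate: by \eqref{rel:xF-commutation}, $f_\ell$ commutes with $x_i$ and $x_j$, and it super-commutes with $b_i$ and $b_j^\vee$ with the two Koszul signs canceling (since $\bar b = \overline{b^\vee}$); meanwhile $\alpha(f_\ell) = f_\ell$. For $\ell \in \{i,j\}$, I would expand $\frac{x_i^k - x_j^k}{x_i - x_j} = \sum_{m=0}^{k-1} x_i^{k-1-m} x_j^m$ and use \eqref{rel:xF-commutation} iteratively to move $f_\ell$ past each occurrence of $x_\ell$, picking up a copy of $\psi$ each time. Matching coefficients of each monomial $x_i^{k-1-m} x_j^m$ on both sides then reduces the claim, for each $m$, to an identity in $F \otimes F$ (viewed at positions $i$ and $j$).

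For $\ell = i$, this identity, after applying $\psi^{-(k-1-m)} \otimes \id$ to one tensor factor, takes the Casimir-type form $\sum_b fb \otimes b^\vee = \sum_b b \otimes b^\vee f$, which follows directly from \eqref{eq:f-in-basis} and \eqref{eq:f-in-dual-basis}. The case $\ell = j$ is the main obstacle: commuting $f_j$ past $b_i$ introduces a Koszul sign $(-1)^{\bar f \bar b}$, and the reduction yields the signed variant $\sum_b (-1)^{\bar f \bar b}\, b \otimes h b^\vee = \sum_b (-1)^{\bar f \bar b}\, b \psi(h) \otimes b^\vee$ in $F \otimes F$ (with $h = \psi^m(f)$). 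I would verify this by pairing the second tensor factor against an arbitrary test element $u \in F$ via $\tr$; applying the Nakayama property $\tr(hg) = (-1)^{\bar h \bar g}\tr(g\psi(h))$ together with \eqref{eq:f-in-basis}, both sides evaluate to $(-1)^{\bar f \bar u}\, u\psi(h)$, and the identity then follows from nondegeneracy of $\tr$. The careful bookkeeping of Koszul signs in this final case is the principal technical difficulty.
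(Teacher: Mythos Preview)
Your proof is correct and follows the same overall architecture as the paper's: both reduce to $\bsf = f_\ell$ and handle the three cases $\ell \notin \{i,j\}$, $\ell = i$, $\ell = j$ separately. The $\ell \notin \{i,j\}$ case and the $\ell = i$ case (the Casimir identity $\sum_b fb \otimes b^\vee = \sum_b b \otimes b^\vee f$) are essentially identical to the paper's treatment.

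The genuine difference is in the $\ell = j$ case. The paper invokes the standing assumption that $\psi$ acts diagonalizably on $F$, reduces to an eigenvector $\psi(f) = \omega f$, and then exploits the neat rewriting $f_j \frac{x_i^k - x_j^k}{x_i - x_j} = \frac{x_i^k - (\omega x_j)^k}{x_i - \omega x_j} f_j$ to push $f_j$ through the rational expression in one step. Your argument instead expands $\sum_{m} x_i^{k-1-m} x_j^m$, strips off the monomial in each term, and reduces (after applying $\psi^{-(k-1-m)} \otimes \id$) to the single signed Casimir identity $\sum_b (-1)^{\bar h \bar b}\, b \otimes h b^\vee = \sum_b (-1)^{\bar h \bar b}\, b \psi(h) \otimes b^\vee$, which you verify via the Nakayama relation and nondegeneracy of $\tr$. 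This is correct, and in fact it does \emph{not} use the diagonalizability hypothesis on $\psi$ at all, so your argument is strictly more general on this point. The tradeoff is that your route requires the term-by-term expansion and careful sign bookkeeping, whereas the paper's eigenvector trick handles the whole polynomial at once.
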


\begin{proof}
  It suffices to prove that, for $1 \le i,j \le n$, $i \ne j$, $k \in \N_+$, and $f \in F$, we have
  \begin{equation} \label{eq:Ft-commutation-ne}
    f_\ell t_{i,j} = t_{i,j} f_\ell, \quad \ell \ne i,j,
  \end{equation}
  \begin{equation}
    f_i t_{i,j}^{(k)} = t_{i,j}^{(k)} f_j, \label{eq:ftij-commutation1} \\
  \end{equation}
  \begin{equation}
    f_j t_{i,j}^{(k)} = t_{i,j}^{(k)} \psi^k(f)_i, \label{eq:ftij-commutation2}
  \end{equation}
  Relation \eqref{eq:Ft-commutation-ne} is immediate.  For \eqref{eq:ftij-commutation1}, we compute
  \[
    f_i t_{i,j}^{(k)}
    = f_i \sum_{b \in B} b_i \frac{x_i^k - x_j^k}{x_i - x_j} b_j^\vee
    \stackrel{\eqref{eq:f-in-basis}}{=} \sum_{b,c \in B} c_i \tr(c^\vee f b) \frac{x_i^k - x_j^k}{x_i - x_j} b_j^\vee
    \stackrel{\eqref{eq:f-in-dual-basis}}{=} \sum_{c \in B} c_i \frac{x_i^k - x_j^k}{x_i - x_j} c_j^\vee f_j
    = t_{i,j}^{(k)} f_j.
  \]
  Recall that $F$ has a basis consisting of eigenvectors for $\psi$ (see Section~\ref{subsec:Frobenius-algebras}).  Thus, to prove \eqref{eq:ftij-commutation2}, it suffices to consider $f \in F$ satisfying $\psi(f) = \omega f$, where $\omega$ is a $\theta$-th root of unity.  Then
  \begin{multline*}
    f_j t_{i,j}^{(k)}
    = f_j \sum_{b \in B} b_i \frac{x_i^k - x_j^k}{x_i - x_j} b_j^\vee
    \stackrel{\eqref{eq:f-in-dual-basis}}{=} \sum_{b,c \in B} (-1)^{\bar b \bar f} b_i \frac{x_i^k - \omega^k x_j^k}{x_i - \omega x_j} \tr(f b^\vee c) c_j^\vee
    \\
    = \sum_{b,c \in B} (-1)^{\bar c \bar f} b_i \frac{x_i^k - \omega^k x_j^k}{x_i - \omega x_j} \omega \tr(b^\vee c f) c_j^\vee
    \stackrel{\eqref{eq:f-in-basis}}{=} \sum_{c \in B} (-1)^{\bar c \bar f} c_i f_i \frac{x_i^k - \omega^k x_j^k}{x_i - \omega x_j} \omega c_j^\vee
    \\
    = \sum_{c \in B} c_i \frac{\omega^k x_i^k - \omega^k x_j^k}{\omega x_i - \omega x_j} \omega c_j^\vee f_i
    = \sum_{c \in B} c_i \frac{x_i^k - x_j^k}{x_i - x_j} c_j^\vee \omega^k f_i
    = t_{i,j}^{(k)} \psi^k(f)_i. \qedhere
  \end{multline*}
\end{proof}

\begin{lem}
  For $1 \le i,j \le n$, $i \ne j$, we have
  \begin{equation} \label{eq:psi-tij-reverse}
    \psi_j(t_{i,j}) = t_{j,i}.
  \end{equation}
\end{lem}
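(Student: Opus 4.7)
The plan is to exploit the basis-independence of $t_{i,j}$ (noted immediately after Definition~\ref{def:affine-wreath}) together with the double-dual formula \eqref{eq:double-dual}. The idea is that $\psi_j(t_{i,j})$ looks almost like $t_{j,i}$ up to transposing the two tensor slots, and the twist by $\psi$ is precisely what is needed to compensate for the change of basis from $B$ to $B^\vee$.

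First, I would rewrite $t_{i,j}$ using the basis $B^\vee = \{b^\vee \mid b \in B\}$ in place of $B$. By \eqref{eq:double-dual}, the left dual of this basis is $\{(-1)^{\bar b}\psi^{-1}(b) \mid b \in B\}$. Applying the definition of $t_{i,j}$ with this new basis gives
\[
  t_{i,j} \;=\; \sum_{b \in B}(-1)^{\bar b}\, b_i^\vee\, \psi^{-1}(b)_j.
\]

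Next, the key calculational step: graded-commute the two factors in each summand. Since $i \ne j$, the elements $b_i^\vee$ and $\psi^{-1}(b)_j$ lie in disjoint tensor slots of $F^{\otimes n}$, and both have parity $\bar b$ (because $\tr$ is parity-preserving and $\psi$ is even). Swapping them contributes a sign $(-1)^{\bar b^2} = (-1)^{\bar b}$, which cancels the leading $(-1)^{\bar b}$, so
\[
  t_{i,j} \;=\; \sum_{b \in B}\psi^{-1}(b)_j\, b_i^\vee.
\]

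Finally, apply the algebra automorphism $\psi_j$ (which acts trivially on position $i$) to both sides. Each summand becomes $\psi_j(\psi^{-1}(b)_j)\cdot \psi_j(b_i^\vee) = b_j\, b_i^\vee$, yielding $\psi_j(t_{i,j}) = \sum_b b_j b_i^\vee = t_{j,i}$, as required. The only delicate point is tracking the sign bookkeeping in $F^{\otimes n}$, but the two sign contributions have the same parity $\bar b$ and cancel, so no real obstacle arises.
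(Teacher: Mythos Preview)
Your proof is correct and is essentially the same as the paper's: both arguments use the double-dual formula \eqref{eq:double-dual}, the basis-independence of $t_{i,j}$, and the sign from commuting factors in distinct tensor slots, with the only difference being that the paper runs the computation backward by showing $\psi_j^{-1}(t_{j,i}) = t_{i,j}$.
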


\begin{proof}
  We have
  \[
    \psi_j^{-1}(t_{j,i})
    = \sum_{b \in B} \psi^{-1}(b)_j b_i^\vee
    = \sum_{b \in B} (-1)^{\bar b} b_i^\vee \psi^{-1}(b)_j
    \stackrel{\eqref{eq:double-dual}}{=} \sum_{b \in B} b^\vee_i (b^\vee)^\vee_j
    = t_{i,j},
  \]
  where we have used the fact that the parities of $b$ and $b^\vee$ are equal (since the trace map is even) in the second equality, and used the fact that the definition of $t_{i,j}$ is independent of the basis in the final equality (where we are summing over the basis $\{b^\vee \mid b \in B\}$).
\end{proof}

\begin{lem}
  For $1 \le i,j \le n$, $i \ne j$, $k \in \N_+$, $\pi \in S_n$, and $p \in \kk[x_1^\theta,\dotsc,x_n^\theta]$, we have
  \begin{equation} \label{eq:xt-commutation-far}
    x_\ell t_{i,j}^{(k)} = t_{i,j}^{(k)} x_\ell,\quad \ell \ne i,j,
  \end{equation}
  \begin{equation} \label{eq:xt-commutation-R}
    x_i t_{i,j} = t_{j,i} x_i,
  \end{equation}
  \begin{equation} \label{eq:pf-commutation}
    p t_{i,j} = t_{i,j} p,
  \end{equation}
  \begin{equation} \label{eq:tij-conjugation}
    \pi t_{i,j} = t_{\pi(i), \pi(j)} \pi,
  \end{equation}
  \begin{equation} \label{eq:tii+1-conjugation}
    s_i t_{i,i+1}^{(k)} = t_{i+1,i}^{(k)} s_i.
  \end{equation}
\end{lem}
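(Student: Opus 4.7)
The plan is to verify each of the five relations separately, using the defining relations of $\cA_n(F)$ together with the duality properties of Section~\ref{subsec:Frobenius-algebras}. Two are essentially immediate: for \eqref{eq:xt-commutation-far}, since $\ell \ne i,j$ the automorphism $\psi_\ell$ fixes $b_i$ and $b_j^\vee$, so \eqref{rel:xF-commutation} yields termwise commutation, and the polynomial factor $(x_i^k-x_j^k)/(x_i-x_j)$ lies in the commutative subalgebra $\kk[x_1,\dotsc,x_n]$; and for \eqref{eq:tij-conjugation}, \eqref{rel:SF-commutation} applied to each summand of $\sum_b b_i b_j^\vee$ gives $\prescript{\pi}{}{(b_i b_j^\vee)} = b_{\pi(i)} b_{\pi(j)}^\vee$, since the superpermutation signs vanish as the nontrivial entries $b$ and $b^\vee$ are moved past identity tensor factors.

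For \eqref{eq:xt-commutation-R}, I would push $x_i$ to the right through $t_{i,j} = \sum_{b \in B} b_i b_j^\vee$ using \eqref{rel:xF-commutation} (which gives $x_i b_i = \psi^{-1}(b)_i x_i$) together with the fact that $x_i$ commutes with $b_j^\vee$ when $j \ne i$, producing $\sum_b \psi^{-1}(b)_i b_j^\vee x_i$. To identify this sum with $t_{j,i}$, I would re-express $t_{j,i}$ using the alternative basis $\{b^\vee \mid b \in B\}$ of $F$, whose left dual is $\{(-1)^{\bar b} \psi^{-1}(b) \mid b \in B\}$ by \eqref{eq:double-dual}, then absorb the resulting sign via the supercommutation $b_j^\vee \psi^{-1}(b)_i = (-1)^{\bar b} \psi^{-1}(b)_i b_j^\vee$. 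Relation \eqref{eq:tii+1-conjugation} admits an even shorter derivation from Lemma~\ref{lem:sx-higher-commutation}: rewriting \eqref{eq:sx^k-commutation1} and \eqref{eq:sx^k-commutation2} as $t_{i,i+1}^{(k)} = x_{i+1}^k s_i - s_i x_i^k$ and $t_{i+1,i}^{(k)} = s_i x_{i+1}^k - x_i^k s_i$, both $s_i t_{i,i+1}^{(k)}$ and $t_{i+1,i}^{(k)} s_i$ immediately collapse to $s_i x_{i+1}^k s_i - x_i^k$ upon using $s_i^2 = 1$.

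The main obstacle is \eqref{eq:pf-commutation}, which is the only relation that depends essentially on the assumption that the Nakayama automorphism $\psi$ has finite order $\theta$ and is diagonalizable. Since $\kk[x_1^\theta,\dotsc,x_n^\theta]$ is commutative, I would reduce to checking that each generator $x_\ell^\theta$ commutes with $t_{i,j}$. For $\ell \ne i,j$ this is \eqref{eq:xt-commutation-far}. For $\ell = i$, I would fix an eigenbasis $B$ of $F$ with $\psi(b) = \omega_b b$, so that $\omega_b^\theta = 1$; then \eqref{rel:xF-commutation} gives $x_i b_i = \omega_b^{-1} b_i x_i$, iterating to $x_i^\theta b_i = \omega_b^{-\theta} b_i x_i^\theta = b_i x_i^\theta$, while $x_i$ already commutes with $b_j^\vee$ since $j \ne i$, whence $x_i^\theta t_{i,j} = t_{i,j} x_i^\theta$ by basis independence of $t_{i,j}$. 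The case $\ell = j$ is symmetric, using that $\psi$ commutes with the duality operation so that $\psi(b^\vee) = \omega_b^{-1} b^\vee$.
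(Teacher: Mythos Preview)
Your proof is correct and matches the paper's approach for all five relations, with one minor inefficiency worth noting. For \eqref{eq:pf-commutation} you invoke the diagonalizability of $\psi$ and work with an eigenbasis, but this is unnecessary: relation \eqref{rel:xF-commutation} gives $\bsf x_\ell^\theta = x_\ell^\theta \psi_\ell^\theta(\bsf) = x_\ell^\theta \bsf$ for every $\bsf \in F^{\otimes n}$ directly from $\psi^\theta = \id$, so each $x_\ell^\theta$ already commutes with $t_{i,j} \in F^{\otimes n}$ without any spectral argument. Your treatment of \eqref{eq:xt-commutation-R} simply inlines the computation the paper isolates as \eqref{eq:psi-tij-reverse}, and your derivation of \eqref{eq:tii+1-conjugation} is a rephrasing of the paper's (which multiplies \eqref{eq:sx^k-commutation1} on the left by $s_i$, \eqref{eq:sx^k-commutation2} on the right by $s_i$, and adds).
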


\begin{proof}
  Relation~\eqref{eq:xt-commutation-far} follows immediately from \eqref{rel:xF-commutation}.  For relation \eqref{eq:xt-commutation-R}, we have
  \[
    x_i t_{i,j}
    \stackrel{\eqref{rel:xF-commutation}}{=} \psi_i^{-1}(t_{i,j}) x_i
    \stackrel{\eqref{eq:psi-tij-reverse}}{=} t_{j,i} x_i.
  \]
  Relation \eqref{eq:pf-commutation} follows immediately from \eqref{rel:xF-commutation} and the fact that $\theta$ is the order of the Nakayama automorphism $\psi$.  Relation~\eqref{eq:tij-conjugation} follows immediately from \eqref{rel:SF-commutation}.  Finally, \eqref{eq:tii+1-conjugation} follows from multiplying \eqref{eq:sx^k-commutation1} on the left by $s_i$, multiplying  \eqref{eq:sx^k-commutation2} on the right by $s_i$, and adding the resulting two equations.
\end{proof}

%
\section{Structure theory\label{sec:structure-theory}}
%

In this section we examine the structure theory of affine wreath product algebras.  In particular, we give an explicit basis, describe the center, define Jucys--Murphy elements, give a Mackey Theorem, and define intertwining elements.  In this section $\kk$ is an arbitrary commutative ring of characteristic not equal to $2$.

\subsection{Deformed divided difference operators\label{subsec:divided-diff}}

Let $P_n = \kk[x_1,\dotsc,x_n]$,\label{P_n-def} and let $P_n(F)$ be the graded superalgebra such that
\[
  P_n(F) = P_n \otimes F^{\otimes n}
\]
as a $\kk$-module, where the two factors are subalgebras, and where we impose the relations \eqref{rel:xF-commutation}.  Equivalently, $P_n(F)$ is the free product of $\kk$-algebras $P_n \star F^{\otimes n}$ modulo the relations \eqref{rel:xF-commutation}.  We also have a natural isomorphism of algebras
\begin{equation} \label{eq:P_n(F)-def}
  P_n(F) \simeq (\kk[x] \ltimes F)^{\otimes n},
\end{equation}
and we will often identify the two.  The parity and degrees of the $x_i$ are given by \eqref{eq:x-degrees}.  We define the \emph{polynomial degree} of an element of $P_n(F)$ to be its total degree as a polynomial in the $x_i$.

For $1 \le i \le n-1$, we define a skew derivation $\Delta_i \colon P_n(F) \to P_n(F)$ inductively as follows.  First, we define $\Delta_i(F^{\otimes n})=0$ and, on elements of $P_n$ of polynomial degree one, we define
\begin{equation} \label{eq:Delta_i-degree-one-def}
  \Delta_i(x_i) = t_{i,i+1},\quad
  \Delta_i(x_{i+1}) = - t_{i+1,i},\quad
  \Delta_i(x_j) = 0,\quad j \ne i,i+1,
\end{equation}
and extend $\kk$-linearly.  Then we extend $\Delta_i$ to all of $P_n(F)$ by requiring that
\[
  \Delta_i(a_1 a_2)
  = \Delta_i(a_1) a_2 + \prescript{s_i}{}{a_1} \Delta_i(a_2),\quad a_1,a_2 \in P_n(F).
\]
Note, in particular, that
\begin{equation} \label{eq:Deltai-F-skew-linear}
  \Delta_i (\bsf a)
  = \prescript{s_i}{}{\bsf} \Delta_i(a),\qquad
  \bsf \in F^{\otimes n},\ a \in P_n(F).
\end{equation}

\begin{lem} \label{lem:s_i-commutation-derivation}
  For all $a \in P_n(F)$ and $1 \le i \le n-1$, in $\cA_n(F)$ we have
  \[
    s_i a = \prescript{s_i}{}{a} s_i - \Delta_i(a).
  \]
\end{lem}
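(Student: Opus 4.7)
The plan is to prove the identity by induction on the polynomial degree of $a \in P_n(F)$, which is equivalent to verifying it on the generating set $F^{\otimes n} \cup \{x_1,\dotsc,x_n\}$ and then propagating the result multiplicatively. Both sides of the claimed equation are $\kk$-linear in $a$, and the right-hand side has exactly the Leibniz-type behavior under products that will match up with the left-hand side once we apply the defining recursion for $\Delta_i$.

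First, I would dispose of the base cases. For $a = \bsf \in F^{\otimes n}$ the relation \eqref{rel:SF-commutation} gives $s_i \bsf = \prescript{s_i}{}{\bsf} s_i$, and $\Delta_i(\bsf) = 0$ by definition, so the identity holds. For polynomial degree one it suffices by linearity over $F^{\otimes n}$ (together with \eqref{eq:Deltai-F-skew-linear}) to check $a = x_j$. When $j \neq i,i+1$, relation \eqref{rel:sx-triv-commutation} gives $s_i x_j = x_j s_i$, $\prescript{s_i}{}{x_j} = x_j$, and $\Delta_i(x_j) = 0$, so all three terms match. When $j = i$ the identity is exactly \eqref{rel:sx-commutation}, combined with $\prescript{s_i}{}{x_i} = x_{i+1}$ and $\Delta_i(x_i) = t_{i,i+1}$ from \eqref{eq:Delta_i-degree-one-def}; the case $j = i+1$ is likewise exactly \eqref{rel:sx-commutation2} with $\Delta_i(x_{i+1}) = -t_{i+1,i}$.

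For the inductive step, suppose the identity holds for $a_1, a_2 \in P_n(F)$. Then, applying the inductive hypothesis to $a_1$, multiplying on the right by $a_2$, and then applying it to $a_2$ after moving $\prescript{s_i}{}{a_1}$ past $s_i$, I compute
\[
s_i(a_1 a_2) = \prescript{s_i}{}{a_1}\, s_i a_2 - \Delta_i(a_1) a_2 = \prescript{s_i}{}{a_1}\,\prescript{s_i}{}{a_2}\, s_i - \prescript{s_i}{}{a_1}\Delta_i(a_2) - \Delta_i(a_1) a_2.
\]
Because the $S_n$-action on $P_n(F) \simeq (\kk[x] \ltimes F)^{\otimes n}$ is by algebra automorphisms (the superpermutation action on $F^{\otimes n}$ already incorporates the requisite internal signs, and $s_i$ permutes the $x_j$), we have $\prescript{s_i}{}{a_1}\,\prescript{s_i}{}{a_2} = \prescript{s_i}{}{(a_1 a_2)}$, and the two remaining terms collapse to $-\Delta_i(a_1 a_2)$ by the very Leibniz rule used to define $\Delta_i$.

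There is no serious obstacle: the lemma is essentially the assertion that the skew derivation $\Delta_i$ was \emph{defined} to book-keep exactly the error term that arises when moving $s_i$ past a product. The only subtle point worth flagging is that no parity signs intrude when commuting the even element $s_i$ past $a_1$ or $a_2$, so the inductive computation above does not acquire any $(-1)^{\bar a_1 \bar a_2}$ factor and everything matches up on the nose.
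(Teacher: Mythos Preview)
Your proof is correct and follows essentially the same approach as the paper: verify the identity on generators (polynomial degree at most one) using the defining relations \eqref{rel:xF-commutation}--\eqref{rel:SF-commutation} and \eqref{rel:sx-commutation2}, then propagate to products via the skew-Leibniz rule defining $\Delta_i$. The paper's version is terser, but the inductive computation is line-for-line the same as yours.
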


\begin{proof}
  The result for $a$ of polynomial degree zero and one follows immediately from \eqref{rel:sx-triv-commutation}--\eqref{rel:SF-commutation} and \eqref{rel:sx-commutation2}.  Suppose it holds for $a_1, a_2 \in P_n(F)$.  Then
  \[
    s_i(a_1 a_2)
    = \prescript{s_i}{}{a_1} s_i a_2 - \Delta_i(a_1) a_2
    = \prescript{s_i}{}{a_1} \prescript{s_i}{}{a_2} - \prescript{s_i}{}{a_1} \Delta_i(a_2) - \Delta_i(a_1) a_2
    = \prescript{s_i}{}{(a_1 a_2)} - \Delta_i(a_1a_2),
  \]
  and hence the result follows by induction.
\end{proof}

The operators $\Delta_i$ can be thought of as $F$-deformations of divided difference operators.  In particular, it follows from Lemmas~\ref{lem:s_i-commutation-derivation} and~\ref{lem:sx-higher-commutation} that, for $1 \le i \le n-1$ and $k \in \N_+$, we have
\begin{gather}
  \Delta_i \left( x_i^k \right)
  = t_{i,i+1}^{(k)}
  = \sum_{b \in B} b_i \frac{x_i^k - x_{i+1}^k}{x_i - x_{i+1}} b_{i+1}^\vee
  = \sum_{b \in B} b_i \partial_i \left( x_i^k \right) b_{i+1}^\vee, \label{eq:Delta-i}
  \\
  \Delta_i \left( x_{i+1}^k \right)
  = - t_{i+1,i}^{(k)}
  = \sum_{b \in B} b_{i+1} \frac{x_{i+1}^k-x_i^k}{x_i - x_{i+1}} b_i^\vee
  = \sum_{b \in B} b_{i+1} \partial_i \left( x_{i+1}^k \right) b_i^\vee, \label{eq:Delta-i+1}
\end{gather}
where
\begin{equation} \label{eq:usual-divided-diff}
  \partial_i(p) = \frac{p - \prescript{s_i}{}{p}}{x_i-x_{i+1}},\quad p \in P_n,
\end{equation}
is the usual divided difference operator.  In addition, we have the following result.

\begin{prop}
  We have
  \begin{align}
    \Delta_i \left( \prescript{s_j}{}{a} \right) &= \prescript{s_j}{}{\Delta_i(a)}, & 1 \le i, j \le n-1,\ |i-j| > 1,\ a \in P_n(F), \label{eq:Delta-s-triv-commutation} \\
    \Delta_i \left( \prescript{s_i}{}{a} \right) &= - \prescript{s_i}{}{\Delta_i(a)}, & 1 \le i \le n-1,\ a \in P_n(F), \label{eq:Delta-s-commutation} \\
    \Delta_i \Delta_j &= \Delta_j \Delta_i, & 1 \le i,j \le n-1,\ |i-j| > 1, \label{eq:Delta-Delta-triv-commutation} \\
    \Delta_i^2 &= 0, & 1 \le i \le n-1. \label{eq:Delta-square-zero}
  \end{align}
\end{prop}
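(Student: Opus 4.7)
I would prove the four relations in the order listed, in each case by first checking the identity on the generators of $P_n(F)$ (the elements of $F^{\otimes n}$ and the variables $x_1,\dots,x_n$) and then extending it to all of $P_n(F)$ by induction, using the skew-derivation property of $\Delta_i$. This is legitimate because $P_n(F)$ is generated as a $\kk$-algebra by $F^{\otimes n}$ and $x_1,\dots,x_n$, and because the superpermutation action extends naturally from $F^{\otimes n}$ to $P_n(F) \cong (\kk[x] \ltimes F)^{\otimes n}$; compatibility with \eqref{rel:xF-commutation} amounts to the identity $\pi \circ \psi_i = \psi_{\pi(i)} \circ \pi$ of operators on $F^{\otimes n}$. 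The key base-case identity, used throughout, is $\prescript{\pi}{}{t_{i,j}} = t_{\pi(i),\pi(j)}$, which is precisely the content of \eqref{eq:tij-conjugation} and follows directly from the defining sum $t_{i,j} = \sum_b b_i b_j^\vee$ by permuting tensor factors.

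For \eqref{eq:Delta-s-triv-commutation} and \eqref{eq:Delta-s-commutation}, the base case reduces to comparing $\Delta_i(x_{s_j(k)})$ with $\prescript{s_j}{}{\Delta_i(x_k)}$ (respectively $\prescript{s_i}{}{\Delta_i(x_k)}$) for the few values of $k$ where one side is nonzero; for \eqref{eq:Delta-s-triv-commutation} the only interesting cases are $k\in\{i,i+1\}$, where $s_j$ fixes both indices and we use $\prescript{s_j}{}{t_{i,i+1}} = t_{i,i+1}$, while for \eqref{eq:Delta-s-commutation} the interesting cases again are $k\in\{i,i+1\}$ and we use $\prescript{s_i}{}{t_{i,i+1}} = t_{i+1,i}$. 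The inductive step is clean because both sides of each relation satisfy the same skew-derivation recurrence $\Phi(a_1 a_2) = \Phi(a_1)\, \prescript{s_j}{}{a_2} + \prescript{s_is_j}{}{a_1}\, \Phi(a_2)$ (in the case of \eqref{eq:Delta-s-triv-commutation}), where matching the ``transport'' factors requires $s_is_j=s_js_i$, and analogously for \eqref{eq:Delta-s-commutation} using $s_i^2 = 1$.

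For \eqref{eq:Delta-Delta-triv-commutation}, the base case is immediate because $\Delta_j(x_k) \in F^{\otimes n}$ and $\Delta_i$ annihilates $F^{\otimes n}$, so both compositions vanish on generators. Expanding $\Delta_i\Delta_j(a_1 a_2)$ via the skew-derivation rule twice produces four terms; the previously-established \eqref{eq:Delta-s-triv-commutation} lets me rewrite the cross-term $\Delta_i(\prescript{s_j}{}{a_1})$ as $\prescript{s_j}{}{\Delta_i(a_1)}$, and then $s_is_j=s_js_i$ matches this expansion with that of $\Delta_j\Delta_i(a_1 a_2)$ term by term. Relation \eqref{eq:Delta-square-zero} proceeds analogously: on generators $\Delta_i^2$ vanishes, and the cross-terms in $\Delta_i^2(a_1 a_2)$ reduce to $\bigl(\prescript{s_i}{}{\Delta_i(a_1)} + \Delta_i(\prescript{s_i}{}{a_1})\bigr)\Delta_i(a_2)$, which cancel thanks to the sign supplied by \eqref{eq:Delta-s-commutation}.

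The main obstacle is the bookkeeping of signs from the superpermutation action in the base-case calculations---for instance, verifying $\prescript{s_i}{}{t_{i,i+1}} = t_{i+1,i}$ directly from the defining sum over a basis of $F$ containing elements of varying parity. Once these base-case identities are pinned down, the inductive compatibilities with the skew-derivation rule are routine, and the four proofs follow the same template.
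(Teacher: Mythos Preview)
Your proposal is correct and follows essentially the same approach as the paper: induction on polynomial degree (equivalently, on the generators $F^{\otimes n}$ and $x_1,\dots,x_n$), with the base case reducing to the identities $\prescript{s_j}{}{t_{i,i+1}} = t_{i,i+1}$ for $|i-j|>1$ and $\prescript{s_i}{}{t_{i,i+1}} = t_{i+1,i}$, and the inductive step unwinding the skew-derivation rule. The paper's proof of \eqref{eq:Delta-Delta-triv-commutation} and \eqref{eq:Delta-square-zero} likewise expands $\Delta_i\Delta_j(a_1a_2)$ (resp.\ $\Delta_i^2(a_1a_2)$) into four terms and matches them using \eqref{eq:Delta-s-triv-commutation} (resp.\ cancels the cross-terms using \eqref{eq:Delta-s-commutation}), exactly as you describe.
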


\begin{proof}
  We prove \eqref{eq:Delta-s-triv-commutation} and \eqref{eq:Delta-s-commutation} by induction on the polynomial degree of $a$.  The results for $a$ of polynomial degree less than or equal to one follow immediately from the definition of $\Delta_i$.  Assume the results hold for elements of polynomial degree less than or equal to $k$, and let $a_1, a_2 \in P_n(F)$ have polynomial degree less than or equal to $k$.  Then, if $|i-j|>1$,
  \begin{multline*}
    \prescript{s_j}{}{\Delta_i(a_1 a_2)}
    = \prescript{s_j}{}{\big( \Delta_i(a_1) a_2 + \prescript{s_i}{}{a_1} \Delta_i(a_2) \big)}
    = \prescript{s_j}{}{\Delta_i(a_1)} \prescript{s_j}{}{a_2} + \prescript{s_j s_i}{}{a_1} \prescript{s_j}{}{\Delta_i(a_2)}
    \\
    = \Delta_i \left( \prescript{s_j}{}{a_1} \right) \prescript{s_j}{}{a_2} + \prescript{s_i s_j}{}{a_1} \Delta_i \left(  \prescript{s_j}{}{a_2} \right)
    = \Delta_i \left( \prescript{s_j}{}{(a_1 a_2)} \right),
  \end{multline*}
  completing the inductive step for \eqref{eq:Delta-s-triv-commutation}.  The proof of the inductive step for \eqref{eq:Delta-s-commutation} is similar.
  \details{
    We have
    \begin{multline*}
      \prescript{s_i}{}{\Delta_i (a_1 a_2)}
      = \prescript{s_i}{}{\big( \Delta_i(a_1) a_2 + \prescript{s_i}{}{a_1} \Delta_i(a_2) \big)}
      = \prescript{s_i}{}{\Delta_i(a_1)} \prescript{s_i}{}{a_2} + a_1 \prescript{s_i}{}{\Delta_i(a_2)}
      \\
      = - \Delta_i \left( \prescript{s_i}{}{a_1} \right) \prescript{s_i}{}{a_2} - a_1 \Delta_i \left( \prescript{s_i}{}{a_2} \right)
      = - \Delta_i \left( \prescript{s_i}{}{a_1} \prescript{s_i}{}{a_2} \right)
      = - \Delta_i \left( \prescript{s_i}{}{(a_1 a_2)} \right).
    \end{multline*}
  }

  We prove \eqref{eq:Delta-Delta-triv-commutation} again by induction on the polynomial degree of $a$.  Suppose $|i-j|>1$.  For $a$ of polynomial degree less than or equal to one, we have $\Delta_i \Delta_j(a) = 0 = \Delta_j \Delta_i(a)$.  Assume the result holds for elements of polynomial degree less than or equal to $k$, and let $a_1, a_2 \in P_n(F)$ have polynomial degree less than or equal to $k$.  Then
  \begin{align*}
    \Delta_i \Delta_j (a_1 a_2)
    &= \Delta_i \left( \Delta_j(a_1) a_2 + \prescript{s_j}{}{a_1} \Delta_j(a_2) \right)
    \\
    &= \Delta_i (\Delta_j(a_1)) a_2 + \prescript{s_i}{}{\Delta_j(a_1)} \Delta_i(a_2) + \Delta_i \left( \prescript{s_j}{}{a_1} \right) \Delta_j(a_2) + \prescript{s_i s_j}{}{a_1} \Delta_i \Delta_j(a_2)
    \\
    &\stackrel{\mathclap{\eqref{eq:Delta-s-triv-commutation}}}{=}\ \Delta_j (\Delta_i(a_1)) a_2 + \Delta_j \left( \prescript{s_i}{}{a_1} \right) \Delta_i(a_2) + \prescript{s_j}{}{\Delta_i (a_1)} \Delta_j(a_2) + \prescript{s_j s_i}{}{a_1} \Delta_j \Delta_i(a_2)
    \\
    &= \Delta_j \left( \Delta_i(a_1) a_2 + \prescript{s_i}{}{a_1} \Delta_i(a_2) \right)
    \\
    &= \Delta_j \Delta_i (a_1 a_2).
  \end{align*}

  To prove \eqref{eq:Delta-square-zero}, note that $\Delta_i^2(a) = 0$ for $a$ of polynomial degree less than or equal to one.  Assume the result holds for elements of polynomial degree less than or equal to $k$, and let $a_1, a_2 \in P_n(F)$ have polynomial degree less than or equal to $k$.  Then
  \begin{multline*}
    \Delta_i^2(a_1 a_2)
    = \Delta_i \big( \Delta_i(a_1) a_2 + \prescript{s_i}{}{a_1} \Delta_i(a_2) \big)
    \\
    = \Delta_i^2 (a_1) a_2 + \prescript{s_i}{}{\Delta_i(a_1)} \Delta_i(a_2) + \Delta_i \left( \prescript{s_i}{}{a_1} \right) \Delta_i(a_2) + a_1 \Delta_i^2(a_2)
    \stackrel{\eqref{eq:Delta-s-commutation}}{=} 0. \qedhere
  \end{multline*}
\end{proof}

\begin{lem}
  If $\psi = \id$ (i.e.\ the Frobenius algebra $F$ is symmetric), then
  \[
    \Delta_i(\bsf p)
    = \prescript{s_i}{}{\bsf}\, t_{i,i+1} \partial_i(p), \quad
    \bsf \in F^{\otimes n},\ p \in P_n,
  \]
  where $\partial_i$ is the usual divided difference operator of \eqref{eq:usual-divided-diff}.
\end{lem}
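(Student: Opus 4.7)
The plan is to reduce the statement to one about polynomials only, then exhibit both sides as skew derivations on $P_n$ that agree on generators.

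Since \eqref{eq:Deltai-F-skew-linear} gives $\Delta_i(\bsf p) = \prescript{s_i}{}{\bsf}\,\Delta_i(p)$ for $\bsf \in F^{\otimes n}$ and $p \in P_n$, the identity to be proved reduces to showing
\[
\Delta_i(p) \;=\; t_{i,i+1}\,\partial_i(p), \qquad p \in P_n,
\]
under the assumption $\psi = \id$. The strategy is to verify that both sides are $s_i$-twisted derivations $P_n \to P_n(F)$ with respect to the derivation rule $D(p_1 p_2) = D(p_1)p_2 + \prescript{s_i}{}{p_1}D(p_2)$, and then to compare them on the generators $x_1, \dots, x_n$.

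For the left-hand side, the derivation property is built into the definition of $\Delta_i$. For the right-hand side, one combines the classical twisted Leibniz rule $\partial_i(p_1 p_2) = \partial_i(p_1)p_2 + \prescript{s_i}{}{p_1}\partial_i(p_2)$ for the usual divided difference operator \eqref{eq:usual-divided-diff} with the crucial observation that, when $\psi = \id$, relation \eqref{rel:xF-commutation} reduces to $\bsf x_k = x_k \bsf$ for all $\bsf \in F^{\otimes n}$ and all $k$. In particular $t_{i,i+1}$ commutes with every element of $P_n$ inside $P_n(F)$, so that
\[
t_{i,i+1}\partial_i(p_1 p_2) = t_{i,i+1}\partial_i(p_1)\,p_2 + \prescript{s_i}{}{p_1}\,t_{i,i+1}\partial_i(p_2),
\]
which is the required twisted Leibniz rule for $p \mapsto t_{i,i+1}\partial_i(p)$.

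It then remains to check agreement on $x_1, \dots, x_n$. For $j \ne i, i+1$ both sides vanish. For $x_i$ we have $\Delta_i(x_i) = t_{i,i+1} = t_{i,i+1}\partial_i(x_i)$. For $x_{i+1}$ we have $\Delta_i(x_{i+1}) = -t_{i+1,i}$, which equals $-t_{i,i+1} = t_{i,i+1}\partial_i(x_{i+1})$ by \eqref{eq:t-symmetry-psi-id}. A straightforward induction on polynomial degree, using that $P_n$ is generated by $x_1,\dots,x_n$ together with the common twisted Leibniz rule, then shows the two maps coincide on all of $P_n$. The only real subtlety — and the one place where the symmetric hypothesis $\psi = \id$ is genuinely needed — is the commutativity of $t_{i,i+1}$ with elements of $P_n$, together with the symmetry $t_{i,i+1} = t_{i+1,i}$ used in the $x_{i+1}$ case; everything else is formal.
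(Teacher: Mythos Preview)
Your proof is correct and follows essentially the same route as the paper: reduce to $\bsf = 1$ via \eqref{eq:Deltai-F-skew-linear}, verify the degree-one case on the generators $x_j$ using $t_{i,i+1}=t_{i+1,i}$ from \eqref{eq:t-symmetry-psi-id}, and then induct via the twisted Leibniz rule. Your framing of the induction as ``both sides are $s_i$-twisted derivations agreeing on generators'' and your explicit isolation of the point that $t_{i,i+1}$ commutes with $P_n$ when $\psi=\id$ make the argument a bit more transparent, but the underlying idea is the same as the paper's ``straightforward induction.''
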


\begin{proof}
  By \eqref{eq:Deltai-F-skew-linear}, it suffices to prove the case where $\bsf = 1$.  Since $\psi = \id$, the case where $p$ is of polynomial degree one follows from \eqref{eq:t-symmetry-psi-id} and \eqref{eq:Delta_i-degree-one-def}. The general result then follows by a straightforward induction.
  \details{
    Assuming the result holds for $p_1$ and $p_2$, we have
    \[
      \Delta_i(p_1p_2)
      = \Delta_i(p_1) p_2 + \prescript{s_i}{}{p_1} \Delta_i(p_2)
      = t_{i,i+1} \left( \frac{p_1 - \prescript{s_i}{}{p_1}}{x_i-x_{i+1}} p_2 + \prescript{s_i}{}{p_1} \frac{p_2 - \prescript{s_i}{}{p_2}}{x_i - x_{i+1}} \right)
      = t_{i,i+1} \partial_i(p_1 p_2).
    \]
  }
\end{proof}

\subsection{Basis theorem}

We have a natural action of $S_n$ on $P_n(F)$ by superpermutation of the $x_i$ and the factors of $F^{\otimes n}$.  Recall that we also have a natural algebra homomorphism $P_n(F) \to \cA_n(F)$ and we use this to view elements of $P_n(F)$ as elements of $\cA_n(F)$.  (We will see in Theorem~\ref{theo:AnF-basis} that this homomorphism is injective.)  Let $\le$ denote the strong Bruhat ordering on $S_n$.

\begin{lem} \label{lem:Sn-commutation-mod-lower-terms}
  For $\pi \in S_n$ and $a \in P_n(F)$, we have
  \[
    \pi a = \prescript{\pi}{}{a} \pi + \sum_{\sigma < \pi} a_\sigma \sigma
    \quad \text{and} \quad
    a \pi = \pi \left( \prescript{\pi^{-1}}{}{a} \right) + \sum_{\sigma < \pi} \sigma a_\sigma',
  \]
  for some $a_{\sigma}, a_{\sigma}' \in P_n(F)$ of polynomial degree less than the polynomial degree of $a$.
\end{lem}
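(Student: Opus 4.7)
My plan is to prove both identities by a single induction on the length $\ell(\pi)$ in $S_n$. The base case $\pi = e$ is immediate since $\prescript{e}{}{a} = a$ and there is nothing strictly below the identity. For the inductive step of the first identity, I will choose a factorization $\pi = s_i \pi'$ with $\ell(\pi') = \ell(\pi) - 1$, apply the inductive hypothesis to $\pi' a$, and then push the leading $s_i$ across the resulting expression using Lemma~\ref{lem:s_i-commutation-derivation}. The leading term then rewrites as $s_i \prescript{\pi'}{}{a}\, \pi' = \prescript{\pi}{}{a}\, \pi - \Delta_i(\prescript{\pi'}{}{a})\, \pi'$, giving the desired main term plus a correction supported at $\pi' < \pi$, while each lower term $s_i a_\sigma \sigma$ with $\sigma < \pi'$ splits analogously as $\prescript{s_i}{}{a_\sigma}\, (s_i \sigma) - \Delta_i(a_\sigma)\, \sigma$.

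The polynomial-degree control is then automatic, since $\prescript{s_i}{}{(\cdot)}$ preserves polynomial degree, $\Delta_i$ kills $F^{\otimes n}$ and strictly lowers the polynomial degree of the $x$-variables, and the $a_\sigma$ already have polynomial degree strictly less than that of $a$. The point I expect to be the main obstacle is the Bruhat-order bookkeeping, namely verifying that every permutation appearing on the right, other than $\pi$ itself, is strictly less than $\pi$. This is precisely the content of the lifting property of the Bruhat order: since $\pi = s_i \pi' > \pi'$, any $\sigma \le \pi'$ automatically satisfies both $\sigma \le \pi$ and $s_i \sigma \le \pi$, and the inequalities are strict since equality would force $\sigma = \pi'$, contradicting $\sigma < \pi'$. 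Combined with the trivial fact $\pi' < \pi$, this shows all new permutations lie strictly below $\pi$.

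For the second identity I will run the mirror argument, factoring $\pi = \pi' s_i$ and commuting $s_i$ across from the right. The needed tool is the rearranged form $a s_i = s_i \prescript{s_i}{}{a} - \prescript{s_i}{}{\Delta_i(a)}$ of Lemma~\ref{lem:s_i-commutation-derivation}, which I would obtain by substituting $\prescript{s_i}{}{a}$ for $a$ in that lemma and invoking \eqref{eq:Delta-s-commutation}. The leading term becomes $\pi\, \prescript{\pi^{-1}}{}{a}$ because $s_i (\pi')^{-1} = \pi^{-1}$, and the symmetric version of the lifting property controls the Bruhat order of the lower permutations exactly as before. The degree estimate transfers without change, so the whole argument is completely parallel to the first identity; the only nontrivial ingredient in either direction is the lifting property of the Bruhat order.
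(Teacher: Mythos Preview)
Your argument is correct and is exactly the approach the paper has in mind: the paper's proof is the one-line remark ``This follows from the defining relations (or Lemma~\ref{lem:s_i-commutation-derivation}),'' and your induction on $\ell(\pi)$ via Lemma~\ref{lem:s_i-commutation-derivation} together with the lifting property is precisely how one unpacks that remark. The Bruhat-order bookkeeping and degree estimates you give are the right details.
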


\begin{proof}
  This follows from the defining relations (or Lemma~\ref{lem:s_i-commutation-derivation}).
\end{proof}

\begin{prop} \label{prop:dHn-action-on-V}
  Let $V$ denote $P_n(F) \otimes \kk S_n$, considered as a graded $\kk$-supermodule.  Then $V$ is an $\cA_n(F)$-module under the action
  \[
    z \cdot (a \otimes w)
    = za \otimes w,
    \qquad
    s_i \cdot (a \otimes w)
    = \prescript{s_i}{}{a} \otimes s_i w - \Delta_i(a) \otimes w,
  \]
  for all $z,a \in P_n(F)$, $w \in \kk S_n$, and $1 \le i \le n-1$.
\end{prop}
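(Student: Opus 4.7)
The plan is to verify that the given formulas respect each defining relation of $\cA_n(F)$ from Definition~\ref{def:affine-wreath}. Since $V = P_n(F) \otimes \kk S_n$ with $P_n(F)$ acting by left multiplication on the first tensor factor, all relations internal to $P_n(F)$---in particular \eqref{rel:xF-commutation}---hold automatically. It remains to check: (i) the Coxeter relations of $S_n$ for the $s_i$-action; and (ii) the mixed relations \eqref{rel:sx-triv-commutation}, \eqref{rel:sx-commutation}, and \eqref{rel:SF-commutation} between $s_i$ and $P_n(F)$.

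Both the mixed relations in (ii) and the easy Coxeter relations in (i) reduce, after direct expansion of both sides, to identities established earlier. For \eqref{rel:SF-commutation} and \eqref{rel:sx-triv-commutation}, one uses $\Delta_i(\bsf) = 0$ and $\Delta_i(x_j) = 0$ for $j \ne i, i+1$, together with the skew-derivation property of $\Delta_i$. For \eqref{rel:sx-commutation}, one further uses $\Delta_i(x_i) = t_{i,i+1}$ from \eqref{eq:Delta_i-degree-one-def}. The equality $s_i^2 \cdot v = v$ expands to an expression whose vanishing is equivalent to $\Delta_i^2 = 0$ and $\Delta_i \circ \prescript{s_i}{}{(-)} = -\prescript{s_i}{}{(-)} \circ \Delta_i$, i.e.\ relations \eqref{eq:Delta-square-zero} and \eqref{eq:Delta-s-commutation}. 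The commutation $s_i s_j = s_j s_i$ for $|i-j| > 1$ similarly reduces to \eqref{eq:Delta-s-triv-commutation} and \eqref{eq:Delta-Delta-triv-commutation}.

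The main obstacle is the braid relation $s_i s_{i+1} s_i \cdot (a \otimes w) = s_{i+1} s_i s_{i+1} \cdot (a \otimes w)$. Expanding each side produces six terms indexed by elements of the subgroup $\langle s_i, s_{i+1} \rangle \cong S_3$ acting on $w$. The coefficient of $s_i s_{i+1} s_i w = s_{i+1} s_i s_{i+1} w$ matches by the Coxeter relation in $S_n$, but matching the remaining five coefficients requires an $F$-deformed analog of the standard braid relation for divided difference operators, essentially amounting to
\[
  \Delta_i \Delta_{i+1} \Delta_i = \Delta_{i+1} \Delta_i \Delta_{i+1}
\]
together with auxiliary intertwining identities relating $\Delta_i$ and $\Delta_{i+1}$ to $\prescript{s_i}{}{(-)}$ and $\prescript{s_{i+1}}{}{(-)}$. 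I would establish these by induction on the polynomial degree of $a$: the skew-derivation property reduces the inductive step to checking the case of polynomial degree at most one, where the claim becomes a direct computation using the explicit formulas \eqref{eq:Delta_i-degree-one-def} and the identities for the $t_{i,j}$ from Section~\ref{subsec:additional-relations}. This braid-relation check is the single truly nontrivial step of the proof.
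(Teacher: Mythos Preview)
Your handling of the relations internal to $P_n(F)$, the mixed relations \eqref{rel:xF-commutation}--\eqref{rel:SF-commutation}, the involution $s_i^2 = 1$, and the distant commutation $s_is_j = s_js_i$ matches the paper's proof essentially verbatim, reducing each to the appropriate identity among \eqref{eq:Delta-s-triv-commutation}--\eqref{eq:Delta-square-zero}.

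For the braid relation, however, the paper takes a different and noticeably cleaner route than the one you propose. Rather than proving $\Delta_i\Delta_{i+1}\Delta_i = \Delta_{i+1}\Delta_i\Delta_{i+1}$ together with the auxiliary intertwining identities (which, as you note, would require a somewhat delicate induction with many cross-terms), the paper bootstraps: having already verified \eqref{rel:sx-triv-commutation}, \eqref{rel:sx-commutation}, and \eqref{rel:SF-commutation} \emph{as operators on $V$}, it uses those to show directly that the operator $s_is_{i+1}s_i - s_{i+1}s_is_{i+1}$ commutes past each $x_j$ in the sense that
\[
  (s_is_{i+1}s_i - s_{i+1}s_is_{i+1})\, x_j = x_{s_is_{i+1}s_i(j)}\,(s_is_{i+1}s_i - s_{i+1}s_is_{i+1})
\]
on $V$. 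This is a short explicit computation for each $j$, using only the already-established relations and \eqref{eq:tij-conjugation}. One then inducts on the polynomial degree of $a$ in $a\otimes w$: the base case (degree zero) is immediate from the Coxeter relation in $S_n$, and the inductive step is a one-line consequence of the commutation identity above. The payoff is that one never needs to establish braid relations for the $\Delta_i$ themselves; your approach would work, but the paper's argument sidesteps the messiest part of yours entirely.
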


\begin{proof}
  It is clear that the action satisfies the defining relations of $P_n(F)$.  This includes relation \eqref{rel:xF-commutation}.  For $1 \le i \le n-1$ and $\bsf \in F^{\otimes n}$, we have
  \begin{multline*}
    s_i \cdot (\bsf \cdot (a \otimes w))
    = s_i \cdot (\bsf a \otimes w)
    = \prescript{s_i}{}{(\bsf a)} \otimes s_i w - \Delta_i(\bsf a) \otimes w
    \\
    \stackrel{\eqref{eq:Deltai-F-skew-linear}}{=} \prescript{s_i}{}{\bsf} \prescript{s_i}{}{a} \otimes s_i w - \prescript{s_i}{}{\bsf} \Delta_i(a) \otimes w
    = \prescript{s_i}{}{\bsf} \cdot (s_i \cdot (a \otimes w)).
  \end{multline*}
  Thus, the action satisfies relation \eqref{rel:SF-commutation}.

  For $1 \le i \le n-1$, $1 \le j \le n$, $j \ne i,i+1$, we have
  \begin{multline*}
    s_i \cdot (x_j \cdot (a \otimes w))
    = s_i (x_j a \otimes w)
    = \prescript{s_i}{}{(x_j a)} \otimes s_i w - \Delta_i(x_j a) \otimes w
    \\
    = x_j \prescript{s_i}{}{a} \otimes s_i w - x_j \Delta_i(a) \otimes w
    = x_j \cdot (s_i \cdot (a \otimes w)).
  \end{multline*}
  Thus the action satisfies \eqref{rel:sx-triv-commutation}.  We also have
  \begin{multline*}
    s_i \cdot (x_i \cdot (a \otimes w))
    = s_i \cdot (x_i a \otimes w)
    = \prescript{s_i}{}{(x_i a)} \otimes s_i w - \Delta_i(x_i a) \otimes w
    \\
    = x_{i+1} \prescript{s_i}{}{a} \otimes s_i w - \big( t_{i,i+1} a + x_{i+1} \Delta_i(a) \big) \otimes w
    = (x_{i+1} s_i - t_{i,i+1}) \cdot (a \otimes w),
  \end{multline*}
  and so the action satisfies \eqref{rel:sx-commutation}.

  It remains to verify the Coxeter relations of $\kk S_n$.  For $1 \le i \le n-1$, we have
  \begin{multline*}
    s_i \cdot (s_i \cdot (a \otimes w))
    = s_i \cdot \left( \prescript{s_i}{}{a} \otimes s_i w - \Delta_i(a) \otimes w \right)
    \\
    = a \otimes w - \Delta_i( \prescript{s_i}{}{a} ) \otimes s_i w - \prescript{s_i}{}{\Delta_i(a)} \otimes s_i w + \Delta_i^2(a) \otimes w
    = a \otimes w,
  \end{multline*}
  where the final equality holds by \eqref{eq:Delta-s-commutation} and \eqref{eq:Delta-square-zero}.  For $1 \le i,j \le n-1$ with $|i-j| > 1$, we have
  \begin{multline*}
    s_i \cdot (s_j \cdot (a \otimes w))
    = s_i \cdot \left( \prescript{s_j}{}{a} \otimes s_i w - \Delta_j(a) \otimes w \right)
    \\
    = \prescript{s_i s_j}{}{a} \otimes s_i s_j w - \Delta_i( \prescript{s_j}{}{a} ) \otimes s_j w - \prescript{s_i}{}{\Delta_j(a)} \otimes s_i w + \Delta_i \Delta_j(a) \otimes w
    = s_j \cdot (s_i \cdot (a \otimes w)),
  \end{multline*}
  where the last equality uses \eqref{eq:Delta-s-triv-commutation}, \eqref{eq:Delta-Delta-triv-commutation}, and the fact that $s_i s_j = s_j s_i$.

  Finally, suppose $1 \le i \le n-2$.  We claim that, as operators on $V$,
  \begin{equation} \label{eq:braid-x-relation}
    (s_i s_{i+1} s_i - s_{i+1} s_i s_{i+1}) x_j
    = x_{s_i s_{i+1} s_i(j)} (s_i s_{i+1} s_i - s_{i+1} s_i s_{i+1}),\quad
    1 \le j \le n.
  \end{equation}
  Indeed, by what has already been shown above, we have
  \begin{multline*}
    s_i s_{i+1} s_i x_i
    \stackrel{\eqref{rel:sx-commutation}}{=} s_i s_{i+1} x_{i+1} s_i - s_i s_{i+1} t_{i,i+1}
    \stackrel{\eqref{rel:sx-commutation}}{=} s_i x_{i+2} s_{i+1} s_i - s_i t_{i+1,i+2} s_i - s_i s_{i+1} t_{i,i+1}
    \\
    \stackrel{\substack{\eqref{rel:sx-triv-commutation} \\ \eqref{eq:tij-conjugation}}}{=} x_{i+2} s_i s_{i+1} s_i - t_{i,i+2} - s_i s_{i+1} t_{i,i+1}
  \end{multline*}
  and
  \begin{multline*}
    s_{i+1} s_i s_{i+1} x_i
    \stackrel{\eqref{rel:sx-triv-commutation}}{=} s_{i+1} s_i x_i s_{i+1}
    \stackrel{\eqref{rel:sx-commutation}}{=} s_{i+1} x_{i+1} s_i s_{i+1} - s_{i+1} t_{i,i+1} s_{i+1}
    \\
    \stackrel{\substack{\eqref{rel:sx-commutation} \\ \eqref{eq:tij-conjugation}}}{=} x_{i+2} s_{i+1} s_i s_{i+1} - t_{i+1,i+2} s_i s_{i+1} - t_{i,i+2}
    \stackrel{\eqref{eq:tij-conjugation}}{=} x_{i+2} s_{i+1} s_i s_{i+1} - s_i s_{i+1} t_{i,i+1} - t_{i,i+2}.
  \end{multline*}
  This proves \eqref{eq:braid-x-relation} when $j=i$.  The other cases are proved by similar direct computations.
  \details{
    For $j=i+1$, we have
    \begin{multline*}
      s_{i+1} s_i s_{i+1} x_{i+1}
      \stackrel{\eqref{rel:sx-commutation}}{=} s_{i+1} s_i x_{i+2} s_{i+1} - s_{i+1} s_i t_{i+1,i+2}
      \stackrel{\eqref{rel:sx-triv-commutation}}{=} s_{i+1} x_{i+2} s_i s_{i+1} - s_{i+1} s_i t_{i+1,i+2}
      \\
      \stackrel{\eqref{rel:sx-commutation2}}{=} x_{i+1} s_{i+1} s_i s_{i+1} + t_{i+2,i+1} s_i s_{i+1} - s_{i+1} s_i t_{i+1,i+2}
    \end{multline*}
    and
    \begin{multline*}
      s_i s_{i+1} s_i x_{i+1}
      \stackrel{\eqref{rel:sx-commutation2}}{=} s_i s_{i+1} x_i s_i + s_i s_{i+1} t_{i+1,i}
      \stackrel{\eqref{rel:sx-triv-commutation}}{=} s_i x_i s_{i+1} s_i + s_i s_{i+1} t_{i+1,i}
      \\
      \stackrel{\eqref{rel:sx-commutation}}{=} x_{i+1} s_i s_{i+1} s_i - t_{i,i+1} s_{i+1} s_i + s_i s_{i+1} t_{i+1,i}
      \stackrel{\eqref{eq:tij-conjugation}}{=} x_{i+1} s_i s_{i+1} s_i - s_{i+1} s_i t_{i+1,i+2} + t_{i+2,i+1} s_i s_{i+1}.
    \end{multline*}
    Hence
    \[
      (s_i s_{i+1} s_i - s_{i+1} s_i s_{i+1}) x_{i+1}
    = x_{i+1} (s_i s_{i+1} s_i - s_{i+1} s_i s_{i+1})
    \]
    This proves \eqref{eq:braid-x-relation} when $j=i+1$.

    For $j=i+2$, we have
    \begin{multline*}
      s_{i+1} s_i s_{i+1} x_{i+2}
      \stackrel{\eqref{rel:sx-commutation2}}{=} s_{i+1} s_i x_{i+1} s_{i+1} + s_{i+1} s_i t_{i+2,i+1}
      \\
      \stackrel{\eqref{rel:sx-commutation2}}{=} s_{i+1} x_i s_i s_{i+1} + s_{i+1} t_{i+1,i} s_{i+1} + s_{i+1} s_i t_{i+2,i+1}
      \\
      \stackrel{\eqref{rel:sx-triv-commutation}}{=} x_i s_{i+1} s_i s_{i+1} + s_{i+1} t_{i+1,i} s_{i+1} + s_{i+1} s_i t_{i+2,i+1}
      \stackrel{\eqref{eq:tij-conjugation}}{=} x_i s_{i+1} s_i s_{i+1} + t_{i+2,i} + s_{i+1} s_i t_{i+2,i+1}
    \end{multline*}
    and
    \begin{multline*}
      s_i s_{i+1} s_i x_{i+2}
      \stackrel{\eqref{rel:sx-triv-commutation}}{=} s_i s_{i+1} x_{i+2} s_i
      \stackrel{\eqref{rel:sx-commutation2}}{=} s_i x_{i+1} s_{i+1} s_i + s_i t_{i+2,i+1} s_i
      \\
      \stackrel{\eqref{rel:sx-commutation2}}{=} x_i s_i s_{i+1} s_i + t_{i+1,i} s_{i+1} s_i + s_i t_{i+2,i+1} s_i
      \stackrel{\eqref{eq:tij-conjugation}}{=} x_i s_i s_{i+1} s_i + s_{i+1} s_i t_{i+2,i+1} + t_{i+2,i}
    \end{multline*}
    This proves \eqref{eq:braid-x-relation} when $j=i+2$.

    The remaining cases follow immediately from \eqref{rel:sx-triv-commutation}.
  }

  We now prove that $s_i s_{i+1} s_i = s_{i+1} s_i s_{i+1}$ as operators on $V$ by induction on the polynomial degree of $a$ in $a \otimes w \in V$.  The case where $a$ has polynomial degree zero is immediate.  If the claim holds for $a$, then, for $1 \le i \le n$, we have
  \begin{multline*}
    (s_i s_{i+1} s_i - s_{i+1} s_i s_{i+1})(x_i a \otimes w)
    = \big( (s_i s_{i+1} s_i - s_{i+1} s_i s_{i+1}) x_i \big) \cdot (a \otimes w)
    \\
    \stackrel{\eqref{eq:braid-x-relation}}{=} \big( x_{s_i s_{i+1} s_i(j)} (s_i s_{i+1} s_i - s_{i+1} s_i s_{i+1}) \big) \cdot (a \otimes w)
    = 0,
  \end{multline*}
  proving the inductive step.
\end{proof}

For $\alpha = (\alpha_1,\dotsc,\alpha_n) \in \N^n$, we let
\begin{equation} \label{eq:x^alpha-def}
  x^\alpha = x_1^{\alpha_1} x_2^{\alpha_2} \dotsm x_n^{\alpha_n}.
\end{equation}
We define the \emph{graded dimension} $\grdim V$ of a $(\Z \times \Z_2)$-graded $\kk$-module, with finite-dimensional graded pieces, to be
\[
  \grdim V = \sum_{i \in \Z,\ \varepsilon \in \Z_2} (\dim V_{i,\varepsilon}) q^i \varpi^\varepsilon \in \N[q^{\pm 1}, \varpi]/(\varpi^2-1).
\]

\begin{theo}[Basis theorem for $\cA_n(F)$] \phantomsection \label{theo:AnF-basis}
  \begin{enumerate}
    \item \label{theo-item:AnF-basis} The map
      \[
        V = P_n(F) \otimes \kk S_n \to \cA_n(F),\quad
        a \otimes w \mapsto aw,
      \]
      is an isomorphism of graded $\cA_n(F)$-supermodules.

    \item \label{theo-item:dHn-dim} The algebra $\cA_n(F)$ is free as a $\kk$-module, with graded dimension
  \[
    \grdim \cA_n(F) = n! \left( \frac{\grdim F}{1-q^\delta} \right)^n.
  \]
  \end{enumerate}
\end{theo}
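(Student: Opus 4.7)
The plan is to establish part (a) by a standard two-step PBW-type argument (surjectivity by spanning, injectivity via a faithful module), after which part (b) is an immediate dimension count. The hard work has already been absorbed into Proposition~\ref{prop:dHn-action-on-V}; the basis theorem will follow from that faithful action plus the spanning argument.

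First I would prove that the map $a \otimes w \mapsto aw$ is a surjection. Since $\cA_n(F)$ is generated by the $x_i$, by $F^{\otimes n}$, and by $S_n$, it suffices to show that the $\kk$-span $M$ of the products $aw$ with $a \in P_n(F)$ and $w \in \kk S_n$ is closed under left multiplication by these generators. Closure under left multiplication by elements of $P_n(F)$ is obvious. For closure under left multiplication by $s_i$, use Lemma~\ref{lem:s_i-commutation-derivation}: for any $a \in P_n(F)$ and $w \in \kk S_n$,
\[
  s_i (aw) = \prescript{s_i}{}{a}\, s_i w - \Delta_i(a)\, w,
\]
and both summands lie in $M$. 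This shows $M$ is a left ideal containing $1$, hence $M = \cA_n(F)$. In particular, the map in part (a) is surjective, and it is clearly $\cA_n(F)$-linear and grading-preserving by construction of the action on $V$.

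Next I would prove injectivity using the action on $V$ from Proposition~\ref{prop:dHn-action-on-V}. The key observation is that the element $1 \otimes 1 \in V$ is ``cyclic and free'' in the sense that evaluating any element $aw \in \cA_n(F)$ at $1 \otimes 1$ returns exactly $a \otimes w$. To see this, note $\Delta_i(1) = 0$ for all $i$, so a straightforward induction on the length of a reduced expression for $w = s_{i_1} \dotsm s_{i_k}$ gives $w \cdot (1 \otimes 1) = 1 \otimes w$; then $a \cdot (1 \otimes w) = a \otimes w$ since the action of $P_n(F)$ is just left multiplication on the first tensor factor. Thus the composite
\[
  V \longrightarrow \cA_n(F) \xrightarrow{\ \cdot\, (1 \otimes 1)\ } V,\qquad a \otimes w \mapsto aw \mapsto a \otimes w,
\]
is the identity on $V$, which forces the first map to be injective. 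Combined with surjectivity, this establishes part (a).

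Finally, for part (b), part (a) gives an isomorphism of graded $\kk$-supermodules $\cA_n(F) \simeq P_n(F) \otimes \kk S_n$. Since $P_n(F) \simeq P_n \otimes F^{\otimes n}$ as graded $\kk$-modules by construction, and each $x_i$ has degree $\delta$ and even parity while $\kk S_n$ is concentrated in degree $0$ and even parity, we compute
\[
  \grdim \cA_n(F)
  = \grdim P_n \cdot \grdim F^{\otimes n} \cdot \grdim \kk S_n
  = \frac{1}{(1-q^\delta)^n} \cdot (\grdim F)^n \cdot n!
  = n!\left( \frac{\grdim F}{1-q^\delta}\right)^n,
\]
and freeness over $\kk$ follows from the freeness of $P_n$, $F^{\otimes n}$ (by assumption $F$ is free over $\kk$), and $\kk S_n$. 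The main subtlety, already handled upstream, is the consistency of the action defining $V$; once that is in hand, the argument above is essentially formal.
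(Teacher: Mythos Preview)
Your proof is correct and follows essentially the same approach as the paper: spanning via the commutation relations (you use Lemma~\ref{lem:s_i-commutation-derivation}, the paper uses its consequence Lemma~\ref{lem:Sn-commutation-mod-lower-terms}), and linear independence via the faithful action on $V$ from Proposition~\ref{prop:dHn-action-on-V} evaluated at $1\otimes 1$. The paper phrases the isomorphism in the inverse direction (the map $\cA_n(F)\to V$, $z\mapsto z\cdot(1\otimes 1)$, shown to be a bijection on explicit bases), but the key computation $(aw)\cdot(1\otimes 1)=a\otimes w$ is identical to yours.
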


\begin{proof}
  Let
  \begin{gather*}
    \cB_1 = \{x^\alpha \bb \otimes \pi \mid \alpha \in \N^n,\ \bb \in B^{\otimes n},\ \pi \in S_n\} \subseteq V, \\
    \cB_2 = \{x^\alpha \bb \pi \mid \alpha \in \N^n,\ \bb \in B^{\otimes n},\ \pi \in S_n\} \subseteq \cA_n(F).
  \end{gather*}
  Thus $\cB_1$ is a basis of $V$.  It follows easily from Lemma~\ref{lem:Sn-commutation-mod-lower-terms} that the elements of $\cB_2$ span $\cA_n(F)$.  Furthermore, we have $x^\alpha \bb \pi \cdot (1 \otimes 1) = x^\alpha \bb \otimes \pi$, and so the elements of $\cB_2$ are linearly independent, and hence $\cB_2$ is a basis for $\cA_n(F)$.

  Since $V$ is a cyclic module generated by $1 \otimes 1$, there is an $\cA_n(F)$-module homomorphism $\cA_n(F) \to V$ determined by $1 \mapsto 1 \otimes 1$.  This map sends $x^\alpha \bb \pi \in \cB_2$ to $x^\alpha \bb \otimes \pi \in \cB_1$.  Since the map is a bijection on $\kk$-bases, it is an isomorphism.  This proves \ref{theo-item:AnF-basis}.  Part~\ref{theo-item:dHn-dim} follows from \ref{theo-item:AnF-basis}.
\end{proof}

\begin{eg}
  Specializing $F$ in Theorem~\ref{theo:AnF-basis} recovers several results that have appeared in the literature:
  \begin{enumerate}
    \item When $F=\kk$ (see Example~\ref{eg:daHa}) we recover a basis for the degenerate affine Hecke algebra.  See \cite[Th.~3.2.2]{Kle05}.

    \item When $F$ is the group algebra of a finite group (see Example~\ref{eg:wreath-Hecke}), we recover \cite[Th.~2.8]{WW08}.

    \item When $F=\Cl$ (see Example~\ref{eg:affine-Sergeev}) we recover a basis for the affine Sergeev algebra.  See \cite[Th.~14.2.2]{Kle05}.

    \item When $F$ is symmetric (i.e.\ $\psi=\id$) and purely even, we recover \cite[Th.~3.8]{KM15}.  In fact, our proof closely follows the proof of \cite[Th.~3.8]{KM15}.
  \end{enumerate}
\end{eg}

\begin{cor}
  The sets
  \[
    \{x^\alpha \bb \pi \mid \alpha \in \N^n,\ \bb \in B^{\otimes n},\ \pi \in S_n\}
    \quad \text{and} \quad
    \{\pi x^\alpha \bb \mid  \alpha \in \N^n,\ \bb \in B^{\otimes n},\ \pi \in S_n\},
  \]
  are $\kk$-bases for $\cA_n(F)$.
\end{cor}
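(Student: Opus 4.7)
Write $\cB = \{x^\alpha \bb \pi \mid \alpha \in \N^n,\ \bb \in B^{\otimes n},\ \pi \in S_n\}$ and $\cB' = \{\pi x^\alpha \bb \mid \alpha \in \N^n,\ \bb \in B^{\otimes n},\ \pi \in S_n\}$ for the two sets in the corollary. The set $\cB$ is precisely the basis exhibited in the proof of Theorem~\ref{theo:AnF-basis}\ref{theo-item:AnF-basis}, so the only content is to show that $\cB'$ is also a $\kk$-basis. The plan is to write out the change-of-basis from $\cB$ to $\cB'$ and to show that, with respect to the filtration by polynomial degree, it is block upper triangular with signed-permutation diagonal blocks, from which invertibility and hence the basis property of $\cB'$ follow immediately.

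The first step will apply the right-hand identity of Lemma~\ref{lem:Sn-commutation-mod-lower-terms} to $a = x^\alpha \bb \in P_n(F)$, producing
\[
  x^\alpha \bb \pi = \pi \cdot \prescript{\pi^{-1}}{}{(x^\alpha \bb)} + \sum_{\sigma < \pi} \sigma\, a'_\sigma,
\]
with each $a'_\sigma \in P_n(F)$ of polynomial degree strictly less than $|\alpha|$. The second step will observe that, because $S_n$ acts on $P_n(F) \simeq P_n \otimes F^{\otimes n}$ by permuting the $x_i$ and super-permuting the tensor factors of $F^{\otimes n}$, one has $\prescript{\pi^{-1}}{}{(x^\alpha \bb)} = \epsilon\, x^{\pi^{-1}(\alpha)} \bb''$ for a unique sign $\epsilon \in \{\pm 1\}$ and a unique $\bb'' \in B^{\otimes n}$. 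Thus the leading term above is, up to sign, the element $\pi\, x^{\pi^{-1}(\alpha)} \bb''$ of $\cB'$, and at each fixed polynomial degree the assignment $(\alpha, \bb, \pi) \mapsto (\pi^{-1}(\alpha), \bb'', \pi)$ is a bijection on the index set $\N^n \times B^{\otimes n} \times S_n$.

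The third step will expand each $a'_\sigma$ in the standard $\kk$-basis $\{x^\beta \bc : \beta \in \N^n,\ \bc \in B^{\otimes n}\}$ of $P_n(F)$, turning each correction $\sigma\, a'_\sigma$ into a $\kk$-linear combination of elements of $\cB'$ of polynomial degree strictly less than $|\alpha|$. Filtering both $\cB$ and $\cB'$ by polynomial degree, the change-of-basis matrix from $\cB$ to $\cB'$ is then block upper triangular with diagonal blocks that are signed permutation matrices, hence invertible, so $\cB'$ is a $\kk$-basis of $\cA_n(F)$.

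The only mild subtlety --- the closest thing to an obstacle --- is confirming that $\prescript{\pi^{-1}}{}{(x^\alpha \bb)}$ is a \emph{single} signed simple tensor, not a genuine $\kk$-linear combination of such tensors in $P_n \otimes B^{\otimes n}$. This is immediate from the definition of super-permutation on rank-one tensors of basis elements of $F^{\otimes n}$, but needs to be invoked explicitly to justify that the diagonal blocks really are signed permutation matrices rather than more general invertible matrices.
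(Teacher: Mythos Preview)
Your argument is correct and is essentially the same as the paper's: both deduce that $\cB'$ is a basis from Lemma~\ref{lem:Sn-commutation-mod-lower-terms} via a triangular change-of-basis argument. The only cosmetic difference is that you filter by polynomial degree (using that the correction terms $\sigma a'_\sigma$ have strictly smaller polynomial degree), whereas the paper filters by the length of $\pi$ (using that the correction terms involve $\sigma < \pi$ in the Bruhat order); since Lemma~\ref{lem:Sn-commutation-mod-lower-terms} gives both drops simultaneously, either filtration yields the same block-triangular structure with signed-permutation diagonal blocks.
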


\begin{proof}
  It was shown in the proof of Theorem~\ref{theo:AnF-basis}\ref{theo-item:AnF-basis} that the first set is a basis.  The fact that the second set is also a basis then follows from Lemma~\ref{lem:Sn-commutation-mod-lower-terms} using induction on the length of $\pi \in S_n$.
\end{proof}

Theorem~\ref{theo:AnF-basis} allows us to view $\cA_n(F)$ as an affine version of the wreath product algebra.

\begin{cor} \label{cor:affine-wreath-presentation}
  As $(\Z \times \Z_2)$-graded $\kk$-modules, we have
  \[
    \cA_n(F) = \kk[x_1,\dotsc,x_n] \otimes \left( F^{\otimes n} \rtimes_\rho S_n \right).
  \]
  The multiplication is determined by the fact that the factors $\kk[x_1,\dotsc,x_n]$ and $F^{\otimes n} \rtimes_\rho S_n$ are subalgebras and the relations \eqref{rel:xF-commutation}--\eqref{rel:sx-commutation}.
\end{cor}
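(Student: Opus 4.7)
The plan is to deduce this corollary as an essentially immediate reformulation of Theorem~\ref{theo:AnF-basis}. That theorem produces the $\kk$-basis $\{x^\alpha \bb \pi \mid \alpha \in \N^n,\ \bb \in B^{\otimes n},\ \pi \in S_n\}$ of $\cA_n(F)$, which is precisely the external tensor product of a $\kk$-basis of $\kk[x_1,\dotsc,x_n]$ with a $\kk$-basis of $F^{\otimes n} \rtimes_\rho S_n$ (namely $B^{\otimes n} \times S_n$, by definition of the smash product). So the multiplication map
\[
  \kk[x_1,\dotsc,x_n] \otimes \left( F^{\otimes n} \rtimes_\rho S_n \right) \to \cA_n(F),\qquad p \otimes (\bsf \pi) \mapsto p \bsf \pi,
\]
is a bijection on $\kk$-bases and is manifestly $(\Z \times \Z_2)$-graded, giving the asserted $\kk$-module decomposition.

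Next I would check that both tensor factors inject as subalgebras. From Definition~\ref{def:affine-wreath}, there are natural algebra homomorphisms $\kk[x_1,\dotsc,x_n] \to \cA_n(F)$ and $F^{\otimes n} \rtimes_\rho S_n \to \cA_n(F)$, the latter being well-defined because relation \eqref{rel:SF-commutation} is exactly the wreath product relation \eqref{rel:smash-product}. Each of these maps sends its standard $\kk$-basis into the basis of $\cA_n(F)$ exhibited in Theorem~\ref{theo:AnF-basis}: the polynomial basis embeds as the subset indexed by $(\bb,\pi) = (1^{\otimes n}, e)$, and $\bb\pi$ embeds as the subset indexed by $\alpha = 0$. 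Both maps are therefore injective, identifying the two factors with subalgebras of $\cA_n(F)$.

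Finally, I would argue that the subalgebra structures together with relations \eqref{rel:xF-commutation}--\eqref{rel:sx-commutation} suffice to reconstruct the full algebra structure. Given a product $(p_1 \otimes \bsf_1 \pi_1)(p_2 \otimes \bsf_2 \pi_2)$, one must rewrite it in the normal form $p \otimes \bsf \pi$. Relations \eqref{rel:xF-commutation}, \eqref{rel:sx-triv-commutation}, \eqref{rel:sx-commutation} (and the derived \eqref{rel:sx-commutation2}) describe exactly how to move the $x_i$ past any element of the wreath product subalgebra: \eqref{rel:xF-commutation} handles $x_i \bsf$, and \eqref{rel:sx-triv-commutation}--\eqref{rel:sx-commutation} handle $x_i s_j$, producing correction terms $t_{i,i+1} \in F^{\otimes n}$ that themselves lie in the wreath product subalgebra. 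Iterating these rewrites pushes all $x$-variables to the left and leaves an element of $F^{\otimes n} \rtimes_\rho S_n$ on the right, yielding the normal form.

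There is no real obstacle here; the content of the corollary is entirely captured by the basis theorem, and the verification that $F^{\otimes n} \rtimes_\rho S_n$ embeds as a subalgebra is the only point where the wreath-product relation \eqref{rel:SF-commutation} is invoked, precisely matching the smash-product definition in Section~\ref{subsec:smash}.
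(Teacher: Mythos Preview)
Your proposal is correct and matches the paper's approach: the paper states this corollary without proof, treating it as immediate from Theorem~\ref{theo:AnF-basis}, and your argument spells out precisely the reasoning implicit in that omission. The basis-matching argument and the observation that relations \eqref{rel:xF-commutation}--\eqref{rel:sx-commutation} suffice to normalize any product are exactly what is needed.
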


By Theorem~\ref{theo:AnF-basis} and Corollary~\ref{cor:affine-wreath-presentation}, we can identify $\kk[x_1,\dotsc,x_n]$, $F^{\otimes n}$, $\kk S_n$, $F^{\otimes n} \rtimes_\rho S_n$, and $P_n(F)$ as subalgebras of $\cA_n(F)$.  We will do so in the remainder of the paper.

\subsection{A filtration and the associated graded algebra\label{subsec:filtration}}

By Theorem~\ref{theo:AnF-basis}, we can extend the notion of \emph{polynomial degree} to all of $\cA_n(F)$ in the natural way, and we obtain a filtration on $\cA_n(F)$.  Note that all the relations of Definition~\ref{def:affine-wreath} are homogeneous except for \eqref{rel:sx-commutation}.  It follows that the associated graded algebra is
\[
  \gr \cA_n(F) = (\kk[x] \ltimes F)^{\otimes n} \rtimes_\rho S_n,
\]
where we recall that the subscript $\rho$ indicates that the action of $S_n$ is by superpermutation of the factors.

\subsection{Description of the center}

Let
\begin{equation} \label{eq:F_psi-def}
  F_\psi := \{f \in F \mid \psi(f) = f\}
\end{equation}
be the subalgebra of $F$ consisting of those elements fixed by the Nakayama automorphism.  Then we have
\[
  P_n(F_\psi) := \kk[x_1,\dotsc,x_n] \otimes F_\psi^{\otimes n} \subseteq P_n(F)
\]
where the tensor product in the center expression is of graded superalgebras.

For $k \in \Z$, define
\begin{equation} \label{eq:F^(k)-def}
  F^{(k)} := \{f \in F \mid g f = (-1)^{\bar f \bar g} f \psi^k(g) \text{ for all } g \in F\}
  \quad \text{and} \quad
  F_\psi^{(k)} := F^{(k)} \cap F_\psi.
\end{equation}
Note that, for all $k \in \Z$, $F^{(k \theta)} = Z(F)$ is the center of $F$.  In particular, if $\psi = \id$ (i.e.\ if $F$ is symmetric), then $F^{(k)} = Z(F)$ for all $k \in \Z$.

It is clear that
\[
  F^{(k)} F^{(\ell)} \subseteq F^{(k+\ell)}
  \quad \text{and} \quad
  F_\psi^{(k)} F_\psi^{(\ell)} \subseteq F_\psi^{(k+\ell)},\quad k,\ell \in \Z.
\]

\begin{lem} \label{lem:center-kxF}
  The center of $\kk[x] \ltimes F$ is
  \[
    Z(\kk[x] \ltimes F) = \bigoplus_{k=0}^\infty x^k F_\psi^{(-k)}.
  \]
\end{lem}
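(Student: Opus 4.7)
The plan is to analyze an arbitrary element $a \in \kk[x] \ltimes F$ by using the $\kk$-module isomorphism $\kk[x] \ltimes F = \kk[x] \otimes F$ to write it uniquely as a finite sum $a = \sum_{k \ge 0} x^k f_k$ with $f_k \in F$. Since $\kk[x] \ltimes F$ is generated as a $\kk$-algebra by $x$ together with $F$, and since $x$ is even, $a$ lies in the graded center if and only if $ax = xa$ and $ag = (-1)^{\bar a \bar g} ga$ for every homogeneous $g \in F$. I will handle the two conditions separately, and will work throughout with homogeneous $f_k$ (using a homogeneous basis of $F$).

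For the first condition, I use $fx = x\psi(f)$ repeatedly to compute $ax = \sum_k x^{k+1} \psi(f_k)$ while $xa = \sum_k x^{k+1} f_k$. Because the distinct powers of $x$ appear as $\kk$-linearly independent summands of $\kk[x] \otimes F$, I may compare them separately, obtaining $\psi(f_k) = f_k$ for every $k$, i.e.\ $f_k \in F_\psi$.

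For the second condition, using $gx^k = x^k \psi^k(g)$ (a straightforward induction from the defining relation), I compute $ag = \sum_k x^k(f_k g)$ and $ga = \sum_k x^k \psi^k(g) f_k$. Noting that the degree-$k$ part of $a$ has parity $\bar{f_k}$ (because $x$ is even), the supercentrality condition reduces, after comparing $x$-degrees, to
\[
  f_k g = (-1)^{\bar{f_k} \bar g} \psi^k(g) f_k \quad \text{for every homogeneous } g \in F.
\]
Substituting $g \leadsto \psi^{-k}(g)$, which is valid because $\psi$ is an automorphism and preserves parity, rewrites this as $g f_k = (-1)^{\bar{f_k} \bar g} f_k \psi^{-k}(g)$, which by \eqref{eq:F^(k)-def} is precisely the condition $f_k \in F^{(-k)}$.

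Combining the two conditions yields $f_k \in F_\psi \cap F^{(-k)} = F_\psi^{(-k)}$ for every $k$, giving the stated inclusion $Z(\kk[x] \ltimes F) \subseteq \bigoplus_{k \ge 0} x^k F_\psi^{(-k)}$. The reverse inclusion follows immediately by running the same two calculations backwards: whenever each $f_k$ belongs to $F_\psi^{(-k)}$, the element $\sum_k x^k f_k$ supercommutes with $x$ and with every homogeneous $g \in F$. No substantive obstacle is anticipated; the only point needing care is the direction of the Nakayama shift (that $f_k$ lies in $F^{(-k)}$, not $F^{(k)}$), which arises because moving the factor $x^k$ past $g$ produces $\psi^k(g)$ on the left rather than on the right, necessitating the single substitution described above.
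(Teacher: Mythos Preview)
Your proof is correct and follows essentially the same approach as the paper: check supercommutation with $x$ to obtain $f_k \in F_\psi$, then supercommutation with $g \in F$ to obtain $f_k \in F^{(-k)}$. Your argument is in fact marginally cleaner, since the paper unnecessarily passes to a basis of $\psi$-eigenvectors to deduce $\psi(f_k)=f_k$, whereas you compare coefficients of $x^{k+1}$ directly.
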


\begin{proof}
  It is clear from the definitions that $\bigoplus_{k=0}^\infty x^k F_\psi^{(-k)} \subseteq Z(\kk[x] \ltimes F)$.  Now let
  \[
    z = \sum_{k \in \N,\ b \in B} a_{k,b} x^k b \in Z(\kk[x] \ltimes F),
  \]
  where $a_{k,b} \in \kk$ for all $k, b$.  Without loss of generality, we may assume that $B = \{b_1,\dotsc,b_m\}$ is a basis of $F$ consisting of eigenvectors for $\psi$ (see Section~\ref{subsec:Frobenius-algebras}).  Let $i \in \{1,2,\dotsc,n\}$.  Then we have
  \begin{equation} \label{eq:center-kxF-computation}
    0
    = z x - x z
    = x \sum_{k \in \N,\ b \in B} a_{k,b} x^k (\psi - \id)(b).
  \end{equation}
  It follows that $a_{k,b} = 0$ for all $b \in B$ such that $\psi(b) \ne b$.  Hence $z \in \kk[x] \otimes F_\psi$.

  Thus, we can write
  \[
    z = \sum_{k \in \N} x^k f_k,
  \]
  where $f_k \in F_\psi$ for all $k \in \N$.  Without loss of generality, we may assume that $z$ is homogeneous in the $\Z_2$-grading, so that $\bar f_k = \bar z$ for all $k$.  Then, for $g \in F$ homogeneous in the $\Z_2$-grading, we then have
  \[
    0
    = gz - (-1)^{\bar z \bar g} zg
    = \sum_{k \in \N} x^k \left( \psi^k(g) f_k - (-1)^{\bar z \bar g} f_k g \right).
  \]
  It follows from Theorem~\ref{theo:AnF-basis} that $f_k \in F^{(-k)}$.
\end{proof}

\begin{eg}
  \begin{enumerate}
    \item When $F = \kk$, we have $Z(\kk[x] \ltimes F) = \kk[x]$.

    \item If $F$ is the Taft Hopf algebra of Example~\ref{eg:Taft} and $m \in \N_+$, we have $y^{m-1} \in F^{(1-m)}_\psi$. So $x^{m-1} y^{m-1} \in Z(\kk[x] \ltimes F)$.
  \end{enumerate}
\end{eg}

\begin{lem} \label{lem:center-in-PnF}
  The centralizer of $\kk[x_1,\dotsc,x_n]$ in $\cA_n(F)$ is contained in the subalgebra $P_n(F)$.
\end{lem}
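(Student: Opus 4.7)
The plan is to use the basis theorem (Theorem~\ref{theo:AnF-basis}) to write any centralizing $z$ uniquely as $z = \sum_{\pi \in S_n} a_\pi \pi$ with $a_\pi \in P_n(F)$, and then show by a maximal-Bruhat-term argument that $a_\pi = 0$ for every $\pi \neq e$. Suppose for contradiction that the set $\Sigma = \{\pi \neq e : a_\pi \neq 0\}$ is non-empty, and pick $\pi \in \Sigma$ maximal in the Bruhat order. For each $i$, Lemma~\ref{lem:Sn-commutation-mod-lower-terms} gives $\sigma x_i = x_{\sigma(i)} \sigma + \sum_{\tau < \sigma} c_\tau \tau$ with the tail coefficients $c_\tau \in F^{\otimes n}$, since $x_i$ has polynomial degree $1$. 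Using this expansion in $z x_i$ and the maximality of $\pi$ (so $a_\sigma = 0$ for $\sigma > \pi$), the coefficient of $\pi$ in $[x_i, z] = 0$ reduces to the $P_n(F)$-identity $x_i a_\pi - a_\pi x_{\pi(i)} = 0$.

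Next, I choose $i$ to be the smallest index with $\pi(i) \neq i$; since $\pi$ is then the identity on $\{1,\dots,i-1\}$ and a bijection of $\{1,\dots,n\}$, necessarily $\pi(i) > i$. Expanding $a_\pi = \sum_{\alpha \in \N^n} x^\alpha \bb_\alpha$ with $\bb_\alpha \in F^{\otimes n}$ (finite sum) and using \eqref{rel:xF-commutation} to move $x_{\pi(i)}$ past the $\bb_\alpha$'s, I rewrite the identity above as
\[
  \sum_\alpha x^{\alpha+e_i} \bb_\alpha = \sum_\alpha x^{\alpha+e_{\pi(i)}} \psi_{\pi(i)}(\bb_\alpha).
\]

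Now I take $\alpha^* \in \N^n$ maximal in lexicographic order (with position $1$ most significant) among those $\alpha$ with $\bb_\alpha \neq 0$, and compare coefficients of $x^{\alpha^* + e_i}$ on the two sides. The left side contributes $\bb_{\alpha^*}$. If $\alpha^*_{\pi(i)} = 0$, the right side contributes $0$, forcing $\bb_{\alpha^*} = 0$ and giving a contradiction. Otherwise, $\alpha^*_{\pi(i)} \geq 1$ and the right side contributes $\psi_{\pi(i)}(\bb_{\alpha^* + e_i - e_{\pi(i)}})$. But the multi-index $\alpha^* + e_i - e_{\pi(i)}$ agrees with $\alpha^*$ in positions $1, \dots, i-1$ (since $\pi(i) > i$) and is strictly larger at position $i$, so it exceeds $\alpha^*$ in lex order; by maximality, $\bb_{\alpha^* + e_i - e_{\pi(i)}} = 0$, again forcing $\bb_{\alpha^*} = 0$, a contradiction. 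Thus $\Sigma$ is empty and $z = a_e \in P_n(F)$.

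The main subtlety is the reduction in the first step: one must be sure that, in $zx_i$, the $\kk S_n$-coefficient of $\pi$ really collapses to $a_\pi x_{\pi(i)}$. This relies on the precise statement of Lemma~\ref{lem:Sn-commutation-mod-lower-terms}, since the tail $\sum_{\tau < \sigma} c_\tau \tau$ from larger $\sigma$'s could in principle contribute to the coefficient of $\pi$; Bruhat maximality of $\pi$ in $\Sigma$ is what eliminates those contributions. The remaining arithmetic with $\alpha^*$ is a straightforward leading-monomial argument, made possible by the fact that moving $x_{\pi(i)}$ past elements of $F^{\otimes n}$ via the Nakayama twist $\psi_{\pi(i)}$ does not alter the $x$-multidegree.
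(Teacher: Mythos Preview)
Your proof is correct and follows the same overall strategy as the paper: write $z=\sum_\pi a_\pi\pi$ using the basis theorem, pick a Bruhat-maximal $\pi\neq e$ with $a_\pi\neq 0$, and use Lemma~\ref{lem:Sn-commutation-mod-lower-terms} to isolate the $\pi$-coefficient of the commutator. The difference is in the element you commute with. The paper commutes with $x_i^\theta$ rather than $x_i$: since $\psi^\theta=\id$, the element $x_i^\theta$ lies in the center of $P_n(F)$, so the $\pi$-coefficient of $x_i^\theta z - z x_i^\theta$ is simply $(x_i^\theta-x_{\pi(i)}^\theta)z_\pi$, and one concludes $z_\pi=0$ in one line from freeness over $\kk[x_1,\dots,x_n]$. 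Your choice of $x_i$ forces you to track the Nakayama twist $\psi_{\pi(i)}$ when passing $x_{\pi(i)}$ through $\bb_\alpha$, which is why you need the auxiliary choice of the smallest $i$ with $\pi(i)\neq i$ and the lex-leading-monomial argument on $\alpha^*$. Both work; the paper's trick of using the $\theta$-th power buys a shorter argument by sidestepping the twist entirely, while your route has the minor virtue of not invoking the finiteness of the order of $\psi$.
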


\begin{proof}
  Let $z = \sum_{\pi \in S_n} z_\pi \pi$ be an element of the centralizer of $\kk[x_1,\dotsc,x_n]$ in $\cA_n(F)$, where $z_\pi \in P_n(F)$ for all $\pi \in S_n$.  Let $\pi \in S_n$ be maximal with respect to the strong Bruhat order such that $z_\pi \ne 0$.  Assume $\pi \ne 1$ and choose $i \in \{1,2,\dotsc,n\}$ such that $\pi(i) \ne i$.  Then, by Lemma~\ref{lem:Sn-commutation-mod-lower-terms}, we have
  \[
    x_i^\theta z - z x_i^\theta
    = \left( x_i^\theta - x_{\pi(i)}^\theta \right) z_\pi \pi + \sum_{\sigma : \sigma \not \ge \pi} z_\sigma' \sigma,
  \]
  for some $z_\sigma' \in P_n(F)$.  Thus, by Theorem~\ref{theo:AnF-basis}, $z$ is not central, giving a contradiction.  Hence the center of $\cA_n(F)$ is contained in $P_n(F)$.
\end{proof}

For $\alpha = (\alpha_1,\dotsc,\alpha_n) \in \Z^n$, let
\begin{equation} \label{eq:bF^alpha-def}
  \bF^{(\alpha)} := F^{(\alpha_1)} \otimes \dotsb \otimes F^{(\alpha_n)}
  \quad \text{and} \quad
  \bF_\psi^{(\alpha)} := F_\psi^{(\alpha_1)} \otimes \dotsb \otimes F_\psi^{(\alpha_n)}.
\end{equation}
It follows that
\[
  \bigoplus_{\alpha \in \N^n} x^\alpha \bF^{(-\alpha)}
  \quad \text{and} \quad
  \bigoplus_{\alpha \in \N^n} x^\alpha \bF_\psi^{(-\alpha)}
\]
are subalgebras of $\cA_n(F)$.  Note that $x^\alpha \bF^{(-\alpha)} = \big( x^{\alpha_1} F^{(-\alpha_1)} \big) \otimes \dotsb \otimes \big( x^{\alpha_1} F^{(-\alpha_n)} \big)$.

\begin{lem} \label{lem:centralizer-xF}
  The centralizer of $P_n(F)$ in $\cA_n(F)$ is the subalgebra $\bigoplus_{\alpha \in \N^n} x^\alpha \bF_\psi^{(-\alpha)}$.  In particular, the center of $\cA_n(F)$ is contained in the subalgebra $\bigoplus_{\alpha \in \N^n} x^\alpha \bF_\psi^{(-\alpha)}$.
\end{lem}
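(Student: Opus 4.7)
The plan is to combine Lemma~\ref{lem:center-in-PnF} with a direct coefficient analysis in the basis provided by Theorem~\ref{theo:AnF-basis}. Since $P_n(F) \supseteq \kk[x_1,\dotsc,x_n]$, the centralizer of $P_n(F)$ in $\cA_n(F)$ is contained in the centralizer of $\kk[x_1,\dotsc,x_n]$, which by Lemma~\ref{lem:center-in-PnF} lies in $P_n(F)$. Thus I would reduce to computing the (super-)centralizer of $P_n(F)$ in itself, where the only relations in play are polynomial commutativity and \eqref{rel:xF-commutation}. Splitting into parity components, I would write any $\Z_2$-homogeneous candidate uniquely as
\[
  z = \sum_{\alpha \in \N^n} x^\alpha f_\alpha, \qquad f_\alpha \in F^{\otimes n}, \quad \bar{f_\alpha} = \bar z,
\]
using the basis $\{x^\alpha \bb \mid \alpha \in \N^n,\ \bb \in B^{\otimes n}\}$ of $P_n(F)$.

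The first constraint comes from commuting $z$ with each $x_i$. Since $x_i$ is even and commutes with the other $x_j$'s, applying \eqref{rel:xF-commutation} and invoking the linear independence of the $x^\alpha \bb$ would force $\psi_i(f_\alpha) = f_\alpha$ for every $i$ and $\alpha$, so $f_\alpha \in F_\psi^{\otimes n}$. The second constraint comes from graded commutation with elements $\bsg \in F^{\otimes n}$. Iterating \eqref{rel:xF-commutation} yields $\bsg x^\alpha = x^\alpha \psi^\alpha(\bsg)$ with $\psi^\alpha := \psi_1^{\alpha_1} \dotsm \psi_n^{\alpha_n}$, and the identity $\bsg z = (-1)^{\bar z \bar \bsg} z\bsg$ reduces coefficient-by-coefficient to
\[
  \psi^\alpha(\bsg) f_\alpha = (-1)^{\bar z \bar \bsg} f_\alpha \bsg \qquad \text{for all } \bsg \in F^{\otimes n}.
\]

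The next step is to identify this identity with membership in $\bF^{(-\alpha)}$. Taking $\bsg$ to be a pure tensor of $\psi$-eigenvectors and unpacking the Koszul signs in the supertensor product, one recovers componentwise the defining relation of $F^{(-\alpha_i)}$ in each slot; this is essentially the single-factor argument used in the proof of Lemma~\ref{lem:center-kxF}. Combining this with $f_\alpha \in F_\psi^{\otimes n}$, and noting that $\psi$ preserves each $F^{(k)}$ and is diagonalizable (so that $\bF^{(-\alpha)} \cap F_\psi^{\otimes n} = \bF_\psi^{(-\alpha)}$), I would conclude that $f_\alpha \in \bF_\psi^{(-\alpha)}$ for every $\alpha$. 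The reverse containment is routine by running the same computations backwards, and the final sentence of the lemma is then immediate, since the center lies inside the centralizer of any subalgebra.

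The main obstacle will be the Koszul-sign bookkeeping when translating between the global relation $\psi^\alpha(\bsg) f_\alpha = (-1)^{\bar z \bar \bsg} f_\alpha \bsg$ and the componentwise definition of $\bF^{(-\alpha)}$. Exploiting the diagonalizability of $\psi$ to restrict attention to pure tensors of eigenvectors should collapse the signs to a tractable form, at which point the problem reduces componentwise to what has already been handled for the rank-one case in Lemma~\ref{lem:center-kxF}.
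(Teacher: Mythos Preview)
Your proposal is correct and follows essentially the same route as the paper: both reduce via Lemma~\ref{lem:center-in-PnF} to computing the center of $P_n(F)$ itself. The only difference is packaging. The paper observes that $P_n(F) \simeq (\kk[x] \ltimes F)^{\otimes n}$, invokes the general fact $Z(A_1 \otimes A_2) = Z(A_1) \otimes Z(A_2)$ for graded superalgebras, and then applies Lemma~\ref{lem:center-kxF} to each tensor factor; this dispatches the whole lemma in one line. Your coefficient-by-coefficient analysis is exactly an unpacking of that tensor-product-of-centers identity in this particular case, so the Koszul-sign bookkeeping you flag as the main obstacle is precisely what the general $Z(A_1 \otimes A_2) = Z(A_1) \otimes Z(A_2)$ statement absorbs. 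Either way works; the paper's version is shorter, while yours is more self-contained.
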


\begin{proof}
  This follows immediately from Lemmas~\ref{lem:center-kxF} and~\ref{lem:center-in-PnF}, together with the fact that \break $Z(A_1 \otimes A_2) = Z(A_1) \otimes Z(A_2)$ for $\kk$-algebras $A_1$ and $A_2$.
\end{proof}

\begin{theo} \label{theo:center}
  The center of $\cA_n(F)$ consists of those elements of $\bigoplus_{\alpha \in \N^n} x^\alpha \bF_\psi^{(-\alpha)}\subseteq P_n(F_\psi) = (\kk[x] \otimes F_\psi)^{\otimes n}$ that are invariant under the action of $S_n$ by superpermuting the factors of $(\kk[x] \otimes F_\psi)^{\otimes n}$.  In other words, the center of $\cA_n(F)$ consists of finite sums of the form
  \begin{equation} \label{eq:center-form}
    \sum_{\alpha \in \N^n} x^\alpha \bsf_\alpha,
    \quad \bsf_\alpha \in \bF_\psi^{(-\alpha)},
  \end{equation}
  such that $\bsf_{\pi \cdot \alpha} = \prescript{\pi}{}{\bsf_\alpha}$ for all $\alpha \in \N^n$ and $\pi \in S_n$.
\end{theo}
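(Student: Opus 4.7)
The plan is to use Lemma~\ref{lem:centralizer-xF} to reduce to the centralizer of $P_n(F)$, then use Lemma~\ref{lem:s_i-commutation-derivation} together with the basis theorem (Theorem~\ref{theo:AnF-basis}) to encode commutation with each simple transposition $s_i$ as two independent conditions: the superpermutation invariance of $z$ and the vanishing of $\Delta_i(z)$.

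First I would note that, since $\cA_n(F)$ is generated by $P_n(F)$ and the $s_i$, a central element is exactly one that lies in the centralizer of $P_n(F)$ and commutes with every $s_i$. By Lemma~\ref{lem:centralizer-xF}, the former forces $z \in \bigoplus_{\alpha \in \N^n} x^\alpha \bF_\psi^{(-\alpha)}$. For such a $z$, Lemma~\ref{lem:s_i-commutation-derivation} rewrites $s_i z = \prescript{s_i}{}{z}\, s_i - \Delta_i(z)$, so the condition $[s_i, z] = 0$ becomes
\[
(\prescript{s_i}{}{z} - z)\, s_i \;=\; \Delta_i(z).
\]
By the basis theorem, $P_n(F) \cdot s_i$ and $P_n(F)$ are linearly independent summands of $\cA_n(F)$, which forces both sides to vanish. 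Hence $z$ is central if and only if $\prescript{s_i}{}{z} = z$ and $\Delta_i(z) = 0$ for every $i$. Invariance under all the $s_i$ is equivalent to $S_n$-invariance, yielding immediately the containment of $Z(\cA_n(F))$ in the claimed $S_n$-invariant subspace of $\bigoplus_\alpha x^\alpha \bF_\psi^{(-\alpha)}$.

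The remaining task is to verify, for $z = \sum_\alpha x^\alpha \bsf_\alpha$ satisfying $\bsf_\alpha \in \bF_\psi^{(-\alpha)}$ and the equivariance $\bsf_{\pi \cdot \alpha} = \prescript{\pi}{}{\bsf_\alpha}$, that $\Delta_i(z) = 0$ automatically. I would group the sum by $s_i$-orbits on $\N^n$: a typical two-element orbit $\{\alpha, s_i \cdot \alpha\}$ with $\alpha_i \ne \alpha_{i+1}$ contributes $x^\alpha \bsf_\alpha + x^{s_i \cdot \alpha} \prescript{s_i}{}{\bsf_\alpha}$, while fixed indices with $\alpha_i = \alpha_{i+1}$ contribute a single term with $s_i$-invariant $\bsf_\alpha$. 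Since $\Delta_i$ vanishes on $F^{\otimes n}$ and obeys the skew Leibniz rule, $\Delta_i(x^\alpha \bsf_\alpha) = \Delta_i(x^\alpha)\bsf_\alpha$, and the only nontrivial piece of $\Delta_i(x^\alpha)$ lives in slots $i$ and $i+1$. Using \eqref{eq:Delta-i}, \eqref{eq:Delta-i+1}, Lemma~\ref{lem:Ft-commutation}, the identities \eqref{eq:xt-commutation-far}--\eqref{eq:tii+1-conjugation}, and the fact that $F_\psi$ commutes with all the $x_j$, one checks that each orbit contribution vanishes.

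The main obstacle is this last calculation. The hypotheses on $\bsf_\alpha$ enter in an essential way: membership in $F_\psi$ is what lets each factor commute with the polynomial variables, while membership in $F^{(-\alpha_i)}$ controls how factors commute past the dual-basis elements hidden inside $t_{i,i+1}^{(\alpha_i)}$. The key combinatorial identity is $\psi_{i+1}^k(t_{i,i+1}^{(a)}) = \psi_i^{-k}(t_{i,i+1}^{(a)})$, which follows by iterated application of \eqref{eq:psi-tij-reverse} together with a change of basis and the compatibility $\psi(f^\vee) = \psi(f)^\vee$. With this in hand, the movement of $x_{i+1}^b$ and of the factors $f \in F_\psi^{(-a)}$, $g \in F_\psi^{(-b)}$ across $t_{i,i+1}^{(a)}$ and $t_{i+1,i}^{(b)}$ produces exactly the sign-matched cancellation between the two members of each orbit, and reduces the fixed-orbit case to the $s_i$-invariance of $\bsf_\alpha$ in the two relevant tensor slots.
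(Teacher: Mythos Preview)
Your approach is correct and essentially the same as the paper's. Both directions coincide: the containment of the center in the $S_n$-invariants uses the basis theorem to separate the $P_n(F)$ and $P_n(F)\,s_i$ components of $s_i z - z s_i$ (the paper invokes Lemma~\ref{lem:Sn-commutation-mod-lower-terms}, you invoke Lemma~\ref{lem:s_i-commutation-derivation}, but these are equivalent here), and the reverse containment is in both cases the explicit orbit computation showing that
\[
t_{i,i+1}^{(q)}\, x_{i+1}^r\, \bsf - x_{i+1}^q\, t_{i+1,i}^{(r)}\, \bsf + t_{i,i+1}^{(r)}\, x_{i+1}^q\, \prescript{s_i}{}{\bsf} - x_{i+1}^r\, t_{i+1,i}^{(q)}\, \prescript{s_i}{}{\bsf} = 0
\]
for $\bsf \in \bF_\psi^{(-\alpha)}$; the paper writes this as $s_i$ commuting with orbit sums, you write it as $\Delta_i$ annihilating them, but the calculation and the ingredients (Lemma~\ref{lem:Ft-commutation}, \eqref{eq:psi-tij-reverse}, basis-independence of $t_{i,j}$) are identical.
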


\begin{proof}
  Let $z$ be a central element of $\cA_n(F)$.  By Lemma~\ref{lem:centralizer-xF}, we have
  \[
    z = \sum_{\alpha \in \N^n} x^\alpha \bsf_\alpha \quad \text{for some } \bsf_\alpha \in \bF_\psi^{(-\alpha)}.
  \]
  For $i \in \{1,2,\dotsc,n-1\}$, it follows from Lemma~\ref{lem:Sn-commutation-mod-lower-terms} that $s_i z - \left( \prescript{s_i}{}{z} \right) s_i \in P_n(F)$.  Thus
  \[
    z
    = s_i z s_i
    = \sum_{\alpha \in \N^n} x^{s_i \cdot \alpha} \left( \prescript{s_i}{}{\bsf_\alpha} \right) + a s_i,
  \]
  for some $a \in P_n(F)$.  It then follows from Theorem~\ref{theo:AnF-basis} that $z$ is invariant under the superpermutation action of $S_n$.

  In remains to prove that elements of the form \eqref{eq:center-form} commute with elements of $S_n$.  Let $1 \le i \le n-1$.  Since $s_i$ commutes with $x_j$ and $f_j$ for all $j \ne i,i+1$ and $f \in F$, it suffices to check that $s_i$ commutes with elements of the form
  \[
    x_i^q x_{i+1}^r \bsf + x_i^r x_{i+1}^q \prescript{s_i}{}{\bsf},\quad q,r \in \N,\ \bsf \in \bF_\psi^{(-\alpha)},
  \]
  where
  \[
    \alpha = (0,\dotsc,0,q,r,0,\dotsc,0),
  \]
  with $q$ appearing in the $i$-th component.

  We compute
  \begin{equation} \label{eq:center-comp-simplify1}
    s_i x_i^q x_{i+1}^r \bsf s_i
    \stackrel{\eqref{eq:sx^k-commutation1}}{=} x_{i+1}^q s_i x_{i+1}^r \bsf s_i - t_{i,i+1}^{(q)} x_{i+1}^r \bsf s_i
    \stackrel{\eqref{eq:sx^k-commutation2}}{=} x_i^r x_{i+1}^q \prescript{s_i}{}{\bsf} + x_{i+1}^q t_{i+1,i}^{(r)} \bsf s_i - t_{i,i+1}^{(q)} x_{i+1}^r \bsf s_i.
  \end{equation}
  By symmetry, we have
  \begin{equation} \label{eq:center-comp-simplify2}
    s_i x_i^r x_{i+1}^q \prescript{s_i}{}{\bsf} s_i
    = x_i^q x_{i+1}^r \bsf + x_{i+1}^r t_{i+1,i}^{(q)} \prescript{s_i}{}{\bsf} s_i - t_{i,i+1}^{(r)} x_{i+1}^q \prescript{s_i}{}{\bsf} s_i.
  \end{equation}
  We would like to show that
  \begin{equation} \label{eq:center-computation-want-zero}
    s_i \left( x_i^q x_{i+1}^r \bsf + x_i^r x_{i+1}^q \prescript{s_i}{}{\bsf} \right) s_i - x_i^q x_{i+1}^r \bsf - x_i^r x_{i+1}^q \prescript{s_i}{}{\bsf}
  \end{equation}
  is equal to zero.  By \eqref{eq:center-comp-simplify1} and \eqref{eq:center-comp-simplify2}, we see that \eqref{eq:center-computation-want-zero} is equal to
  \begin{equation} \label{eq:center-computation-want-zero2}
    x_{i+1}^q t_{i+1,i}^{(r)} \bsf s_i - t_{i,i+1}^{(q)} x_{i+1}^r \bsf s_i + x_{i+1}^r t_{i+1,i}^{(q)} \prescript{s_i}{}{\bsf} s_i - t_{i,i+1}^{(r)} x_{i+1}^q \prescript{s_i}{}{\bsf} s_i.
  \end{equation}
  Now
  \begin{align*}
    x_{i+1}^q t_{i+1,i}^{(r)} \bsf
    &= \sum_{b \in B} \sum_{\ell=0}^{r-1} x_{i+1}^q b_{i+1} x_i^{r-\ell-1} x_{i+1}^\ell b_i^\vee \bsf & \\
    &\stackrel{\mathclap{\eqref{rel:xF-commutation}}}{=}\  \sum_{b \in B} \sum_{\ell=0}^{r-1} x_{i+1}^\ell \psi^{\ell-q}(b)_{i+1} \psi^{\ell-r-1}(b^\vee)_i x_{i+1}^q x_i^{r-\ell-1} \bsf & \\
    &= \sum_{b \in B} \sum_{\ell=0}^{r-1} x_{i+1}^\ell \bsf \psi^{\ell-q-r}(b)_{i+1} \psi^{\ell-q-r-1}(b^\vee)_i x_{i+1}^q x_i^{r-\ell-1} & \left(\text{since $\bsf \in \bF_\psi^{(-\alpha)}$}\right) \\
    &= \sum_{b \in B} \sum_{\ell=0}^{r-1} x_{i+1}^\ell \bsf \psi^{-1}(b)_{i+1} b^\vee_i x_{i+1}^q x_i^{r-\ell-1} & \text{(see explanation below)} \\
    &\stackrel{\mathclap{\eqref{eq:psi-tij-reverse}}}{=}\ \ \sum_{\ell=0}^{r-1} x_{i+1}^\ell \bsf t_{i,i+1} x_{i+1}^q x_i^{r-\ell-1} & \\
    &\stackrel{\mathclap{\eqref{eq:Ft-commutation}}}{=}\ \ \sum_{\ell=0}^{r-1} x_{i+1}^\ell t_{i,i+1} x_{i+1}^q x_i^{r-\ell-1} \prescript{s_i}{}{\bsf} & \text{(since $\psi(\bsf) = \bsf$)} \\
    &= t_{i,i+1}^{(r)} x_{i+1}^q \prescript{s_i}{}{\bsf},
  \end{align*}
  where, in the fourth equality, we used the fact that the definition of $t_{i,j}$ is independent of the choice of basis and so we summed over the basis $\{\psi^{\ell-q-r+1}(b) \mid b \in B\}$ (and abused notation by calling this new basis $B$ again.)

  Thus, the first and last terms in \eqref{eq:center-computation-want-zero2} cancel.  The proof that the second and third cancel is similar.

  \details{
    We have
    \begin{align*}
      t_{i,i+1}^{(q)} x_{i+1}^r \bsf
      &= \sum_{b \in B} \sum_{\ell=0}^{q-1} b_i x_{i+1}^{q-1-\ell} x_i^\ell b_{i+1}^\vee x_{i+1}^r \bsf & \\
      &\stackrel{\mathclap{\eqref{rel:xF-commutation}}}{=}\ \sum_{b \in B} \sum_{\ell=0}^{q-1} x_i^\ell x_{i+1}^r \psi^\ell(b)_i \psi^{\ell+r-q+1}(b^\vee)_{i+1} x_{i+1}^{q-\ell-1} \bsf & \\
      &= \sum_{b \in B} \sum_{\ell=0}^{q-1} x_i^\ell x_{i+1}^r \bsf \psi^{\ell-q}(b)_i \psi^{\ell-q+1}(b^\vee)_{i+1} x_{i+1}^{q-\ell-1} & \left( \text{since $\bsf \in \bF_\psi^{(-\alpha)}$} \right) \\
      &= \sum_{b \in B} \sum_{\ell=0}^{q-1} x_i^\ell x_{i+1}^r \bsf \psi^{-1}(b_i) b_{i+1}^\vee x_{i+1}^{q-\ell-1} & \text{(changing basis $B$)} \\
      &\stackrel{\mathclap{\eqref{eq:Ft-commutation}}}{=}\ \  \sum_{\ell=0}^{q-1} x_i^\ell x_{i+1}^r \bsf t_{i+1,i} x_{i+1}^{q-\ell-1} & \\
      &\stackrel{\mathclap{\eqref{eq:Ft-commutation}}}{=}\ \ \sum_{\ell=0}^{q-1} x_i^\ell x_{i+1}^r t_{i+1,i} x_{i+1}^{q-\ell-1} \prescript{s_i}{}{\bsf} & \text{(since $\psi(\bsf) = \bsf$)} \\
      &= x_{i+1}^r t_{i+1,i}^{(q)} \prescript{s_i}{}{\bsf}.
    \end{align*}
  }
\end{proof}

\begin{eg}
  \begin{enumerate}
    \item When $F = \kk$, Theorem~\ref{theo:center} recovers the well-known result that the center of the degenerate affine Hecke algebra consists of all symmetric polynomials in $x_1,\dotsc,x_n$ (see, for example, \cite[Th.~3.3.1]{Kle05}).

    \item When $F$ is the group algebra of a finite group (see Example~\ref{eg:wreath-Hecke}), Theorem~\ref{theo:center} recovers \cite[Th.~2.10]{WW08}.

    \item When $F = \Cl$ (see Example~\ref{eg:affine-Sergeev}), we have $F_\psi^{(k)} = \kk$ for $k$ even and $F_\psi^{(k)} = 0$ for $k$ odd.  Thus, Theorem~\ref{theo:center} recovers \cite[Prop.~3.1]{Naz97} (see also \cite[Th.~14.3.1]{Kle05}), which states that the center of the affine Sergeev algebra consists of all symmetric polynomials in $x_1^2,\dotsc,x_n^2$.
  \end{enumerate}
\end{eg}

\begin{cor} \label{cor:fg-over-center}
  The center of $\cA_n(F)$ contains the ring of symmetric polynomials in $x_1^\theta,\dotsc,x_n^\theta$.  In particular, $\cA_n(F)$ is finitely generated as a module over its center.
\end{cor}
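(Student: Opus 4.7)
The plan is to treat the two assertions separately: the first is a direct application of Theorem~\ref{theo:center}, and the second is a routine finite-generation argument built on Theorem~\ref{theo:AnF-basis}.

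For the containment of symmetric polynomials in $x_1^\theta, \dotsc, x_n^\theta$ in the center, I would match such a polynomial directly to the form \eqref{eq:center-form} produced by Theorem~\ref{theo:center}. The key observation is that $1 \in F_\psi^{(-k\theta)}$ for every $k \in \N$, which is immediate from \eqref{eq:F_psi-def} and \eqref{eq:F^(k)-def} together with $\psi^\theta = \id$ and $\psi(1) = 1$. Consequently, for any $\alpha \in (\theta\N)^n$ the element $1^{\otimes n}$ lies in $\bF_\psi^{(-\alpha)}$, and any $S_n$-symmetric $\kk$-linear combination of monomials $x^\alpha$ trivially satisfies the condition $\bsf_{\pi \cdot \alpha} = \prescript{\pi}{}{\bsf_\alpha}$, since each $\bsf_\alpha$ is simply a scalar multiple of $1^{\otimes n}$.

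For the second assertion, the plan is to show that $\cA_n(F)$ is already finitely generated over the smaller central subring $Z' := \kk[x_1^\theta, \dotsc, x_n^\theta]^{S_n}$. By Corollary~\ref{cor:affine-wreath-presentation}, we have $\cA_n(F) \cong \kk[x_1, \dotsc, x_n] \otimes_\kk (F^{\otimes n} \rtimes_\rho S_n)$ as $\kk$-modules. I would chain three standard observations: $F^{\otimes n} \rtimes_\rho S_n$ is finitely generated over $\kk$ because $F$ is a Frobenius algebra (hence finitely generated); $\kk[x_1, \dotsc, x_n]$ is free over $\kk[x_1^\theta, \dotsc, x_n^\theta]$ on the finite set of monomials $\{x^\beta \mid 0 \le \beta_i < \theta\}$; and $\kk[x_1^\theta, \dotsc, x_n^\theta]$ is free of rank $n!$ over $Z'$ by the classical theorem on coinvariants of $S_n$. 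Multiplying these three finite generating sets then produces a finite spanning set for $\cA_n(F)$ as a $Z'$-module, and a fortiori as a module over the full center.

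No real obstacle arises in this argument: it is essentially bookkeeping on top of Theorem~\ref{theo:center}, Theorem~\ref{theo:AnF-basis}, and classical $S_n$-invariant theory. The only mildly delicate point is the verification that $1$ lies in the correct twisted centralizer $F^{(-k\theta)}$, but as noted above this is forced by $\psi^\theta = \id$.
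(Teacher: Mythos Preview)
Your proposal is correct and follows essentially the same approach as the paper: the first assertion is deduced from Theorem~\ref{theo:center} via the observation $\kk \subseteq F_\psi^{(-k\theta)}$, and the second is obtained from Theorem~\ref{theo:AnF-basis} by noting that $\cA_n(F)$ is finitely generated over the symmetric polynomials in $x_1^\theta,\dotsc,x_n^\theta$. Your write-up simply spells out the finite-generation chain in more detail than the paper does.
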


\begin{proof}
  That the center contains the ring of symmetric polynomials in $x_1^\theta,\dotsc,x_n^\theta$ follows from Theorem~\ref{theo:center} and the fact that $\kk \subseteq F_\psi^{(-k\theta)}$ for all $k \in \N$.  Thus $\cA_n(F)$ is finitely generated as a module over its center by Theorem~\ref{theo:AnF-basis}.
\end{proof}

\begin{rem} \label{rem:simples-fd}
  If $\kk$ is an algebraically closed field, then Corollary~\ref{cor:fg-over-center} implies that all simple $\cA_n(F)$-modules are finite-dimensional.
  \details{
    This follows from the more general fact that, over an algebraically closed field $\kk$, if $A$ is finitely generated over its center, then all simple $A$-modules are finite dimensional.  Indeed, let $V$ be a simple $A$-module.  Passing to the quotient algebra $A/\Ann_A V$, we may assume that the annihilator of $V$ in $A$ is zero.  Let $Z$ denote the center of $A$.  Since $V$ is a simple $A$-module, it is generated (as an $A$-module) by any nonzero element.  Since $A$ is finitely generated as a $Z$-module, it follows that $V$ is finitely generated as a $Z$-module.  Since multiplication by elements of $Z$ commutes with the $A$-action, it follows from Schur's Lemma (and the fact that the annihilator of $V$ is zero) that every nonzero element of $Z$ acts on $A$ as a nonzero scalar.  Now, let $z \in Z$, $z \ne 0$.  Then, $zV=V$ and so, by Nakayama's Lemma, there exists $a \in 1 + zZ$ such that $aV=0$.  This implies that $a=0$, and so $z$ is invertible.  Hence $Z$ is a field.  Since $\kk$ is algebraically closed, we have $Z = \kk$.  Thus $V$ is finitely generated over $\kk$.
  }
\end{rem}

\begin{prop} \label{prop:maximal-commutative}
  Suppose $A$ is a maximal commutative subalgebra of $F_\psi$.  Then $\kk[x_1,\dotsc,x_n] A^{\otimes n}$ is a maximal commutative subalgebra of $\cA_n(F)$.
\end{prop}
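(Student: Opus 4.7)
The plan is to establish the two required properties in turn: first that $\kk[x_1,\dotsc,x_n] A^{\otimes n}$ is (super)commutative, and then that it is maximal among such subalgebras.

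For supercommutativity, the essential observation is that $A \subseteq F_\psi$, so every element of $A^{\otimes n}$ is fixed by each $\psi_i$, and hence by \eqref{rel:xF-commutation} commutes with each $x_i$. Combined with the fact that $A$ is supercommutative (so $A^{\otimes n}$ is supercommutative as a super tensor product), this shows that $\kk[x_1,\dotsc,x_n] A^{\otimes n}$ is a subalgebra that, as a $\kk$-module, equals $\kk[x_1,\dotsc,x_n] \otimes A^{\otimes n}$, and it is supercommutative.

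For maximality, suppose $z \in \cA_n(F)$ supercommutes with every element of $\kk[x_1,\dotsc,x_n] A^{\otimes n}$. Since $z$ commutes with $\kk[x_1,\dotsc,x_n]$, Lemma~\ref{lem:center-in-PnF} gives $z \in P_n(F)$, so we may write $z = \sum_{\alpha} x^\alpha \bsf_\alpha$ with $\bsf_\alpha \in F^{\otimes n}$. Applying the relation $\bsf x_i = x_i \psi_i(\bsf)$ and comparing coefficients of $x^{\alpha+e_i}$ in $x_i z = z x_i$ via Theorem~\ref{theo:AnF-basis}, I would deduce $\psi_i(\bsf_\alpha) = \bsf_\alpha$ for all $i$ and $\alpha$, so in fact $\bsf_\alpha \in F_\psi^{\otimes n}$. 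Then, for $\bsg \in A^{\otimes n} \subseteq F_\psi^{\otimes n}$, the relation $\bsg x^\alpha = x^\alpha \bsg$ (valid because $\bsg$ is $\psi$-fixed) together with supercommutation $z\bsg = (-1)^{\bar z \bar\bsg}\bsg z$ reduces (after passing to parity-homogeneous components) to each $\bsf_\alpha$ supercommuting with each $\bsg$ inside $F_\psi^{\otimes n}$.

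The final step, which I expect to be the main obstacle, is converting ``$\bsf_\alpha$ supercommutes with $A^{\otimes n}$ in $F_\psi^{\otimes n}$'' into ``$\bsf_\alpha \in A^{\otimes n}$.'' Maximality of $A$ in $F_\psi$ implies $A$ equals its own supercentralizer in $F_\psi$ (any $f$ supercommuting with $A$ and trivially with itself generates together with $A$ a supercommutative extension, forced to equal $A$). A standard tensor-product argument, treating one factor at a time and being careful with sign conventions on parity-homogeneous components, then upgrades this to the statement that $A^{\otimes n}$ is its own supercentralizer in $F_\psi^{\otimes n}$. This forces each $\bsf_\alpha \in A^{\otimes n}$, giving $z \in \kk[x_1,\dotsc,x_n] A^{\otimes n}$ and completing the proof.
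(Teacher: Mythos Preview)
Your proposal is correct and follows essentially the same route as the paper: invoke Lemma~\ref{lem:center-in-PnF} to land in $P_n(F)$, use commutation with each $x_i$ together with the basis theorem to force $\bsf_\alpha \in F_\psi^{\otimes n}$, then use commutation with $A^{\otimes n}$ to force $\bsf_\alpha \in A^{\otimes n}$. You are in fact more thorough than the paper in two places: you explicitly verify that $\kk[x_1,\dotsc,x_n] A^{\otimes n}$ is (super)commutative, and you flag the tensor-product step (that the supercentralizer of $A^{\otimes n}$ in $F_\psi^{\otimes n}$ equals $A^{\otimes n}$) which the paper simply asserts.
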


\begin{proof}
  Suppose $z \in \cA_n(F)$ commutes with all elements of $\kk[x_1,\dotsc,x_n] A^{\otimes n}$.  It follows immediately from Lemma~\ref{lem:center-in-PnF} that $z \in P_n(F)$.  Hence we may write $z = \sum_{\alpha \in \N^n} x^\alpha \bsf_{\alpha}$ for some $\bsf_{\alpha} \in F^{\otimes n}$ with all but finitely many $\bsf_{\alpha}$ equal to zero.  Then, for $1 \le i \le n$, we have, by \eqref{rel:xF-commutation},
  \[
    x_i z = z x_i
    \implies \sum_\alpha x_i x^\alpha \bsf_{\alpha}
    = \sum_\alpha x_i x^\alpha \psi_i \left( \bsf_{\alpha} \right).
  \]
  It then follows from Theorem~\ref{theo:AnF-basis} that $\psi_i \left( \bsf_{\alpha} \right) = \bsf_{\alpha}$ for all $\alpha$.  Hence $\bsf_{\alpha} \in F_\psi^{\otimes n}$ for all $\alpha$.

  Now suppose $\bsg \in A^{\otimes n}$.  Then
  \[
    \bsg z = z \bsg \implies
    \sum_\alpha x^\alpha \bsg \bsf_{\alpha}
    = \sum_\alpha x^\alpha \bsf_{\alpha} \bsg.
  \]
  Thus, by Theorem~\ref{theo:AnF-basis}, $\bsg \bsf_{\alpha} = \bsf_{\alpha} \bsg$.  Since $A$ is a maximal commutative subalgebra of $F_\psi$, it follows that $\bsf_{\alpha} \in A^{\otimes n}$, completing the proof of the proposition.
\end{proof}

\begin{eg}
  \begin{enumerate}
    \item When $F = \kk$ (see Example~\ref{eg:daHa}), Proposition~\ref{prop:maximal-commutative} recovers the well-known fact that $\kk[x_1,\dotsc,x_n]$ is a maximal commutative subalgebra of the degenerate affine Hecke algebra.

    \item When $F = \Cl$ (see Example~\ref{eg:affine-Sergeev}), we have $F_\psi = \kk$.  Thus, Proposition~\ref{prop:maximal-commutative} recovers the fact that $\kk[x_1,\dotsc,x_n]$ is a maximal commutative subalgebra of the affine Sergeev algebra (see \cite[Prop.~3.1]{Naz97}).
  \end{enumerate}
\end{eg}

\subsection{Jucys--Murphy elements}

Define the \emph{Jucys--Murphy elements}
\begin{equation} \label{eq:JM-elements}
  J_1 = 0,\quad
  J_k = \sum_{i=1}^{k-1} t_{i,k} s_{i,k},\quad 2 \le k \le n,
\end{equation}
where $s_{i,k} \in S_n$ is the transposition of $i$ and $k$.  (See \cite[(8.7)]{RS17}.)

\begin{prop} \label{prop:JM-quotient}
  We have a surjective algebra homomorphism $\cA_n(F) \twoheadrightarrow F^{\otimes n} \rtimes_\rho S_n$ that is the identity on $F^{\otimes n} \rtimes_\rho S_n$ and maps $x_k$ to $J_k$ for $1 \le k \le n$.
\end{prop}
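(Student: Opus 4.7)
The plan is to use the presentation of $\cA_n(F)$ from Definition~\ref{def:affine-wreath}: since the proposed map fixes $F^{\otimes n} \rtimes_\rho S_n$ pointwise and sends each $x_k$ into that same subalgebra via $J_k$, surjectivity is automatic, and relation \eqref{rel:SF-commutation} holds in the target by construction. The substance of the argument is therefore to verify that the assignment $x_k \mapsto J_k$ is compatible with the three remaining defining relations \eqref{rel:xF-commutation}, \eqref{rel:sx-triv-commutation}, and \eqref{rel:sx-commutation}, interpreted now as identities inside $F^{\otimes n} \rtimes_\rho S_n$.

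For \eqref{rel:xF-commutation}, I would verify $\bsf J_k = J_k \psi_k(\bsf)$ summand by summand. The second form of \eqref{eq:Ft-commutation} (with $k=1$, which is a pure $F^{\otimes n}$-identity) rewrites $\bsf\, t_{\ell, k}$ as $t_{\ell, k} \prescript{s_{\ell, k}}{}{\psi_k(\bsf)}$, and then \eqref{rel:SF-commutation} lets me slide this factor through $s_{\ell, k}$ so that $\psi_k(\bsf)$ lands on the far right. Summing over $\ell$ yields the claim.

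For \eqref{rel:sx-triv-commutation}, with $j \ne i, i+1$, I split $J_j$ into summands indexed by $\ell \notin \{i, i+1\}$, which commute with $s_i$ term by term via \eqref{eq:tij-conjugation} and disjointness of supports, and (only when $j > i+1$) the pair $\ell = i,\, i+1$, which must be handled jointly. Conjugating via \eqref{eq:tij-conjugation} and then applying the Coxeter identities $s_i s_{i, j} = s_{i+1, j} s_i$ and $s_i s_{i+1, j} = s_{i, j} s_i$ (both immediate calculations in $S_n$), the $\ell = i$ term of $s_i J_j$ matches the $\ell = i+1$ term of $J_j s_i$ and vice versa, so the two sides agree.

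The essential identity is \eqref{rel:sx-commutation}, namely $s_i J_i = J_{i+1} s_i - t_{i, i+1}$. I would first apply \eqref{eq:tij-conjugation} to obtain $s_i J_i = \sum_{\ell=1}^{i-1} t_{\ell, i+1}\, s_i s_{\ell, i}$, then rewrite each $s_i s_{\ell, i} = s_{\ell, i+1} s_i$ (a direct check, since $\ell < i$) to get $\sum_{\ell=1}^{i-1} t_{\ell, i+1} s_{\ell, i+1} s_i$. Comparing this with $J_{i+1} s_i = \sum_{\ell=1}^{i} t_{\ell, i+1} s_{\ell, i+1} s_i$, the discrepancy is precisely the $\ell = i$ summand $t_{i, i+1} s_{i, i+1} s_i = t_{i, i+1}$, using $s_{i, i+1} = s_i$. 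This produces the $-t_{i, i+1}$ correction on the nose. I expect this relation to be the main hurdle---not for depth, but because it is the one that forces the exact shape of $J_k$: the entire cancellation hinges on the single boundary identity $s_{i, i+1} s_i = 1$, which is what distinguishes $J_k$ from nearby candidates.
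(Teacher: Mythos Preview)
Your verifications of \eqref{rel:xF-commutation}, \eqref{rel:sx-triv-commutation}, and \eqref{rel:sx-commutation} are correct, but there is one relation you have not addressed: since $\cA_n(F)$ is presented as a quotient of $\kk[x_1,\dotsc,x_n] \star F^{\otimes n} \star \kk S_n$, the first factor already carries the relations $x_i x_j = x_j x_i$, and a homomorphism out of $\cA_n(F)$ requires $J_i J_j = J_j J_i$ in the target. This is not one of the four labelled relations but rather an internal relation of the polynomial factor, and you cannot call your map an algebra homomorphism without it. The gap is repairable: commutativity of the $J_k$ follows by induction on the smaller index from the three relations you do establish, using the recursion $J_{i+1} = s_i J_i s_i + t_{i,i+1} s_i$ together with $\psi_j(t_{i,i+1}) = t_{i,i+1}$ for $j \notin \{i,i+1\}$.

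By contrast, the paper's proof is organized around exactly this recursion. It verifies only the single identity $s_k J_k s_k + t_{k,k+1} s_k = J_{k+1}$ in $F^{\otimes n} \rtimes_\rho S_n$ (equivalent to your check of \eqref{rel:sx-commutation}), notes that $x_{k+1} = s_k x_k s_k + t_{k,k+1} s_k$ holds in $\cA_n(F)$, and then asserts that all relations follow inductively from the base case $J_1 = 0$. Your relation-by-relation approach is more explicit and exposes the mechanism of each identity; the paper's is shorter but delegates the same inductive bookkeeping---including the commutativity you omitted---to the reader.
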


\begin{proof}
  The given map is clearly a map of $\kk$-modules.  To show that it is an algebra homomorphism, it suffices to prove that it respects the relations of $\cA_n(F)$.  This follows inductively from the fact that $x_{k+1} = s_k x_k s_k + t_{k,k+1} s_k$ by \eqref{rel:sx-commutation} and that
  \[
    s_k J_k s_k + t_{k,k+1} s_k
    = \sum_{i=1}^{k-1} s_k t_{i,k} s_{i,k} s_k + t_{k,k+1} s_k
    = \sum_{i=1}^{k-1} t_{i,k+1} s_{i,k+1} + t_{k,k+1} s_k \\
    = J_{k+1}. \qedhere
  \]
\end{proof}

\begin{eg}
  \begin{enumerate}
    \item When $F = \kk$ (see Example~\ref{eg:daHa}), the $J_k$ are the usual Jucys--Murphy elements of the symmetric group.

    \item When $F$ is the group algebra of a finite group (see Example~\ref{eg:wreath-Hecke}), the $J_k$ are the Jucys--Murphy elements of the wreath Hecke algebra introduced independently in \cite[Def.~2(a)]{Pus97} and \cite[Def.~3.1]{Wan04}.

    \item When $F = \Cl$ (see Example~\ref{eg:affine-Sergeev}), the $J_k$ are the Jucys--Murphy elements of the Sergeev algebra.  See, for example, \cite[(13.22)]{Kle05}.
  \end{enumerate}
\end{eg}

\subsection{Mackey Theorem\label{sec:Mackey}}

For a composition $\mu = (\mu_1,\dotsc,\mu_r)$ of $n$, let
\begin{equation} \label{eq:S_mu-def}
  S_\mu \cong S_{\mu_1} \times \dotsb \times S_{\mu_r}
\end{equation}
denote the corresponding Young subgroup of $S_n$.  Then let $\cA_\mu(F)$\label{parabolic-subalg-def} denote the \emph{parabolic subalgebra} of $\cA_n(F)$ generated by $F^{\otimes n}$, $\kk[x_1,\dotsc,x_n]$, and $S_\mu$.  So we have an isomorphism of graded superalgebras
\[
  \cA_\mu(F) \cong \cA_{\mu_1}(F) \otimes \dotsb \otimes \cA_{\mu_r}(F),
\]
and an even isomorphism of $(\Z \times \Z_2)$-graded $\kk$-modules
\[
  \cA_\mu(F) \simeq P_n(F) \otimes \kk S_\mu.
\]

Let $D_{\mu,\nu}$\label{D_munu-def} denote the set of minimal length $(S_\mu,S_\nu)$-double coset representatives in $S_n$. By \cite[Lem.~1.6(ii)]{DJ86}, for $\pi \in D_{\mu,\nu}$, $S_\mu \cap \pi S_\nu \pi^{-1}$ and $\pi^{-1} S_\mu \pi \cap S_\nu$ are Young subgroups of $S_n$.  So we can define compositions $\mu \cap \pi \nu$ and $\pi^{-1} \mu \cap \nu$ by
\[
  S_\mu \cap \pi S_\nu \pi^{-1} = S_{\mu \cap \pi \nu}
  \quad \text{and} \quad
  \pi^{-1} S_\mu \pi \cap S_\nu = S_{\pi^{-1} \mu \cap \nu}.
\]
Furthermore, the map $w \mapsto \pi^{-1} w \pi$ restricts to a length preserving isomorphism
\[
  S_{\mu \cap \pi \nu} \to S_{\pi^{-1}\mu \cap \nu}.
\]
One can verify that, for $\pi \in D_{\mu,\nu}$ and $s_i \in S_{\mu \cap \pi \nu}$, we have $\pi^{-1}(i+1) = \pi^{-1}(i)+1$, and hence $\pi^{-1} s_i \pi = s_{\pi^{-1} i}$.
\details{
  The fact that $s_i \in S_{\mu \cap \pi \nu} = S_\mu \cap \pi S_\nu \pi^{-1}$ implies that $i$ and $i+1$ are in the same subset of the partition $\mu$ (thought of as a partition of the set $\{1,2,\dotsc,n\}$ into the subsets $\{1,\dotsc,\mu_1\}$, $\{\mu_1+1,\dotsc, \mu_1+\mu_2\}$, etc.) and that $\pi^{-1}(i)$ and $\pi^{-1}(i+1)$ are in the same subset of the partition $\nu$.  On the other hand, $\pi \in D_{\mu,\nu}$ means that $\pi$ does not invert elements in the same subset of $\nu$ and $\pi^{-1}$ does not invert elements in the same subset of $\mu$.  It follows that $\pi^{-1}(i) < \pi^{-1}(i+1)$.  Suppose, towards a contradiction, that $\pi^{-1}(i) + 1 < \pi^{-1}(i+1)$.  Then there exists an integer $j$ with $\pi^{-1}(i) < j < \pi^{-1}(i+1)$.  Then  $\pi^{-1}(i)$, $j$, and $\pi^{-1}(i+1)$ are are in the same subset of the partition $\nu$.  Hence $\pi$ does not invert these elements, and so $i < \pi(j) < i+1$, which is clearly a contradiction.
}
Thus, for each $\pi \in D_{\mu,\nu}$, we have an algebra isomorphism
\begin{gather*}
  \varphi_{\pi^{-1}} \colon \cA_{\mu \cap \pi v}(F) \to \cA_{\pi^{-1} \mu \cap \nu}(F), \\
  \varphi_{\pi^{-1}}(\sigma) = \pi^{-1} \sigma \pi,\quad \varphi_{\pi^{-1}}(f_i) = f_{\pi^{-1}i},\quad \varphi_{\pi^{-1}}(x_i) = x_{\pi^{-1}i},\quad \sigma \in S_{\mu \cap \pi \nu},\ f \in F,\ 1 \le i \le n.
\end{gather*}
\details{
  As noted above, $\varphi_{\pi^{-1}}$ induces an isomorphism $\kk S_{\mu \cap \pi \nu} \to \kk S_{\pi^{-1} \mu \cap \nu}$.  It is also clear that it induces an automorphism of $F^{\otimes n}$ and of $\kk[x_1,\dotsc,x_n]$.  It obviously preserves the relations \eqref{rel:xF-commutation} and \eqref{rel:SF-commutation}.  Now suppose $j \ne i,i+1$.  Then $\pi^{-1} i \ne \pi^{-1} j$ and $\pi^{-1}(i)+1 = \pi^{-1}(i+1) \ne \pi^{-1} j$ by the above.  Thus
  \[
    \varphi_{\pi^{-1}} (s_i x_j)
    = s_{\pi^{-1} i} x_{\pi^{-1} j}
    \stackrel{\eqref{rel:sx-triv-commutation}}{=} x_{\pi^{-1} j} s_{\pi^{-1} i}
    = \varphi_{\pi^{-1}} (x_j s_i),
  \]
  so $\varphi_{\pi^{-1}}$ preserves \eqref{rel:sx-triv-commutation}.  Finally,
  \begin{multline*}
    \varphi_{\pi^{-1}} (s_i x_i)
    = s_{\pi^{-1} i} x_{\pi^{-1} i}
    \stackrel{\eqref{rel:sx-commutation}}{=} x_{\pi^{-1}(i)+1} s_{\pi^{-1} i} - t_{\pi^{-1} i,\pi^{-1}(i)+1}
    \\
    = x_{\pi^{-1}(i+1)} s_{\pi^{-1} i} - t_{\pi^{-1} i,\pi^{-1}(i+1)}
    = \varphi_{\pi^{-1}} (x_{i+1} s_i - t_{i,i+1}),
  \end{multline*}
  and so $\varphi_{\pi^{-1}}$ preserves \eqref{rel:sx-commutation}.
}
If $N$ is a left $\cA_{\pi^{-1} \mu \cap \nu}(F)$-module, we let $\prescript{\pi}{}{N}$ denote the left $\cA_{\mu \cap \pi \nu}$-module with action given by
\[
  a \cdot v = \varphi_{\pi^{-1}}(a) v,\quad a \in \cA_{\mu \cap \pi \nu},\ v \in \prescript{\pi}{}{N} = N,
\]
where juxtaposition denotes the original action on $N$.

The inclusion $\cA_\mu(F) \subseteq \cA_n(F)$ gives rise to restriction and induction functors
\begin{equation} \label{eq:Ind-Res-def}
  \Res^n_\mu \colon \cA_n(F)\md \to \cA_\mu(F)\md,\qquad
  \Ind^n_\mu \colon \cA_\mu(F)\md \to \cA_n(F)\md.
\end{equation}

\begin{theo}[Mackey Theorem for $\cA_n(F)$] \label{theo:Mackey}
  Suppose $M$ is an $\cA_\nu(F)$-module.  Then $\Res^n_\mu \Ind_\nu^n M$ admits a filtration with subquotients evenly isomorphic to
  \[
    \Ind_{\mu \cap \pi \nu}^\mu \prescript{\pi}{}{(\Res^\nu_{\pi^{-1} \mu \cap \nu}M)},
  \]
  one for each $\pi \in D_{\mu,\nu}$.  Furthermore, the subquotients can be taken in any order refining the strong Bruhat order on $D_{\mu,\nu}$.  In particular, $\Ind_{\mu \cap \nu}^\mu \Res_{\mu \cap \nu}^\nu M$ appears as a submodule.
\end{theo}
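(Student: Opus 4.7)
The plan is to build an explicit filtration of $\Res^n_\mu \Ind_\nu^n M$ indexed by downward-closed subsets of $D_{\mu,\nu}$ under the strong Bruhat order, whose successive quotients realize the stated modules. By Theorem~\ref{theo:AnF-basis} and the decomposition $\kk S_n = \bigoplus_{\pi \in D_{\mu,\nu}} \kk(S_\mu \pi S_\nu)$, one obtains a $\kk$-module direct sum $\Ind_\nu^n M = \bigoplus_{\pi \in D_{\mu,\nu}} V_\pi$ where
$$V_\pi := \big(P_n(F) \otimes \kk(S_\mu \pi S_\nu)\big) \otimes_{\cA_\nu(F)} M.$$
For each downward-closed $\Sigma \subseteq D_{\mu,\nu}$ I set $V_\Sigma := \sum_{\pi \in \Sigma} V_\pi$.

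The central step is to verify that each $V_\Sigma$ is an $\cA_\mu(F)$-submodule. Elements of $P_n(F) \subseteq \cA_\mu(F)$ act on the left polynomial factor and clearly preserve each $V_\pi$. For $s_i \in S_\mu$, Lemma~\ref{lem:s_i-commutation-derivation} allows me to move $s_i$ past an element of $P_n(F)$ at the cost of a correction lying purely in $P_n(F)$ (no $S_n$ component), and the residual $s_i \cdot \pi \tau$ is resolved in $\kk S_n$ via classical $(S_\mu, S_\nu)$-double-coset combinatorics: either $s_i \pi \in S_\mu \pi S_\nu$, or $s_i \pi = \sigma \pi' \tau'$ with $\sigma \in S_\mu$, $\tau' \in S_\nu$, and $\pi' < \pi$ in the Bruhat order. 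Combined with Lemma~\ref{lem:Sn-commutation-mod-lower-terms} to handle the movement of $x_j$'s on the other side of $\pi$, this shows $\cA_\mu(F) \cdot V_\Sigma \subseteq V_\Sigma$.

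To identify the layers, for $\pi$ minimal in $D_{\mu,\nu} \setminus \Sigma$ I would construct an even $\cA_\mu(F)$-homomorphism
$$\Theta_\pi \colon \Ind_{\mu \cap \pi \nu}^\mu \prescript{\pi}{}{\big(\Res^\nu_{\pi^{-1}\mu \cap \nu} M \big)} \longrightarrow V_{\Sigma \cup \{\pi\}} / V_\Sigma,\qquad a \otimes m \longmapsto a\pi \otimes m + V_\Sigma.$$
Well-definedness reduces to checking $b\pi \otimes m \equiv \pi \varphi_{\pi^{-1}}(b) \otimes m \pmod{V_\Sigma}$ for $b \in \cA_{\mu \cap \pi \nu}(F)$, which follows from the identities $\pi^{-1} x_i \pi = x_{\pi^{-1}i}$, $\pi^{-1} f_i \pi = f_{\pi^{-1}i}$, and $\pi^{-1} s_i \pi = s_{\pi^{-1}i}$ for $s_i \in S_{\mu \cap \pi\nu}$, all holding inside $\cA_n(F)$ modulo terms whose $\kk S_n$-support lies in strictly lower $(S_\mu, S_\nu)$-double cosets (coming from the $t_{i,j}$-corrections produced by \eqref{rel:sx-commutation}). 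Surjectivity of $\Theta_\pi$ is immediate from the definition of $V_{\Sigma \cup \{\pi\}}$; injectivity follows from a graded-dimension count using Theorem~\ref{theo:AnF-basis} and the fact that $|S_\mu \pi S_\nu| = |S_\mu| \cdot |S_\nu| / |S_{\mu \cap \pi\nu}|$. The final assertion of the theorem is then the case $\Sigma = \varnothing$, $\pi = 1$, which is the minimum of $D_{\mu,\nu}$.

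The main obstacle will be the second step: a careful bookkeeping argument to confirm that every correction term generated by the noncommutativity relation \eqref{rel:sx-commutation} has $\kk S_n$-support contained in double cosets $S_\mu \pi'' S_\nu$ with $\pi'' \le \pi$ in $D_{\mu,\nu}$. This combines Lemma~\ref{lem:Sn-commutation-mod-lower-terms} with the standard Bruhat analysis of parabolic double cosets in $S_n$, and is what guarantees that the filtration layers line up with the Bruhat order as claimed.
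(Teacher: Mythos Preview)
Your approach is essentially that of the paper (which in turn defers to \cite[Th.~3.5.2 and Th.~14.5.2]{Kle05}): filter $\cA_n(F)$, or equivalently $\Ind_\nu^n M$, by Bruhat--downward--closed unions of $(S_\mu,S_\nu)$-double cosets using Lemma~\ref{lem:Sn-commutation-mod-lower-terms}, and then identify each layer with the asserted induced module via the map $a \otimes m \mapsto a\pi \otimes m$, checking bijectivity by a basis/dimension count. One notational caution: your $V_\pi$ as written is not literally well-formed, since $P_n(F)\,\kk(S_\mu \pi S_\nu)$ is not a right $\cA_\nu(F)$-submodule of $\cA_n(F)$, but this is immaterial once you pass to the downward-closed sums $V_\Sigma$, exactly as you do.
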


\begin{proof}
  The proof is almost identical to the proofs of \cite[Th.~3.5.2]{Kle05} and \cite[Th.~14.5.2]{Kle05} and hence will be omitted.
\end{proof}

\details{
  Fix some total order $\prec$ refining the strong Bruhat order $<$ on $D_{\mu,\nu}$.  For $\pi \in D_{\mu,\nu}$, set
  \begin{gather*}
    \cB_{\preceq \pi} = \bigoplus_{\sigma \in D_{\mu,\nu},\, \sigma \preceq \pi} \cA_\mu(F) \sigma S_\nu, \\
    \cB_{\prec \pi} = \bigoplus_{\sigma \in D_{\mu,\nu},\, \sigma \prec \pi} \cA_\mu(F) \sigma S_\nu, \\
    \cB_\pi = \cB_{\preceq \pi}/\cB_{\prec \pi}.
  \end{gather*}
  It follows from Lemma~\ref{lem:Sn-commutation-mod-lower-terms} that $\cB_{\preceq \pi}$ and $\cB_{\prec \pi}$ are invariant under right multiplication by $P_n(F)$.  Since $\cA_\nu(F) = S_\nu P_n(F)$, it follows that
  \begin{equation} \label{eq:dH-B-filtation}
    \cA_n(F) = \sum_{\pi \in D_{\mu,\nu}} \cB_{\preceq \pi} \tag{$*$}
  \end{equation}
  is a filtration of $(\cA_\mu(F), \cA_\nu(F))$-modules.

  \begin{lem*}
    Consider $\cA_\mu(F)$ as a $(\cA_\mu(F),\cA_{\mu \cap \pi \nu}(F))$-bimodule and $\cA_\nu(F)$ as a $(\cA_{\pi^{-1}\nu \cap \nu}, \cA_\nu(F))$-bimodule in the natural ways.  Then $\prescript{\pi}{}{\cA}_\nu(F)$ is a $(\cA_{\mu \cap \pi \nu}(F), \cA_\nu(F))$-bimodule and
    \[
      \cB_\pi \simeq \cA_\mu(F) \otimes_{\cA_{\mu \cap \pi \nu}(F)} \prescript{\pi}{}{\cA_\nu(F)}
    \]
    as $(\cA_\mu(F), \cA_\nu(F))$-bimodules.
  \end{lem*}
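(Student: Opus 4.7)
The plan is to build an explicit bimodule map
\[
\Phi \colon \cA_\mu(F) \otimes_{\cA_{\mu\cap\pi\nu}(F)} \prescript{\pi}{}{\cA_\nu(F)} \to \cB_\pi,\qquad a \otimes b \mapsto a\pi b + \cB_{\prec\pi},
\]
and to verify that it is an even isomorphism of $(\cA_\mu(F),\cA_\nu(F))$-bimodules. As a preliminary step I would confirm that $\cB_{\prec\pi}$ and $\cB_{\preceq\pi}$ are honest subbimodules: left closure under $\cA_\mu(F)$ is built into the definition, and right closure under $\cA_\nu(F)$ reduces to closure under right multiplication by $P_n(F)$. Using Theorem~\ref{theo:AnF-basis} to write any element of $\cA_n(F)$ as $\sum_\sigma p_\sigma \sigma$ with $p_\sigma \in P_n(F)$, and then using the factorization $\sigma = \tau_1 \sigma' \tau_2$ with $\sigma' \in D_{\mu,\nu}$, $\tau_1 \in S_\mu$, $\tau_2 \in S_\nu$, $\ell(\sigma) = \ell(\tau_1)+\ell(\sigma')+\ell(\tau_2)$, the Subword Property of the strong Bruhat order gives $\sigma' \le \sigma$; hence any element supported on $\sigma < \pi$ lies in $\cA_\mu(F)\sigma' S_\nu$ with $\sigma' \prec \pi$, i.e.\ in $\cB_{\prec\pi}$. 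The same bookkeeping shows that the commutation remainders produced by Lemma~\ref{lem:Sn-commutation-mod-lower-terms} stay inside $\cB_{\prec\pi}$.

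Next I would check well-definedness. Because $\cB_{\prec\pi}$ is a subbimodule, the derivation-style identity
\[
c_1 c_2 \pi - \pi\,\varphi_{\pi^{-1}}(c_1 c_2) = c_1\bigl(c_2\pi - \pi\varphi_{\pi^{-1}}(c_2)\bigr) + \bigl(c_1\pi - \pi\varphi_{\pi^{-1}}(c_1)\bigr)\varphi_{\pi^{-1}}(c_2)
\]
reduces the congruence $c\pi \equiv \pi\,\varphi_{\pi^{-1}}(c) \pmod{\cB_{\prec\pi}}$ to the case of generators of $\cA_{\mu\cap\pi\nu}(F)$. For $c = s_i$ with $s_i\in S_{\mu\cap\pi\nu}$ the identity $s_i\pi = \pi s_{\pi^{-1}i}$ holds on the nose, using $\pi^{-1}s_i\pi = s_{\pi^{-1}i}$ (established just before the lemma). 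For $c \in F^{\otimes n}$, relation~\eqref{rel:SF-commutation} gives $c\pi = \pi\,\prescript{\pi^{-1}}{}{c} = \pi\,\varphi_{\pi^{-1}}(c)$ exactly. For $c = x_i$, Lemma~\ref{lem:Sn-commutation-mod-lower-terms} yields $x_i\pi = \pi x_{\pi^{-1}i} + \sum_{\sigma<\pi}\sigma a'_\sigma$, and the tail lies in $\cB_{\prec\pi}$ by the Subword argument above. Surjectivity of $\Phi$ is then immediate, since $\pi + \cB_{\prec\pi} = \Phi(1\otimes 1)$ generates $\cB_\pi$ as an $(\cA_\mu(F),\cA_\nu(F))$-bimodule.

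For injectivity I would argue by comparison of graded dimensions in each degree. Theorem~\ref{theo:AnF-basis} together with the disjoint double-coset decomposition $S_n = \bigsqcup_{\sigma \in D_{\mu,\nu}} S_\mu\sigma S_\nu$ shows that the filtration sum is direct: both $\sum_{\sigma \in D_{\mu,\nu}} \cA_\mu(F)\sigma S_\nu$ and $\cA_n(F)$ have graded dimension $|S_n|\grdim P_n(F)$. Hence $\grdim \cB_\pi = |S_\mu\pi S_\nu|\grdim P_n(F) = (|S_\mu||S_\nu|/|S_{\mu\cap\pi\nu}|)\grdim P_n(F)$. On the source side, the length-preserving isomorphism $S_{\mu\cap\pi\nu}\xrightarrow{\sim} S_{\pi^{-1}\mu\cap\nu}$ makes $\prescript{\pi}{}{\cA_\nu(F)}$ a free left $\cA_{\mu\cap\pi\nu}(F)$-module on any right transversal of $S_{\pi^{-1}\mu\cap\nu}$ in $S_\nu$, so the source of $\Phi$ has graded rank $|S_\nu|/|S_{\mu\cap\pi\nu}|$ over $\cA_\mu(F)$, yielding the same graded dimension. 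Since a graded surjection between graded $\kk$-modules that are finitely generated and free in each degree is an isomorphism, $\Phi$ is the desired bimodule isomorphism. The main obstacle is the careful Bruhat/Subword accounting underlying both the subbimodule property of $\cB_{\prec\pi}$ and the $x_i$-balancedness check; once that is in place, the rest is essentially formal.
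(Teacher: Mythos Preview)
Your proposal is correct and follows essentially the same route as the paper: construct the map $\Phi(a\otimes b)=a\pi b+\cB_{\prec\pi}$, verify it is $\cA_{\mu\cap\pi\nu}(F)$-balanced (you are in fact more careful than the paper's hidden details, which gloss over the $x_i$ case), and then prove bijectivity. The only real difference is that the paper establishes bijectivity by exhibiting an explicit basis of the tensor product (borrowed from \cite[\S3.5]{Kle05}) and checking that $\Phi$ carries it to a basis of $\cB_\pi$, whereas you compare graded dimensions; both arguments are standard and yield the same conclusion.
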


  \begin{proof}
    Define a bilinear map
    \[
      \cA_\mu(F) \times \prescript{\pi}{}{\cA_\nu(F)} \to \cB_\pi = \cB_{\preceq \pi}/\cB_{\prec \pi},\quad
      (a,a') \mapsto a \pi a' + \cB_{\prec \pi}.
    \]
    Suppose $a \in \cA_\mu(F)$, $a' \in \prescript{\pi}{}{\cA_\nu(F)}$, and $z \in \cA_{\mu \cap \pi \nu}(F)$.  Then, if $\phi$ denotes the given bilinear map, we have
    \[
      \phi(az,a')
      = az \pi a' + \cB_{\prec \pi}
      = a \pi \left( \pi^{-1} z \pi \right) a' + \cB_{\prec \pi}
      = a \pi \varphi_{\pi^{-1}}(z) a' + \cB_{\prec \pi}
      = a\pi (z \cdot a') + \cB_{\prec \pi}
      = \phi(a, z \cdot a').
    \]
    Hence $\phi$ is $\cA_{\mu \cap \pi \nu}(F)$-balanced.  Thus, it yields a $(\cA_\mu(F), \cA_\nu(F))$-bimodule homomorphism
    \[
      \Phi \colon \cA_\mu(F) \otimes_{\cA_{\mu \cap \pi \nu}(F)} \prescript{\pi}{}{\cA_\nu(F)} \to \cB_\pi.
    \]
    By \cite[\S3.5, Property (3)]{Kle05}, the elements
    \[
      \pi^\alpha f u \otimes v,\quad \alpha \in \N^n,\ f \in B^{\otimes n},\ u \in S_\mu,\ v \in S_\nu \cap D_{\pi^{-1} \mu \cap \nu}^{-1},
    \]
    form a basis of $\cA_\mu \otimes_{\cA_{\mu \cap \pi \nu}(F)} \prescript{\pi}{}{\cA_\nu(F)}$ as a $\kk$-module.  Then, again by \cite[\S3.5, Property (3)]{Kle05}, the images of these elements under $\phi$ is a basis of $\cB_\pi$.
  \end{proof}

  \begin{proof}[Proof of Theorem~\ref{theo:Mackey}]
    This follows from the filtration \eqref{eq:dH-B-filtation}, the above lemma, and the isomorphism
    \[
      \left( \cA_\mu(F) \otimes_{\cA_{\mu \cap \pi \nu}(F)} \prescript{\pi}{}{\cA_\nu(F)} \right) \otimes_{\cA_\nu(F)} M
      \simeq \Ind_{\mu \cap \pi \nu}^\mu \prescript{\pi}{}{\left( \Res_{\pi^{-1} \mu \cap \nu}^\nu M \right)},
    \]
    which is straightforward to verify.
  \end{proof}\
}

\subsection{Intertwining elements\label{subsec:interwining-elements}}

Intertwining elements play a fundamental role in the treatment of integral modules for degenerate affine Hecke algebras (see \cite[\S3.8]{Kle05}), affine Sergeev algebras (see \cite[\S14.8]{Kle05}), and wreath Hecke algebras (see \cite[\S5.2]{WW08}).  While the treatment of integral modules for affine wreath product algebras is beyond the scope of the current paper, we introduce here intertwining elements in these algebras and prove that they have properties analogous to those in the aforementioned special cases.

For $1 \le i < n$, define
\[
  \Omega_i
  := x_{i+1}^\theta s_i - s_i x_{i+1}^\theta
  = s_i(x_i^\theta - x_{i+1}^\theta) + t_{i,i+1}^{(\theta)}
  = (x_{i+1}^\theta - x_i^\theta) s_i - t_{i+1,i}^{(\theta)},
\]
where the last two equalities follow from Lemma~\ref{lem:sx-higher-commutation}.

\begin{lem}
  For $1 \le i,j < n$, we have
  \begin{gather}
    \Omega_i^2
    = \left( t_{i,i+1}^{(\theta)} \right)^2 - \left( x_i^\theta - x_{i+1}^\theta \right)^2, \label{eq:Omega-squared} \\
    \Omega_i f_j = f_{s_i(j)} \Omega_i, \label{eq:Omega-f-commutation} \\
    \Omega_i x_j = x_{s_i(j)} \Omega_i. \label{eq:Omega-x-commutation} \\
    \Omega_i \Omega_j = \Omega_j \Omega_i \quad \text{if} \quad |i-j|>1, \label{eq:Omega-distant-commute}
  \end{gather}
\end{lem}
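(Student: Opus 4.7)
The four identities all follow by direct computation, resting on three key inputs: (a) $x_i^\theta$ and $x_{i+1}^\theta$ are central in $P_n(F)$, since iterating \eqref{rel:xF-commutation} gives $\bsf x_i^\theta = x_i^\theta \psi_i^\theta(\bsf) = x_i^\theta \bsf$ (as $\psi^\theta = \id$); (b) the commutation formulas $s_i x_i^\theta = x_{i+1}^\theta s_i - t_{i,i+1}^{(\theta)}$ and $s_i x_{i+1}^\theta = x_i^\theta s_i + t_{i+1,i}^{(\theta)}$ from Lemma~\ref{lem:sx-higher-commutation}; and (c) the swap rule \eqref{eq:tii+1-conjugation}, which gives $s_i t_{i,i+1}^{(\theta)} = t_{i+1,i}^{(\theta)} s_i$ and equivalently $s_i t_{i+1,i}^{(\theta)} = t_{i,i+1}^{(\theta)} s_i$.

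Starting with the easier identities: for \eqref{eq:Omega-distant-commute}, when $|i-j|>1$ every factor of $\Omega_i = x_{i+1}^\theta s_i - s_i x_{i+1}^\theta$ commutes with every factor of $\Omega_j$, using \eqref{rel:sx-triv-commutation} and the distant Coxeter relation $s_i s_j = s_j s_i$. For \eqref{eq:Omega-f-commutation}, combine $s_i f_j = f_{s_i(j)} s_i$ from \eqref{rel:SF-commutation} (the super-sign being trivial since $f_j$ has only one nontrivial tensor factor) with the commutativity of $x_{i+1}^\theta$ and $f_j$ from (a). For \eqref{eq:Omega-x-commutation}, I would split into cases: $j\not\in\{i,i+1\}$ is immediate from \eqref{rel:sx-triv-commutation}, while for $j \in \{i, i+1\}$ the $t$-terms produced by \eqref{rel:sx-commutation} and \eqref{rel:sx-commutation2} cancel in pairs because $t_{i,i+1}$ and $t_{i+1,i}$ commute with $x_{i+1}^\theta$ by (a).

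The substantive computation is \eqref{eq:Omega-squared}. Set $u = x_i^\theta - x_{i+1}^\theta$, $t = t_{i,i+1}^{(\theta)}$, and $t' = t_{i+1,i}^{(\theta)}$; from (b), $\Omega_i = -us_i - t'$. I would expand
\[
  \Omega_i^2 = u s_i u s_i + u s_i t' + t' u s_i + (t')^2,
\]
and reduce each term using the auxiliary identity $s_i u = -u s_i - (t+t')$ (obtained by subtracting the two relations in (b)), the swap rule (c), and the centrality of $u$ in $P_n(F)$. All coefficients of $s_i$ cancel in pairs, leaving $\Omega_i^2 = (t')^2 - u^2$.

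The final step---which I expect to be the main obstacle presentationally---is to recognize $(t')^2 = t^2$. For this I would observe that $\Omega_i$ \emph{anticommutes} with $s_i$: directly from $\Omega_i = x_{i+1}^\theta s_i - s_i x_{i+1}^\theta$ one reads off $\Omega_i s_i + s_i \Omega_i = 0$. Hence $\Omega_i^2$ commutes with $s_i$. A similar short computation ($s_i u^2 = (s_i u) u = (-u s_i - t - t')u = u^2 s_i$, again using centrality of $u$) shows $u^2$ commutes with $s_i$, so $(t')^2$ does too. Combined with $s_i t' s_i = t$ from (c), this yields $(t')^2 = s_i (t')^2 s_i = (s_i t' s_i)^2 = t^2$, completing the proof of \eqref{eq:Omega-squared}.
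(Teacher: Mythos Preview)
Your proof is correct and follows essentially the same approach as the paper for \eqref{eq:Omega-f-commutation}, \eqref{eq:Omega-x-commutation}, and \eqref{eq:Omega-distant-commute}. For \eqref{eq:Omega-squared}, the paper's computation is the same in spirit but sidesteps your final ``$(t')^2 = t^2$'' step entirely: it expands using the form $\Omega_i = s_i u + t$ (with $t = t_{i,i+1}^{(\theta)}$ on the right) rather than your $\Omega_i = -u s_i - t'$, and the same cancellations you found then leave $t^2 - u^2$ directly. Your anticommutation argument to recover $t^2$ from $(t')^2$ is correct and rather nice, but it is an avoidable detour arising purely from which of the three displayed expressions for $\Omega_i$ you chose to square.
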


\begin{proof}
  We have
  \[
    \Omega_i^2
    = s_i (x_i^\theta - x_{i+1}^\theta) s_i (x_i^\theta - x_{i+1}^\theta) + s_i (x_i^\theta - x_{i+1}^\theta) t_{i,i+1}^{(\theta)}
    + t_{i,i+1}^{(\theta)} s_i (x_i^\theta - x_{i+1}^\theta) + \left( t_{i,i+1}^{(\theta)} \right)^2.
  \]
  Now, using Lemma~\ref{lem:sx-higher-commutation}, we have
  \[
    (x_i^\theta-x_{i+1}^\theta) s_i
    = s_i (x_{i+1}^\theta - x_i^\theta) - t_{i,i+1}^{(\theta)} - t_{i+1,i}^{(\theta)}.
  \]
  Using this and \eqref{eq:tii+1-conjugation}, relation \eqref{eq:Omega-squared} follows.  Relation \eqref{eq:Omega-f-commutation} follows easily from \eqref{rel:xF-commutation} and the fact that $\theta$ is the order of $\psi$.

  To prove \eqref{eq:Omega-x-commutation}, we compute
  \begin{multline*}
    x_{i+1} \Omega_i - \Omega_i x_i
    = (x_{i+1}s_i - s_i x_i) (x_i^\theta - x_{i+1}^\theta) - \sum_{b \in B} b_i (x_i^\theta - x_{i+1}^\theta) b_{i+1}^\vee
    \\
    \stackrel{\eqref{rel:sx-commutation}}{=} t_{i,i+1} (x_i^\theta - x_{i+1}^\theta)  - \sum_{b \in B} b_i (x_i^\theta - x_{i+1}^\theta) b_{i+1}^\vee
    = 0,
  \end{multline*}
  and
  \begin{multline*}
    x_i \Omega_i - \Omega_i x_{i+1}
    = (x_{i+1}^\theta - x_i^\theta) (x_i s_i - s_i x_{i+1}) + \sum_{b \in B} b_{i+1} (x_{i+1}^\theta - x_i^\theta) b_i^\vee
    \\
    \stackrel{\eqref{rel:sx-commutation2}}{=} -(x_{i+1}^\theta - x_i^\theta) t_{i+1,i} + \sum_{b \in B} b_{i+1} (x_{i+1}^\theta - x_i^\theta) b_i^\vee
    = 0.
  \end{multline*}
  This completes the proof of \eqref{eq:Omega-x-commutation}.  Relation \eqref{eq:Omega-distant-commute} is straightforward.
\end{proof}

%
\section{Classification of simple modules\label{sec:simples}}
%

In this section we assume that $\kk$ is an algebraically closed field of characteristic not equal to two.  We also continue to assume, except as noted in the first subsection, that its characteristic does not divide the order $\theta$ of the Nakayama automorphism $\psi$.  Hence all simple $\cA_n(F)$-modules are finite-dimensional (see Remark~\ref{rem:simples-fd}).  In this section we classify these modules.  Our approach is inspired by that of \cite{WW08}.  However, the fact that our setup is more general (e.g., we allow the Nakayama automorphism to be nontrivial) makes the proofs somewhat more involved.  We note that the most important case is when $\delta = 0$ (i.e.\ the $\Z$-grading on $F$ is trivial).  See Remark~\ref{rem:nontrivial-gradings}.

\subsection{\texorpdfstring{Simple $\kk[x] \ltimes F$-modules}{Simple k[x]F-modules}\label{subsec:kxF-modules}}

The results of this subsection are valid for any algebra $F$ over an algebraically closed field $\kk$ of characteristic not equal to two, and any algebra automorphism $\psi$ of $F$ of finite order $\theta$.  We continue to use the notation $F$ and $\psi$ that were introduced for Frobenius algebras earlier since that will be our main interest.

Fix a simple $F$-module $L$.  For $k \in \Z$, write $\prescript{k}{}{L}$\label{^kL-def} for $\prescript{\psi^k}{}{L}$ (see \eqref{eq:twisted-module-def}).  Let $r_L$\label{r_L-def} be the smallest positive integer such that $\prescript{r_L}{}{L} \simeq L$, and let $m_L$\label{m_L-def} be the smallest positive integer such that $\psi^{m_L}(f)(v) = f(v)$ for all $f \in F$ and $v \in L$.  It follows that $r_L$ divides $m_L$ and that $m_L$ divides $\theta$.
\details{
  Any $a \in \Z m_L + \Z \theta$ has the property that $\psi^a(f)v = fv$ for all $f \in F$ and $v \in L$.  Thus $\gcd (m_L,\theta)$ has this property.  By our choice of $m_L$, this implies that $m_L = \gcd (m_L,\theta)$ and so $m_L$ divides $\theta$.
}

\begin{eg}
  \begin{enumerate}
    \item If $F = \Cl$ and $L$ is its unique simple module, then $r_L=1$ (since there is only one simple module and it is of type $\tQ$) and $m_L=\theta=2$.

    \item Recall the Taft Hopf algebra of Example~\ref{eg:Taft}.  We have $\prescript{\psi}{}{L}_k \simeq L_{k+1}$ if we take $\omega = e^{2 \pi i / q}$.  Thus $r_{L_k}=m_{L_k}=\theta=q$.

    \item Let $G$ be a finite group with a noncentral element $h$.  Let $\psi$ be conjugation by $h$, and let $L$ be the one-dimensional trivial representation of $G$.  Then $r_L = m_L = 1$, while $\theta > 1$.
  \end{enumerate}
\end{eg}

By Schur's Lemma and the definition of $r_L$, there is an even $F$-module isomorphism
\begin{equation} \label{eq:tau_L-def}
  \tau_L \colon L \xrightarrow{\simeq} \prescript{r_L}{}{L},
\end{equation}
which is unique up to a nonzero scalar.  Thus $\tau_L \left( \psi^{r_L}(f) v \right) = f \tau_L (v)$ for all $f \in F$ and $v \in L$.
\details{
  We have
  \[
    \tau \left( \psi^{r_L}(f) v \right)
    = \psi^{r_L}(f) \cdot \tau (v)
    = f \tau(v).
  \]
}
Note that $\tau_L^{m_L/r_L} \colon L \xrightarrow{\cong} \prescript{m_L}{}{L} = L$.  Thus, by Schur's Lemma, $\tau_L^{m_L/r_L}$ is multiplication by a nonzero scalar.  Rescaling $\tau_L$ if necessary, we may assume that
\begin{equation} \label{eq:tau-normalization}
  \tau_L^{m_L/r_L} = \id.
\end{equation}

Now, for $a \in \kk$ we define an $F$-module $L(a)$ as follows.  We let
\begin{equation} \label{eq:L(a)-def}
  L(a) := \bigoplus_{\ell=0}^{r_L-1} \prescript{\ell}{}{L},
\end{equation}
as $F$-modules.  The action of $x$ is given by
\[
  x (v_0,\dotsc,v_{r_L-1}) = (v_1,v_2,\dotsc,v_{r_L-1},a \tau_L(v_0)),\quad v_i \in \prescript{i}{}{L},\quad 1 \le i \le r_L-1.
\]
It is straightforward to verify that $L(a)$ is a simple $\kk[x] \ltimes F$-module when $a \in \kk^\times$.
\details{
  Let $r = r_L$.  First of all, for $f \in F$, we have
  \begin{align*}
    x \psi(f)(v_0,\dotsc,v_{r-1})
    &= x \left( \psi(f)v_0, fv_1, \dotsc, \psi^{2-r}(f)v_{r-1} \right)
    \\
    &= \left( fv_1,\psi^{-1}(f)v_2, \dotsc, \psi^{2-r}(f)v_{r-1}, a \tau_L(\psi(f)v_0) \right)
  \end{align*}
  and
  \begin{align*}
    fx(v_0,\dotsc,v_{r-1})
    &= f \left( v_1,v_2,\dotsc,v_{r-1},a \tau_L(v_0) \right) \\
    &= \left( fv_1,\psi^{-1}(f)v_2,\dotsc,\psi^{2-r}(f)v_{r-1}, \psi^{1-r}(f) a \tau_L(v_0) \right) \\
    &= \left( fv_1,\psi^{-1}(f)v_2, \dotsc, \psi^{2-r}(f)v_{r-1}, a \tau_L(\psi(f)v_0) \right).
  \end{align*}

  To see that $L(a)$ is simple when $a \in \kk^\times$, suppose $V$ is a nonzero $\kk[x] \ltimes F$-submodule of $L(a)$.  Since $V$ is an $F$-submodule, it is a sum of some of the summands $\prescript{\ell}{}{L}$.  But then invariance under the action of $x$ implies that it must contain \emph{all} the summands.
}

\begin{eg}
  Suppose $F = \Cl$ and $\psi$ is the Nakayama automorphism (see Example~\ref{eg:affine-Sergeev}).  The algebra $F$ has one simple module $L$, which arises from the action of $F$ on itself by left multiplication.  Then $r_L = 1$ and $m_L = \theta = 2$.  In the notation of \cite[\S16.1]{Kle05}, choose $\tau_L$ to be the map given by $\tau_L(v_1)=v_1$ and $\tau_L(v_{-1})=-v_{-1}$.  Then the module $L \left( \sqrt{q(i)} \right)$, $i \in I$, (notation as in the current paper, with $I$ defined in \cite[(15.2)]{Kle05}) is precisely the module denoted by $L(i)$ in \cite[\S16.1]{Kle05}.
\end{eg}

\begin{lem} \label{lem:kxF-irred-isom}
  Suppose $L$ and $L'$ are simple $F$-modules and $a,b \in \kk^\times$.  The $\kk[x] \ltimes F$ modules $L(a)$ and $L'(b)$ are evenly isomorphic if and only if $a=b$ and $L' \simeq \prescript{\ell}{}{L}$ as $F$-modules for some $\ell \in \{0,\dotsc,r_L-1\}$.
\end{lem}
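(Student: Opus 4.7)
The plan is to realize $L(a)$ as an induced module. Let $B \subseteq \kk[x] \ltimes F$ denote the subalgebra generated by $F$ and $x^{r_L}$; as a $\kk$-module, $B \cong \kk[x^{r_L}] \otimes F$ with relation $fx^{r_L} = x^{r_L} \psi^{r_L}(f)$, and $\kk[x] \ltimes F$ is free of rank $r_L$ over $B$. Let $L_a$ denote the $B$-module whose underlying $F$-module is $L$ and on which $x^{r_L}$ acts as $a\tau_L$; consistency reduces to the identity $\tau_L(\psi^{r_L}(f)v) = f\tau_L(v)$, which holds because $\tau_L$ is an $F$-isomorphism $L \to \prescript{r_L}{}{L}$. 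A direct calculation shows that on $L(a)$ the operator $x^{r_L}$ preserves each summand $\prescript{k}{}{L}$ and acts there as $a\tau_L$. In particular, the inclusion $\prescript{0}{}{L} \hookrightarrow L(a)$ is $B$-linear and identifies $\prescript{0}{}{L}$ with $L_a$; Frobenius reciprocity then produces a nonzero $\kk[x] \ltimes F$-homomorphism $\Ind_B^{\kk[x] \ltimes F} L_a \to L(a)$, which is surjective by simplicity of $L(a)$ and bijective by a dimension count, establishing $L(a) \simeq \Ind_B^{\kk[x] \ltimes F} L_a$.

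For the ``if'' direction, suppose $L' \simeq \prescript{\ell}{}{L}$ as $F$-modules, and choose $\tau_{L'}$ via this identification and $\tau_L$, so that the normalization $\tau^{m_L/r_L} = \id$ is inherited. The summand $\prescript{\ell}{}{L}$ of $L(a)$ is then $B$-isomorphic to $L'_a$, and Frobenius reciprocity yields a nonzero homomorphism $L'(a) \simeq \Ind_B^{\kk[x] \ltimes F} L'_a \to L(a)$, which is an isomorphism by simplicity and dimensions.

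For the ``only if'' direction, let $\phi \colon L(a) \to L'(b)$ be an even isomorphism. Restricting to $F$-modules, the summands $\prescript{k}{}{L}$ of $L(a)$ for $0 \le k < r_L$ are pairwise non-isomorphic simple $F$-modules (by the minimality of $r_L$); the same holds for $L'(b)$. Krull--Schmidt then forces $r_{L'} = r_L =: r$ and $L' \simeq \prescript{\ell}{}{L}$ for a unique $\ell \in \{0, \dotsc, r-1\}$. Restricting $\phi$ to the $L$-isotypic summands on both sides yields an even $F$-module isomorphism from $\prescript{0}{}{L} \subseteq L(a)$ to the corresponding summand of $L'(b)$; by Schur's lemma applied to the even part of $\End_F L$ (which is $\kk$ for both type $\tM$ and type $\tQ$), $\phi$ on this summand is $c$ times the canonical identification (the identity if $\ell = 0$, or $\tau_L$ if $\ell > 0$), for some $c \in \kk^\times$. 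Intertwining $x^r$, which acts as $a\tau_L$ on the source and as $b\tau_{L'} = b\tau_L$ on the target, then forces $a = b$. The main obstacle is keeping track of the compatibility of the $\tau$-data under the identification $L' \simeq \prescript{\ell}{}{L}$, so that the normalization $\tau^{m_L/r_L} = \id$ is respected on both sides, and handling type $\tM$ and type $\tQ$ modules uniformly via Schur's lemma on even homomorphisms.
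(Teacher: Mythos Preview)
Your proof is correct. For the ``only if'' direction you follow the same idea as the paper: compare the $F$-module decompositions to get $L' \simeq \prescript{\ell}{}{L}$ and $r_{L'}=r_L$, then compare the action of $x^{r_L}$ on the $L$-isotypic pieces to force $a=b$. The paper does this in two sentences (``Relabelling if necessary, we may assume $L=L'$. Then, since $x^{r_L}$ acts on $L(a)$ as $a\tau_L$ and on $L(b)$ as $b\tau_L$\dots''), while you spell out the Schur-lemma step and the type-$\tM$/type-$\tQ$ bookkeeping more carefully.

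The genuine difference is in the ``if'' direction. The paper simply says the converse is clear; what is presumably intended is a direct cyclic-shift isomorphism $L(a)\to L'(a)$ identifying the summand $\prescript{k}{}{L}$ with the appropriate summand of $L'(a)$. You instead pass through the subalgebra $B=\kk[x^{r_L}]\ltimes F$, realize $L(a)\simeq \Ind_B^{\kk[x]\ltimes F}L_a$, and use Frobenius reciprocity. This is heavier than the lemma requires, but it buys you something: the induction picture explains conceptually why the isomorphism class depends only on the $\psi$-orbit of $L$ and on $a$ (namely, only on the $B$-module $L_a$), rather than on the choice of representative. It also makes transparent the caveat you correctly flag at the end---that the literal equality $a=b$ presupposes $\tau_{L'}$ is chosen compatibly with $\tau_L$ under the identification $L'\simeq\prescript{\ell}{}{L}$---which is exactly the point of the Remark following the lemma in the paper.
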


\begin{proof}
  Suppose $L(a) \simeq L'(b)$ are evenly isomorphic as $\kk[x] \ltimes F$-modules.  Considering the decomposition as $F$-modules, we see that $L' \simeq \prescript{\ell}{}{L}$ for some $\ell \in \{0,\dotsc,r_L-1\}$.  Relabelling if necessary, we may assume $L = L'$.  Then, since $x^{r_L}$ acts on $L(a)$ as $a \tau_L$ and on $L(b)$ as $b \tau_L$, it follows that $a=b$.  The converse statement is clear.
\end{proof}

\begin{rem}
  \begin{enumerate}
    \item Note that the indexing $L(a)$ depends on the choice of $\tau_L$.  In light of the condition \eqref{eq:tau-normalization}, the choice of $\tau_L$ is unique up to multiplication by an $(m_L/r_L)$-th root of unity.  Different choices of $\tau_L$ simply shift the parameter $a$ by multiplication by this root of unity.

    \item It is important to note that Lemma~\ref{lem:kxF-irred-isom} is a statement about \emph{even} isomorphisms.  In general, it is possible for $L(a) \cong L(b)$, via an odd isomorphism, for $a \ne b$.  In particular, when $F = \Cl$, we have $\Pi L(a) \simeq L(-a)$, and so $L(a)$ is isomorphic to $L(-a)$ via an odd isomorphism.   See \cite[\S16.1]{Kle05}.
  \end{enumerate}
\end{rem}

We have an algebra homomorphism
\begin{equation} \label{eq:x-to-0-map}
  \kk[x] \ltimes F \to F,\quad x \mapsto 0,\ f \mapsto f,\quad f \in F.
\end{equation}
For an $F$-module $V$, let $V(0)$ denote the $\kk[x] \ltimes F$-module obtained by inflating via the homomorphism \eqref{eq:x-to-0-map}.  Clearly $V(0)$ is simple if $V$ is, and $V(0) \simeq V'(0)$ as $\kk[x] \ltimes F$-modules if and only if $V \simeq V'$ as $F$-modules.

\begin{prop} \label{prop:kxF-simple-modules}
  The modules
  \begin{gather*}
    L(a),\quad L \in \cS(F),\ a \in \kk,
  \end{gather*}
  are a complete list, up to even isomorphism and degree shift, of simple $\kk[x] \ltimes F$-modules.  Furthermore,
  \begin{itemize}
    \item if $a \ne b$, then $L(a) \not \simeq L(b)$,
    \item $L(0) \simeq L'(0)$ if and only if $L \simeq L'$, and
    \item for $a \ne 0$, $L(a) \simeq L'(a)$ if and only if $L' \simeq \prescript{\ell}{}{L}$ for some $\ell \in \Z$.
  \end{itemize}
\end{prop}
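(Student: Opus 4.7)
The proof splits into distinctness and completeness. For distinctness, the case $a, b\ne 0$ is exactly Lemma~\ref{lem:kxF-irred-isom}, and the case $a=b=0$ follows directly from the definition of $L(0)$ via inflation along $x\mapsto 0$. To separate $a=0$ from $a\ne 0$, I would use that $x^\theta$ is central in $\kk[x]\ltimes F$ (since $fx^\theta = x^\theta\psi^\theta(f) = x^\theta f$), so by Schur's Lemma it acts as a scalar on any simple module; this scalar is $0$ on $L(0)$, but a short computation using the cyclic $x$-action together with the normalization $\tau_L^{m_L/r_L}=\id$ shows it equals the nonzero scalar $a^{\theta/r_L}$ on $L(a)$ when $a\ne 0$.

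For completeness, let $M$ be a simple $\kk[x]\ltimes F$-module. First I would argue $M$ is finite-dimensional by the argument of Remark~\ref{rem:simples-fd}, noting that $\kk[x]\ltimes F$ is finitely generated over its central subring $\kk[x^\theta]$; consequently $x^\theta$ acts on $M$ as a scalar $c\in\kk$. If $c=0$, then $x$ is nilpotent on $M$, and using $xf=\psi^{-1}(f)x$ the subspace $\ker(x)$ is a nonzero $F$-submodule. Any simple $F$-submodule $L\subseteq\ker(x)$ is already $\kk[x]\ltimes F$-invariant, so simplicity of $M$ forces $M=L\simeq L(0)$.

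Suppose now $c\ne 0$, so $x$ acts invertibly on $M$, and choose a simple $F$-submodule $L\subseteq M$. Since $fx=x\psi(f)$, the map $v\mapsto xv$ sends any $F$-submodule $V$ to a simple $F$-submodule isomorphic to $\prescript{-1}{}{V}$, so $L, xL, \dotsc, x^{r_L-1}L$ lie in pairwise distinct $F$-isotypic components of $M$. The main obstacle is that $x^{r_L}L$ is only known to lie in the $L$-isotypic component $M_L$, not inside $L$ itself, so $L\oplus xL\oplus\dotsb\oplus x^{r_L-1}L$ need not be $x$-stable. To resolve this, write $M_L\simeq L\otimes U$ with $U$ the multiplicity space, extend $\tau_L$ to $M_L$ as $\tau_L\otimes\id_U$, and check that $\Psi:=\tau_L^{-1}\circ x^{r_L}\colon M_L\to M_L$ is genuinely $F$-linear: the $\psi^{-r_L}$-twist arising from commuting $F$ past $x^{r_L}$ is exactly cancelled by the $\psi^{r_L}$-twist from $\tau_L^{-1}$. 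For $L$ of type $\tM$, Schur's Lemma then gives $\Psi=\id_L\otimes\phi$ for some $\phi\in\End_\kk(U)$, and algebraic closure of $\kk$ produces an eigenvector $\phi(u)=\lambda u$; the scalar $\lambda$ must be nonzero because $\phi^{\theta/r_L}$ acts as the nonzero scalar $c$. Setting $L':=L\otimes u\subseteq M_L$, the identity $x^{r_L}(v\otimes u)=\lambda\tau_L(v)\otimes u$ shows that $L'\oplus xL'\oplus\dotsb\oplus x^{r_L-1}L'$ is a $\kk[x]\ltimes F$-submodule of $M$ carrying precisely the cyclic $x$-action defining $L(\lambda)$, and simplicity of $M$ yields $M\simeq L(\lambda)$. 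The type $\tQ$ case is handled analogously via the version of Schur's Lemma over $\Cl$.
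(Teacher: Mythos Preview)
Your argument is correct and reaches the same conclusion as the paper, but the paper's proof is organized somewhat more efficiently. For completeness when $x$ acts invertibly, the paper simply picks an eigenvector $v$ of the $\kk$-linear operator $x^{r_L}$ acting on the $L$-isotypic component $V_0$ and then observes, directly from $x^{r_L}f=\psi^{-r_L}(f)x^{r_L}$, that $Fv$ is $x^{r_L}$-invariant; hence $\bigoplus_{\ell=0}^{r_L-1} x^\ell Fv$ is a $\kk[x]\ltimes F$-submodule and must equal $V$, forcing $V_0=Fv$ to be simple. This bypasses your multiplicity-space decomposition $M_L\simeq L\otimes U$, the extension of $\tau_L$, and the type $\tM$/$\tQ$ case split: since $x^{r_L}$ is an even operator, its eigenvector argument works uniformly. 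The paper also begins by writing $V=\sum_k x^kL$ to see at once that $V$ is $F$-semisimple with all simple constituents in a single $\psi$-orbit, which is slightly more information than your isotypic-component argument extracts. Your use of the central element $x^\theta$ to separate the $a=0$ and $a\ne 0$ cases is a clean device not made explicit in the paper, which instead distinguishes the cases by whether $x$ annihilates some simple $F$-submodule; either works.
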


\begin{proof}
  It remains to prove that every simple $\kk[x] \ltimes F$-module is evenly isomorphic to $L(a)$ for some simple $F$-module $L$ and $a \in \kk$.

  Let $V$ be a simple $\kk[x] \ltimes F$-module.  Shifting the degree if necessary, we assume that $V$ is concentrated in nonnegative degree with nonzero degree zero piece.  We first prove that $V$ is semisimple as an $F$-module.  Since the center of $\kk[x] \ltimes F$ contains $\kk[x^\theta]$, and $\kk[x] \ltimes F$ is of finite rank over $\kk[x^\theta]$, it follows that $V$ is finite dimensional.  Now let $L$ be a simple $F$-submodule of $V$.  Then $x^k L$ is either zero or is a simple $F$-submodule of $V$ evenly isomorphic to $\prescript{k}{}{L}$.
  \details{
    We have
    \[
      x^k(fv) = (x^kf) v = (\psi^{-k}(f)x) v = \psi^{-k}(f) (xv) = f \cdot (xv).
    \]
    Furthermore, $L$ is simple as an $F$-module.  Hence any homomorphic image is either zero or simple.
  }
  Since $\sum_{k=0}^\infty x^k L$ is a $\kk[x] \ltimes F$-submodule of $V$ and $V$ is simple as a $\kk[x] \ltimes F$-module, we have $V = \sum_{k=0}^\infty x^k L$ (with the sum actually being finite by the finite-dimensionality of $V$), and so $V$ is semisimple as an $F$-module.

  It follows that
  \begin{equation} \label{eq:kxF-reduced-isotypic}
    V = \bigoplus_{\ell=0}^{r_L-1} V_\ell,
  \end{equation}
  where $V_\ell$ is the $\prescript{\ell}{}{L}$-isotypic component of $V$.

  Now, if $x$ acts by zero on any $F$-submodule of a summand $V_\ell$ in \eqref{eq:kxF-reduced-isotypic}, that submodule would be a $\kk[x] \ltimes F$-submodule, and hence equal to all of $V$.  Furthermore, simplicity of $V$ would imply that $V_\ell \simeq \prescript{\ell}{}{L}$.  Thus $V \simeq \prescript{\ell}{}{L}(0)$ and we are done.

  Now assume that $x$ does not act by zero on any $F$-submodule of a summand in \eqref{eq:kxF-reduced-isotypic}.  It follows from Schur's Lemma that the action of $x$ induces linear isomorphisms $V_\ell \to V_{\ell+1}$ for all $0 \le \ell \le r_L-1$.  In addition, $x^{r_L}$ induces a linear automorphism of $V_0$.  Let $v \in V_0$ be an eigenvector of $x^{r_L}$.  It follows from the relation $x f = \psi^{-1}(f) x$, $f \in F$, that $x^{r_L}$ leaves $Fv$ invariant.  Then $\bigoplus_{\ell=0}^{r_L-1}\, x^\ell Fv$ is a $\kk[x] \ltimes F$-submodule of $V$, with $x^\ell Fv \subseteq V_\ell$.  Since $V$ is simple, it follows that $Fv = V_0$.  So $V_0$ is simple and hence isomorphic to $L$.

  It follows from the definition of $r_L$ that the action of $x^{r_L}$ induces an even isomorphism $L \simeq \prescript{r_L}{}{L}$.  Thus there is some nonzero scalar $a \in \kk^\times$ such that $x^{r_L}$ acts by $a \tau_L$.  Then clearly $V \simeq L(a)$.
\end{proof}

\subsection{\texorpdfstring{Action of $t_{k,\ell}$}{Action of $t{\textunderscore}\{k,l\}$}}

For $\Z$-graded super vector spaces $V_1$ and $V_2$ define the linear map
\begin{equation} \label{eq:flip-def}
  \flip \colon V_1 \otimes V_2 \to V_2 \otimes V_1,\quad \flip(v_1 \otimes v_2) = (-1)^{\bar v_1 \bar v_2} v_2 \otimes v_1.
\end{equation}
In particular, $\flip \colon F^{\otimes 2} \to F^{\otimes 2}$ is an algebra homomorphism, and for $F$-modules $V_1$ and $V_2$,
\[
  \flip \colon \prescript{\flip}{}{(V_1 \boxtimes V_2)} \to V_2 \boxtimes V_1
\]
is an isomorphism of $F^{\otimes 2}$-modules.
\details{
  For $f,g \in F$, $v_1 \in V_1$, and $v_2 \in V_2$, we have
  \begin{multline*}
    \flip \Big( (f \otimes g) \cdot (v_1 \otimes v_2) \Big)
    = (-1)^{\bar f \bar g+ \bar f \bar v_1}\, \flip (gv_1 \otimes fv_2)
    = (-1)^{(\bar g + \bar v_1) \bar v_2} fv_2 \otimes gv_1
    \\
    = (-1)^{\bar v_1 \bar v_2} (f \otimes g)(v_2 \otimes v_1)
    = (f \otimes g) \flip(v_1 \otimes v_2),
  \end{multline*}
  where the $\cdot$ denotes the twisted action and juxtaposition denotes the usual (i.e.\ untwisted) action.
}
As explained in Section~\ref{subsec:superalg-background}, if $V_1$ and $V_2$ are simple, we have an induced even isomorphism
\begin{equation} \label{eq:flip-irtimes}
  \flip \colon \prescript{\flip}{}{(V_1 \irtimes V_2)} \to V_2 \irtimes V_1.
\end{equation}

\begin{lem} \label{lem:t12-action}
  Suppose $L$ and $L'$ are simple $F$-modules.
  \begin{enumerate}
    \item \label{lem-item:t12-action-zero} The element $t_{1,2} \in F^{\otimes 2}$ acts as zero on the $F^{\otimes 2}$-module $L \irtimes L'$ unless $\prescript{\psi}{}{L} \simeq L \cong L'$ and $\delta = 0$ (i.e.\ $F$ is concentrated in degree zero).

    \item \label{lem-item:t12-action-scalar} If $\prescript{\psi}{}{L} \simeq L$ as $F$-modules and $\delta = 0$, then $t_{1,2}$ acts on $L \irtimes L$ either as zero or as $\flip \circ (\tau \otimes \id)$, where $\tau \colon L \xrightarrow{\simeq} \prescript{\psi}{}{L}$ is an even isomorphism of $F$-modules.
  \end{enumerate}
\end{lem}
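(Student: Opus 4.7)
The plan is to exploit the commutation relations from Lemma~\ref{lem:Ft-commutation} to realize $\flip \circ t_{1,2}$ as an honest $F^{\otimes 2}$-module homomorphism, and then apply super-Schur's Lemma together with the classification of simple $F^{\otimes 2}$-modules recalled in Section~\ref{subsec:superalg-background}. Concretely, introduce $T := \flip \circ t_{1,2} \colon L \boxtimes L' \to L' \boxtimes L$. Specializing \eqref{eq:ftij-commutation1} and \eqref{eq:ftij-commutation2} to $k=1$ gives $t_{1,2}(f_1 v) = \psi^{-1}(f)_2 t_{1,2}(v)$ and $t_{1,2}(f_2 v) = f_1 t_{1,2}(v)$; composing with $\flip$, which intertwines $f_1$ and $f_2$ (see \eqref{eq:flip-def} and the surrounding discussion), yields $T(f_1 v) = \psi^{-1}(f)_1 T(v)$ and $T(f_2 v) = f_2 T(v)$. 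In other words, $T$ is an honest $F^{\otimes 2}$-module map $L \boxtimes L' \to \prescript{\psi}{}{L'} \boxtimes L$, where $\prescript{\psi}{}{L'}$ is the twist of $L'$ by the Nakayama automorphism as in \eqref{eq:twisted-module-def}.

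For part (a), suppose $t_{1,2}$, equivalently $T$, is nonzero on the simple $F^{\otimes 2}$-submodule $L \irtimes L' \subseteq L \boxtimes L'$. Since $\prescript{\psi}{}{L'} \boxtimes L$ decomposes into copies of the simple module $\prescript{\psi}{}{L'} \irtimes L$, super-Schur's Lemma forces an isomorphism (possibly odd) $L \irtimes L' \cong \prescript{\psi}{}{L'} \irtimes L$. A case analysis on the types $\tM$/$\tQ$ of $L$ and $L'$, combined with the uniqueness of the $\irtimes$-decomposition of simple $F^{\otimes 2}$-modules, then yields $\prescript{\psi}{}{L} \simeq L \cong L'$: a mismatch in type between $L$ and $L'$ would force $T = 0$ directly, while a potentially odd Schur isomorphism in the both-$\tM$ case can be excluded because it would force $L \simeq \Pi L$, contradicting type $\tM$. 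Separately, since $F$ is a finite-dimensional $\N$-graded Frobenius superalgebra, its augmentation ideal $F_{>0}$ is nilpotent and every simple $F$-module is concentrated in a single degree; because $t_{1,2}$ is homogeneous of degree $\delta$, it must vanish on $L \boxtimes L'$ whenever $\delta > 0$.

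For part (b), the even isomorphism $\tau \colon L \xrightarrow{\simeq} \prescript{\psi}{}{L}$ yields $\tau \otimes \id \colon L \boxtimes L \xrightarrow{\simeq} \prescript{\psi}{}{L} \boxtimes L$, identifying the target of $T$ with $L \boxtimes L$ and exhibiting $T$ as an even $F^{\otimes 2}$-endomorphism of $L \boxtimes L$. Since the simple submodule $L \irtimes L$ is always of type $\tM$ and $T$ preserves the (even) isotypic decomposition of $L \boxtimes L$, Schur's Lemma gives $T|_{L \irtimes L} = c \cdot (\tau \otimes \id)|_{L \irtimes L}$ for some $c \in \kk$. As $\flip$ is an involution, this translates to $t_{1,2}|_{L \irtimes L} = c \cdot \flip \circ (\tau \otimes \id)|_{L \irtimes L}$; absorbing $c$ into $\tau$ when $c \ne 0$ produces the formula claimed. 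The main obstacle will be the parity bookkeeping in part (a), specifically upgrading the potentially odd Schur isomorphism $L \irtimes L' \cong \prescript{\psi}{}{L'} \irtimes L$ to yield the even isomorphism $\prescript{\psi}{}{L} \simeq L$ demanded by the conclusion rather than the strictly weaker $\prescript{\psi}{}{L} \cong L$.
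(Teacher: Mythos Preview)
Your approach is essentially the same as the paper's: both realize $\flip \circ t_{1,2}$ as an even $F^{\otimes 2}$-module homomorphism $L \boxtimes L' \to \prescript{\psi}{}{L'} \boxtimes L$ via Lemma~\ref{lem:Ft-commutation}, then invoke Schur's Lemma and the degree of $t_{1,2}$ for part~\ref{lem-item:t12-action-zero}, and compose with $(\tau^{-1} \otimes \id)$ to reduce to a scalar for part~\ref{lem-item:t12-action-scalar}.

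One small point of confusion in your parity bookkeeping: in the both-$\tM$ case there is no ``potentially odd'' Schur isomorphism to exclude. Since $T$ is \emph{even} and both $L \boxtimes L'$ and $\prescript{\psi}{}{L'} \boxtimes L$ are already simple (no $\Pi$-summand arises), a nonzero $T$ is automatically an even isomorphism, giving $L \simeq \prescript{\psi}{}{L'}$ and $L' \simeq L$ directly. The only case where the image of $T|_{L \irtimes L'}$ might land in a $\Pi$-shifted summand is the both-$\tQ$ case, and there $\cong$ and $\simeq$ coincide anyway since $\Pi L \simeq L$. So the ``main obstacle'' you flag is not actually an obstacle once you track that $T$ is even throughout; the paper sidesteps the issue by using the convention (from Section~\ref{subsec:superalg-background}) that one may adjust the choice of $\irtimes$ to make the induced map land in $\prescript{\psi}{}{L'} \irtimes L$ rather than its parity shift.
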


\begin{proof}
  By Lemma~\ref{lem:Ft-commutation}, we have
  \[
    (f \otimes g) t_{1,2} = (-1)^{\bar f \bar g} t_{1,2} \big( \psi(g) \otimes f \big),\quad f,g \in F.
  \]
  Thus, $t_{1,2}$ induces an even homomorphism (since $t_{1,2}$ is even) of $F^{\otimes 2}$-modules
  \[
    L \irtimes L' \to \prescript{\flip}{}{\left( L \irtimes \prescript{\psi}{}{L'} \right)}.
  \]
  By \eqref{eq:flip-irtimes}, we then have an even homomorphism of $F^{\otimes 2}$-modules
  \[
    \flip \circ t_{1,2} \colon  L \irtimes L' \to \prescript{\psi}{}{L'} \irtimes L.
  \]
  Thus, by Schur's Lemma, $t_{1,2}$ acts as zero unless $L \irtimes L' \simeq \prescript{\psi}{}{L'} \irtimes L$ as $F^{\otimes 2}$-modules or, equivalently, unless $\prescript{\psi}{}{L} \simeq L \cong L'$.
  \details{
    The isomorphism $L \irtimes L' \simeq \prescript{\psi}{}{L'} \irtimes L$ holds if and only if $L \cong \prescript{\psi}{}{L'}$, $L' \cong L$, and these isomorphisms are either both even or both odd.  But this is equivalent to $\prescript{\psi}{}{L} \simeq L \cong L'$.
  }
  Furthermore, since $|t_{1,2}| = \delta$, it follows that $t_{1,2}$ must act as zero if $\delta > 0$ (since isomorphisms live in degree zero).  This proves part~\ref{lem-item:t12-action-zero}.

  Now suppose $\tau \colon L \to \prescript{\psi}{}{L}$ is an even isomorphism of $F$-modules and $\delta = 0$.  Then
  \[
    (\tau^{-1} \otimes \id) \circ \flip \circ t_{1,2} \colon L \irtimes L \to L \irtimes L
  \]
  is an even homomorphism of $F^{\otimes 2}$-modules and thus must be multiplication by a scalar, by Schur's Lemma.  If this scalar is nonzero, we may rescale $\tau$ so that this scalar is equal to one.  This completes the proof of part~\ref{lem-item:t12-action-scalar}.
\end{proof}

If $L_1,\dotsc,L_n$ are simple $F$-modules with $L_k=L_\ell$, we have an even $F$-module homomorphism
\begin{equation} \label{eq:rho_kl-def}
  \rho_{k,\ell} \colon L_1 \irtimes \dotsb \irtimes L_n \to L_1 \irtimes \dotsb \irtimes L_n
\end{equation}
that superpermutes the $k$-th and $\ell$-th factors.

\begin{cor} \label{cor:tkl-action}
  Suppose $k,\ell \in \{1,\dotsc,n\}$ and $k \ne \ell$.  If $L_k = L_\ell$ and $\delta = 0$, then $t_{k,\ell}$ acts on $L_1 \irtimes \dotsb \irtimes L_n$ as zero or as
  \[
    \rho_{k,\ell} \circ \left( \id^{\otimes (k-1)} \otimes \tau \otimes \id^{\otimes (n-k)} \right),
  \]
  where $\tau \colon L_k \to \prescript{\psi}{}{L}_k$ is an even isomorphism of $F$-modules.  Otherwise, $t_{k,\ell}$ acts as zero.
\end{cor}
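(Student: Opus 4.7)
The plan is to deduce Corollary~\ref{cor:tkl-action} from Lemma~\ref{lem:t12-action} by reducing the $n$-factor situation to the two-factor situation. The key point is that $t_{k,\ell} = \sum_{b \in B} b_k b_\ell^\vee \in F^{\otimes n}$ is supported only in positions $k$ and $\ell$, so its action on a pure tensor $v_1 \otimes \dotsb \otimes v_n$ only modifies $v_k$ and $v_\ell$ up to Koszul signs.

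First, I would invoke Lemma~\ref{lem:Ft-commutation}, which says $\bsf\, t_{k,\ell} = t_{k,\ell}\, \psi_k(\prescript{s_{k,\ell}}{}{\bsf})$. By the same mechanism as in the proof of Lemma~\ref{lem:t12-action} (using iterated $\flip$ maps to identify the twisted module $\prescript{s_{k,\ell}}{}{(L_1 \boxtimes \dotsb \boxtimes L_n)}$ with the module in which the $k$-th and $\ell$-th factors are swapped), $t_{k,\ell}$ descends to an even $F^{\otimes n}$-module homomorphism on simple submodules
\[
  L_1 \irtimes \dotsb \irtimes L_n \longrightarrow L_1 \irtimes \dotsb \irtimes \prescript{\psi}{}{L_\ell} \irtimes \dotsb \irtimes L_k \irtimes \dotsb \irtimes L_n,
\]
where in the target $\prescript{\psi}{}{L_\ell}$ is placed in the $k$-th position and $L_k$ is placed in the $\ell$-th position.

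By Schur's Lemma this is zero unless source and target are evenly isomorphic, which forces $L_k \simeq \prescript{\psi}{}{L_\ell}$ and $L_\ell \simeq L_k$; moreover since $|t_{k,\ell}| = \delta$ and even isomorphisms live in degree zero, we must have $\delta = 0$. This is precisely the condition that $L_k = L_\ell$, $\prescript{\psi}{}{L_k} \simeq L_k$ via some even isomorphism $\tau$, and $\delta = 0$, which handles the ``otherwise zero'' statement.

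When these conditions hold, the map $\rho_{k,\ell} \circ (\id^{\otimes (k-1)} \otimes \tau \otimes \id^{\otimes (n-k)})$ is itself an even $F^{\otimes n}$-module isomorphism from $L_1 \irtimes \dotsb \irtimes L_n$ to the same target module, so $t_{k,\ell}$ and this map differ by a scalar (by Schur's Lemma applied to the simple endomorphism algebra of $L_1 \irtimes \dotsb \irtimes L_n$). To identify the scalar as $0$ or $1$, I would fix a pure tensor $v_1 \otimes \dotsb \otimes v_n$ and compute $t_{k,\ell}(v_1 \otimes \dotsb \otimes v_n)$ directly: the Koszul signs from expanding the action of $b_k b_\ell^\vee$ across the intermediate tensor slots match exactly the signs appearing in $\rho_{k,\ell}$, reducing the computation on positions $(k,\ell)$ to the $n=2$ statement of Lemma~\ref{lem:t12-action}\ref{lem-item:t12-action-scalar}, which says the scalar is $0$ or $1$ with our normalization of $\tau$.

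The main obstacle is the careful bookkeeping of super signs: verifying that the Koszul factors produced by $t_{k,\ell}$ acting across intermediate tensor positions agree with the signs built into the superpermutation $\rho_{k,\ell}$, so that the reduction to Lemma~\ref{lem:t12-action} yields exactly the normalized scalar and not some auxiliary sign twist.
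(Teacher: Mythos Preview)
Your proposal is correct and follows essentially the same approach as the paper: the paper's proof is the single line ``This follows from Lemma~\ref{lem:t12-action},'' relying on the observation that $t_{k,\ell}$ acts only through the $k$-th and $\ell$-th tensor slots so the question reduces immediately to the two-factor case. You spell out this reduction in detail (reproving the Schur's Lemma step in the $n$-factor setting and then tracking Koszul signs back to Lemma~\ref{lem:t12-action}), which is fine but more than the paper deems necessary.
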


\begin{proof}
  This follows from Lemma~\ref{lem:t12-action}.
\end{proof}

\begin{eg}
  \begin{enumerate}
    \item In the setting of Example~\ref{eg:wreath-Hecke}, the Nakayama automorphism $\psi$ is the identity and the content of Lemma~\ref{lem:t12-action} is contained in \cite[Lem.~3.1]{WW08}.

    \item If $F = \Cl$, then $F$ has one simple module $L$, $\psi$ has order 2, $\delta = 0$, and $\psi \colon \prescript{\psi}{}{L} \xrightarrow{\simeq} L$.  Then $t_{1,2}$ acts on $L \irtimes L$ as a nonzero scalar multiple of $\flip \circ (\psi \otimes \id)$.
        \details{
          The scalar is $1-i$ or $1+i$, depending on the choice of simple tensor product $L \irtimes L$.
        }
  \end{enumerate}
\end{eg}

\subsection{Associated algebras}

Recall that $\psi$ acts on the set $\cS(F)$ by twisting (see \eqref{eq:twisted-module-def}).  Let $N$\label{N-def} be the number of $\psi$-orbits in $\cS(F)$, and let
\begin{equation} \label{eq:sL-def}
  \sL_1, \sL_2,\dotsc, \sL_N
\end{equation}
be a set of representatives of these orbits.  For $k \in \{1,2,\dotsc,N\}$, let $r_k := r_{\sL_k}$,\label{r_k-def} so that $r_k$ is the smallest positive integer such that $\prescript{r_k}{}{\sL}_k \simeq \sL_k$.  Hence
\[
  \sL_1, \prescript{1}{}{\sL}_1,\dotsc, \prescript{r_1-1}{}{\sL}_1,\dotsc, \sL_N, \prescript{1}{}{\sL}_N,\dotsc, \prescript{r_N-1}{}{\sL}_N,
\]
is an enumeration of the elements of $\cS(F)$.

Let $\cA_n(F)\smod$\label{smod-def} denote the full subcategory of $\cA_n(F)\md$ consisting of finite-dimensional $\cA_n(F)$-modules that are semisimple as $F^{\otimes n}$-modules.  Let\label{cC-def}
\begin{equation} \label{eq:cC_n-def}
  \cC_n := \left\{ \mu = (\mu_1,\dotsc,\mu_N) \mid \mu_1,\dotsc,\mu_N \in \N,\ \mu_1 + \dotsb + \mu_N = n \right\}
\end{equation}
denote the set of compositions of $n$ of length at most $N$.  Recall the parabolic subalgebras $\cA_\mu(F)$ of $\cA_n(F)$ introduced in Section~\ref{sec:Mackey}.  We let $\cA_\mu(F)\smod$ denote the full subcategory of $\cA_\mu(F)\md$ consisting of finite-dimensional $\cA_\mu(F)$-modules that are semisimple as $F^{\otimes n}$-modules.

For $\mu \in \cC_n$ and $1 \le k \le n$, let $\ell_k$ denote the unique integer such that
\begin{equation} \label{eq:l_k-def}
  \mu_1 + \dotsb + \mu_{\ell_k-1} < k \le \mu_1 + \dotsb + \mu_{\ell_k}.
\end{equation}

For $1 \le k \le N$, let $\tau_k = \tau_{\sL_k}$\label{tau_k-def} (see \eqref{eq:tau_L-def}).  If $t_{1,2}$ acts as zero on $\sL_k \irtimes \sL_k$, let $\sa_k = 1$.\label{sa_k-def}  (This choice is not crucial; we choose $\sa_k=1$ for simplicity.)  Otherwise, choose $\ssb_k \in \kk^\times$ such that $t_{1,2}$ acts on $\sL_k \irtimes \sL_k$ as $\flip \circ (\ssb_k \tau_k \otimes \id)$ (see Lemma~\ref{lem:t12-action}) and define
\[
  \sa_k =
  \begin{cases}
    \ssb_k & \text{if } \sL_k \text{ is of type } \tM, \\
    \frac{1-\sqrt{-1}}{2}{\ssb_k} & \text{if } \sL_k \text{ is of type } \tQ.
  \end{cases}
\]

Let
\begin{equation} \label{eq:Lmu-def}
  \sL(\mu)
  := \sL_1 \left( \sa_1 \right)^{\boxtimes \mu_1} \boxtimes \dotsb \boxtimes \sL_N \left( \sa_N \right)^{\boxtimes \mu_N}.
\end{equation}
Thus $\sL(\mu)$ is a $P_n(F)$-module on which the action of $x_k$ is invertible for all $1 \le k \le n$.  By a slight abuse of notation, we will write $x_k^{-1}$ for the endomorphism of $\sL(\mu)$ that is the inverse of the action of $x_k$.  If $\ell_k = \ell_{k+1}$ (i.e.\ $s_k \in S_\mu$), it follows from our choices that, as an operator on $\sL(\mu)$, we have
\begin{equation} \label{eq:t-L-hat-n-action}
  t_{k,k+1} =
  \begin{cases}
    0 & \text{if $t_{1,2}$ acts as zero on $\sL_{\ell_k} \irtimes \sL_{\ell_k}$}, \\
    \tilde{t}_{k,k+1} \rho_{k,k+1} x_k & \text{if $t_{1,2}$ does not act as zero on $\sL_{\ell_k} \irtimes \sL_{\ell_k}$},
  \end{cases}
\end{equation}
where $\tilde{t}_{k,k+1}$ is the element $\sum_b 1^{\otimes (k-1)} \otimes b \otimes b^\vee \otimes 1^{\otimes (n-k-1)}$, with the sum being over a basis of $\END_F L_{\ell_k}$.  (Recall that $\END_F L_{\ell_k}$ is isomorphic to $\kk$ if $L_{\ell_k}$ is of type $\tM$, and to $\Cl$ if $L_{\ell_k}$ is of type $\tQ$.)
\details{
  We use the fact that, if $t_{1,2}$ does not act as zero on $\sL_{\ell_k} \irtimes \sL_{\ell_k}$, then $r_k = 1$ by Lemma~\ref{lem:t12-action}, and so $\sL_{\ell_k}(\sa_k) = \sL_{\ell_k}$ as $F$-modules, with $x$ acting on the $\kk[x] \ltimes F$-module $\sL_{\ell_k}(\sa_k)$ as $\sa_k \tau_k$.  By definition, we have that $t_{k,k+1}$ acts as $\rho_{k,k+1} \ssb_k \tau_k$ on $\sL_1(\sa_1) \boxtimes \dotsb \boxtimes (\sL_{\ell_k} \irtimes \sL_{\ell_k}) \boxtimes \dotsb \boxtimes \sL_N(\sa_N)$, with the understanding that $\tau_k$ acts on the $k$-th factor.  If $\sL_{\ell_k}$ is of type $\tM$, then $\ssb_k = \sa_k$, $\sL_{\ell_k} \irtimes \sL_{\ell_k} = \sL_{\ell_k} \boxtimes \sL_{\ell_k}$, $\tilde{t}_{k,k+1}=1$, and the result is clear.

  Now suppose that $L_{\ell_k}$ is of type $\tQ$.  Fix an isomorphism $\END_F L_{\ell_k} \cong \Cl$, which we will view as equality.  Then the isomorphism $\END_F L_{\ell_k} \cong \END_F \left( {}^\psi L_{\ell_k} \right)$, $z \mapsto \tau_k z \tau_k^{-1}$, gives us an isomorphism $\END_F \left( {}^\psi L_{\ell_k} \right) \cong \Cl$, which we also view as equality.  Thus we have actions of $\Cl$ on $L_{\ell_k}$ and ${}^\psi L_{\ell_k}$ that commute with $\tau_k$.  Note that $\tilde{t}_{k,k+1} = 1 + c \otimes c \in \END_F L_{\ell_k}$ is independent of these choices of isomorphisms.

  Consider the idempotent
  \[
    \epsilon = \frac{1}{2}(1 + \sqrt{-1} c \otimes c) \in \left( \END_F L_{\ell_k} \right) \otimes \left( \END_F L_{\ell_k} \right).
  \]
  Then we have a decomposition of $F$-modules
  \[
    L_{\ell_k} \boxtimes L_{\ell_k}
    = \epsilon \left( L_{\ell_k} \boxtimes L_{\ell_k} \right)
    \oplus (1-\epsilon) \left( L_{\ell_k} \boxtimes L_{\ell_k} \right).
  \]
  We can take $L_{\ell_k} \irtimes L_{\ell_k} = \epsilon \left( L_{\ell_k} \boxtimes L_{\ell_k} \right)$.  Note that
  \[
    (c \otimes 1) \epsilon
    = (1-\epsilon) (c \otimes 1)
  \]
  In other words, $c \otimes 1$ is an odd automorphism of $L_{\ell_k} \boxtimes L_{\ell_k}$ that interchanges $L_{\ell_k} \irtimes L_{\ell_k}$ and
  \[
    (1-\epsilon) \left( L_{\ell_k} \boxtimes L_{\ell_k} \right) \cong \Pi \left( L_{\ell_k} \irtimes L_{\ell_k} \right).
  \]
  Then we have
  \begin{align*}
    t_{k,k+1}
    &= t_{k,k+1} \left( \epsilon + (1-\epsilon) \right) \\
    &= t_{k,k+1} \epsilon + t_{k,k+1} (c \otimes 1) \epsilon (c \otimes 1) \\
    &= t_{k,k+1} \epsilon + (c \otimes 1) \left( t_{k,k+1} \epsilon \right) (c \otimes 1) \\
    &= \rho_{k,k+1} \ssb_k \tau_k \epsilon + (c \otimes 1) \rho_{k,k+1} \ssb_k \tau_k \epsilon (c \otimes 1) \\
    &= (1-\epsilon) \rho_{k,k+1} \ssb_k \tau_k + (c \otimes 1) (1-\epsilon) (1 \otimes c) \rho_{k,k+1} \ssb_k \tau_k \\
    &= \frac{1-\sqrt{-1}}{2} (1 + c \otimes c) \rho_{k,k+1} \ssb_k \tau_k \\
    &= \tilde{t}_{k,k+1} \rho_{k,k+1} \sa_k \tau_k,
  \end{align*}
  where $\epsilon$, $1 \otimes c$, etc.\ lie in the $k$-th and $(k+1)$-st factors (we avoid writing $1^{\otimes (k-1)} \otimes \epsilon \otimes 1^{\otimes (n-k-1)}$, etc.\ to simplify notation).
}

We let
\begin{equation} \label{eq:Emu-def}
  \sE(\mu)
  = \END_{F^{\otimes n}}^\op \sL(\mu)
  \simeq \left( \END_F^\op \sL_1(\sa_1) \right)^{\otimes \mu_1} \otimes \dotsb \otimes \left( \END_F^\op \sL_N(\sa_N) \right)^{\otimes \mu_N},
\end{equation}
so that $\sL(\mu)$ is naturally a right $\sE(\mu)$-module via the action
\[
  vz = (-1)^{\bar v \bar z} zv,\quad z \in \sE(\mu),\ v \in \sL(\mu),
\]
where, on the right side, we view $z$ as an element of $\END_{F^{\otimes n}} \sL(\mu)$.

Note that, for $1 \le k \le N$,
\[
  \END_F \sL_k(\sa_k) \simeq \left(\END_F \sL_k \right)^{\oplus r_k}.
\]
\details{
  We use here the fact that $\HOM_F \left( \prescript{\ell}{}{\sL}_k, \prescript{\ell'}{}{\sL}_k \right) = 0$ unless $\ell = \ell'$, in which case it is evenly isomorphic to $\END_F \sL_k$.
}
Now, for $1 \le k \le N$, we have a natural right action of $\kk[y]$ on $\END_F^\op \sL_k(\sa_k)$ given by
\begin{equation} \label{eq:xEn-action}
  z \cdot y = x^{-1} z x,\quad z \in \END_F^\op \sL_k(\sa_k),
\end{equation}
where, on the right side, $x$ denotes the endomorphism of $\sL_k(\sa_k)$ given by the action of $x$ and, since this action is invertible, $x^{-1}$ denotes its inverse.  Thus we can form the smash product $\kk[y] \ltimes \END_F^\op \sL_k(\sa_k)$.

For $1 \le \ell \le N$ and $n \in \N$, define
\begin{equation} \label{eq:H_n^ell-def}
  \cH_n^\ell =
  \begin{cases}
    \left( \kk[y] \ltimes \END_F^\op \sL_\ell (\sa_\ell) \right)^{\otimes n} \rtimes_\rho S_n & \text{if $t_{1,2}$ acts as zero on $L_\ell \irtimes L_\ell$}, \\
    \cA_n \left( \END_F^\op \sL_\ell \right) & \text{if $t_{1,2}$ does not act as zero on $L_\ell \irtimes L_\ell$}.
  \end{cases}
\end{equation}
(Recall that we use the notation $\rtimes_\rho$ to denote a smash product with $\kk S_n$, where the action is via superpermutations.)  Note that $\cH_n^\ell$ depends on $F$ and our ordering of the simple $F$-modules.  It is also important to note that $r_\ell = 1$ whenever $t_{1,2}$ does not act as zero on $\sL_\ell \irtimes \sL_\ell$.  In this case, we have
\[
  \END_F^\op \sL_\ell = \END_F^\op \sL_\ell(\sa_\ell) \simeq
  \begin{cases}
    \kk & \text{if $\sL_\ell$ is of type $\tM$}, \\
    \Cl & \text{if $\sL_\ell$ is of type $\tQ$}.
  \end{cases}
\]

Define
\begin{equation} \label{eq:cR_n-def}
  \cR_n = \bigoplus_{\mu \in \cC_n} \cR_\mu,\quad
  \cR_\mu = \cH_{\mu_1}^1 \otimes \dotsb \otimes \cH_{\mu_N}^N.
\end{equation}
We will denote the polynomial generators in $\cR_n$ by $y_1,\dotsc,y_n$\label{y_i-def} to avoid confusion with the generators $x_1,\dotsc,x_n \in \cA_n(F)$.  Note that $\sE(\mu)$ can naturally be viewed as a subalgebra of $\cR_\mu$.  Then $\cR_\mu$ is generated as an algebra by $\sE(\mu)$, $y_1,\dotsc,y_n$, and $S_\mu$.

\subsection{An equivalence of categories}

If $A_1$ and $A_2$ are algebras, $V$ is an $(A_1,A_2)$-bimodule, and $W$ is an $A_1$-module, then $\HOM_{A_1}(V,W)$ is naturally a (left) $A_2$-module under the action
\[
  (a \alpha)(v) = (-1)^{\bar a (\bar \alpha + \bar v)} \alpha(va),\quad a \in A_2,\ \alpha \in \HOM_{A_1}(V,W),\ v \in V.
\]
\details{
  We have
  \[
    \big( (a_1 a_2)(\alpha) \big)(v)
    = (-1)^{(\bar a_1 + \bar a_2)(\bar \alpha + \bar v)} \alpha(v a_1 a_2)
    = (-1)^{\bar a_1(\bar \alpha + \bar v + \bar a_2)} (a_2 \alpha)(v a_1)
    = \big(a_1(a_2\alpha))(v).
  \]
}

\begin{lem} \label{lem:category-equivalence}
  Suppose $A$ is a graded superalgebra and $V$ is a finite-dimensional semisimple $A$-module.  Let $\mathscr{C}_V$ denote the full subcategory of $A\md$ whose objects are evenly isomorphic to finite direct sums of degree shifts of simple submodules of $V$.  Then, viewing $V$ as an $(A,\END_A^\op V)$-bimodule, the functors
  \[
    V \otimes_{\END_A^\op V} - \colon (\END_A^\op V)\md \to \mathscr{C}_V,\quad
    \HOM_A(V,-) \colon \mathscr{C}_V \to (\END_A^\op V)\md,
  \]
  yield an equivalence of categories between $\mathscr{C}_V$ and $(\END_A^\op V)\md$.
\end{lem}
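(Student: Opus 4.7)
The plan is to verify this by the standard Morita / double-centralizer argument, adapted to the graded super setting. I would first construct the counit and unit explicitly: for $N \in \mathscr{C}_V$ and $M \in (\END_A^\op V)\md$, define
\[
  \varepsilon_N \colon V \otimes_{\END_A^\op V} \HOM_A(V,N) \to N,\quad v \otimes \alpha \mapsto (-1)^{\bar v \bar \alpha} \alpha(v),
\]
\[
  \eta_M \colon M \to \HOM_A \big( V,\, V \otimes_{\END_A^\op V} M \big),\quad \eta_M(m)(v) = v \otimes m.
\]
The sign in $\varepsilon_N$ is forced by the $\END_A^\op V$-action on $\HOM_A(V,N)$ recalled just before the lemma statement; checking that $\varepsilon_N$ is $\END_A^\op V$-balanced and $A$-linear, and that $\eta_M$ is $\END_A^\op V$-linear and natural, reduces to routine Koszul-sign calculations.

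Next I would reduce to the isotypic case. Since $V$ is finite-dimensional and semisimple as an $A$-module, decompose it as $V \simeq \bigoplus_{i=1}^r V_i$, where the $V_i$ are the isotypic components corresponding to a finite set of pairwise non-evenly-isomorphic (up to degree shift) simple submodules $L_1,\dotsc,L_r$ of $V$. This yields $\END_A^\op V \simeq \prod_{i=1}^r \END_A^\op V_i$, and both functors---as well as $\mathscr{C}_V$ and $(\END_A^\op V)\md$---split compatibly along this product. The super version of Schur's Lemma, together with the standing assumption that $\kk$ is an algebraically closed field of characteristic not two, gives $\END_A L_i \simeq \kk$ if $L_i$ is of type $\tM$ and $\END_A L_i \simeq \Cl$ if $L_i$ is of type $\tQ$; consequently each $\END_A V_i$ is a (super) matrix algebra over $\kk$ or $\Cl$. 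In particular $\END_A^\op V$ is semisimple, so every finitely generated $\END_A^\op V$-module is a finite direct sum of degree and parity shifts of simple summands of $\END_A^\op V$.

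To finish, it suffices to check that $\varepsilon$ and $\eta$ are isomorphisms on generators. The counit $\varepsilon_V$ is the canonical isomorphism $V \otimes_{\END_A^\op V} \END_A^\op V \simeq V$ (where we identify $\HOM_A(V,V)$ with $\END_A^\op V$ using the sign conventions fixed above), so by passing to direct summands $\varepsilon_{L_i}$ is an isomorphism for each $i$. Since every object of $\mathscr{C}_V$ is a finite direct sum of degree and parity shifts of the $L_i$'s, and both functors commute with finite direct sums and such shifts, $\varepsilon$ is a natural isomorphism throughout $\mathscr{C}_V$. A dual argument, applied to $\END_A^\op V$ viewed as a free module of rank one over itself and extended by semisimplicity to all of $(\END_A^\op V)\md$, shows that $\eta$ is a natural isomorphism. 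The principal obstacle here is keeping the Koszul signs consistent---particularly in the type $\tQ$ case, where the Clifford endomorphisms of each $L_i$ enter $\END_A V_i$ nontrivially---but once $\varepsilon_N$ has been normalized as above, the remainder of the argument is formally identical to the classical (non-super) Morita equivalence for a projective generator.
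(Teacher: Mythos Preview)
Your argument is correct and follows the same Morita/double-centralizer strategy as the paper, with the same unit and counit maps. The paper's write-up is more compressed: rather than decomposing into isotypic components and checking on generators, it observes directly that $\HOM_A(V, V \otimes_{\END_A^\op V} U) \simeq (\END_A V) \otimes_{\END_A^\op V} U \simeq U$ (using $R \simeq R^\op$ as $(R,R)$-bimodules), and for the counit shows surjectivity from Schur's Lemma and the definition of $\mathscr{C}_V$, then injectivity by a dimension count via the double centralizer property. Your isotypic decomposition makes the same facts explicit but at greater length. One small imprecision: objects of $\mathscr{C}_V$ are, by definition, direct sums of \emph{degree} shifts of simple submodules of $V$, not necessarily parity shifts; this does not affect your argument, since you only need to check $\varepsilon$ on the $L_i$ and their degree shifts.
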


\begin{proof}
  Suppose $U \in (\END_A^\op V)\md$.  Then
  \[
    \HOM_A(V, V \otimes_{\END_A^\op V} U)
    \simeq (\END_A V) \otimes_{\END_A^\op V} U
    \simeq U,
  \]
  and these isomorphisms are natural in $U$.  (We use the fact that, for any algebra $R$, $R \simeq R^\op$ as $(R,R)$-bimodules.)
  \details{
    An algebra $R$ is naturally an $(R,R)$-bimodule, with the left and right actions given by multiplication on the left and right, respectively.  Also, $R^\op$ is an $(R,R)$-bimodule, with the left and right actions given by signed multiplication on the right and left, respectively. Indeed, it is a left $R$-module since
    \begin{multline*}
      a_1 \heartsuit (a_2 \heartsuit a)
      = (-1)^{\bar a_2 \bar a + \bar a_1 \bar a + \bar a_1 \bar a_2} (a \cdot a_2) \cdot a_1
      = a_1 a_2 a
      \\
      = (-1)^{\bar a (\bar a_1 + \bar a_2)} a \cdot (a_1 a_2)
      = (a_1 a_2) \heartsuit a,\quad
      a \in R^\op,\ a_1, a_2 \in R,
    \end{multline*}
    where $\heartsuit$ denotes this left action of $R$ on $R^\op$, $\cdot$ denotes the multiplication in $R^\op$, and juxtaposition denotes the multiplication in $R$.  It is a right $R$-module since
    \[
      (a \spadesuit a_1) \spadesuit a_2
      = (-1)^{\bar a_2 \bar a + \bar a_1 \bar a + \bar a_1 \bar a_2} a_2 \cdot (a_1 \cdot a)
      = a a_1 a_2
      = (-1)^{\bar a (\bar a_1 + \bar a_2)} (a_1 a_2) \cdot a
      = a \spadesuit (a_1 a_2),
    \]
    where $\spadesuit$ denotes the right action of $R$ on $R^\op$ described above.  It is clear that the right action $\heartsuit$ commutes with the left action $\spadesuit$.

    With these definitions, the identity map $\phi$ from $R$ to $R^\op$ is an even isomorphism of $(R,R)$-bimodules.  Indeed, it is clearly invertible, and
    \[
      a_1 \heartsuit \phi(a) \spadesuit a_2
      = (-1)^{\bar a_1 \bar a + \bar a_1 \bar a_2 + \bar a \bar a_2} a_2 \cdot a \cdot a_1
      = a_1 a a_2
      = \phi(a_1 a a_2).
    \]
  }

  On the other hand, for $W \in \mathscr{C}_V$, we have an even homomorphism of $A$-modules
  \[
    V \otimes_{\END_A^\op V} \HOM_A(V,W) \to W,\quad
    v \otimes \phi \mapsto \phi(v).
  \]
  This homomorphism is surjective by the definition of $\mathscr{C}_V$ and Schur's Lemma.  It is then injective since its domain and codomain have the same dimension by the double centralizer property.
  \details{
    Replacing $A$ by $A/\Ann_A V$, we may assume that $A$ is finite-dimensional, and that its action on $V$ is faithful.  Then we have
    \[
      V \otimes_{\END_A^\op V} \HOM_A(V,W)
      \simeq V \otimes_{\END_A^\op V} V^* \otimes_A W
      \simeq A \otimes_A W
      \simeq A.
    \]
  }
  It is clearly natural in $W$.
\end{proof}

For $V \in \cA_n(F)\smod$ and $\mu \in \cC_n$, let $I_\mu V$\label{ImuV-def} be the sum of all $F^{\otimes n}$-submodules of $V$ evenly isomorphic to degree shifts of submodules of $\sL(\mu)$.  Then define
\begin{equation} \label{eq:Vmu-def}
  V_\mu := \sum_{\pi \in S_n} \pi(I_\mu V).
\end{equation}

\begin{lem} \label{lem:isotypic}
  Suppose $\mu \in \cC_n$ and $V \in \cA_n(F)\smod$.  Then $I_\mu V$ is an $\cA_\mu(F)$-submodule of $V$ and $V_\mu$ is an $\cA_n(F)$-submodule of $V$.  Furthermore, $V_\mu \simeq \Ind_\mu^n (I_\mu V)$.
\end{lem}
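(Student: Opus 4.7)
The plan is to proceed in three steps: (i) verify that $I_\mu V$ is stable under each of the generating subalgebras $F^{\otimes n}$, $\kk[x_1,\dotsc,x_n]$, and $\kk S_\mu$ of $\cA_\mu(F)$; (ii) promote this to $\cA_n(F)$-stability of $V_\mu$; and (iii) identify $V_\mu$ with $\Ind_\mu^n I_\mu V$ via the natural multiplication map. The nontrivial work lies in the last step.

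For (i), $F^{\otimes n}$-stability is automatic. For any $F^{\otimes n}$-submodule $W \subseteq V$ evenly isomorphic to a degree shift of a submodule of $\sL(\mu)$, the commutation relations \eqref{rel:xF-commutation} and \eqref{rel:SF-commutation} show that $x_i W$ and $sW$ (for $s\in S_\mu$) are also $F^{\otimes n}$-submodules of $V$, related to $W$ by twisting by a power of $\psi_i$ or by superpermutation of the tensor factors, respectively. Since each $\sL_\ell(\sa_\ell)$ is, as an $F$-module, the direct sum $\bigoplus_{j=0}^{r_\ell-1}\prescript{j}{}{\sL_\ell}$, the class of $F^{\otimes n}$-submodules of $\sL(\mu)$ (up to even isomorphism and degree shift) is closed under $\psi_i^{\pm 1}$-twisting. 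Moreover, since $s\in S_\mu$ only moves indices within a single block of $\mu$, and the factors of $\sL(\mu)$ within each such block all come from the same $\psi$-orbit of simple $F$-modules, the class is also closed under the superpermutation induced by $s$. Thus $x_iW, sW \subseteq I_\mu V$. For (ii), $\kk S_n$-stability of $V_\mu$ is immediate; $F^{\otimes n}$-stability follows from $\bsf\pi = \pi\,\prescript{\pi^{-1}}{}{\bsf}$, which is the polynomial degree zero case of Lemma~\ref{lem:Sn-commutation-mod-lower-terms}; and $x_i$-stability uses the same lemma in polynomial degree one to write $x_i\pi = \pi x_{\pi^{-1}(i)} + \sum_{\sigma<\pi}\sigma a_\sigma'$ with $a_\sigma'\in F^{\otimes n}$, each term of which, applied to $v \in I_\mu V$, lies in $V_\mu$ by~(i).

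For (iii), let $D$ be a set of minimal length representatives of the left cosets $S_n/S_\mu$. From the basis $\{\pi x^\alpha \bb\}$ of the Corollary following Theorem~\ref{theo:AnF-basis} one obtains a right $\cA_\mu(F)$-module decomposition $\cA_n(F) = \bigoplus_{\sigma\in D}\sigma\cA_\mu(F)$, so $\Ind_\mu^n I_\mu V \simeq \bigoplus_{\sigma\in D}\sigma\otimes I_\mu V$. The natural multiplication map $\phi\colon \Ind_\mu^n I_\mu V \to V$, $a\otimes v\mapsto av$, is an $\cA_n(F)$-homomorphism whose image is $\sum_{\sigma\in D}\sigma I_\mu V$; this coincides with $V_\mu$ since every $\pi\in S_n$ factors as $\sigma\tau$ with $\sigma\in D$, $\tau\in S_\mu$, and $\tau I_\mu V\subseteq I_\mu V$. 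The main obstacle, and the key analytic content of the lemma, is injectivity of $\phi$, equivalent to showing the sum $\sum_{\sigma\in D}\sigma I_\mu V$ is direct. I plan to establish this by comparing $F^{\otimes n}$-isotypic components: conjugation by $\sigma$ superpermutes the factors of $F^{\otimes n}$, so the simple $F^{\otimes n}$-constituents of $\sigma I_\mu V$ have the form $M_1\boxtimes\dotsb\boxtimes M_n$ with $M_i$ drawn from the $\psi$-orbit of $\sL_{\ell_{\sigma^{-1}(i)}}$, where $\ell_k$ is as in \eqref{eq:l_k-def}. Distinct left cosets $\sigma S_\mu$ yield distinct block patterns $i\mapsto \ell_{\sigma^{-1}(i)}$, and by our choice of $\sL_1,\dotsc,\sL_N$ from distinct $\psi$-orbits the corresponding sets of simple $F^{\otimes n}$-constituents are pairwise disjoint. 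Combined with the $F^{\otimes n}$-semisimplicity of $V$ built into the definition of $\cA_n(F)\smod$, this forces the subspaces $\sigma I_\mu V$ to lie in pairwise disjoint isotypic components of $V$, making the sum direct and $\phi$ injective.
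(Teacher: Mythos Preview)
Your proof is correct and follows essentially the same route as the paper. The only difference is in step~(iii): the paper asserts the direct sum decomposition $V_\mu = \bigoplus_{\pi \in X} \pi(I_\mu V)$ and then concludes injectivity by a dimension count, whereas you spell out explicitly why the pieces $\sigma I_\mu V$ live in pairwise disjoint $F^{\otimes n}$-isotypic components (distinct block patterns $i \mapsto \ell_{\sigma^{-1}(i)}$ give simple constituents drawn from distinct $\psi$-orbits), which is the same underlying reason the paper's direct sum holds.
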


\begin{proof}
  As discussed in Section~\ref{subsec:kxF-modules}, it follows from \eqref{rel:xF-commutation} that $x_i$ maps a simple $F^{\otimes n}$-submodule $W$ of $V$ either to zero or to a submodule evenly isomorphic to $\prescript{\psi_i}{}{W}$.  Hence, it follows from the definition of $L(\mu)$ that $I_\mu V$ is invariant under the action of $P_n(F)$.  It also follows from \eqref{rel:SF-commutation} that $\pi \in S_\mu$ maps a simple $F^{\otimes n}$-submodule of $V$ evenly isomorphic to degree shift of a simple summand of $\sL(\mu)$ to one evenly isomorphic to another degree shift of a simple summand of $\sL(\mu)$.  Thus $I_\mu V$ is an $\cA_\mu(F)$-submodule of $V$.  It then follows immediately from the definition that $V_\mu$ is an $\cA_n(F)$-submodule of $V$.

  If $I_\mu V = 0$, the isomorphism asserted in the lemma is trivially true.  So we assume that $I_\mu V \ne 0$.  Then the inclusion $I_\mu V \hookrightarrow V_\mu$ is a nonzero even homomorphism of $\cA_\mu(F)$-modules.  Since induction is left adjoint to restriction, we have a nonzero even homomorphism $\phi \colon \Ind_\mu^n (I_\mu V) \to V_\mu$ of $\cA_n(F)$-modules.  It follows from the definition of $V_\mu$ that $\phi$ is surjective.  Furthermore, if $X$ is a complete set of representatives of left cosets of $S_\mu$ in $S_n$, we have
  \[
    \Ind_\mu^n I_\mu V
    = \cA_n(F) \otimes_{\cA_\mu(F)} I_\mu V
    = \bigoplus_{\pi \in X} \pi \otimes I_\mu V
    \qquad \text{and} \qquad
    V_\mu = \bigoplus_{\pi \in X} \pi(I_\mu V).
  \]
  Therefore, $\Ind_\mu^n (I_\mu V)$ and $V_\mu$ have the same dimension, and so $\phi$ is injective.
\end{proof}

\begin{lem} \label{lem:dHn-mod-cC-decomp}
  For $V \in \cA_n(F)\smod$, we have the decomposition $V = \bigoplus_{\mu \in \cC_n} V_\mu$ in $\cA_n(F)\smod$.
\end{lem}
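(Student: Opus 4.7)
The plan is to refine the $F^{\otimes n}$-isotypic decomposition of $V$ by tuples in $\{1,\dotsc,N\}^n$, and then to recognize each $V_\mu$ as the union of those isotypic pieces lying in a single $S_n$-orbit.

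First, since $V$ lies in $\cA_n(F)\smod$, it is semisimple as an $F^{\otimes n}$-module; by the discussion in Section~\ref{subsec:superalg-background}, every simple $F^{\otimes n}$-submodule $W \subseteq V$ is evenly isomorphic, up to degree shift, to some $M_1 \irtimes \dotsb \irtimes M_n$ for simple $F$-modules $M_i$, and each $M_i$ is in turn evenly isomorphic to a unique $\prescript{a_i}{}{\sL_{k_i}}$ with $k_i \in \{1,\dotsc,N\}$ and $0 \le a_i < r_{k_i}$. For each tuple $\nu = (k_1,\dotsc,k_n) \in \{1,\dotsc,N\}^n$, I will let $I^\nu V \subseteq V$ denote the sum of all simple $F^{\otimes n}$-submodules of $V$ whose $i$-th factor lies in the $\psi$-orbit of $\sL_{k_i}$. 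Since simple $F^{\otimes n}$-modules associated to distinct tuples $\nu$ are nonisomorphic (the orbit-index $k_i$ of each factor is an intrinsic invariant, even after allowing odd isomorphisms and type $\tQ$ ambiguities), semisimplicity yields $V = \bigoplus_\nu I^\nu V$ as $F^{\otimes n}$-modules. Inspecting the definition of $\sL(\mu)$ in \eqref{eq:Lmu-def} together with the $F$-module decomposition $\sL_k(\sa_k) = \bigoplus_{\ell=0}^{r_k-1} \prescript{\ell}{}{\sL_k}$, the simple $F^{\otimes n}$-submodules of $\sL(\mu)$ are exactly those whose $i$-th factor is a $\psi$-conjugate of $\sL_{\ell_i}$, with $\ell_i$ as in \eqref{eq:l_k-def}, so $I_\mu V = I^{\nu(\mu)}V$ for the tuple $\nu(\mu)$ having $\mu_k$ copies of $k$ in the order prescribed by $\mu$.

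Next, I will use relation \eqref{rel:SF-commutation} to verify that the action of $\pi \in S_n$ on $V$ sends $I^\nu V$ to $I^{\pi \cdot \nu} V$, where $\pi \cdot \nu := (k_{\pi^{-1}(1)},\dotsc,k_{\pi^{-1}(n)})$. Indeed, for a simple submodule $W \subseteq V$ with $W \simeq M_1 \irtimes \dotsb \irtimes M_n$, the identity $\bsf \cdot (\pi w) = \pi(\prescript{\pi^{-1}}{}{\bsf} w)$ coming from \eqref{rel:SF-commutation} identifies $\pi W$ as an $F^{\otimes n}$-module with $W$ under the twisted action $\bsf \mapsto \prescript{\pi^{-1}}{}{\bsf}$, yielding $\pi W \simeq M_{\pi^{-1}(1)} \irtimes \dotsb \irtimes M_{\pi^{-1}(n)}$. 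Consequently
\begin{equation*}
V_\mu = \sum_{\pi \in S_n} \pi I_\mu V = \bigoplus_{\nu \in S_n \cdot \nu(\mu)} I^\nu V,
\end{equation*}
and since the $S_n$-orbits on $\{1,\dotsc,N\}^n$ are indexed bijectively by $\cC_n$ via $\mu \mapsto S_n \cdot \nu(\mu)$, these orbits partition $\{1,\dotsc,N\}^n$, giving
\begin{equation*}
V = \bigoplus_\nu I^\nu V = \bigoplus_{\mu \in \cC_n} V_\mu.
\end{equation*}
By Lemma~\ref{lem:isotypic}, each $V_\mu$ is an $\cA_n(F)$-submodule, so this is a decomposition in $\cA_n(F)\smod$.

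The main obstacle will be the third step: careful tracking of the even-isomorphism class of $\irtimes$ products under superpermutation, particularly when some $M_i$ are of type $\tQ$, where $M \irtimes M'$ is defined only up to a choice of simple summand of $M \boxtimes M'$. The required covariance of $\irtimes$ under reordering of factors follows from the flip isomorphism \eqref{eq:flip-irtimes} that already appeared in the proof of Lemma~\ref{lem:t12-action}, so the orbit-index $k_i$ of each tensor slot is genuinely preserved by the $S_n$-action up to the expected permutation.
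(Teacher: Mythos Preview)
Your argument is correct and follows essentially the same route as the paper's proof: both identify $V_\mu$ with the sum of those $F^{\otimes n}$-isotypic components whose tensor factors, counted by $\psi$-orbit, give the composition $\mu$, and then observe that these exhaust $V$ disjointly. The paper condenses this into two sentences, while you spell out the intermediate decomposition by tuples $\nu \in \{1,\dotsc,N\}^n$, the $S_n$-action on those tuples via \eqref{rel:SF-commutation}, and the care needed with $\irtimes$ in the type~$\tQ$ case; this is more detail than the paper provides but not a different argument.
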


\begin{proof}
  Let $V \in \cA_n(F)\smod$.  By definition, $V$ is semisimple as an $F^{\otimes n}$-module.  For $\mu \in \cC_n$, $V_\mu$ is the direct sum of the isotypic components corresponding to simple $F^{\otimes n}$-modules containing, for each $1 \le i \le N$, exactly $\mu_i$ tensor factors evenly isomorphic to degree shifts of $\prescript{\ell}{}{\sL}_i$ for some $0 \le \ell \le r_i$.  The lemma follows.
\end{proof}

Note that $S_\mu$ acts on $\sL(\mu)$ by superpermuting the factors.  For $\pi \in S_\mu$, we will denote this action on $\sL(\mu)$ by $\rho_\pi$, to avoid confusion with the action of $\pi$ on $\cA_n(F)$-modules.

\begin{prop} \label{prop:DH-to-A-functor}
  Suppose $\mu \in \cC_n$ and $V \in \cA_n(F)\smod$.  Then $\HOM_{F^{\otimes n}} (\sL(\mu),V)$ is an $\cR_\mu$-module under the action
  \begin{gather*}
    \pi \diamond \phi = \pi \phi \rho_\pi^{-1},\quad \pi \in S_\mu, \\
    y_k \diamond \phi = x_k \phi x_k^{-1},\quad 1 \le k \le n, \\
    z \diamond \phi = (-1)^{\bar \phi \bar z} \phi z,\quad z \in \sE(\mu),
  \end{gather*}
  for $\phi \in \HOM_{F^{\otimes n}} (\sL(\mu),V)$.  Thus, we have a functor
  \[
    \HOM_{F^{\otimes n}}(\sL(\mu),-) \colon \cA_n(F)\smod \to \cR_\mu\md.
  \]
\end{prop}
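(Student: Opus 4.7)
The plan is to verify that the proposed formulas for the actions of $\pi\in S_\mu$, $y_k$, and $z\in \sE(\mu)$ on $\HOM_{F^{\otimes n}}(\sL(\mu),V)$ respect the defining relations of $\cR_\mu = \cH_{\mu_1}^1 \otimes \dotsb \otimes \cH_{\mu_N}^N$. I first check well-definedness, i.e.\ that each operator preserves $F^{\otimes n}$-linearity. For $z\in\sE(\mu)\subseteq \END_{F^{\otimes n}}\sL(\mu)$ this is immediate; for $y_k$, the commutation \eqref{rel:xF-commutation} applied in both $V$ and $\sL(\mu)$ shows that the two $\psi_k$-twists cancel; for $\pi$, the twist of $F^{\otimes n}$ by $\pi$ arising from \eqref{rel:SF-commutation} on $V$ cancels with the corresponding twist from $\rho_\pi^{-1}$ on $\sL(\mu)$.

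Next I check compatibility with the defining relations of $\cR_\mu$. Operators from distinct factors $\cH_{\mu_i}^i$ act on disjoint tensor slots of $\sL(\mu)$, so inter-factor commutativity is automatic from the corresponding relations in $\cA_n(F)$. Within a single factor, the $\sE$-subalgebra structure is routine sign bookkeeping converting composition in $\END^\op$ correctly; commutativity of the $y_k$'s follows from $x_kx_l=x_lx_k$ in $\cA_n(F)$; the Coxeter relations are transported; and the $\sE$–$y$ twist $zy_k = y_k(x_k^{-1}zx_k)$ of \eqref{eq:xEn-action} is automatic from $zx_k^{-1}=x_k^{-1}(x_kzx_k^{-1})$ on $\sL(\mu)$. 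The substantive relations are the $s_j$–$y_j$ commutations. A direct computation using \eqref{rel:sx-commutation} on $V$ together with $\rho_{s_j}^{-1}x_{j+1}^{-1}=x_j^{-1}\rho_{s_j}^{-1}$ on $\sL(\mu)$ gives
\[
  s_j \diamond (y_j \diamond \phi) - y_{j+1} \diamond (s_j \diamond \phi)
  = -\phi\circ\bigl(t_{j,j+1}\cdot x_j^{-1}\rho_{s_j}^{-1}\bigr).
\]
In the wreath case, Lemma~\ref{lem:t12-action} together with the semisimple decomposition $\sL_i(\sa_i)=\bigoplus_{k=0}^{r_i-1}\,\prescript{k}{}{\sL}_i$ shows that $t_{j,j+1}$ acts as zero on the $(j,j+1)$-positions of $\sL(\mu)$: all cross-pieces vanish if $r_i>1$, and if $r_i=1$ the wreath-case hypothesis that $t_{1,2}$ annihilates $\sL_i\irtimes\sL_i$ extends to $\sL_i\boxtimes\sL_i$, so the correction vanishes and the wreath relation $s_jy_j=y_{j+1}s_j$ is preserved. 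In the affine case, formula \eqref{eq:t-L-hat-n-action} rewrites $t_{j,j+1}$ acting on $\sL(\mu)$ as $\tilde t_{j,j+1}\,\rho_{s_j}x_j$; substituting this into the displayed difference and simplifying yields $-\phi\circ\tilde t_{j,j+1}$, which is precisely the $\diamond$-action of the element $\tilde t_{j,j+1}\in\sE(\mu)$ required by the defining relation $s_jy_j=y_{j+1}s_j-\tilde t_{j,j+1}$ of $\cA_{\mu_i}(\END_F^\op \sL_i)$.

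The main obstacle is the affine-case reconciliation: one must check that the combination of the normalization constant $\sa_i$ (including the factor $(1-\sqrt{-1})/2$ in the type $\tQ$ case, needed to split $\sL_i\boxtimes\sL_i$ into $\sL_i\irtimes\sL_i$ and its parity shift) and the $\END^\op$ sign convention for the $\sE$-action reproduce exactly the element $\tilde t_{j,j+1}=\sum_b b\otimes b^\vee \in(\END_F^\op \sL_i)^{\otimes 2}$ dictated by the presentation of $\cA_{\mu_i}(\END_F^\op \sL_i)$. Once this identification is established, the remaining $s_j$–$\sE$ commutation $s_j z=\prescript{s_j}{}{z}\,s_j$ follows from $\rho_{s_j}\rho_{s_j}^{-1}=\id$ and \eqref{rel:SF-commutation}, and the functoriality of $\HOM_{F^{\otimes n}}(\sL(\mu),-)$ is standard since restriction to $F^{\otimes n}$ is exact.
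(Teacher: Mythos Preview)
Your proof is correct and follows essentially the same approach as the paper: verify that the three operators preserve $F^{\otimes n}$-linearity, then check the defining relations of $\cR_\mu$, with the crucial step being the computation of $(s_jy_j-y_{j+1}s_j)\diamond\phi$ via \eqref{rel:sx-commutation} and the identity \eqref{eq:t-L-hat-n-action}. Your stated ``main obstacle'' is not actually an obstacle within this proof---the normalization by $\sa_i$ (including the type~$\tQ$ factor) is precisely what is packaged into \eqref{eq:t-L-hat-n-action}, which is established \emph{before} this proposition; once you invoke \eqref{eq:t-L-hat-n-action}, the substitution $t_{j,j+1}=\tilde t_{j,j+1}\rho_{s_j}x_j$ on $\sL(\mu)$ cancels cleanly against $x_j^{-1}\rho_{s_j}^{-1}$ to give $-\phi\,\tilde t_{j,j+1}=-\tilde t_{j,j+1}\diamond\phi$, exactly as you say.
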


\begin{proof}
  It is straightforward to verify that $\pi \diamond \phi$, $y_k \diamond \phi$, and $z \diamond \phi$ are $F^{\otimes n}$-module homomorphisms.
  \details{
    Suppose $v \in \sL(\mu)$ and $\bsf \in F^{\otimes n}$.  For $\pi \in S_\mu$, we have
    \begin{multline*}
      (\pi \diamond \phi)(\bsf v)
      = \pi \phi \rho_\pi^{-1} (\bsf v)
      = \pi \phi (\rho_\pi^{-1} \bsf) (\rho_\pi^{-1} v)
      \\
      = (-1)^{\bar \phi \bar \bsf} \pi (\rho_\pi^{-1} \bsf) \phi (\rho_\pi^{-1} v)
      = (-1)^{\bar \phi \bar \bsf} \bsf \pi \phi \rho_\pi^{-1} (v)
      = (-1)^{\bar \phi \bar \bsf} \bsf (\pi \diamond \phi)(v).
    \end{multline*}
    Thus, $\pi \diamond \phi$ is an $F^{\otimes n}$-module homomorphism.  For $1 \le k \le n$, we also have
    \begin{multline*}
      (y_k \diamond \phi)(\bsf v)
      = x_k \phi x_k^{-1} (\bsf v)
      = x_k \phi \psi_k(\bsf) x_k^{-1} (v)
      \\
      = (-1)^{\bar \bsf \bar \phi} x_k \psi_k(\bsf) \phi x_k^{-1} (v)
      = (-1)^{\bar \bsf \bar \phi} \bsf x_k \phi x_k^{-1} (v)
      = \bsf (y_k \diamond \phi)(v).
    \end{multline*}
    Thus $y_k \diamond \phi$ is an $F^{\otimes n}$-module homomorphism.  Finally, for $z \in \sE(\mu)$,
    \[
      (z \diamond \phi)(\bsf v)
      = (-1)^{\bar \phi \bar z} \phi z \bsf v
      = (-1)^{\bar \phi \bar z + (\bar \phi + \bar z) \bar \bsf} \bsf \phi z v
      = (-1)^{(\bar \phi + \bar z) \bar \bsf} \bsf (z \diamond \phi)(v).
    \]
    Thus $z \diamond \phi$ is an $F^{\otimes n}$-module homomorphism.
  }

  Now suppose $1 \le k \le n-1$ such that $\ell_k = \ell_{k+1}$ (i.e.\ such that $s_k \in S_\mu$).  Then we have
  \begin{gather*}
    (s_k y_k) \diamond \phi
    = s_k x_k \phi x_k^{-1} \rho_{k,k+1}
    \quad \text{and} \\
    (y_{k+1} s_k) \diamond \phi
    = x_{k+1} s_k \phi \rho_{k,k+1} x_{k+1}^{-1}
    = x_{k+1} s_k \phi x_k^{-1} \rho_{k,k+1}.
  \end{gather*}
  Thus
  \begin{multline*}
    (s_k y_k - y_{k+1} s_k) \diamond \phi
    = (s_k x_k - x_{k+1} s_k) \phi x_k^{-1} \rho_{k,k+1}
    \stackrel{\eqref{rel:sx-commutation}}{=} - t_{k,k+1} \phi x_k^{-1} \rho_{k,k+1}
    \\
    = - \phi t_{k,k+1} x_k^{-1} \rho_{k,k+1}
    \stackrel{\eqref{eq:t-L-hat-n-action}}{=}
    \begin{cases}
      0 & \text{if $t_{1,2}$ acts as zero on $\sL_{\ell_k} \irtimes \sL_{\ell_k}$}, \\
      - \phi \tilde{t}_{k,k+1} & \text{if $t_{1,2}$ does not act as zero on $\sL_{\ell_k} \irtimes \sL_{\ell_k}$}.
    \end{cases}
  \end{multline*}

  For $z \in \sE(\mu)$, we also have
  \[
    (z y_k) \diamond \phi
    = (-1)^{\bar \phi \bar z} x_k \phi x_k^{-1} z
    \stackrel{\eqref{eq:xEn-action}}{=} (-1)^{\bar \phi \bar z} x_k \phi (z \cdot y_k) x_k^{-1}
    = (y_k (z \cdot y_k)) \diamond \phi.
  \]
  The remainder of the relations, involving only elements of $S_\mu$, only elements of $\sE(\mu)$, or only the $y_k$, are straightforward to verify.
\end{proof}

\begin{prop} \label{prop:A-to-DF-functor}
  Suppose $M$ is an $\cR_\mu$-module.  Then $\sL(\mu) \otimes_{\sE(\mu)} M$ is an $\cA_\mu(F)$-module under the action
  \begin{gather*}
    \bsf * (w \otimes v) = \bsf w \otimes v,\quad \bsf \in F^{\otimes n}, \\
    \pi * (w \otimes v) = \rho_\pi w \otimes \pi v,\quad \pi \in S_\mu, \\
    x_k * (w \otimes v) = x_k w \otimes y_k v,
  \end{gather*}
  for $w \in \sL(\mu)$, $v \in M$.
\end{prop}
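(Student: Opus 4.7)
The plan is to verify in two stages: first, that each of the three prescribed operations passes to the balanced tensor product $\sL(\mu)\otimes_{\sE(\mu)} M$, and second, that the resulting operations satisfy the defining relations of $\cA_\mu(F)$. Both stages reduce to sign-sensitive computations using the defining relations of $\cR_\mu$, the $F^{\otimes n}$-module structure on $\sL(\mu)$, and (crucially) the explicit action of $t_{i,i+1}$ on $\sL(\mu)$ given by \eqref{eq:t-L-hat-n-action}.

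For well-definedness, for any $z \in \sE(\mu)$ I would check that each action commutes past the balance $wz\otimes v = w\otimes zv$. The case $\bsf \in F^{\otimes n}$ is immediate since the elements of $\sE(\mu)$ are $F^{\otimes n}$-linear up to signs. For $\pi \in S_\mu$, the relation $\rho_\pi(wz) = (\rho_\pi w)(\pi\cdot z)$ on $\sL(\mu)$ matches the smash-product relation $\pi z = (\pi\cdot z)\pi$ in $\cR_\mu$. For $x_k$, the key point is precisely \eqref{eq:xEn-action}: the identity $z\cdot y = x^{-1}zx$ translates to $y_k z = (x_k z x_k^{-1})\,y_k$ in $\cR_\mu$, which in turn matches $x_k(wz) = (x_k w)(x_k z x_k^{-1})$ on $\sL(\mu)$.

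For the defining relations of $\cA_\mu(F)$, the Coxeter relations in $S_\mu$, the commutativity of the $x_k$, and the relations inside $F^{\otimes n}$ are inherited directly from the corresponding subalgebras of $\cR_\mu$ and of $\sL(\mu)$. The mixed relations \eqref{rel:xF-commutation}, \eqref{rel:SF-commutation}, and \eqref{rel:sx-triv-commutation} follow by short direct calculations pushing operators past each other; for instance,
\[
  \bsf*\bigl(x_k*(w\otimes v)\bigr)
  = \bsf x_k w \otimes y_k v
  = x_k \psi_k(\bsf) w \otimes y_k v
  = \bigl(x_k \psi_k(\bsf)\bigr)*(w\otimes v),
\]
using the corresponding relation in $\sL(\mu)$.

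The main obstacle is relation \eqref{rel:sx-commutation}, $s_i x_i = x_{i+1} s_i - t_{i,i+1}$, for $s_i\in S_\mu$. Using $x_{i+1}\circ\rho_{i,i+1} = \rho_{i,i+1}\circ x_i$ one obtains
\[
  s_i*(x_i*(w\otimes v)) - x_{i+1}*(s_i*(w\otimes v)) = \rho_{i,i+1}(x_i w)\otimes (s_i y_i - y_{i+1} s_i)\,v.
\]
In $\cR_\mu$ the factor $s_i y_i - y_{i+1} s_i$ is zero in the wreath-product case of \eqref{eq:H_n^ell-def} and equals $-\tilde t_{i,i+1}$ in the affine-wreath case, by applying \eqref{rel:sx-commutation} inside $\cH_{\mu_{\ell_i}}^{\ell_i}$. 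On the other hand, $-t_{i,i+1}*(w\otimes v) = -t_{i,i+1} w \otimes v$ is zero in the first case by \eqref{eq:t-L-hat-n-action} and, in the second case, equals $-\tilde t_{i,i+1}\,\rho_{i,i+1} x_i w \otimes v$, which after pushing the even element $\tilde t_{i,i+1}\in\sE(\mu)$ across the tensor becomes $-\rho_{i,i+1} x_i w \otimes \tilde t_{i,i+1} v$. The two sides agree. The delicate point is precisely \eqref{eq:t-L-hat-n-action}: this is where the normalization of $\sa_k$ in \eqref{eq:Lmu-def} (in particular the factor $\frac{1-\sqrt{-1}}{2}$ when $\sL_k$ is of type $\tQ$) enters, and without that exact normalization the two sides above would differ by a nonzero scalar.
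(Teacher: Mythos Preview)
Your proof is correct and follows essentially the same approach as the paper: verify balance over $\sE(\mu)$ for each generator, then check the defining relations of $\cA_\mu(F)$, with the crucial step being \eqref{rel:sx-commutation}, handled via \eqref{eq:t-L-hat-n-action} exactly as you do. Your explicit remark that the normalization of $\sa_k$ (and hence \eqref{eq:t-L-hat-n-action}) is the delicate point is accurate and well placed.
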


\begin{proof}
  It is straightforward to verify that the given actions of $\bsf$, $\pi$, and $x_k$ are well-defined on the tensor product, that is, that they are balanced with respect to the tensor product over $\sE(\mu)$.
  \details{
    Suppose $\bsf \in F^{\otimes n}$, $w \in \sL(\mu)$, $v \in M$, $\pi \in S_\mu$, and $z \in \sE(\mu)$.  Then
    \[
      \bsf * (w z \otimes v)
      = \bsf wz \otimes v
      = \bsf w \otimes zv
      = \bsf * (w \otimes v),
    \]
    and
    \begin{multline*}
      \pi * (wz \otimes v)
      = \rho_\pi (wz) \otimes \pi v
      = \rho_\pi (w) \rho_\pi (z) \otimes \pi v
      \\
      = \rho_\pi (w) \otimes \rho_\pi(z) \pi v
      = \rho_\pi (w) \otimes \pi z v
      = \pi * (w \otimes zv),
    \end{multline*}
    and
    \begin{multline*}
      x_k * (wz \otimes v)
      = (-1)^{\bar w \bar z} x_k * (zw \otimes v)
      = (-1)^{\bar w \bar z} x_k z w \otimes y_k v
      \stackrel{\eqref{eq:xEn-action}}{=} (-1)^{\bar w \bar z} (z \cdot y_k^{-1}) x_k w \otimes y_k v
      \\
      = x_k w (z \cdot y_k^{-1}) \otimes y_k v
      = (x_k w) \otimes (z \cdot y_k^{-1}) y_k v
      \stackrel{\eqref{eq:xEn-action}}{=} (x_k w) \otimes y_k z v
      = x_k * (w \otimes zv),
    \end{multline*}
    where we use the fact that the action of $y_k$ on $\sE(\mu)$ is invertible, and use the notation $y_k^{-1}$ to denote the inverse of the action by $y_k$.
  }

  The relations involving only elements of $F^{\otimes n}$, only elements of $S_\mu$, or only the $x_k$ are clear.  If $1 \le k \le n$, $\bsf \in F$, $w \in \sL(\mu)$, and $v \in M$, then
  \[
    (\bsf x_k) * (w \otimes v)
    = \bsf x_k w \otimes y_k v
    = x_k \psi_k(\bsf) w \otimes y_k v
    = (x_k \psi_k(\bsf)) * (w \otimes v).
  \]
  If $1 \le k \le n-1$ and $\ell_k = \ell_{k+1}$ (i.e.\ $s_k \in S_\mu$), we also have
  \begin{gather*}
    (s_k x_k) * (w \otimes v)
    = \rho_{k,k+1} x_k w \otimes s_k y_k v,
    \\
    (x_{k+1} s_k) * (w \otimes v)
    = x_{k+1} \rho_{k,k+1} w \otimes y_{k+1} s_k v
    = \rho_{k,k+1} x_k w \otimes y_{k+1} s_k v.
  \end{gather*}
  So we have
  \begin{multline*}
    (s_k x_k - x_{k+1} s_k) * (w \otimes v)
    = \rho_{k,k+1} x_k w \otimes (s_k y_k - y_{k+1} s_k)v
    \\
    = - \rho_{k,k+1} x_k w \otimes \tilde{t}_{k,k+1} v
    \stackrel{\eqref{eq:t-L-hat-n-action}}{=} -t_{k,k+1} * (w \otimes v). \qedhere
  \end{multline*}
\end{proof}

\begin{prop} \label{prop:Phi-isom}
  Let $M$ be an $\cR_\mu$-module.  Then
  \[
    \Phi \colon M \to \HOM_{F^{\otimes n}} \left( \sL(\mu), \sL(\mu) \otimes_{\sE(\mu)} M \right),\quad \Phi(v)(w) = (-1)^{\bar v \bar w} w \otimes v,\ v \in M,\ w \in \sL(\mu),
  \]
  is an even isomorphism of $\cR_\mu$-modules.  Furthermore, $\sL(\mu) \otimes_{\sE(\mu)} M$ is a simple $\cA_\mu(F)$-module if and only if $M$ is a simple $\cR_\mu$-module.
\end{prop}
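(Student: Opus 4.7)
The plan is to reduce the statement to Lemma~\ref{lem:category-equivalence} and then upgrade the resulting $\sE(\mu)$-module isomorphism to an $\cR_\mu$-module isomorphism by checking intertwining with the remaining generators. Finally, simplicity is transferred using the submodule correspondence coming from this equivalence.

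First, I would verify that $\Phi$ is well-defined, i.e.\ that for each $v \in M$ the map $w \mapsto (-1)^{\bar v \bar w} w \otimes v$ lies in $\HOM_{F^{\otimes n}}(\sL(\mu), \sL(\mu) \otimes_{\sE(\mu)} M)$ with parity $\bar v$. This is a one-line check from the balancing over $\sE(\mu)$ and the sign conventions of Section~\ref{subsec:superalg-background}. Then I would apply Lemma~\ref{lem:category-equivalence} with $A = F^{\otimes n}$ and $V = \sL(\mu)$; by construction $\sE(\mu) = \END_{F^{\otimes n}}^\op \sL(\mu)$, and the unit of the equivalence at $M$ is exactly $\Phi$ (up to the parity sign, which matches our definition). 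Thus $\Phi$ is an even isomorphism of $\sE(\mu)$-modules.

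Next, I would upgrade this to an isomorphism of $\cR_\mu$-modules by checking that $\Phi$ intertwines the actions of $y_k$ and of $\pi \in S_\mu$. For $y_k$, a direct calculation using the definitions in Propositions~\ref{prop:DH-to-A-functor} and~\ref{prop:A-to-DF-functor} gives
\[
(y_k \diamond \Phi(v))(w) = x_k \Phi(v)(x_k^{-1} w) = \pm\, x_k \cdot (x_k^{-1} w \otimes v) = \pm\, (w \otimes y_k v) = \Phi(y_k v)(w),
\]
with the signs accounted for by $\bar v$ and $\bar w$; the computation for $\pi \in S_\mu$ is analogous, replacing $x_k$ by $\rho_\pi$. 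The bookkeeping of signs is the one technical step, but each case is a short unwinding.

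Finally, for the simplicity statement I would argue that submodules correspond. Any $\cA_\mu(F)$-submodule $W$ of $\sL(\mu) \otimes_{\sE(\mu)} M$ is in particular an $F^{\otimes n}$-submodule lying in the category $\mathscr{C}_{\sL(\mu)}$ of Lemma~\ref{lem:category-equivalence}, so $W \mapsto \HOM_{F^{\otimes n}}(\sL(\mu), W)$ is an injective, order-preserving correspondence into $\cR_\mu$-submodules of $\HOM_{F^{\otimes n}}(\sL(\mu), \sL(\mu) \otimes_{\sE(\mu)} M)$; conversely, any $\cR_\mu$-submodule $N$ of $M$ gives, via the exact functor $\sL(\mu) \otimes_{\sE(\mu)} -$, a submodule $\sL(\mu) \otimes_{\sE(\mu)} N$ which one then checks is $\cA_\mu(F)$-stable (using Proposition~\ref{prop:A-to-DF-functor}). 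Combined with the isomorphism $\Phi$ and Lemma~\ref{lem:category-equivalence} these are mutually inverse, so proper nonzero submodules correspond and the simplicity equivalence follows. I expect the main obstacle to be the sign-tracking when verifying the intertwining with $y_k$ and $S_\mu$, since one must reconcile the opposite-algebra action conventions with the parity shifts in $\Phi$; beyond that, everything is formal from Lemma~\ref{lem:category-equivalence}.
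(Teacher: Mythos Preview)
Your proposal is correct and follows essentially the same approach as the paper: verify $\Phi$ is an $F^{\otimes n}$-homomorphism, use Lemma~\ref{lem:category-equivalence} to get bijectivity, check $\cR_\mu$-equivariance on the generators $y_k$, $\pi \in S_\mu$, $z \in \sE(\mu)$, and then transfer simplicity by sending submodules through the two functors of Propositions~\ref{prop:DH-to-A-functor} and~\ref{prop:A-to-DF-functor}. The only cosmetic difference is the order: the paper first checks $\cR_\mu$-equivariance and then invokes Lemma~\ref{lem:category-equivalence} for bijectivity, whereas you do the reverse; and for the simplicity statement the paper argues each direction concretely rather than phrasing it as a lattice correspondence, but the content is identical.
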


\begin{proof}
  It is clear that $\Phi(v) \in \HOM_{F^{\otimes n}}(\sL(\mu), \sL(\mu) \otimes_{\sE(\mu)} M)$ for all $v \in M$.  It is also straightforward to verify that $\Phi$ is a homomorphism of $\cR_\mu$-modules.
  \details{
    Let $v \in M$.  For $\pi \in S_\mu$, $1 \le k \le n$, and $z \in \sE(\mu)$, we have
    \begin{gather*}
      \Phi(\pi v)
      = (w \mapsto (-1)^{\bar v \bar w} w \otimes \pi v)
      = \left( w \mapsto (-1)^{\bar v \bar w} \pi * (\rho_\pi^{-1} w \otimes v) \right)
      = \pi \diamond \Phi(v),
      \\
      \Phi(y_k v)
      = (w \mapsto (-1)^{\bar v \bar w} w \otimes y_k v)
      = \left( w \mapsto (-1)^{\bar v \bar w} x_k * (x_k^{-1} w \otimes v) \right)
      = y_k \diamond \Phi(v),
      \\
      \Phi(z v)
      = \left( w \mapsto (-1)^{(\bar z + \bar v) \bar w} w \otimes z v = (-1)^{\bar v \bar w} zw \otimes v \right)
      = (-1)^{\bar z \bar v} (w \mapsto (-1)^{\bar v \bar w} w \otimes v) \circ z
      = z \diamond \Phi(v)
    \end{gather*}
  }
  By Lemma~\ref{lem:category-equivalence}, $\Phi$ is an isomorphism of $\sE(\mu)$-modules.  Therefore, it is bijective and hence an isomorphism of $\cR_\mu$-modules.

  Suppose that $\sL(\mu) \otimes_{\sE(\mu)} M$ is a simple $\cA_\mu(F)$-module and $W$ is a nonzero $\cR_\mu$-submodule of $M$.  Then $\sL(\mu) \otimes_{\sE(\mu)} W$ is an $\cA_\mu(F)$-submodule of $\sL(\mu) \otimes_{\sE(\mu)} M$.  Hence $W = M$.  So $M$ is simple.

  Conversely, suppose that $M$ is a simple $\cR_\mu$-module and that $W$ is a nonzero $\cA_\mu(F)$-submodule of $\sL(\mu) \otimes_{\sE(\mu)} M$.  Then, by Proposition~\ref{prop:DH-to-A-functor}, $\HOM_{F^{\otimes n}} (\sL(\mu), W)$ is a nonzero $\cR_\mu$-submodule of $\HOM_{F^{\otimes n}} (\sL(\mu), \sL(\mu) \otimes_{\sE(\mu)} M) \simeq M$, which is simple.  Hence $\HOM_{F^{\otimes n}} (\sL(\mu), W) \simeq M$.  Then, as an $F^{\otimes n}$-module, $W \simeq \sL(\mu) \otimes_{\sE(\mu)} M$ by Lemma~\ref{lem:category-equivalence}.  So $\sL(\mu) \otimes_{\sE(\mu)} M$ is simple.
\end{proof}

\begin{prop} \label{prop:Upsilon-isom}
  Suppose $V \in \cA_n(F)\smod$.  Then
  \[
    \Upsilon \colon \sL(\mu) \otimes_{\sE(\mu)} \HOM_{F^{\otimes n}} (\sL(\mu), I_\mu V) \to I_\mu V,\quad
    v \otimes \varphi \mapsto (-1)^{\bar v \bar \varphi} \varphi(v),
  \]
  defines an even isomorphism of $\cA_\mu(F)$-modules.
\end{prop}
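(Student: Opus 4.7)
The plan is to prove three things about $\Upsilon$: that it is well-defined on the tensor product (i.e.\ $\sE(\mu)$-balanced), that it is $\cA_\mu(F)$-linear, and that it is bijective. The only nontrivial conceptual input is Lemma~\ref{lem:category-equivalence}; everything else is sign-bookkeeping using the module structures established in Propositions~\ref{prop:DH-to-A-functor} and~\ref{prop:A-to-DF-functor}.

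First I would check that $\Upsilon$ is $\sE(\mu)$-balanced. For homogeneous $z \in \sE(\mu)$, the right $\sE(\mu)$-action on $\sL(\mu)$ is $vz = (-1)^{\bar v \bar z} zv$, while the left $\sE(\mu)$-action on $\HOM_{F^{\otimes n}}(\sL(\mu), I_\mu V)$ recalled just before Lemma~\ref{lem:category-equivalence} is $(z\varphi)(v) = (-1)^{\bar z(\bar \varphi + \bar v)} \varphi(vz)$. Evaluating both sides, $\Upsilon(vz \otimes \varphi) = (-1)^{(\bar v + \bar z)\bar\varphi} \varphi(vz)$ and $\Upsilon(v \otimes z\varphi) = (-1)^{\bar v(\bar z + \bar\varphi) + \bar z(\bar\varphi + \bar v)} \varphi(vz)$, and the exponents agree modulo $2$.

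Next I would check $\cA_\mu(F)$-linearity on each of the three types of generators identified in Corollary~\ref{cor:affine-wreath-presentation}. For $\bsf \in F^{\otimes n}$, expand and use $\varphi(\bsf v) = (-1)^{\bar \varphi \bar \bsf} \bsf \varphi(v)$. For $\pi \in S_\mu$, use $\pi \diamond \varphi = \pi \varphi \rho_\pi^{-1}$ so that the $\rho_\pi$ from the left tensor slot cancels the $\rho_\pi^{-1}$ from the right. For $x_k$, use $y_k \diamond \varphi = x_k \varphi x_k^{-1}$ so that the $x_k$ cancels the $x_k^{-1}$. In each case a short sign computation yields $\Upsilon(a * (v \otimes \varphi)) = a\, \Upsilon(v \otimes \varphi)$, so $\Upsilon$ is $\cA_\mu(F)$-linear.

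Finally, for bijectivity, I would apply Lemma~\ref{lem:category-equivalence} with $A = F^{\otimes n}$, $V = \sL(\mu)$, and $W = I_\mu V$. By the definition of $I_\mu V$ we have $I_\mu V \in \mathscr{C}_{\sL(\mu)}$ (as $F^{\otimes n}$-module), so the lemma's unit map $v \otimes \varphi \mapsto \varphi(v)$ is an even isomorphism onto $I_\mu V$. Our $\Upsilon$ differs from this map only by the sign $(-1)^{\bar v \bar \varphi}$ on homogeneous summands; this graded sign modification is an even bijection of the tensor product with itself, so the composition $\Upsilon$ is also a bijection. The main obstacle is the sign-tracking in the middle step, which I would handle one generator type at a time rather than trying to do everything in one calculation.
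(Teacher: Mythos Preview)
Your proof takes the same approach as the paper: verify $\sE(\mu)$-balancedness, verify $\cA_\mu(F)$-linearity on generators using Propositions~\ref{prop:DH-to-A-functor} and~\ref{prop:A-to-DF-functor}, and then invoke Lemma~\ref{lem:category-equivalence} for bijectivity. The first two steps are carried out correctly.

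There is one small wrinkle in your bijectivity step. You say the unsigned map $v\otimes\varphi\mapsto\varphi(v)$ from Lemma~\ref{lem:category-equivalence} is an isomorphism and that $\Upsilon$ differs from it by a sign twist which is itself a self-bijection of the tensor product. Neither half of this is quite right when $\sE(\mu)$ has odd elements (i.e.\ when some $\sL_k$ is of type $\tQ$): the unsigned evaluation map is not $\sE(\mu)$-balanced (run your own balancedness computation with the sign removed and you will find a leftover $(-1)^{\bar z(\bar\varphi+\bar v)}$), so it is not well-defined on $\sL(\mu)\otimes_{\sE(\mu)}\HOM_{F^{\otimes n}}(\sL(\mu),I_\mu V)$; and for the same reason the sign twist $v\otimes\varphi\mapsto(-1)^{\bar v\bar\varphi}v\otimes\varphi$ does not descend to a self-map of that tensor product. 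The fix is immediate: apply the argument in the proof of Lemma~\ref{lem:category-equivalence} (surjectivity from the definition of $\mathscr{C}_{\sL(\mu)}$ together with Schur's Lemma, then injectivity by a dimension count via the double-centralizer property) directly to $\Upsilon$, which you have already shown is well-defined and $F^{\otimes n}$-linear. In effect the counit displayed in Lemma~\ref{lem:category-equivalence} should itself carry the Koszul sign $(-1)^{\bar v\bar\phi}$; with that correction your appeal to the lemma goes through verbatim, and this is exactly what the paper does.
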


\begin{proof}
  By Lemma~\ref{lem:isotypic}, $I_\mu V$ is an $\cA_\mu(F)$-module.  Then, by Propositions~\ref{prop:DH-to-A-functor} and~\ref{prop:A-to-DF-functor}, \break $\sL(\mu) \otimes_{\sE(\mu)} \HOM_{F^{\otimes n}} (\sL(\mu), I_\mu V)$ is an $\cA_n(F)$-module.
  \details{
    We verify that $\Upsilon$ is balanced (i.e.\ respects the tensor product over $\sE(\mu)$).  For $v \in \sL(\mu)$, $\varphi \in \HOM_{F^{\otimes n}}(\sL(\mu), I_\mu V)$, and $z \in \sE(\mu)$, we have
    \[
      \Upsilon(vz \otimes \varphi)
      = (-1)^{\bar v \bar z} \Upsilon(zv \otimes \varphi)
      = (-1)^{\bar v \bar z + (\bar v + \bar z) \bar \varphi} \varphi(zv)
      = (-1)^{\bar v (\bar z + \bar \varphi)} (z \diamond \varphi)(v)
      = \Upsilon \left(v \otimes (z \diamond \varphi) \right).
    \]
  }
  It is straightforward to verify that $\Upsilon$ is a homomorphism of $\cA_\mu(F)$-modules.
  \details{
    Let $v \in \sL(\mu)$ and $\varphi \in \HOM_{F^{\otimes n}} (\sL(\mu), I_\mu V)$.  For $\bsf \in F^{\otimes n}$, $\pi \in S_\mu$, and $1 \le k \le n$, we have
    \begin{gather*}
      \Upsilon(\bsf * (v \otimes \varphi))
      = \Upsilon(\bsf v \otimes \varphi)
      = (-1)^{(\bar \bsf + \bar v)\bar \varphi} \varphi(\bsf v)
      = (-1)^{\bar v \bar \varphi} \bsf \varphi(v)
      = \bsf \Upsilon(v \otimes \varphi),
      \\
      \Upsilon(\pi * (v \otimes \varphi))
      = \Upsilon(\rho_\pi v \otimes \pi \diamond \varphi)
      = \Upsilon \left( \rho_\pi v \otimes \pi \varphi \rho_\pi^{-1} \right)
      = (-1)^{\bar v \bar \varphi} \pi \varphi(v)
      = \pi \Upsilon(v),
      \\
      \Upsilon(x_k * (v \otimes \varphi))
      = \Upsilon(x_k v \otimes y_k \diamond \varphi)
      = \Upsilon \left( x_k v \otimes x_k \varphi x_k^{-1} \right)
      = (-1)^{\bar v \bar \varphi} x_k \varphi(v)
      = x_k \Upsilon(v \otimes \varphi).
    \end{gather*}
  }
  Now, as an $F^{\otimes n}$-module, $I_\mu V$ is isomorphic to a finite direct sum of modules evenly isomorphic to degree shifts of simple summands of $\sL(\mu)$.  It follows from Lemma~\ref{lem:category-equivalence} that $\Upsilon$ is bijective.
\end{proof}

The following theorem is a generalization of \cite[Th.~3.9]{WW08}, which treats the case where $F$ is the group algebra of a finite group (see Example~\ref{eg:wreath-Hecke}).

\begin{theo} \label{theo:category-equivalence}
  The functor $\mathbf{F} \colon \cA_n(F)\smod \to \cR_n\md$ defined by
  \[
    \mathbf{F}(V)
    = \bigoplus_{\mu \in \cC_n} \HOM_{F^{\otimes n}} \left( \sL(\mu), I_\mu V \right)
  \]
  is an equivalence of categories with inverse  $\mathbf{G} \colon \cR_n\md \to \cA_n(F)\smod$ given by
  \[
    \mathbf{G} \left( \bigoplus_{\mu \in \cC_n} M_\mu \right)
    = \bigoplus_{\mu \in \cC_n} \Ind_\mu^n \left( \sL(\mu) \otimes_{\sE(\mu)} M_\mu \right).
  \]
\end{theo}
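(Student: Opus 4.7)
The plan is to verify that both composites $\mathbf{G}\mathbf{F}$ and $\mathbf{F}\mathbf{G}$ are naturally isomorphic to the identity, leveraging the three main technical results already in hand: Lemmas~\ref{lem:isotypic} and~\ref{lem:dHn-mod-cC-decomp} (which give $V \simeq \bigoplus_\mu \Ind_\mu^n I_\mu V$), Proposition~\ref{prop:Upsilon-isom} (which compares $I_\mu V$ with $\sL(\mu) \otimes_{\sE(\mu)} \HOM_{F^{\otimes n}}(\sL(\mu), I_\mu V)$), and Proposition~\ref{prop:Phi-isom} (which compares $M_\mu$ with $\HOM_{F^{\otimes n}}(\sL(\mu), \sL(\mu) \otimes_{\sE(\mu)} M_\mu)$).

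First I would treat $\mathbf{G}\mathbf{F} \simeq \id$. For $V \in \cA_n(F)\smod$, applying Proposition~\ref{prop:Upsilon-isom} to the $\cA_\mu(F)$-module $I_\mu V$ (which is indeed an $\cA_\mu(F)$-module by Lemma~\ref{lem:isotypic}) yields an $\cA_\mu(F)$-module isomorphism $\sL(\mu) \otimes_{\sE(\mu)} \HOM_{F^{\otimes n}}(\sL(\mu), I_\mu V) \simeq I_\mu V$. Inducing up and using $\Ind_\mu^n I_\mu V \simeq V_\mu$ (Lemma~\ref{lem:isotypic}), and finally summing over $\mu \in \cC_n$ and invoking Lemma~\ref{lem:dHn-mod-cC-decomp}, gives the chain of natural $\cA_n(F)$-module isomorphisms $\mathbf{G}\mathbf{F}(V) \simeq \bigoplus_\mu \Ind_\mu^n I_\mu V \simeq \bigoplus_\mu V_\mu \simeq V$.

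For $\mathbf{F}\mathbf{G} \simeq \id$, the crux is the identity
\[
  I_\mu\bigl( \Ind_\nu^n (\sL(\nu) \otimes_{\sE(\nu)} M_\nu) \bigr)
  \simeq
  \begin{cases}
    \sL(\mu) \otimes_{\sE(\mu)} M_\mu & \text{if } \mu = \nu, \\
    0 & \text{if } \mu \neq \nu,
  \end{cases}
\]
as $\cA_\mu(F)$-modules. To prove this, I would pick a set $X \subseteq S_n$ of minimal-length left coset representatives for $S_\nu$ in $S_n$ and decompose $\Ind_\nu^n W = \bigoplus_{\pi \in X} \pi \otimes W$ as a $\kk$-module, where $W := \sL(\nu) \otimes_{\sE(\nu)} M_\nu$. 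Each summand $\pi \otimes W$ is an $F^{\otimes n}$-submodule on which $F$ in position $i$ acts as $F$ in position $\pi^{-1}(i)$ acts on $W$; hence its simple $F^{\otimes n}$-constituents have type composition $\mu$ exactly when $\pi^{-1}$ carries each $\mu$-block of indices to the corresponding $\nu$-block. This forces $\mu_k = \nu_k$ for all $k$ (so $\mu = \nu$) and $\pi \in S_\nu$, whence $\pi \in S_\nu \cap X = \{1\}$. So only the summand $1 \otimes W$ contributes to $I_\mu$, yielding the claim. Given this, Proposition~\ref{prop:Phi-isom} gives $\mathbf{F}\mathbf{G}(M)_\mu = \HOM_{F^{\otimes n}}(\sL(\mu), I_\mu \mathbf{G}(M)) \simeq \HOM_{F^{\otimes n}}(\sL(\mu), \sL(\mu) \otimes_{\sE(\mu)} M_\mu) \simeq M_\mu$, and summing over $\mu$ produces $\mathbf{F}\mathbf{G}(M) \simeq M$. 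Naturality in both directions follows from the evident naturality of $\Upsilon$ and $\Phi$ in their arguments.

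The main obstacle is the combinatorial bookkeeping for the key claim: one must carefully track how the $F^{\otimes n}$-isotypic structure of $W$ is transformed by coset representatives $\pi \in X$, and check that no ``accidental'' match of type compositions can occur for $\mu \neq \nu$ or for $\pi \neq 1$ when $\mu = \nu$. Everything else is essentially assembly of results already established.
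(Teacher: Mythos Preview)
Your proposal is correct and follows essentially the same strategy as the paper: the paper simply invokes the naturality of $\Upsilon$ and $\Phi$ together with Lemmas~\ref{lem:isotypic} and~\ref{lem:dHn-mod-cC-decomp} to conclude $\mathbf{F}\mathbf{G} \cong \id$ and $\mathbf{G}\mathbf{F} \cong \id$, leaving the verification as ``straightforward.'' Your coset-representative argument for the key identity $I_\mu\bigl(\Ind_\nu^n(\sL(\nu)\otimes_{\sE(\nu)} M_\nu)\bigr) \simeq \delta_{\mu,\nu}\, \sL(\mu)\otimes_{\sE(\mu)} M_\mu$ is precisely what underlies the paper's appeal to Lemma~\ref{lem:dHn-mod-cC-decomp} (which characterizes $V_\mu$ in terms of the orbit-type composition of simple $F^{\otimes n}$-constituents), just spelled out more explicitly.
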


\begin{proof}
  The map $\Phi$ of Proposition~\ref{prop:Phi-isom} is natural in $M$, and the map $\Upsilon$ of Proposition~\ref{prop:Upsilon-isom} is natural in $V$.  Then, using Lemmas~\ref{lem:isotypic} and~\ref{lem:dHn-mod-cC-decomp} and Propositions~\ref{prop:DH-to-A-functor}--\ref{prop:Upsilon-isom}, it is straightforward to verify that $\mathbf{F} \mathbf{G} \cong \id$ and $\mathbf{G} \mathbf{F} \cong \id$.
  \details{
    We have
    \begin{align*}
      \mathbf{F} \mathbf{G} \left( \bigoplus_{\mu \in \cC_n} M_\mu \right)
      &= \bigoplus_{\mu \in \cC_n} \HOM_{F^{\otimes n}} \left( \sL(\mu), I_\mu \left( \bigoplus_{\mu \in \cC_n} \Ind_\mu^n \left( \sL(\mu) \otimes_{\sE(\mu)} M_\mu \right) \right) \right) \\
      &\simeq \bigoplus_{\mu \in \cC_n} \HOM_{F^{\otimes n}} \left( \sL(\mu), \sL(\mu) \otimes_{\sE(\mu)} M_\mu \right) \\
      &\simeq \bigoplus_{\mu \in \cC_n} M_\mu,
    \end{align*}
    where the first even isomorphism follows from Lemma~\ref{lem:isotypic} and Lemma~\ref{lem:dHn-mod-cC-decomp}, and the last even isomorphism follows from Proposition~\ref{prop:Phi-isom}.

    On the other hand, we have
    \begin{align*}
      \mathbf{G} \mathbf{F} (V)
      &= \bigoplus_{\mu \in \cC_n} \Ind_\mu^n \left( \sL(\mu) \otimes_{\sE(\mu)} \HOM_{F^{\otimes n}} \left( \sL(\mu), I_\mu V \right) \right) \\
      &\simeq \bigoplus_{\mu \in \cC_n} \Ind_\mu^n ( I_\mu V ) \\
      &\simeq V,
    \end{align*}
    where the first even isomorphism follows from Proposition~\ref{prop:Upsilon-isom}, and the second even isomorphism follows from Lemma~\ref{lem:isotypic} and Lemma~\ref{lem:dHn-mod-cC-decomp}.
  }
\end{proof}

\begin{rem}
  Note that if $F$ is semisimple, then $\cA_n(F)\md = \cA_n(F)\smod$ and thus Theorem~\ref{theo:category-equivalence} implies that $\cA_n(F)$ is Morita equivalent to $\cR_n$.
\end{rem}

\subsection{Simple $\cA_n(F)$-modules}

We can now classify the simple $\cA_n(F)$-modules.

\begin{prop} \label{prop:simples-are-Fn-semisimple}
  Every simple $\cA_n(F)$-module is semisimple as an $F^{\otimes n}$-module.  In particular, the categories $\cA_n(F)\smod$ and $\cA_n(F)\md$ have the same class of simple modules.
\end{prop}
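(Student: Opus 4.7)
The plan is to show that the Jacobson radical $J$ of $F^{\otimes n}$ annihilates any simple $\cA_n(F)$-module $V$, from which semisimplicity over $F^{\otimes n}$ is immediate. The key point is that $F$ is finite-dimensional over $\kk$ (since the Frobenius pairing identifies $F$ with $\Hom_\kk(F,\kk)$ and $F$ is concentrated in degrees $0$ through $\delta$), so $F^{\otimes n}$ is finite-dimensional and $J$ is nilpotent.

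First I would verify that $JV$ is stable under all of $\cA_n(F)$, so that it is an $\cA_n(F)$-submodule of $V$. Stability under $F^{\otimes n}$ is automatic since $JV$ is a left $F^{\otimes n}$-submodule. For $\pi \in S_n$ and $\bsf \in J$, relation \eqref{rel:SF-commutation} gives $\pi(\bsf v) = \prescript{\pi}{}{\bsf} \, \pi v$; since superpermutation is an algebra automorphism of $F^{\otimes n}$ it preserves $J$, so $\prescript{\pi}{}{\bsf} \in J$ and hence $\pi(JV) \subseteq JV$. Similarly, by \eqref{rel:xF-commutation} we have $x_i \bsf = \psi_i^{-1}(\bsf) x_i$; since $\psi_i^{-1}$ is also an algebra automorphism of $F^{\otimes n}$, it preserves $J$, giving $x_i(JV) \subseteq JV$.

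Since $V$ is simple, $JV$ is either $0$ or $V$. But $J$ is nilpotent, so if $JV = V$ then $V = J^N V = 0$ for some $N$, a contradiction. Hence $JV = 0$, so $V$ descends to a module over the semisimple quotient $F^{\otimes n}/J$, and is therefore semisimple as an $F^{\otimes n}$-module. The ``in particular'' statement follows immediately: every simple $\cA_n(F)$-module is finite-dimensional by Remark~\ref{rem:simples-fd} and semisimple over $F^{\otimes n}$ by what we just proved, so it lies in $\cA_n(F)\smod$, which in turn is a full subcategory of $\cA_n(F)\md$.

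There is no real obstacle here; the only subtle point is ensuring that $F^{\otimes n}$ really is finite-dimensional (so that $J$ is nilpotent) and that $J$ is invariant under the automorphisms $\psi_i$ and the superpermutation action of $S_n$, both of which are standard.
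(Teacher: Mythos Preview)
Your argument is correct and takes a genuinely different route from the paper's. The paper proceeds constructively: it picks a simple $P_n(F)$-submodule $W$ of $V$, observes that $V = \sum_{\pi \in S_n} \pi W$ by simplicity, and then invokes the classification of simple $\kk[x]\ltimes F$-modules (Proposition~\ref{prop:kxF-simple-modules}) to conclude that each $\pi W$, and hence $V$, is semisimple over $F^{\otimes n}$. Your argument instead shows that the Jacobson radical $J$ of $F^{\otimes n}$ is stable under the automorphisms $\psi_i$ and the $S_n$-action, so $JV$ is an $\cA_n(F)$-submodule and must vanish by simplicity and nilpotency of $J$. This is more elementary in that it avoids Proposition~\ref{prop:kxF-simple-modules} entirely, and more conceptual in that it isolates exactly the structural feature being used (invariance of $J$ under automorphisms). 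The paper's approach, on the other hand, gives slightly more: it exhibits $V$ explicitly as a sum of $S_n$-translates of a module of the form $L_1(a_1) \irtimes \dotsb \irtimes L_n(a_n)$, which is information used elsewhere in Section~\ref{sec:simples}.

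One small point worth tightening: your justification for $F$ being finite-dimensional (``the Frobenius pairing identifies $F$ with $\Hom_\kk(F,\kk)$ and $F$ is concentrated in degrees $0$ through $\delta$'') is not quite an argument as stated, since an isomorphism $V \cong W^*$ alone does not force finite-dimensionality. In the paper's setting this is not an issue---$F$ is a Frobenius algebra over a field with a basis $B$ admitting a dual basis $B^\vee$, and identities such as \eqref{eq:f-in-basis} implicitly require $B$ to be finite---but you might simply cite that Frobenius algebras are finite-dimensional by definition. You should also note (briefly) that $J$ is a graded super ideal, so that $JV$ is a graded submodule; over an infinite field this follows from the fact that $J$ is preserved by the grading automorphisms $a \mapsto \sum_i t^i a_i$ and the parity involution.
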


\begin{proof}
  Suppose $V$ is a simple $\cA_n(F)$-module.  Let $W$ be a simple $P_n(F)$-submodule of $V$.  Then $\sum_{\pi \in S_n} \pi W$ is an $\cA_n(F)$-submodule of $V$ (e.g., by Lemma~\ref{lem:Sn-commutation-mod-lower-terms}) and hence $V = \sum_{\pi \in S_n} \pi W$ since $V$ is simple.  By Proposition~\ref{prop:kxF-simple-modules}, $W \simeq L_1(a_1) \irtimes \dotsb \irtimes L_n(a_n)$, where $L_1,\dotsc,L_n$ are simple $F$-modules and $a_1,\dotsc,a_n \in \kk$.  Thus each $\pi W$ is semisimple as an $F^{\otimes n}$-module, and so $V$ is semisimple as an $F^{\otimes n}$-module.
\end{proof}

The following theorem is a generalization of \cite[Th.~4.4]{WW08}, which treats the case where $F$ is the group algebra of a finite group (see Example~\ref{eg:wreath-Hecke}).

\begin{theo} \label{theo:simple-modules}
  Every simple $\cA_n(F)$-module is evenly isomorphic to a module of the form
  \begin{equation} \label{eq:D-hatn-def}
    \Ind_\mu^n \left( \sL(\mu) \otimes_{\sE(\mu)} (V_1 \irtimes \dotsb \irtimes V_N) \right),
  \end{equation}
  where $\mu = (\mu_1,\dotsc,\mu_N) \in \cC_n$, and $V_k$ is a simple $\cH_{\mu_\ell}^\ell$-module for $1 \le \ell \le N$.  Furthermore, the above modules (over all $\mu \in \cC_n$ and $V_\ell$, $1 \le \ell \le N$, ranging over a set of representatives of even isomorphism classes, up to degree shift) form a complete set of pairwise not evenly-isomorphic simple $\cA_n(F)$-modules, up to degree shift.
\end{theo}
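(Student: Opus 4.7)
The plan is to obtain the classification by combining Proposition~\ref{prop:simples-are-Fn-semisimple} with the equivalence of categories in Theorem~\ref{theo:category-equivalence} and then unwinding the resulting decompositions.

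First, by Proposition~\ref{prop:simples-are-Fn-semisimple}, every simple $\cA_n(F)$-module lies in $\cA_n(F)\smod$, so classifying simples in the two categories is the same problem. I would then invoke Theorem~\ref{theo:category-equivalence}, which gives an equivalence $\mathbf{F} \colon \cA_n(F)\smod \to \cR_n\md$ with quasi-inverse $\mathbf{G}$. Since $\mathbf{G}$ is an equivalence, it takes a complete set of non-isomorphic simple $\cR_n$-modules to a complete set of non-isomorphic simple $\cA_n(F)$-modules (even isomorphism being preserved and reflected). So it suffices to classify simple $\cR_n$-modules, and then apply $\mathbf{G}$.

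For the classification on the $\cR_n$ side, I would use the block decomposition $\cR_n = \bigoplus_{\mu \in \cC_n} \cR_\mu$ from \eqref{eq:cR_n-def}, which shows that every simple $\cR_n$-module is a simple $\cR_\mu$-module for a uniquely determined $\mu \in \cC_n$. Next, I would use the tensor product decomposition $\cR_\mu = \cH_{\mu_1}^1 \otimes \cdots \otimes \cH_{\mu_N}^N$ together with the description of simples of a tensor product of superalgebras given in Section~\ref{subsec:superalg-background}: every simple $\cR_\mu$-module is evenly isomorphic to $V_1 \irtimes \cdots \irtimes V_N$ for simple $\cH_{\mu_\ell}^\ell$-modules $V_\ell$, uniquely determined up to even isomorphism and degree shift (and, in the $\tQ$-type cases, up to the usual twofold ambiguity inherent in $\irtimes$). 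Applying $\mathbf{G}$ to $V_1 \irtimes \cdots \irtimes V_N$ yields exactly the module displayed in \eqref{eq:D-hatn-def}.

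For the final statement that these modules are pairwise non-isomorphic (up to degree shift), I would argue in the reverse direction: if two modules of the form \eqref{eq:D-hatn-def} corresponding to data $(\mu, V_1, \dotsc, V_N)$ and $(\mu', V_1', \dotsc, V_N')$ are evenly isomorphic in $\cA_n(F)\smod$, then applying the functor $\mathbf{F}$ and using the direct sum decomposition $\mathbf{F}(V) = \bigoplus_\mu \HOM_{F^{\otimes n}}(\sL(\mu), I_\mu V)$ forces $\mu = \mu'$ (by looking at which summand is nonzero, which is detected by the $F^{\otimes n}$-isotypic components appearing in the module), and then by Proposition~\ref{prop:Phi-isom} the simple $\cR_\mu$-modules $V_1 \irtimes \cdots \irtimes V_N$ and $V_1' \irtimes \cdots \irtimes V_N'$ are evenly isomorphic, whence $V_\ell \cong V_\ell'$ for each $\ell$.

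The main obstacle to verify carefully is the bookkeeping at the step where one passes from simples of $\cR_\mu = \cH_{\mu_1}^1 \otimes \dotsb \otimes \cH_{\mu_N}^N$ to outer tensor products $V_1 \irtimes \dotsb \irtimes V_N$: one must invoke the iterated version of the $\irtimes$ construction for multiple tensor factors of possibly mixed types $\tM$ and $\tQ$, and one must make sure that the parameter set for representatives (types up to even isomorphism and degree shift) matches what is asserted in the theorem. Everything else is a direct application of the functors $\mathbf{F}$ and $\mathbf{G}$ and the results already established.
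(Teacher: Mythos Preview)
Your proposal is correct and follows exactly the paper's approach: the paper's proof consists of the single sentence ``This follows immediately from Proposition~\ref{prop:simples-are-Fn-semisimple} and Theorem~\ref{theo:category-equivalence},'' and what you have written is precisely the unwinding of that sentence, using the block decomposition $\cR_n = \bigoplus_\mu \cR_\mu$ and the description of simples of a tensor product of superalgebras from Section~\ref{subsec:superalg-background}. The additional bookkeeping you flag (the iterated $\irtimes$ and the type-$\tM$/type-$\tQ$ ambiguities) is indeed the only subtlety, and the paper leaves it implicit just as you suggest.
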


\begin{proof}
  This follows immediately from Proposition~\ref{prop:simples-are-Fn-semisimple} and Theorem~\ref{theo:category-equivalence}.
\end{proof}

\begin{rem}
  Theorem~\ref{theo:simple-modules} reduces the study of simple $\cA_n(F)$-modules to the study of simple modules for the $\cH_m^\ell$.  If $t_{1,2}$ does not act as zero on $\sL_\ell \irtimes \sL_\ell$, then $\cH_m^\ell$ is either a degenerate affine Hecke algebra (when $\sL_\ell$ is of type $\tM$) or an affine Sergeev algebra (when $\sL_\ell$ is of type $\tQ$), as explained in Examples~\ref{eg:daHa} and~\ref{eg:affine-Sergeev}.  The simple modules for these algebras have been classified.  See, for example, \cite{Kle05}.  On the other hand, if $t_{1,2}$ acts as zero on $\sL_\ell \irtimes \sL_\ell$, then $\cH_m^\ell$ is a wreath product algebra of the form $(\kk[y] \ltimes A)^{\otimes m} \rtimes_\rho S_m$, for a finite-dimensional algebra $A$.  Modules for such wreath product algebras can be classified using the results of Section~\ref{subsec:kxF-modules} and Clifford theory (e.g., see \cite[\S4]{RS17} for the characteristic zero case).  In this way, Theorem~\ref{theo:simple-modules} provides a complete classification of the simple $\cA_n(F)$-modules.
\end{rem}

\begin{rem}[Nontrivial $\Z$-gradings] \label{rem:nontrivial-gradings}
  The case where $\delta > 0$ (i.e.\ the $\Z$-grading on $F$ is nontrivial) is of particular interest in Heisenberg categorification.  In particular, it is exactly this property that allows one to conclude in \cite{CL12,RS17} that the Grothendieck groups of the categories defined there are isomorphic to Heisenberg algebras.  For the original Heisenberg category of \cite{Kho14}, where the grading is trivial, Khovanov proves that the Heisenberg algebra embeds into the Grothendieck group, and it is still an open conjecture that one has equality.

  For affine wreath product algebras, the study of simple modules simplifies considerably in the presence of nontrivial $\Z$-gradings.  Since $|x_i| = \delta > 0$, the $x_i$ act as zero on any simple module.  Similarly, the $t_{i,j}$, which also have degree $\delta$, act as zero on simple modules.  Thus, the study of simple $\cA_n(F)$-modules reduces to the study of simple modules for wreath product algebras, which can be classified using Clifford theory.  However, the full representation theory (i.e.\ the study of modules that are not necessarily simple) remains much more intricate in general.
\end{rem}

%
\section{Cyclotomic quotients\label{sec:cyclotomic}}
%

In this section we introduce and study cyclotomic quotients of affine wreath product algebras.  These simultaneously unify and generalize cyclotomic quotients of degenerate affine Hecke algebras (see, e.g., \cite[\S7.3]{Kle05}), wreath Hecke algebras (see \cite[\S5]{WW08}) and affine Sergeev algebras (see, e.g., \cite[\S15.3]{Kle05}).  Choosing particular Frobenius algebras $F$ will recover known results as well as proofs of open conjectures (Corollaries~\ref{cor:KM3.21} and \ref{cor:KM3.22}).  In this section $\kk$ is an arbitrary commutative ring of characteristic not equal to $2$.

\subsection{Shifting homomorphisms}

For $1 \le i \le n$ and $k \in \Z$, define
\begin{equation} \label{eq:bF_i^(k)-def}
  \bF_i^{(k)}
  := \left( \bF_\psi^{(0^{i-1},k,0^{n-i})} \right)^{S_n^i} \subseteq F^{\otimes n} \subseteq \cA_n(F),
  \quad \text{where }
  S_n^i  = \{ \pi \in S_n \mid \pi i = i\},
\end{equation}
and $(0^{i-1},k,0^{n-i}) = (0,\dotsc,0,k,0,\dotsc,0)$, where the $k$ appears in the $i$-th place.  Intuitively, one should think of $\bF_i^{(k)}$ as the subspace of $F^{\otimes n}$ consisting of those elements that commute with elements of $\cA_n(F)$ just as $x_i^k$ does.

\begin{lem} \label{lem:shifting-spaces}
  For $1 \le i \le n$ and $\pi \in S_n$, we have $\pi \bF_i^{(k)} \pi^{-1} = \bF_{\pi i}^{(k)}$.
\end{lem}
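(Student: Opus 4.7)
The plan is to reduce the claim to two simple compatibilities: how the superpermutation action moves tensor factors, and how conjugation by $\pi$ in $S_n$ permutes the stabilizer subgroups $S_n^i$.

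First, I would observe that relation \eqref{rel:SF-commutation} says $\pi \bsf \pi^{-1} = \prescript{\pi}{}{\bsf}$ for all $\bsf \in F^{\otimes n}$, so conjugation by $\pi$ on $F^{\otimes n} \subseteq \cA_n(F)$ is precisely the superpermutation action. Under this action, a pure tensor $f_1 \otimes \dotsb \otimes f_n$ has its $i$-th tensor factor sent (up to sign) to position $\pi(i)$. Since $F_\psi^{(k)}$ is a homogeneous subspace of $F$ in both the $\Z$- and $\Z_2$-gradings, these signs only rescale elements within the same subspace, so as $\kk$-submodules we have
\[
  \prescript{\pi}{}{\bF_\psi^{(0^{i-1},k,0^{n-i})}}
  = \bF_\psi^{(0^{\pi(i)-1},k,0^{n-\pi(i)})}.
\]

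Next, I would verify that $\pi S_n^i \pi^{-1} = S_n^{\pi(i)}$: if $\sigma(i)=i$, then $(\pi\sigma\pi^{-1})(\pi i)=\pi i$, and this is clearly a bijection between $S_n^i$ and $S_n^{\pi i}$. Combining this with the previous step, conjugation by $\pi$ sends $S_n^i$-invariants in $\bF_\psi^{(0^{i-1},k,0^{n-i})}$ bijectively to $S_n^{\pi i}$-invariants in $\bF_\psi^{(0^{\pi(i)-1},k,0^{n-\pi(i)})}$: for $\bsf$ in the former and $\sigma\in S_n^{\pi i}$, we have $\sigma(\pi\bsf\pi^{-1})\sigma^{-1} = \pi\bigl((\pi^{-1}\sigma\pi)\,\bsf\,(\pi^{-1}\sigma\pi)^{-1}\bigr)\pi^{-1} = \pi\bsf\pi^{-1}$, since $\pi^{-1}\sigma\pi\in S_n^i$ fixes $\bsf$. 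The reverse inclusion is obtained by conjugating back by $\pi^{-1}$. This yields $\pi\bF_i^{(k)}\pi^{-1} = \bF_{\pi i}^{(k)}$, as required.

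There is no real obstacle here; the only thing to watch is that the superpermutation signs do not take one outside the relevant subspace, which is immediate from homogeneity of $F_\psi^{(0)}$ and $F_\psi^{(k)}$. The argument is purely combinatorial once one unfolds the definitions of $\bF_i^{(k)}$, $\bF_\psi^{(\alpha)}$, and the $S_n$-action.
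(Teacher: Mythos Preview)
Your proof is correct and follows essentially the same approach as the paper's: both use relation \eqref{rel:SF-commutation} to identify conjugation with the superpermutation action, observe that this action carries $\bF_\psi^{(0^{i-1},k,0^{n-i})}$ to $\bF_\psi^{(0^{\pi(i)-1},k,0^{n-\pi(i)})}$, and then verify the invariance condition via the identity $\pi^{-1}\sigma\pi \in S_n^i$ for $\sigma \in S_n^{\pi i}$, with the reverse inclusion obtained by replacing $\pi$ with $\pi^{-1}$. Your remark about signs and homogeneity is harmless but unnecessary, since membership in $\bF_\psi^{(\alpha)}$ is determined factorwise and is unaffected by scalar multiples.
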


\begin{proof}
  It suffices to show that $\pi \bF_i^{(k)} \pi^{-1} \subseteq \bF_{\pi i}^{(k)}$, since the reverse inclusion then follows by considering $\pi^{-1}$.  First, it is clear that
  \[
    \pi \bF_i^{(k)} \pi^{-1}
    \stackrel{\eqref{rel:SF-commutation}}{=} \prescript{\pi}{}{\left(\bF_i^{(k)}\right)}
    \subseteq \bF_\psi^{\left( 0^{\pi i-1},k,0^{n-\pi i} \right)},
  \]
  where the last inclusion follows from the fact that $\prescript{\pi}{}{\left(\bF_\psi^{(\alpha)}\right)} = \bF_\psi^{(\pi \cdot \alpha)}$ for $\alpha \in \Z^n$.  Now suppose $\bsf \in \bF_i^{(k)}$.  For $\pi_1 \in S_n^{\pi i}$, we have
  \[
    \prescript{\pi_1}{}{\left( \pi \bsf \pi^{-1} \right)}
    \stackrel{\eqref{rel:SF-commutation}}{=} \pi_1 \pi \bsf \pi^{-1} \pi_1^{-1}
    = \pi \left( \pi^{-1} \pi_1 \pi \right) \bsf \left( \pi^{-1} \pi_1 \pi \right)^{-1} \pi^{-1}
    = \pi \left( \prescript{\pi^{-1} \pi_1 \pi}{}{\bsf} \right) \pi^{-1}
    = \pi \bsf \pi^{-1},
  \]
  where the last equality follows from the fact that $\pi^{-1} \pi_1 \pi \in S_n^i$.
\end{proof}

\begin{prop} \label{prop:shift-x_i}
  Suppose $\bc \in \bF_1^{(1)}$ is even of degree $\delta$.  Then
  \[
    x_i \mapsto x_i + s_{i-1} \dotsm s_1 \bc s_1 \dotsm s_{i-1},\quad
    \bsf \mapsto \bsf,\quad
    \pi \mapsto \pi,\qquad
    1 \le i \le n,\ \bsf \in F^{\otimes n},\ \pi \in S_n,
  \]
  determines an algebra automorphism of $\cA_n(F)$.
\end{prop}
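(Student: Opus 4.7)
The plan is to set $\bc_i := s_{i-1} \dotsm s_1 \bc s_1 \dotsm s_{i-1} \in \cA_n(F)$ for $1 \le i \le n$, so that $\bc_1 = \bc$ and $\bc_{i+1} = s_i \bc_i s_i$. Since $s_{i-1} \dotsm s_1$ sends $1$ to $i$, Lemma~\ref{lem:shifting-spaces} yields $\bc_i \in \bF_i^{(1)}$, and each $\bc_i$ is even of degree $\delta$ (so $x_i + \bc_i$ has the same bidegree as $x_i$). I will then show that the prescribed assignment extends to an algebra endomorphism $\phi$ of $\cA_n(F)$ by checking it preserves the defining relations of Definition~\ref{def:affine-wreath}, and finish by exhibiting an explicit inverse.

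Applying the proposed map to both sides of \eqref{rel:xF-commutation} and using the original relation reduces the check to the identity
\[
  \bsf \bc_i = \bc_i \psi_i(\bsf) \quad \text{for all } \bsf \in F^{\otimes n}.
\]
This is the key step. Because $\bc_i$ is a sum of $S_n^i$-invariants of the form $f_1 \otimes \dotsb \otimes f_n$ with $f_i \in F_\psi^{(1)}$ and $f_j \in F_\psi^{(0)} = Z(F)_\psi$ for $j \ne i$, one unwinds the multiplication in $F^{\otimes n}$ by moving $\bsf$ past $\bc_i$ slot-by-slot. In slot $i$ the factor $f_i \in F^{(1)}$ produces the $\psi$-twist; in every other slot the factor lies in $Z(F)$ and only contributes Koszul signs. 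All those signs collect to $(-1)^{\overline{\bsf}\,\overline{\bc_i}}$, which is trivial because $\bc_i$ is even; this matches the sign-free form of \eqref{rel:xF-commutation} exactly.

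For the remaining relations: \eqref{rel:sx-triv-commutation} reduces to $s_i \bc_j = \bc_j s_i$ for $j \ne i, i+1$, which is immediate from the $S_n^j$-invariance of $\bc_j$ since such $s_i$ lies in $S_n^j$; \eqref{rel:sx-commutation} reduces to $s_i \bc_i = \bc_{i+1} s_i$, which is precisely the inductive definition of $\bc_{i+1}$; and \eqref{rel:SF-commutation} involves no $x_j$, hence is preserved trivially. Thus the assignment extends to an algebra endomorphism $\phi$ of $\cA_n(F)$.

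For invertibility, note that $-\bc \in \bF_1^{(1)}$ as well, so the same construction applied to $-\bc$ produces a second endomorphism $\phi'$. Both maps fix $F^{\otimes n}$ and $\kk S_n$ pointwise, so they each send $\bc_i$ to $\bc_i$; consequently $\phi'\phi(x_i) = \phi'(x_i + \bc_i) = (x_i - \bc_i) + \bc_i = x_i$, and symmetrically $\phi\phi' = \id$. The main obstacle is the slot-by-slot sign bookkeeping in the verification of \eqref{rel:xF-commutation}; the remaining relations are either trivial or a direct consequence of the definition of $\bc_i$.
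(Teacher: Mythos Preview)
Your approach is essentially the paper's: define $\bc_i$ by conjugation, invoke Lemma~\ref{lem:shifting-spaces} to get $\bc_i \in \bF_i^{(1)}$, check the cross-relations \eqref{rel:xF-commutation}--\eqref{rel:SF-commutation}, and invert via $-\bc$. One genuine omission, though: you never verify that the images $\phi(x_i) = x_i + \bc_i$ commute with one another. The presentation in Definition~\ref{def:affine-wreath} starts from the \emph{commutative} polynomial ring $\kk[x_1,\dotsc,x_n]$ in the free product, so commutativity of the $x_i$ is a defining relation that must be preserved; it is not among \eqref{rel:xF-commutation}--\eqref{rel:SF-commutation}. Concretely you need $x_i \bc_j = \bc_j x_i$ and $\bc_i \bc_j = \bc_j \bc_i$ for $i \ne j$.

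Both follow easily from what you already have. Since $\bc_j \in \bF_j^{(1)} \subseteq F_\psi^{\otimes n}$, relation \eqref{rel:xF-commutation} gives $\bc_j x_i = x_i \psi_i(\bc_j) = x_i \bc_j$. For $\bc_i \bc_j = \bc_j \bc_i$, your own slot-by-slot description shows that in slot $i$ the factor of $\bc_j$ lies in $F_\psi^{(0)} = Z(F) \cap F_\psi$ (and symmetrically in slot $j$), while the remaining slots carry central factors on both sides; so everything commutes. The paper notes this step explicitly; you should too.
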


\begin{proof}
  Let $\zeta$ denote the given map.  It suffices to show that $\zeta$ is an algebra homomorphism, since it is then clearly invertible, with inverse given by the same map with $\bc$ replaced by $-\bc$.

  For $1 \le i \le n$, define
  \[
    \bc^{(i)} = s_{i-1} \dotsm s_1 \bc s_1 \dotsm s_{i-1}.
  \]
  By Lemma~\ref{lem:shifting-spaces}, we have $\bc^{(i)} \in \bF_i^{(1)}$.  It follows that, for all $1 \le i,j \le n$ and $\bsf \in F^{\otimes n}$, we have
  \[
    \bc^{(i)} \bc^{(j)} = \bc^{(j)} \bc^{(i)},\quad
    x_i \bc^{(j)} = \bc^{(j)} x_i,\quad
    \bsf \bc^{(i)} = \bc^{(i)} \psi_i(\bsf),
  \]
  and that
  \[
    s_i \bc^{(j)} = \bc^{(j)} s_i,\quad 1 \le i \le n-1,\ 1 \le j \le n,\ j \ne i,i+1.
  \]

  It is clear that $\zeta$ is a homomorphism when restricted to $F^{\otimes n}$ and $\kk S_n$.  It follows easily from the above relations that $\zeta(x_i) \zeta(x_j) = \zeta(x_j) \zeta(x_i)$ for all $1 \le i,j \le n$. Hence $\zeta$ is also a  homomorphism when restricted to $\kk[x_1,\dotsc,x_n]$.

  To prove that $\zeta$ preserves \eqref{rel:sx-commutation}, we compute
  \[
    \zeta(s_i x_i)
    = s_i x_i + s_i \bc^{(i)}
    \stackrel{\eqref{rel:sx-commutation}}{=} x_{i+1} s_i - t_{i,i+1} + s_i \bc^{(i)}
    = x_{i+1} s_i + \bc^{(i+1)} s_i - t_{i,i+1}
    = \zeta \left( x_{i+1} s_i - t_{i,i+1} \right).
  \]
  The remaining relations \eqref{rel:xF-commutation}, \eqref{rel:sx-triv-commutation}, and \eqref{rel:SF-commutation} are straightforward to verify.
  \details{
    For $1 \le i \le n$, $\bsf \in F^{\otimes n}$, we have
    \[
      \zeta(\bsf x_i)
      = \bsf x_i + \bsf \bc^{(i)}
      = x_i \psi_i(\bsf) + \bc^{(i)} \psi_i(\bsf)
      = \zeta \left( x_i \psi_i(\bsf) \right),
    \]
    and so $\zeta$ preserves \eqref{rel:xF-commutation}.

    For $1 \le i \le n-1$, $1 \le j \le n$, $j \ne i,i+1$, we have
    \[
      \zeta(s_i x_j)
      = s_i x_j + s_i \bc^{(j)}
      = x_j s_i + \bc^{(j)} s_i
      = \zeta(x_j s_i),
    \]
    and so $\zeta$ preserves \eqref{rel:sx-triv-commutation}.

    For $\pi \in S_n$ and $\bsf \in F^{\otimes n}$ we have
    \[
      \zeta(\pi \bsf)
      = \pi \bsf
      = \prescript{\pi}{}{\bsf} \pi
      = \zeta \left( \prescript{\pi}{}{\bsf} \pi \right),
    \]
    and so $\zeta$ preserves \eqref{rel:SF-commutation}.
  }
\end{proof}

\subsection{Cyclotomic wreath product algebras}

For $1 \le k \le \theta$, choose $e_k \in \N$ and degree $k \delta$ elements
\[
  \bc^{(k,1)},\dotsc,\bc^{(k,e_k)} \in \bF_1^{(k)}.
\]
Define
\[
  \bC = \left( \bc^{(1,1)}, \dotsc, \bc^{(1,e_1)},\dotsc, \bc^{(\theta,1)},\dotsc, \bc^{(\theta,e_\theta)} \right).
\]
Let $J_\bC$ be the two-sided ideal in $\cA_n(F)$ generated by the homogeneous element
\begin{equation} \label{eq:chi_C-def}
  \chi_\bC = \prod_{k=1}^\theta \prod_{j=1}^{e_k} \left( x_1^k - \bc^{(k,j)} \right).
\end{equation}
Note that $\chi_\bC$ is independent of the order of the factors in \eqref{eq:chi_C-def} by the definition of $\bF_1^{(k)}$.  In fact, this was the essential motivation for the definitions \eqref{eq:F^(k)-def}, \eqref{eq:bF^alpha-def}, and \eqref{eq:bF_i^(k)-def}.  We choose the elements $\bc^{(k,j)}$ so that they commute with elements of $\cA_n(F)$ in the same way that $x_1^k$ does.

We define the \emph{cyclotomic wreath product algebra} to be the quotient
\begin{equation} \label{eq:cyclotomic-WPA}
  \cA_n^\bC(F) := \cA_n(F)/J_\bC.
\end{equation}
By convention, we set $\cA_0^\bC(F) = \kk$.  Since $\chi_\bC$ is homogeneous, $\cA_n^\bC(F)$ inherits the structure of a graded superalgebra.  We define the \emph{level} of $\bC$ and the corresponding algebra $\cA_n^\bC(F)$ to be the polynomial degree of $\chi_\bC$, which we denote by $d_\bC$.  Thus
\begin{equation} \label{eq:d_C-def}
  d = d_\bC = \sum_{k=1}^\theta k e_k.
\end{equation}
For any element of $\cA_n(F)$, we will denote its canonical image in $\cA_n^\bC(F)$ by the same symbol.

\begin{prop}
  If $\delta > 0$ or $\kk$ is an algebraically closed field, then every finite-dimensional $\cA_n(F)$-module is the inflation of an $\cA_n^\bC(F)$-module for some $\bC$.
\end{prop}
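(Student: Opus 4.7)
The plan is to reduce the statement to showing that, for each finite-dimensional $\cA_n(F)$-module $V$, one can choose a tuple $\bC$ so that the single element $\chi_\bC \in \cA_n(F)$ acts as zero on $V$. Once $\chi_\bC \cdot V = 0$, the two-sided ideal $J_\bC = \cA_n(F) \chi_\bC \cA_n(F)$ annihilates $V$ by $\cA_n(F)$-linearity of the action, so the module structure descends along the quotient map $\cA_n(F) \twoheadrightarrow \cA_n^\bC(F)$ and $V$ is the inflation of an $\cA_n^\bC(F)$-module. Everything therefore reduces to finding elements $\bc^{(k,j)} \in \bF_1^{(k)}$ with $\prod_{k,j}(x_1^k - \bc^{(k,j)})$ acting as zero on $V$, and I will split on whether $\delta > 0$ or $\delta = 0$.

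When $\delta > 0$, the generator $x_1$ raises $\Z$-degree by $\delta$. Any finite-dimensional graded $\kk$-module is concentrated in a bounded range of degrees, so $x_1^N$ must act as zero on $V$ for all sufficiently large $N$. Since $0 \in \bF_1^{(1)}$ trivially, the choice $e_1 = N$, $\bc^{(1,j)} = 0$ for $1 \le j \le N$, and $e_k = 0$ for $k > 1$ produces the homogeneous element $\chi_\bC = x_1^N$, which annihilates $V$.

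When $\delta = 0$ and $\kk$ is an algebraically closed field, I use $x_1^\theta$ in place of $x_1$. By \eqref{rel:xF-commutation} together with $\psi^\theta = \id$, one has $\bsf \cdot x_1^\theta = x_1^\theta \cdot \psi_1^\theta(\bsf) = x_1^\theta \cdot \bsf$ for every $\bsf \in F^{\otimes n}$, so the action of $x_1^\theta$ on $V$ is a $\kk$-linear endomorphism of the finite-dimensional $\kk$-vector space $V$. Its characteristic polynomial splits over $\kk$ as $p(y) = \prod_{j=1}^m (y - \lambda_j)$, and Cayley--Hamilton gives $p(x_1^\theta) \cdot V = 0$. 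Since $1 \in Z(F) \cap F_\psi = F_\psi^{(\theta)}$ and each $\lambda_j \cdot 1^{\otimes n}$ is $S_{n-1}$-invariant in the trailing $n-1$ factors, we have $\lambda_j \cdot 1^{\otimes n} \in \bF_1^{(\theta)}$. Taking $e_\theta = m$, $\bc^{(\theta,j)} = \lambda_j \cdot 1^{\otimes n}$, and $e_k = 0$ for $k < \theta$ then gives the homogeneous element $\chi_\bC = p(x_1^\theta)$ of total degree $m\theta\delta = 0$, and it annihilates $V$ by construction.

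Nothing in either case is genuinely hard; the only nontrivial check is that $\lambda_j \cdot 1^{\otimes n}$ actually lies in $\bF_1^{(\theta)}$, which reduces to the observation that $\kk \subseteq F_\psi^{(\theta)}$. The clean splitting of hypotheses between ``$x_1$ acts nilpotently by degree reasons'' and ``$x_1^\theta$ is $F^{\otimes n}$-linear and Cayley--Hamilton applies'' makes the two halves of the proof fundamentally parallel.
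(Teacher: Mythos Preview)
Your proof is correct and follows essentially the same strategy as the paper: in the graded case, $x_1$ acts nilpotently for degree reasons; in the ungraded case, one factors a polynomial satisfied by (a power of) $x_1$ over the algebraically closed field. The one difference is that the paper works directly with the minimal polynomial of $x_1$, whereas you work with the characteristic polynomial of $x_1^\theta$. Your choice is in fact the more careful one: a linear factor $(x_1 - \lambda)$ is a valid constituent of some $\chi_\bC$ only if $\lambda \cdot 1^{\otimes n} \in \bF_1^{(1)}$, which requires $1 \in F_\psi^{(1)}$ and hence $\psi = \id$; by passing to $x_1^\theta$ you ensure the scalar constants land in $\bF_1^{(\theta)} \supseteq \kk \cdot 1^{\otimes n}$, which holds for arbitrary $\psi$.
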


\begin{proof}
  Let $V$ be a finite-dimensional $\cA_n(F)$-module.  If $\delta > 0$, then the action of $x_1$ on $V$ is nilpotent by degree considerations and the result is clear.  On the other hand, when $\delta=0$, we simply take $\chi_\bC$ to be the minimal polynomial of $x_1$ on $V$, which factors in the form \eqref{eq:chi_C-def} when $\kk$ is algebraically closed.
\end{proof}

\subsection{Basis theorem}

We now describe an explicit basis for $\cA_n^\bC(F)$.  Our approach is inspired by the methods of \cite[\S7.5, \S15.4]{Kle05}.

Let $\chi_1 := \chi_\bC$ and, for $i=2,\dotsc,n$, define
\[
  \chi_i = s_{i-1} \dotsm s_1 \chi_1 s_1 \dotsm s_{i-1}.
\]

\begin{lem}
  For $f \in F$ and $1 \le i,j \le n$, we have
  \[
    f_i \chi_j =
    \begin{cases}
      \chi_j f_i & \text{if } i \ne j, \\
      \chi_j \psi^d(f_i) & \text{if } i = j.
    \end{cases}
  \]
  In particular,
  \begin{equation} \label{eq:Fchi-commutation}
    F^{\otimes n} \chi_j = \chi_j F^{\otimes n},\qquad 1 \le j \le n.
  \end{equation}
\end{lem}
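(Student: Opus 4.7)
The plan is to reduce to the case $j = 1$ and process the factors of $\chi_1$ one at a time. For general $j$, setting $w = s_{j-1} \cdots s_1 \in S_n$, we have $w(1) = j$, and relation \eqref{rel:SF-commutation} gives $w g_1 w^{-1} = g_j$ for any $g \in F$, so $\chi_j = w \chi_1 w^{-1}$. Granting the $j = 1$ case, the $i = j$ statement will follow from
\[
  f_j \chi_j = w f_1 w^{-1} \cdot w \chi_1 w^{-1} = w(f_1 \chi_1) w^{-1} = w \chi_1 \psi^d(f)_1 w^{-1} = \chi_j \psi^d(f)_j,
\]
and for $i \ne j$, writing $i' = w^{-1}(i) \ne 1$, from $f_i \chi_j = w f_{i'} \chi_1 w^{-1} = w \chi_1 f_{i'} w^{-1} = \chi_j f_i$.

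For the case $j = 1$, I would show that each factor $x_1^k - \bc^{(k,\ell)}$ of $\chi_1$ satisfies
\[
  f_1 \bigl(x_1^k - \bc^{(k,\ell)}\bigr) = \bigl(x_1^k - \bc^{(k,\ell)}\bigr) \psi^k(f)_1, \qquad f_i \bigl(x_1^k - \bc^{(k,\ell)}\bigr) = \bigl(x_1^k - \bc^{(k,\ell)}\bigr) f_i \quad (i \ne 1).
\]
For the $x_1^k$ contribution, iterating \eqref{rel:xF-commutation} $k$ times gives $f_1 x_1^k = x_1^k \psi^k(f)_1$, and for $i \ne 1$ the element $x_1^k$ (being even and supported in the first tensor slot) commutes with $f_i$. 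For $\bc^{(k,\ell)}$, I exploit the inclusion $\bF_1^{(k)} \subseteq F_\psi^{(k)} \otimes F_\psi^{(0)} \otimes \cdots \otimes F_\psi^{(0)}$ from Section~\ref{subsec:additional-relations}: on a pure tensor $a \otimes b_2 \otimes \cdots \otimes b_n$ with $a \in F^{(k)}$ and each $b_m \in Z_s(F)$, the super-tensor multiplication combined with the defining identity $fa = (-1)^{\bar a \bar f} a \psi^k(f)$ of $F^{(k)}$ yields $f_1 \bc = \bc \psi^k(f)_1$; and the super-centrality of each $b_m$ yields $f_i \bc = \bc f_i$ for $i \ne 1$. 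Both computations use that $\bc^{(k,\ell)}$ is even (implicit in the setup, since otherwise $x_1^k - \bc^{(k,\ell)}$ and hence $\chi_\bC$ would fail to be parity-homogeneous), so that the super-signs arising from pulling $f$ across the tail $b_2 \otimes \cdots \otimes b_n$ exactly cancel the sign in the $F^{(k)}$-relation.

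Multiplying these identities over all $d = \sum_k k e_k$ factors of $\chi_1$ then gives $f_1 \chi_1 = \chi_1 \psi^d(f)_1$ and $f_i \chi_1 = \chi_1 f_i$ for $i \ne 1$, completing the $j = 1$ case. The final assertion $F^{\otimes n} \chi_j = \chi_j F^{\otimes n}$ is immediate, since $\psi^d$ is an automorphism of $F$ and hence $\{\psi^d(f) \mid f \in F\} = F$.

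The main technical point is the super-sign bookkeeping in the computation of $f_i \bc^{(k,\ell)}$; the cancellation hinges on the combination of three facts built into $\bF_1^{(k)}$: the $F^{(k)}$-commutation in slot $1$, the super-centrality of the remaining slots, and the evenness of $\bc^{(k,\ell)}$. Everything else is a formal consequence of \eqref{rel:xF-commutation} and \eqref{rel:SF-commutation}.
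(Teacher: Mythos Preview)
Your argument is correct and follows essentially the same route as the paper's: reduce to $j=1$ via conjugation by $w=s_{j-1}\dotsm s_1$ (using \eqref{rel:SF-commutation}), then handle $\chi_1$ factor by factor using \eqref{rel:xF-commutation} and the defining property of $\bF_1^{(k)}$. One small correction: the inclusion $\bF_1^{(k)}\subseteq F_\psi^{(k)}\otimes F_\psi^{(0)}\otimes\dotsb\otimes F_\psi^{(0)}$ comes directly from the definitions \eqref{eq:bF_i^(k)-def} and \eqref{eq:bF^alpha-def}, not from Section~\ref{subsec:additional-relations}. Your observation that evenness of the $\bc^{(k,\ell)}$ is implicitly required for the sign bookkeeping (and for $\chi_\bC$ to be parity-homogeneous) is apt.
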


\begin{proof}
  The case $j=1$ follows immediately from the definition of $\chi_\bC$.  The result for general $j$ then follows from a straightfoward calculation using the definition of $\chi_j$.
  \details{
    If $i < j$, we have
    \begin{multline*}
      f_i \chi_j
      = f_i s_{j-1} \dotsm s_1 \chi_1 s_1 \dotsm s_{j-1}
      = s_{j-1} \dotsm s_1 f_{i+1} \chi_1 s_1 \dotsm s_{j-1}
      \\
      = s_{j-1} \dotsm s_1 \chi_1 f_{i+1} s_1 \dotsm s_{j-1}
      = s_{j-1} \dotsm s_1 \chi_1 s_1 \dotsm s_{j-1} f_i
      = \chi_j f_i.
    \end{multline*}
    If $i > j$, we have
    \begin{multline*}
      f_i \chi_j
      = f_i s_{j-1} \dotsm s_1 \chi_1 s_1 \dotsm s_{j-1}
      = s_{j-1} \dotsm s_1 f_i \chi_1 s_1 \dotsm s_{j-1}
      \\
      = s_{j-1} \dotsm s_1 \chi_1 f_i s_1 \dotsm s_{j-1}
      = s_{j-1} \dotsm s_1 \chi_1 s_1 \dotsm s_{j-1} f_i
      = \chi_j f_i.
    \end{multline*}
    Finally, we have
    \begin{multline*}
      f_j \chi_j
      = f_j s_{j-1} \dotsm s_1 \chi_1 s_1 \dotsm s_{j-1}
      = s_{j-1} \dotsm s_1 f_1 \chi_1 s_1 \dotsm s_{j-1}
      \\
      = s_{j-1} \dotsm s_1 \chi_1 \psi^d(f_1) s_1 \dotsm s_{j-1}
      = s_{j-1} \dotsm s_1 \chi_1 s_1 \dotsm s_{j-1} \psi^d(f_j)
      = \chi_j \psi^d(f_j).
    \end{multline*}
  }
\end{proof}

\begin{lem} \label{lem:x-chi-commute}
  We have $x_1 \chi_1 = \chi_1 x_1$.  For $1 \le i < j \le n$, we also have $x_j \chi_i = \chi_i x_j$.
\end{lem}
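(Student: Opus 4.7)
The plan is to reduce both claims to a single elementary observation: every element of $\bF_1^{(k)}$ lies in $F_\psi^{\otimes n}$, and hence commutes with every $x_m$. Indeed, by definition
\[
  \bF_1^{(k)}
  = \left( F_\psi^{(k)} \otimes F_\psi^{(0)} \otimes \dotsb \otimes F_\psi^{(0)} \right)^{S_n^1}
  \subseteq F_\psi^{\otimes n},
\]
since both $F_\psi^{(k)}$ and $F_\psi^{(0)}$ are by construction contained in $F_\psi$. Therefore, for any $\bc \in \bF_1^{(k)}$ and any index $m$, the tensor factor of $\bc$ in position $m$ is fixed by $\psi$, so $\psi_m(\bc) = \bc$. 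Relation \eqref{rel:xF-commutation} then gives $\bc x_m = x_m \bc$.

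Armed with this, the first claim $x_1 \chi_1 = \chi_1 x_1$ is immediate: by \eqref{eq:chi_C-def}, $\chi_1$ is a polynomial in $x_1$ and the $\bc^{(k,j)} \in \bF_1^{(k)}$, all of which commute with $x_1$.

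For the second claim, I would fix $i < j$ and expand
\[
  x_j \chi_i = x_j \, s_{i-1} \dotsm s_1 \, \chi_1 \, s_1 \dotsm s_{i-1}.
\]
Since $j > i$, the index $j$ is distinct from $k$ and $k+1$ for every $k \in \{1, \dotsc, i-1\}$, so \eqref{rel:sx-triv-commutation} lets $x_j$ pass freely through $s_{i-1} \dotsm s_1$ on the left and $s_1 \dotsm s_{i-1}$ on the right. In the middle, $x_j$ commutes with $x_1$ (since $\kk[x_1,\dotsc,x_n]$ is commutative) and with each $\bc^{(k,\ell)}$ by the key observation, hence with $\chi_1$. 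Collecting these commutations yields $x_j \chi_i = \chi_i x_j$.

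There is no real obstacle here; the whole lemma is a careful unpacking of the definition of $\bF_1^{(k)}$ together with the defining relations \eqref{rel:xF-commutation} and \eqref{rel:sx-triv-commutation}. The only point requiring attention is confirming the inclusion $\bF_1^{(k)} \subseteq F_\psi^{\otimes n}$, which justifies ``neutralizing'' the Nakayama twist in \eqref{rel:xF-commutation}. This is precisely the reason for inserting the $F_\psi$-invariance condition into the definitions \eqref{eq:F^(k)-def} and \eqref{eq:bF_i^(k)-def}.
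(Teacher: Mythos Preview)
Your proof is correct and follows essentially the same approach as the paper: both arguments rest on the observation that each $\bc^{(k,j)} \in \bF_1^{(k)} \subseteq F_\psi^{\otimes n}$ commutes with every $x_m$ by \eqref{rel:xF-commutation}, then slide $x_j$ through $s_{i-1}\dotsm s_1$ and back using \eqref{rel:sx-triv-commutation}. Your write-up is a bit more explicit about the inclusion $\bF_1^{(k)} \subseteq F_\psi^{\otimes n}$, but the substance is identical.
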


\begin{proof}
  The first statement follows from the fact that $\bc^{(k)} \in F_\psi^{\otimes n}$ for all $k$.  For $1 \le i < j \le n$, we have
  \[
    x_j \chi_i
    \stackrel{\eqref{rel:sx-triv-commutation}}{=} s_{i-1} \dotsm s_1 x_j \chi_1 s_1 \dotsm s_{i-1}
    = s_{i-1} \dotsm s_1 \chi_1 x_j s_1 \dotsm s_{i-1}
    \stackrel{\eqref{rel:sx-triv-commutation}}{=} \chi_i x_j. \qedhere
  \]
\end{proof}

\begin{lem} \label{lem:chi_i-leading}
  For $i=1,\dotsc,n$, we have
  \[
    \chi_i - x_i^d \in \sum_{e=0}^{d-1} P_{i-1} x_i^e F^{\otimes i} S_i.
  \]
\end{lem}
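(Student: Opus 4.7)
The plan is to induct on $i$. For the base case $i = 1$, I would expand $\chi_1 = \prod_{k,j}(x_1^k - \bc^{(k,j)})$ directly. Since each $\bc^{(k,j)}$ is even and $\psi$-invariant (so it commutes past $x_1$) and belongs to $\bF_1^{(k)} \subseteq F^{\otimes n}$, with these elements commuting past each other, the product expands as an ordinary commutative polynomial in $x_1$ with leading term $x_1^d$ and remaining terms of the form $x_1^e \cdot g$ with $e < d$ and $g$ a product of some $\bc^{(k,j)}$'s, lying in the target $F$-subspace.

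For the inductive step, I would use $\chi_i = s_{i-1} \chi_{i-1} s_{i-1}$ and write $\chi_{i-1} = x_{i-1}^d + R$ with $R$ in the target subspace for $i-1$, giving
\[
\chi_i - x_i^d = \bigl( s_{i-1} x_{i-1}^d s_{i-1} - x_i^d \bigr) + s_{i-1} R s_{i-1}.
\]
By \eqref{eq:sx^k-commutation1}, the first summand equals $-t_{i-1,i}^{(d)} s_{i-1}$; expanding $t_{i-1,i}^{(d)}$ via \eqref{eq:t^k-def} and using \eqref{rel:xF-commutation} to move each $b_{i-1}$ past $x_{i-1}^a$, each contribution takes the form $x_{i-1}^a \cdot x_i^{d-1-a} \cdot \psi^a(b)_{i-1} b^\vee_i \cdot s_{i-1} \in P_{i-1}\, x_i^{d-1-a}\, F^{\otimes i}\, S_i$.

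For the second summand, I would conjugate a generic term $p \cdot x_{i-1}^e \cdot f \cdot \sigma$ of $R$ (with $p \in P_{i-2}$, $e < d$, $f \in F^{\otimes(i-1)}$, $\sigma \in S_{i-1}$) by $s_{i-1}$: it commutes with $p$, then \eqref{eq:sx^k-commutation1} rewrites $s_{i-1} x_{i-1}^e = x_i^e s_{i-1} - t_{i-1,i}^{(e)}$, and \eqref{rel:SF-commutation} rewrites $s_{i-1} f = \prescript{s_{i-1}}{}{f} \cdot s_{i-1}$. Since $\prescript{s_{i-1}}{}{f} \in F^{\otimes i}$ (the superpermutation moves content between factors $i-1$ and $i$, both inside $\{1,\dotsc,i\}$) and $s_{i-1} \sigma s_{i-1} \in S_i$, the main contribution lies in $P_{i-1}\, x_i^e\, F^{\otimes i}\, S_i$, while the correction $-p\, t_{i-1,i}^{(e)} f \sigma s_{i-1}$ is treated exactly like the first summand above. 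The hard part will be the careful bookkeeping of the Nakayama twists that accompany commuting polynomial generators past $F$-elements; everything fits because $P_{i-1}\, x_i^e\, F^{\otimes i}\, S_i$ is closed under the factorwise $\psi^{\pm 1}$ twists coming from \eqref{rel:xF-commutation}, and because \eqref{rel:xF-commutation}, \eqref{rel:SF-commutation}, and \eqref{eq:sx^k-commutation1} are the only relations needed to rearrange any such product into the desired $P \cdot x^e \cdot F \cdot S$ form.
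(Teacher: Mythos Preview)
Your proposal is correct and follows essentially the same approach as the paper: induction on $i$, writing $\chi_i - x_i^d = s_{i-1}(\chi_{i-1} - x_{i-1}^d)s_{i-1} - t_{i-1,i}^{(d)} s_{i-1}$ via \eqref{eq:sx^k-commutation1}, then checking both summands land in the target. The paper compresses the inductive step to a single displayed containment, whereas you unpack the conjugation of a generic term $p\,x_{i-1}^e f\,\sigma$ by $s_{i-1}$ and the expansion of $t_{i-1,i}^{(e)}$ explicitly; your added bookkeeping is accurate and makes explicit what the paper leaves to the reader.
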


\begin{proof}
  The case $i=1$ is immediate.  Assuming the result for some $1 \le i < n$, we have
  \begin{multline*}
    \chi_{i+1} - x_{i+1}^d
    \stackrel{\eqref{eq:sx^k-commutation1}}{=} s_i \chi_i s_i - s_i x_i^d s_i - t_{i,i+1}^{(d)} s_i
    = s_i \left( \chi_i - x_i^d \right) s_i - t_{i,i+1}^{(d)} s_i
    \\
    \in s_i \sum_{e=0}^{d-1} P_{i-1} x_i^e F^{\otimes i} S_i s_i - t_{i,i+1}^{(d)} s_i
    \subseteq \sum_{e=0}^{d-1} P_i x_{i+1}^e F^{\otimes (i+1)} S_{i+1}. \qedhere
  \end{multline*}
\end{proof}

For $Z = \{z_1 < \dotsb < z_k\} \subseteq \{1,\dotsc,n\}$, let
\[
  \chi_Z := \chi_{z_1} \chi_{z_2} \dotsm \chi_{z_k} \in \cA_n(F).
\]
We also define
\begin{gather*}
  \Pi_n := \{ (\alpha,Z) \mid Z \subseteq \{1,\dotsc,n\},\ \alpha \in \N^n,\ \alpha_i < d \text{ whenever } i \notin Z\}, \\
  \Pi_n^+ := \{(\alpha,Z) \in \Pi_n \mid Z \ne \varnothing\}.
\end{gather*}

\begin{lem} \label{lem:dHn-right-FS-module-basis}
  We have that $\cA_n(F)$ is a free right $F^{\otimes n} \rtimes_\rho S_n$-module on the basis
  \[
    \{x^\alpha \chi_Z \mid (\alpha,Z) \in \Pi_n\}.
  \]
\end{lem}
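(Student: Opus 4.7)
The plan is to use the polynomial-degree filtration on $\cA_n(F)$ from Section~\ref{subsec:filtration} together with the standard result that a filtration-preserving map of ascending-filtered right modules whose associated graded is an isomorphism is itself an isomorphism. By Theorem~\ref{theo:AnF-basis} and Corollary~\ref{cor:affine-wreath-presentation}, $\cA_n(F)$ is a free right $F^{\otimes n} \rtimes_\rho S_n$-module on $\{x^\beta : \beta \in \N^n\}$, and this passes to a free basis $\{\overline{x^\beta}\}$ of the associated graded algebra $\gr \cA_n(F) = (\kk[x] \ltimes F)^{\otimes n} \rtimes_\rho S_n$, with $\overline{x^\beta} \in \gr^{|\beta|} \cA_n(F)$.

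First I would observe that the map $\phi \colon \Pi_n \to \N^n$, $(\alpha, Z) \mapsto \alpha + d \mathbf{1}_Z$ where $\mathbf{1}_Z := \sum_{i \in Z} e_i$, is a bijection: its inverse sends $\beta$ to the pair $(\alpha, Z)$ with $Z = \{i : \beta_i \geq d\}$, $\alpha_i = \beta_i - d$ for $i \in Z$, and $\alpha_i = \beta_i$ otherwise. The constraint $\alpha_i < d$ for $i \notin Z$ is then automatic.

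Next I would show that each $\chi_i$ lies in filtration degree $d$ with graded image $x_i^d \in \gr^d \cA_n(F)$. For $\chi_1 = \chi_\bC$ this is immediate from the product expansion $\prod_{k,j}(x_1^k - \bc^{(k,j)})$, whose only polynomial-degree-$d$ summand is $x_1^d$. For the inductive step, using $\chi_{i+1} = s_i \chi_i s_i$ and expanding via \eqref{eq:sx^k-commutation1}, one obtains $\chi_{i+1} - x_{i+1}^d = s_i(\chi_i - x_i^d) s_i - t_{i,i+1}^{(d)} s_i$; by induction and the fact that conjugation by $s_i$ preserves polynomial degree (Lemma~\ref{lem:Sn-commutation-mod-lower-terms}) the first summand lies in $F^{d-1}\cA_n(F)$, and $t_{i,i+1}^{(d)}$ has polynomial degree $d-1$. (This is essentially the content of Lemma~\ref{lem:chi_i-leading}.) It then follows that the product $\chi_Z = \chi_{z_1} \cdots \chi_{z_k}$ has filtration degree $d|Z|$ with graded image $x^{d \mathbf{1}_Z}$, and hence $x^\alpha \chi_Z$ has graded image $x^{\alpha + d \mathbf{1}_Z} \in \gr^{|\alpha| + d|Z|} \cA_n(F)$.

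Finally I would consider the filtration-preserving right $F^{\otimes n} \rtimes_\rho S_n$-module map from the free module on formal generators $\{e_{(\alpha, Z)} : (\alpha, Z) \in \Pi_n\}$, with $e_{(\alpha, Z)}$ assigned filtration degree $|\alpha| + d|Z|$, into $\cA_n(F)$ sending $e_{(\alpha, Z)} \mapsto x^\alpha \chi_Z$. By the preceding step, the induced map on associated graded sends $\overline{e_{(\alpha, Z)}}$ to $\overline{x^{\phi(\alpha, Z)}}$; since $\phi$ is a bijection and $\{\overline{x^\beta}\}$ is a free basis of $\gr \cA_n(F)$, the graded map is an isomorphism, and therefore so is the original map. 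The principal technical hurdle will be the polynomial-degree analysis of $\chi_i$ in the third paragraph; the remaining ingredients (the bijection $\phi$ and the filtered-to-graded lifting principle) are essentially formal.
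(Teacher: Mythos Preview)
Your proof is correct and follows essentially the same triangularity strategy as the paper, but with a different choice of ordering. The paper orders $\N^n$ by the reverse-lexicographic (colex) order $\prec$ and shows
\[
  x^\alpha \chi_Z - x^{\gamma(\alpha,Z)} \in \sum_{\beta \prec \gamma(\alpha,Z)} x^\beta F^{\otimes n} S_n,
\]
where $\gamma$ is your bijection $\phi$; this is deduced by induction on $n$ from Lemma~\ref{lem:chi_i-leading}, which only controls the $x_i$-degree of $\chi_i - x_i^d$ (allowing arbitrary degree in $x_1,\dots,x_{i-1}$), hence the need for the colex order. You instead use the total polynomial degree filtration and establish the sharper statement that $\chi_i - x_i^d$ lies in filtration degree $<d$; your inductive step $\chi_{i+1}-x_{i+1}^d = s_i(\chi_i-x_i^d)s_i - t_{i,i+1}^{(d)} s_i$ together with the fact that the filtration is multiplicative (Section~\ref{subsec:filtration}) makes this immediate. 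Framing the conclusion via the associated graded is cleaner than inverting a unitriangular matrix, though the content is the same. One minor point: your appeal to Lemma~\ref{lem:Sn-commutation-mod-lower-terms} for ``conjugation by $s_i$ preserves polynomial degree'' is slightly misplaced, since that lemma is stated only for $a \in P_n(F)$; the correct justification is simply that the polynomial-degree filtration is an algebra filtration and $s_i$ has filtration degree $0$.
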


\begin{proof}
  Consider the ordering $\prec$ on $\N^n$ given by $\alpha \prec \alpha'$ if and only if
  \[
    \alpha_n = \alpha_n', \dotsc, \alpha_{k+1} = \alpha_{k+1}',\ \alpha_k < \alpha_k'
  \]
  for some $k \in \{1,\dotsc,n\}$.  Define a function
  \[
    \gamma \colon \Pi_n \to \N^n,\quad
    \gamma(\alpha,Z) := (\gamma_1,\dotsc,\gamma_n), \quad \text{where }
    \gamma_i =
    \begin{cases}
      \alpha_i & \text{if } i \notin Z, \\
      \alpha_i + d & \text{if } i \in Z.
    \end{cases}
  \]
  Using induction on $n$ and Lemma~\ref{lem:chi_i-leading}, we see that, for $(\alpha,Z) \in \Pi_n$, we have
  \begin{equation} \label{eq:x^alpha.chi_Z-leading}
    x^\alpha \chi_Z - x^{\gamma(\alpha,Z)} \in \sum_{\beta \prec \gamma(\alpha,Z)} x^\beta F^{\otimes n} S_n.
  \end{equation}
  \details{
    Consider the base case $n=1$.  If $Z = \varnothing$, then $\gamma(\alpha,Z) = \alpha$, and the claim is clearly true.  If $Z = \{1\}$, then $\gamma(\alpha,Z) = \alpha+d$ and we have
    \[
      x^\alpha \chi_Z - x^{\gamma(\alpha,Z)}
      = x_1^\alpha \chi_1 - x_1^{\alpha + d}
      = x_1^\alpha (\chi_1 - x_1^d)
      \in \sum_{e=0}^{d-1} x_1^{e + \alpha} F
      \subseteq \sum_{\beta \prec \gamma(\alpha,Z)} x^\beta F.
    \]
    Thus, the claim is true for $n=1$.

    Now suppose $n > 1$ and the result holds for $n-1$.  Let $\alpha' = (\alpha_1,\dotsc,\alpha_{n-1})$. If $n \notin Z$, then
    \begin{align*}
      x^\alpha \chi_Z - x^{\gamma(\alpha,Z)}
      &= x^{\alpha'} x_n^{\alpha_n} \chi_Z - x^{\gamma(\alpha',Z)} x_n^{\alpha_n} \\
      &= x_n^{\alpha_n} \left( x^{\alpha'} \chi_Z - x^{\gamma(\alpha',Z)} \right) \\
      &\in \sum_{\beta \prec \gamma(\alpha',Z)} x^\beta x_n^{\alpha_n} F^{\otimes (n-1)} S_{n-1} \\
      &\subseteq \sum_{\beta \prec \gamma(\alpha,Z)} x^\beta F^{\otimes n} S_n.
    \end{align*}
    On the other hand, if $n \in Z$, then, setting $Z' = Z \setminus \{n\}$, we have
    \begin{align*}
      x^\alpha \chi_Z - x^{\gamma(\alpha,Z)}
      &= x^{\alpha'} x_n^{\alpha_n} \chi_{Z'} \chi_n - x^{\gamma(\alpha',Z')} x_n^{\alpha_n+d} \\
      &= x_n^{\alpha_n} \left( x^{\alpha'} \chi_{Z'} \chi_n - x^{\gamma(\alpha',Z')} x_n^d \right) \\
      &= x_n^{\alpha_n} \left( \left( x^{\alpha'} \chi_{Z'} - x^{\gamma(\alpha',Z')} \right) \chi_n + x^{\gamma(\alpha',Z')} \left( \chi_n - x_n^d \right) \right) \\
      &\in x_n^{\alpha_n} \sum_{\beta \prec \gamma(\alpha',Z')} x^\beta F^{\otimes (n-1)} S_{n-1} \chi_n + x_n^{\alpha_n} x^{\gamma(\alpha',Z')} \sum_{e=0}^{d-1} P_{n-1} x_n^e F^{\otimes n} S_n \\
      &= \sum_{\beta \prec \gamma(\alpha',Z')} x^\beta x_n^{\alpha_n} F^{\otimes (n-1)} S_{n-1} \chi_n + \sum_{e=0}^{d-1} P_{n-1} x_n^{\alpha_n+e} F^{\otimes n} S_n \\
      &\subseteq \sum_{\beta \prec \gamma(\alpha,Z)} x^\beta F^{\otimes n} S_n,
    \end{align*}
    where in the fourth line we used the induction hypothesis and Lemma~\ref{lem:chi_i-leading}
  }
  Now, $\gamma \colon \Pi_n \to \N^n$ is a bijection and, by Theorem~\ref{theo:AnF-basis}, $\{x^\alpha \mid \alpha \in \N^n\}$ is a basis for $\cA_n(F)$ viewed as a right $F^{\otimes n} \rtimes_\rho S_n$-module.  Thus, the lemma follows from \eqref{eq:x^alpha.chi_Z-leading}.
\end{proof}

\begin{lem} \label{lem:n-1_chi_n-simplify}
  For $n > 1$, we have $F^{\otimes (n-1)} S_{n-1} \chi_n F^{\otimes n} S_n = \chi_n F^{\otimes n} S_n$.
\end{lem}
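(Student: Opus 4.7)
The inclusion $\chi_n F^{\otimes n} S_n \subseteq F^{\otimes(n-1)} S_{n-1} \chi_n F^{\otimes n} S_n$ is trivial, since $1 \in F^{\otimes(n-1)}$ and $1 \in S_{n-1}$. For the reverse inclusion, my plan is to establish the key claim that $\chi_n$ commutes with every element of $S_{n-1}$. Assuming this, any product $\bsf \sigma \chi_n \bsg \tau$ with $\bsf \in F^{\otimes(n-1)}$, $\sigma \in S_{n-1}$, $\bsg \in F^{\otimes n}$, $\tau \in S_n$ can first be rewritten as $\bsf \chi_n \sigma \bsg \tau$, then via \eqref{rel:SF-commutation} as $\bsf \chi_n \bsg' \tau'$ for some $\bsg' \in F^{\otimes n}$ and $\tau' \in S_n$, and finally by \eqref{eq:Fchi-commutation} it lies in $\chi_n F^{\otimes n} S_n$.

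To prove the commutation claim, since $S_{n-1} = \langle s_1, \dotsc, s_{n-2} \rangle$, it suffices to show that $s_i \chi_n = \chi_n s_i$ for $1 \le i \le n-2$. Set $w := s_1 s_2 \cdots s_{n-1}$, so that $w^{-1} = s_{n-1} \cdots s_1$ and hence $\chi_n = w^{-1} \chi_1 w$. First I would check the permutation identity $w s_i w^{-1} = s_{i+1}$ for $1 \le i \le n-2$: as $w$ is the $n$-cycle $(1,2,\dotsc,n)$, the conjugate $w s_i w^{-1}$ is the transposition $(w(i), w(i+1)) = (i+1, i+2) = s_{i+1}$. This gives $s_i w^{-1} = w^{-1} s_{i+1}$ and $s_{i+1} w = w s_i$.

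The next step is to verify that $s_j$ commutes with $\chi_1$ for each $2 \le j \le n-1$. Any such $s_j$ fixes position $1$, so $s_j \in S_n^1$, and each $\bc^{(k,\ell)} \in \bF_1^{(k)}$ is $S_n^1$-invariant under superpermutation by the definition \eqref{eq:bF_i^(k)-def}; combined with \eqref{rel:SF-commutation} this yields $s_j \bc^{(k,\ell)} = \bc^{(k,\ell)} s_j$. Since also $s_j x_1 = x_1 s_j$ by \eqref{rel:sx-triv-commutation}, the element $s_j$ commutes with every factor $x_1^k - \bc^{(k,\ell)}$ of $\chi_1$, and therefore with $\chi_1$ itself.

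Combining these observations, $s_i \chi_n = s_i w^{-1} \chi_1 w = w^{-1} s_{i+1} \chi_1 w = w^{-1} \chi_1 s_{i+1} w = w^{-1} \chi_1 w s_i = \chi_n s_i$, as required. I do not anticipate any serious obstacle here; the only delicate point is keeping the conjugation identity $w s_i w^{-1} = s_{i+1}$ straight so that the shifted index $i+1$ lands in the range $\{2, \dotsc, n-1\}$, where $\chi_1$ is known to commute with the corresponding simple transposition.
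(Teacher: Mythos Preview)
Your proof is correct and follows essentially the same approach as the paper: one shows that each $s_j$ with $1 \le j \le n-2$ commutes with $\chi_n$ by pushing it past the product $s_{n-1}\dotsm s_1$ (your conjugation identity $w s_i w^{-1} = s_{i+1}$), using that $s_{j+1}$ commutes with $\chi_1$, and then pushing back; the $F^{\otimes(n-1)}$ part is handled via \eqref{eq:Fchi-commutation}. Your write-up is in fact a bit more explicit than the paper's, which simply asserts the invariance and leaves the braid-relation computation implicit.
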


\begin{proof}
  It follows from the definition of $\chi_n$ and the relations in $\cA_n(F)$ that multiplication by $s_j$, $1 \le j \le n-2$, leaves the space $\chi_n F^{\otimes n} S_n$ invariant.
  \details{
    For $1 \le j \le n-2$, we have
    \begin{align*}
      s_j \chi_n F^{\otimes n} S_n
      &= s_j s_{n-1} \dotsm s_1 \chi_1 s_1 \dotsm s_{n-1} F^{\otimes n} S_n \\
      &= s_{n-1} \dotsm s_1 s_{j+1} \chi_1 s_1 \dotsm s_{n-1} F^{\otimes n} S_n \\
      &= s_{n-1} \dotsm s_1 \chi_1 s_{j+1} s_1 \dotsm s_{n-1} F^{\otimes n} S_n \\
      &= s_{n-1} \dotsm s_1 \chi_1 s_1 \dotsm s_{n-1} s_j F^{\otimes n} S_n \\
      &= s_{n-1} \dotsm s_1 \chi_1 s_1 \dotsm s_{n-1} F^{\otimes n} S_n.
    \end{align*}
  }
  That multiplication by $F^{\otimes (n-1)}$ also leaves this space invariant follows from \eqref{eq:Fchi-commutation}.
\end{proof}

\begin{lem} \label{lem:J_c-sum}
  We have $J_\bC = \sum_{i=1}^n P_n \chi_i F^{\otimes n} S_n$.
\end{lem}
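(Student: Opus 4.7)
The inclusion $I := \sum_{i=1}^n P_n \chi_i F^{\otimes n} S_n \subseteq J_\bC$ is immediate, since each $\chi_i$ is a conjugate of $\chi_1 = \chi_\bC$ by an element of $S_n$. For the reverse direction, my plan is to show that $I$ is itself a two-sided ideal of $\cA_n(F)$; since $\chi_1 \in I$, this forces $J_\bC \subseteq I$.

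The crucial preliminary identity is $\sigma \chi_1 = \chi_{\sigma(1)} \sigma$ for every $\sigma \in S_n$. To prove it, I factor $\sigma = \pi_{\sigma(1)} \tau$ with $\pi_i := s_{i-1}\dotsm s_1$ and $\tau \in S_n^1$; then $x_1$ commutes with $\tau$ by \eqref{rel:sx-triv-commutation}, while each $\bc^{(k,l)}$ commutes with $\tau$ by the $S_n^1$-invariance built into \eqref{eq:bF_i^(k)-def}, so $\tau\chi_1 = \chi_1\tau$ and the claim follows from the definition of $\chi_{\sigma(1)}$. Combined with \eqref{eq:Fchi-commutation} and Lemma~\ref{lem:Sn-commutation-mod-lower-terms}, this makes left closure of $I$ under $P_n$, $F^{\otimes n}$, and $\kk S_n$ straightforward, so $I$ is already a left ideal.

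The main obstacle is right closure, which via the commutations above and the decomposition $\cA_n(F) = P_n F^{\otimes n} S_n$ reduces to the nontrivial claim $\chi_i P_n \subseteq I$ for every $i$. I will prove this by induction on polynomial degree, and the heart of the argument is the base case $\chi_i x_j \in I$ for all $i, j$. Writing $\chi_i = \pi_i \chi_1 \pi_i^{-1}$, I apply Lemma~\ref{lem:Sn-commutation-mod-lower-terms} to move $x_j$ past $\pi_i^{-1}$ as $x_{\pi_i^{-1}(j)} \pi_i^{-1} + \eta$ with $\eta \in F^{\otimes n} S_n$, then invoke the fact that every $x_k$ commutes with $\chi_1$---the case $k=1$ being Lemma~\ref{lem:x-chi-commute}, and the case $k \ge 2$ following from \eqref{rel:xF-commutation} because the $k$-th tensor factor of every $\bc^{(r,s)}$ lies in $F_\psi$---and finally commute back through $\pi_i$ using Lemma~\ref{lem:Sn-commutation-mod-lower-terms} again. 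The leading term emerges as $x_j \chi_i \in P_n \chi_i \subseteq I$; the remaining correction terms, of the form $F^{\otimes n} S_n \cdot \chi_1 \cdot \pi_i^{-1}$ or $\pi_i \chi_1 \cdot F^{\otimes n} S_n$, are absorbed into $\sum_k \chi_k F^{\otimes n} S_n \subseteq I$ via another application of $\sigma\chi_1 = \chi_{\sigma(1)}\sigma$ together with \eqref{eq:Fchi-commutation}.

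The induction step is then routine: given $\chi_i x^\beta \in I$ written as $\sum_m p_m \chi_{i_m} f_m \pi_m$, I multiply on the right by $x_j$ and push $x_j$ leftward through each $\pi_m$ and $f_m$ using Lemma~\ref{lem:Sn-commutation-mod-lower-terms} until it meets $\chi_{i_m}$, where the base case applies; the lower-order correction terms sit in $P_n \chi_{i_m} F^{\otimes n} S_n \subseteq I$ by construction. With $\chi_i P_n \subseteq I$ in hand, the chain $J_\bC \subseteq \sum_i P_n \chi_i P_n F^{\otimes n} S_n \subseteq P_n \cdot I \cdot F^{\otimes n} S_n \subseteq I$ closes the argument.
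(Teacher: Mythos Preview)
Your proof is correct, but it takes a substantially longer route than the paper's.  Both arguments rest on the same two facts---the identity $\sigma\chi_1=\chi_{\sigma(1)}\sigma$ (equivalently, $u\chi_1=\chi_1 u$ for $u\in S_n^1$ together with the definition of $\chi_i$) and the commutation $\chi_1 P_n = P_n \chi_1$---but the paper applies them directly to the definition $J_\bC=\cA_n(F)\chi_1\cA_n(F)$.  Writing $\cA_n(F)=P_n F^{\otimes n}S_n$ on both sides, the commutation $\chi_1 P_n = P_n\chi_1$ immediately absorbs the right-hand $P_n$ into the left factor, and then decomposing the left-hand $S_n$ over cosets of $S_n^1$ yields $\sum_i P_n F^{\otimes n}\chi_i F^{\otimes n}S_n$, which collapses to the claimed sum via \eqref{eq:Fchi-commutation}.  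This takes one displayed chain of equalities.

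By contrast, you prove that $I$ is a two-sided ideal, which forces you to establish $\chi_i P_n\subseteq I$ for \emph{every} $i$.  Since $\chi_i$ for $i>1$ genuinely fails to commute with $x_j$ for $j<i$, this requires the conjugation gymnastics through $\pi_i$ and an induction on polynomial degree.  Your observation that every $x_k$ commutes with $\chi_1$ is exactly the key insight the paper uses---but once you have it, you can bypass the ideal-closure argument entirely by working with $\chi_1$ rather than $\chi_i$ on the right, as the paper does.  Your approach does have the minor conceptual payoff of making explicit that $I$ is itself an ideal, but the direct computation is considerably shorter.
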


\begin{proof}
  We have
  \begin{align*}
    J_\bC
    &= \cA_n(F) \chi_1 \cA_n(F)
    = \cA_n(F) \chi_1 P_n F^{\otimes n} S_n
    = \cA_n(F) \chi_1 F^{\otimes n} S_n \\
    &= P_n F^{\otimes n} S_n \chi_1 F^{\otimes n} S_n
    = \sum_{i=1}^n \sum_{u \in S_{(1,n-1)}} P_n F^{\otimes n} s_{i-1} \dotsm s_1 u \chi_1 F^{\otimes n} S_n \\
    &= \sum_{i=1}^n P_n F^{\otimes n} s_{i-1} \dotsm s_1 \chi_1 F^{\otimes n} S_n
    = \sum_{i=1}^n P_n F^{\otimes n} \chi_i F^{\otimes n} S_n
    = \sum_{i=1}^n P_n \chi_i F^{\otimes n} S_n,
  \end{align*}
  where the third equality uses Lemma~\ref{lem:x-chi-commute}, and the final equality uses \eqref{eq:Fchi-commutation}.
\end{proof}

\begin{lem} \label{lem:J_c-description}
  For $d > 0$, we have $J_\bC = \sum_{(\alpha,Z) \in \Pi_n^+} x^\alpha \chi_Z F^{\otimes n} S_n$.
\end{lem}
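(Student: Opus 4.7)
The inclusion $\sum_{(\alpha, Z) \in \Pi_n^+} x^\alpha \chi_Z F^{\otimes n} S_n \subseteq J_\bC$ is immediate: for each $(\alpha, Z) \in \Pi_n^+$ the nonempty set $Z$ contains some index $j$, so $\chi_Z$ has $\chi_j$ as a factor, and $\chi_j \in J_\bC$ by Lemma~\ref{lem:J_c-sum}. For the reverse inclusion, using Lemma~\ref{lem:J_c-sum} together with right $F^{\otimes n} S_n$-linearity of both sides, it suffices to prove $x^\beta \chi_i \in A_+ := \sum_{(\alpha, Z) \in \Pi_n^+} x^\alpha \chi_Z F^{\otimes n} S_n$ for every $\beta \in \N^n$ and every $i \in \{1, \ldots, n\}$. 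To make the induction work smoothly, I will establish the stronger claim that $x^\beta \chi_I \in A_+$ for every $\beta \in \N^n$ and every nonempty $I \subseteq \{1, \ldots, n\}$, where $\chi_I := \chi_{i_1} \chi_{i_2} \cdots \chi_{i_r}$ for $I = \{i_1 < \cdots < i_r\}$.

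The proof goes by induction on $\beta$ with respect to the well-founded order $\prec$ from the proof of Lemma~\ref{lem:dHn-right-FS-module-basis}. If $\beta_k < d$ for all $k \notin I$, then $(\beta, I) \in \Pi_n^+$ and we are done. Otherwise, take $j$ to be the largest element of $\{1, \ldots, n\} \setminus I$ with $\beta_j \ge d$, and set $I_- := I \cap \{1, \ldots, j-1\}$ and $I_+ := I \cap \{j+1, \ldots, n\}$. By Lemma~\ref{lem:x-chi-commute}, $x_j^d$ commutes with each $\chi_m$ for $m \in I_-$. Letting $\tilde\beta \in \N^n$ be the tuple agreeing with $\beta$ except at position $j$ where $\tilde\beta_j = \beta_j - d$, substituting $x_j^d = \chi_j - (\chi_j - x_j^d)$ via Lemma~\ref{lem:chi_i-leading} and using the identity $\chi_{I_-} \chi_j \chi_{I_+} = \chi_{I \cup \{j\}}$ yields
\[
  x^\beta \chi_I
  = x^{\tilde\beta} \chi_{I_-} x_j^d \chi_{I_+}
  = x^{\tilde\beta} \chi_{I \cup \{j\}} \;-\; x^{\tilde\beta} \chi_{I_-} (\chi_j - x_j^d) \chi_{I_+}.
\]
The first summand lies in $A_+$ by the inductive hypothesis, since $\tilde\beta \prec \beta$ and $I \cup \{j\}$ is nonempty.

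The main obstacle lies in the second summand. Decomposing $\chi_j - x_j^d$ into terms $p\, x_j^e\, f\, \sigma$ with $p \in P_{j-1}$, $e < d$, $f \in F^{\otimes j}$, $\sigma \in S_j$, the plan is to commute these factors past the outer $\chi_{I_-}$ and $\chi_{I_+}$ using \eqref{rel:xF-commutation}, \eqref{eq:Fchi-commutation}, Lemma~\ref{lem:x-chi-commute}, and Lemma~\ref{lem:Sn-commutation-mod-lower-terms}, thereby re-expressing each piece as a linear combination of terms $x^{\beta'} \chi_I r$ with $r \in F^{\otimes n} S_n$. The crucial point is that every factor from $P_{j-1}$, $F^{\otimes j}$, or $S_j$ acts nontrivially only on positions $1, \ldots, j$, so all commutations leave the $\beta_k$ for $k > j$ unchanged; moreover, the total $x_j$-exponent drops from $\beta_j$ to at most $\beta_j - 1$ since $x_j^d$ has been replaced by $x_j^e$ with $e < d$. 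Consequently every $\beta'$ produced satisfies $\beta' \prec \beta$, and the inductive hypothesis closes the argument. The delicate step is verifying that moving $\sigma \in S_j$ through the Coxeter factors of $\chi_m$ for $m \in I_+$, which can fail to commute when the involved simple transpositions are adjacent, produces only ``lower'' polynomial terms in the sense of Lemma~\ref{lem:Sn-commutation-mod-lower-terms}, and hence remains within the scope of the induction.
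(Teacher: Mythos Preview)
Your reduction to showing $x^\beta \chi_I \in A_+$ for all nonempty $I$, and the decomposition
\[
  x^\beta \chi_I = x^{\tilde\beta}\chi_{I\cup\{j\}} - x^{\tilde\beta}\chi_{I_-}(\chi_j-x_j^d)\chi_{I_+},
\]
are both correct. The first summand is fine, and when $I_- = \varnothing$ the second summand also goes through: pushing $f\sigma$ past $\chi_{I_+}$ works (indeed $\sigma\in S_j$ \emph{commutes} with each $\chi_m$ for $m>j$, by the braid computation behind Lemma~\ref{lem:n-1_chi_n-simplify}, so your ``delicate step'' is not the delicate one), and then the polynomial $x^{\tilde\beta}p\,x_j^e$ already sits to the left of $\chi_{I_+}=\chi_I$ with exponent vector $\prec\beta$.

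The genuine gap is the case $I_- \neq \varnothing$. After moving $f\sigma$ to the right of $\chi_{I_+}$ and $x_j^e$ to the left of $\chi_{I_-}$, you are left with $x^{\tilde\beta}x_j^e\,\chi_{I_-}\, p\,\chi_{I_+}\cdot(F^{\otimes n}S_n)$, and now the polynomial $p\in P_{j-1}$ must cross $\chi_{I_-}$. But Lemma~\ref{lem:x-chi-commute} only gives $x_k\chi_m=\chi_m x_k$ for $k>m$; once $p$ involves some $x_k$ with $k\le\max I_-$ there is no such commutation, and forcing $p$ through destroys the product $\chi_{I_-}$ rather than preserving it. Your inductive hypothesis applies to expressions $x^{\beta'}\chi_{I'}$ with an intact $\chi$-product; once that product is gone you have only an element of $J_\bC$, and appealing to $J_\bC=A_+$ is circular. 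A concrete obstruction is $n=3$, $I=\{2\}$, $\beta=(0,0,d)$, $j=3$: one must control $\chi_2\, p$ for $p\in P_2$, yet neither $x_1$ nor $x_2$ commutes with $\chi_2$.

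The paper avoids this with a different inductive structure: an outer induction on $n$ and, for $i<n$, an inner induction on $\alpha_n$. The outer hypothesis furnishes $J_\bC' = \sum_{(\alpha',Z')\in\Pi_{n-1}^+} x^{\alpha'}\chi_{Z'}F^{\otimes(n-1)}S_{n-1}$, which packages precisely the ``$\chi_{I_-}$ side'' that your single $\prec$-induction cannot reach. Your scheme could plausibly be repaired by layering in an induction on $n$ (or on $\max I$), but as written the argument is incomplete.
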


\begin{proof}
  We proceed by induction on $n$.  When $n=1$, the right side of the equality in the statement of the lemma is
  \[
    \sum_{k \in \N} x_1^k \chi_1 F
    = P_1 \chi_1 F
    = J_\bC,
  \]
  where, in the last equality, we use \eqref{eq:Fchi-commutation} and Lemma~\ref{lem:x-chi-commute}.

  Now suppose $n>1$.  Let $J_\bC' := \cA_{n-1}(F) \chi_1 \cA_{n-1}(F)$, so that
  \begin{equation} \label{eq:J_c'}
    J_\bC' = \sum_{(\alpha',Z') \in \Pi_{n-1}^+} x^{\alpha'} \chi_{Z'} F^{\otimes (n-1)} S_{n-1}
  \end{equation}
  by the induction hypothesis.  Let $J = \sum_{(\alpha,Z) \in \Pi_n^+} x^\alpha \chi_Z F^{\otimes n} S_n$.  It is clear that $J \subseteq J_\bC$.  Therefore, by Lemma~\ref{lem:J_c-sum}, it suffices to prove that $x^\alpha \chi_i F^{\otimes n} S_n \subseteq J$ for all $\alpha \in \N^n$ and $1 \le i \le n$.

  First consider $x^\alpha \chi_n F^{\otimes n} S_n$ for $\alpha = (\alpha_1,\dotsc,\alpha_n) \in \N^n$.  Let $\beta = (\alpha_1,\dotsc,\alpha_{n-1}) \in \N^{n-1}$, so that $x^\alpha = x_n^{\alpha_n} x^\beta$.  Expanding $x^\beta$ in terms of the basis of $\cA_{n-1}(F)$ from Lemma~\ref{lem:dHn-right-FS-module-basis}, we see that
  \begin{multline*}
    x^\alpha \chi_n F^{\otimes n} S_n
    \subseteq \sum_{(\alpha',Z') \in \Pi_{n-1}} x_n^{\alpha_n} x^{\alpha'} \chi_{Z'} F^{\otimes (n-1)} S_{n-1} \chi_n F^{\otimes n} S_n
    \\
    \subseteq \sum_{(\alpha',Z') \in \Pi_{n-1}} x_n^{\alpha_n} x^{\alpha'} \chi_{Z'} \chi_n F^{\otimes n} S_n
    \subseteq J,
  \end{multline*}
  where the second inclusion follows from Lemma~\ref{lem:n-1_chi_n-simplify}.

  Now suppose $1 \le i < n$ and consider $x^\alpha \chi_i F^{\otimes n} S_n$.  Again, let $\beta = (\alpha_1,\dotsc,\alpha_{n-1}) \in \N^{n-1}$, so that $x^\alpha = x_n^{\alpha_n} x^\beta$.  By the induction hypothesis, we have
  \[
    x^\alpha \chi_i F^{\otimes n} S_n
    = x_n^{\alpha_n} x^\beta \chi_i F^{\otimes n} S_n
    \subseteq \sum_{(\alpha',Z') \in \Pi_{n-1}^+} x_n^{\alpha_n} x^{\alpha'} \chi_{Z'} F^{\otimes n} S_n.
  \]
  We now show by induction on $\alpha_n$ that $x_n^{\alpha_n} x^{\alpha'} \chi_{Z'} F^{\otimes n} S_n \subseteq J$ for all $(\alpha', Z') \in \Pi_{n-1}^+$.  This follows immediately from the definition of $\Pi_n^+$ if $\alpha_n < d$.  So suppose $\alpha_n \ge d$.  By Lemmas~\ref{lem:x-chi-commute} and~\ref{lem:chi_i-leading}, we have
  \[
    x_n^{\alpha_n} x^{\alpha'} \chi_{Z'} F^{\otimes n} S_n
    = x_n^{\alpha_n-d} x^{\alpha'} \chi_{Z'} x_n^d F^{\otimes n} S_n
    \in x_n^{\alpha_n-d} x^{\alpha'} \chi_{Z'} \chi_n F^{\otimes n} S_n
    + \sum_{e=0}^{d-1} x_n^{\alpha_n-d+e} J_\bC' F^{\otimes n} S_n.
  \]
  By the definition of $J$, we have $x_n^{\alpha_n-d} x^{\alpha'} \chi_{Z'} \chi_n F^{\otimes n} S_n \in J$.  Now, by \eqref{eq:J_c'}, for $0 \le e < d$, we have
  \[
    x_n^{\alpha_n-d+e} J_\bC' F^{\otimes n} S_n
    \subseteq \sum_{(\alpha',Z') \in \Pi_{n-1}^+} x_n^{\alpha_n-d+e} x^{\alpha'} \chi_{Z'} F^{\otimes n} S_n.
  \]
  Since $0 \le \alpha_n - d + e < \alpha_n$, each term in the above sum is contained in $J$ by induction.  This completes the proof.
\end{proof}

\begin{theo}[Basis theorem for cyclotomic quotients] \label{theo:basis-cyclotomic}
  The canonical images of the elements
  \[
    \{x^\alpha \bb \pi \mid \alpha \in \N^n \text{ with } \alpha_1,\dotsc,\alpha_n < d_\bC,\ \bb \in B^{\otimes n},\ \pi \in S_n\}
  \]
  form a basis for $\cA_n^\bC(F)$.
\end{theo}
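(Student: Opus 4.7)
The plan is to assemble the theorem directly from Lemma~\ref{lem:dHn-right-FS-module-basis} and Lemma~\ref{lem:J_c-description}, which together have done the real work. I will first dispose of the trivial case $d_\bC = 0$ (where $\chi_\bC$ is an empty product equal to $1$, so $J_\bC = \cA_n(F)$, the quotient is zero, and the stated indexing set is empty), and then assume $d = d_\bC > 0$ for the remainder.

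By Lemma~\ref{lem:dHn-right-FS-module-basis}, $\cA_n(F)$ is a \emph{free} right $F^{\otimes n}\rtimes_\rho S_n$-module on the set $\{x^\alpha \chi_Z \mid (\alpha,Z) \in \Pi_n\}$. Decomposing $\Pi_n = \Pi_n^+ \sqcup \{(\alpha,\varnothing) \mid \alpha \in \N^n,\ \alpha_i < d\}$ (and noting $\chi_\varnothing = 1$) gives the direct sum decomposition of right $F^{\otimes n}\rtimes_\rho S_n$-modules
\[
  \cA_n(F) = \bigoplus_{(\alpha,Z) \in \Pi_n^+} x^\alpha \chi_Z \left( F^{\otimes n} \rtimes_\rho S_n \right)\ \oplus\ \bigoplus_{\alpha : \alpha_i < d\, \forall i} x^\alpha \left( F^{\otimes n} \rtimes_\rho S_n \right).
\]
Now Lemma~\ref{lem:J_c-description} identifies the first summand on the right as precisely $J_\bC$ (the sum appearing there is automatically direct, being a sub-sum of a free module basis). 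Hence passing to the quotient $\cA_n^\bC(F) = \cA_n(F)/J_\bC$ yields an even isomorphism
\[
  \cA_n^\bC(F) \simeq \bigoplus_{\alpha : \alpha_i < d\, \forall i} x^\alpha \left( F^{\otimes n} \rtimes_\rho S_n \right)
\]
of right $F^{\otimes n}\rtimes_\rho S_n$-modules.

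Combining this with the standard basis $\{\bb \pi \mid \bb \in B^{\otimes n},\ \pi \in S_n\}$ of $F^{\otimes n}\rtimes_\rho S_n$ (which is a $\kk$-basis since $B$ is a $\kk$-basis of $F$ and the smash product is free on $S_n$ over $F^{\otimes n}$) produces the $\kk$-basis claimed in the theorem. There is no real obstacle here: the only subtlety was organizing the bookkeeping so that the spanning statement of Lemma~\ref{lem:J_c-description} gets upgraded to a direct-sum statement, and this upgrade is automatic once one recognizes that the sum indexed by $\Pi_n^+$ is a sub-sum inside the free-module basis provided by Lemma~\ref{lem:dHn-right-FS-module-basis}.
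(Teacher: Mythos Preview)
Your proof is correct and follows essentially the same route as the paper: use Lemma~\ref{lem:dHn-right-FS-module-basis} to write $\cA_n(F)$ as a free right $F^{\otimes n}\rtimes_\rho S_n$-module on $\{x^\alpha\chi_Z\}$, then invoke Lemma~\ref{lem:J_c-description} to identify the $\Pi_n^+$-indexed part with $J_\bC$, so that the $\{x^\alpha : \alpha_i<d\}$ give a free basis for a complement. Your explicit treatment of the degenerate case $d_\bC=0$ is a small addition the paper leaves implicit (Lemma~\ref{lem:J_c-description} already assumes $d>0$).
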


\begin{proof}
  By Lemmas~\ref{lem:dHn-right-FS-module-basis} and~\ref{lem:J_c-description}, the elements $\{x^\alpha \chi_Z \mid (a,Z) \in \Pi_n^+\}$ form a basis for $J_\bC$ viewed as a right $F^{\otimes n} \rtimes_\rho S_n$-module.  Thus Lemma~\ref{lem:dHn-right-FS-module-basis} implies that
  \[
    \{x^\alpha \mid \alpha \in \N^n \text{ with } \alpha_1,\dotsc,\alpha_n < d\}
  \]
  is a basis for a complement to $J_\bC$ in $\cA_n(F)$, viewed as a right $F^{\otimes n} \rtimes_\rho S_n$-module.  The theorem follows.
\end{proof}

When $F = \kk$ or $F = \Cl$, Theorem~\ref{theo:basis-cyclotomic} recovers known results.  (See, e.g., \cite[Th.~7.5.6 and Th.~15.4.6]{Kle05}.)  When $F$ is the group algebra of a group (see Example~\ref{eg:wreath-Hecke}), the result is stated without proof in \cite[Prop.~5.5]{WW08}.  In other cases, Theorem~\ref{theo:basis-cyclotomic} seems to be new.  In particular, as noted in the introduction, when $F$ is a symmetric algebra, concentrated in even parity, $\cA_n(F)$ is the affinized symmetric algebra considered by Kleshchev and Muth \cite[\S3]{KM15}.  Under these additional assumptions, those authors prove that the elements given in Theorem~\ref{theo:basis-cyclotomic} are a spanning set and then conjecture that they are a basis \cite[Conj.~3.21]{KM15}.

\begin{cor} \label{cor:KM3.21}
  Conjecture~3.21 of \cite{KM15} holds.
\end{cor}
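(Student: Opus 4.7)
The plan is to obtain Corollary~\ref{cor:KM3.21} as an immediate consequence of Theorem~\ref{theo:basis-cyclotomic}, after a short dictionary check matching the setting of \cite{KM15} to ours. First I would verify that when $F$ is a symmetric algebra concentrated in even parity (so that $\psi = \id$, $\theta = 1$, and $\bar f = 0$ for all $f \in F$), Definition~\ref{def:affine-wreath} recovers the affinized symmetric algebra of \cite[\S3]{KM15}; this follows since all sign conventions and the Nakayama twist in \eqref{rel:xF-commutation} become trivial. Under these hypotheses, $\bF_1^{(k)}$ reduces to the subspace of $Z(F) \otimes \one^{\otimes (n-1)} \subseteq F^{\otimes n}$, and the cyclotomic parameter $\bC = (\bc^{(1,1)},\dotsc,\bc^{(1,e_1)})$ together with the polynomial $\chi_\bC = \prod_j (x_1 - \bc^{(1,j)})$ from \eqref{eq:chi_C-def} matches the cyclotomic element used to define the ideal quotiented out in \cite{KM15}. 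Consequently $\cA_n^\bC(F)$ coincides with their cyclotomic affinized symmetric algebra.

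With this identification in place, the spanning set appearing in \cite[Conj.~3.21]{KM15} is, up to notation, exactly the set $\{x^\alpha \bb \pi \mid \alpha \in \N^n,\ \alpha_i < d_\bC,\ \bb \in B^{\otimes n},\ \pi \in S_n\}$ of Theorem~\ref{theo:basis-cyclotomic}. Kleshchev and Muth establish that these elements span the quotient; their conjecture is precisely the linear independence statement. Since Theorem~\ref{theo:basis-cyclotomic} asserts that these elements form a basis, the conjecture follows at once. There is no genuine obstacle here beyond the bookkeeping of matching conventions, which is routine given the restrictive hypotheses on $F$. The nontrivial content lies entirely in Theorem~\ref{theo:basis-cyclotomic} itself, whose proof (via Lemmas~\ref{lem:dHn-right-FS-module-basis}--\ref{lem:J_c-description}) treats the general Frobenius case uniformly, without requiring the symmetric or purely even assumptions of \cite{KM15}.
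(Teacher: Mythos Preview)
Your approach is correct and essentially identical to the paper's: both deduce the corollary directly from Theorem~\ref{theo:basis-cyclotomic} by specializing to the case where $F$ is purely even and symmetric, so that the cyclotomic quotient coincides with the one in \cite{KM15}. One small inaccuracy: when $\psi=\id$, the space $\bF_1^{(k)}$ is not $Z(F)\otimes\one^{\otimes(n-1)}$ but rather $Z(F)\otimes\big(Z(F)^{\otimes(n-1)}\big)^{S_{n-1}}$; the paper records the relevant parameters as lying in $(Z(F)^{\otimes n})^{S_n}$, which is what \cite{KM15} actually uses, but this does not affect your argument.
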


\begin{proof}
  This follows immediately from the special case of Theorem~\ref{theo:basis-cyclotomic} where $F$ is purely even and symmetric (i.e.\ $\psi = \id$), and $\bC \in (Z(F)^{\otimes n})^{S_n}$.
\end{proof}

\begin{cor}
  Every level one cyclotomic wreath product algebra is isomorphic to $F^{\otimes n} \rtimes_\rho S_n$.
\end{cor}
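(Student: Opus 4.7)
The plan is to apply Theorem~\ref{theo:basis-cyclotomic} almost directly. First I would unpack what level one means: by the definition \eqref{eq:d_C-def}, the condition $d_\bC = 1$ forces $e_1 = 1$ and $e_k = 0$ for all $k > 1$, so the cyclotomic ideal $J_\bC$ is the two-sided ideal generated by the single element $\chi_\bC = x_1 - \bc^{(1,1)}$ for some $\bc^{(1,1)} \in \bF_1^{(1)}$ of degree $\delta$.

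Next I would consider the natural composition
\[
  \Phi \colon F^{\otimes n} \rtimes_\rho S_n \hookrightarrow \cA_n(F) \twoheadrightarrow \cA_n^\bC(F),
\]
where the first arrow is the subalgebra inclusion (made explicit after Corollary~\ref{cor:affine-wreath-presentation}) and the second is the canonical quotient. The composite $\Phi$ is manifestly an algebra homomorphism, so it suffices to verify that it is bijective. The set $\{\bb \pi \mid \bb \in B^{\otimes n},\ \pi \in S_n\}$ is a $\kk$-basis of $F^{\otimes n} \rtimes_\rho S_n$. On the other hand, specializing Theorem~\ref{theo:basis-cyclotomic} to $d_\bC = 1$ collapses the condition $\alpha_1,\dotsc,\alpha_n < d_\bC$ to $\alpha = 0$, so the canonical images of these same elements $\bb \pi$ form a $\kk$-basis of $\cA_n^\bC(F)$. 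Thus $\Phi$ carries a $\kk$-basis to a $\kk$-basis, hence is an isomorphism.

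The main point to notice is that there is no real obstacle once the basis theorem is in hand---the corollary is essentially its immediate reformulation when $d_\bC = 1$. One could alternatively approach the result by first applying the shifting automorphism of Proposition~\ref{prop:shift-x_i} with $\bc = \bc^{(1,1)}$ to identify $\cA_n^\bC(F)$ with the quotient $\cA_n(F)/(x_1)$, and then arguing (via the defining relations \eqref{rel:sx-commutation} and \eqref{rel:sx-commutation2}) that all higher $x_i$ are forced to lie in $F^{\otimes n} \rtimes_\rho S_n$; but this detour is no shorter than the basis-theorem route and ultimately hinges on the same rank computation.
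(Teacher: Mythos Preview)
Your proof is correct. The paper, however, takes precisely the ``detour'' you sketch at the end: it first treats the case $\chi_\bC = x_1$ by identifying the quotient map $\cA_n(F) \twoheadrightarrow \cA_n^\bC(F)$ with the Jucys--Murphy surjection of Proposition~\ref{prop:JM-quotient} (so the inverse isomorphism sends $x_k$ explicitly to $J_k$), and then invokes the shifting automorphism of Proposition~\ref{prop:shift-x_i} to pass from $x_1 - \bc^{(1,1)}$ to $x_1$. Your route via Theorem~\ref{theo:basis-cyclotomic} is shorter and more uniform in $\bc^{(1,1)}$, since the basis theorem has already absorbed all the work; the paper's route is independent of Theorem~\ref{theo:basis-cyclotomic} and yields an explicit description of the isomorphism in terms of Jucys--Murphy elements, which is conceptually pleasant even if not strictly needed here.
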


\begin{proof}
  If $\chi_\bC = x_1$, then $\cA_n^\bC(F) \simeq F^{\otimes n} \rtimes_\rho S_n$.  The projection $\cA_n(F) \twoheadrightarrow \cA_n^\bC(F) \simeq F^{\otimes n} \rtimes_\rho S_n$ is precisely the homomorphism described in Proposition~\ref{prop:JM-quotient}.  The general result then follows from Proposition~\ref{prop:shift-x_i}.
\end{proof}

\begin{rem}
  When $F = \kk$, Brundan proved in \cite[Th.~1]{Bru08} that the centers of the cyclotomic quotients consist of symmetric polynomials in $x_1,\dotsc,x_n$.  However, this is \emph{not} true in general (i.e.\ for arbitrary $F$).  Indeed, the $x_i$ have degree $\delta$.  Thus, if $\delta > 0$, but $Z(F) \cap F_\psi \ne \kk$, there are elements of the center of $\cA_n^\bC(F)$ that cannot possibly be expressed as symmetric polynomials in $x_1,\dotsc,x_n$ for degree reasons.
\end{rem}

\subsection{Frobenius algebra structure}

By Theorem~\ref{theo:basis-cyclotomic}, we can define an even linear map $\tr_\bC \colon \cA_n^\bC(F) \to \kk$ by defining
\begin{equation} \label{eq:full-cyclotomic-trace}
  \tr_\bC \left( x^\alpha \bsf \pi \right) = \delta_{\alpha,(d-1,\dotsc,d-1)} \tr^{\otimes n}(\bsf) \delta_{\pi,1}, \quad
  \bsf \in F^{\otimes n},\ \alpha = (\alpha_1,\dotsc,\alpha_n) \in \N^n,\ \alpha_1,\dotsc,\alpha_n < d_\bC.
\end{equation}
and extending by linearity.

\begin{theo} \label{theo:cyclotomic-Frobenius-algebra}
  The cyclotomic quotient $\cA_n^\bC(F)$ is an $\N$-graded Frobenius superalgebra with trace map $\tr_\bC$ and Nakayama automorphism given by
  \[
    x_i \mapsto x_i,\quad
    \bsf \mapsto \left( \psi^{d_\bC} \right)^{\otimes n} (\bsf),\quad
    \pi \mapsto \pi,\qquad
    1 \le i \le n,\ \bsf \in F^{\otimes n},\ \pi \in S_n.
  \]
  It follows that $\cA_n(F)$ is a symmetric algebra if the level $d_\bC$ is a multiple of the order of the Nakayama automorphism $\psi$ of $F$.  In particular, if $F$ is a symmetric algebra (i.e.\ $\psi = \id$), then so is $\cA_n^\bC(F)$.
\end{theo}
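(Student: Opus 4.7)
My plan is to realize $\cA_n^\bC(F)$ as a Frobenius extension of the wreath product $F^{\otimes n} \rtimes_\rho S_n$ and then compute the Nakayama automorphism directly from the explicit form of $\tr_\bC$. Writing $\mathbf{d} := (d-1,\dotsc,d-1) \in \N^n$ with $d := d_\bC$, I first observe that $F^{\otimes n} \rtimes_\rho S_n$ is itself a Frobenius superalgebra with trace $\tr_0(\bsf\,\pi) := \tr^{\otimes n}(\bsf)\,\delta_{\pi,1}$ and Nakayama automorphism $\bsf\,\pi \mapsto \psi^{\otimes n}(\bsf)\,\pi$; this is a direct calculation using the $S_n$-invariance of $\tr^{\otimes n}$ together with the Frobenius identity for $F^{\otimes n}$. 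By Theorem~\ref{theo:basis-cyclotomic} I then define the right $F^{\otimes n} \rtimes_\rho S_n$-linear projection
\[
  E \colon \cA_n^\bC(F) \to F^{\otimes n} \rtimes_\rho S_n, \qquad E\bigl(x^\alpha \bb\pi\bigr) = \delta_{\alpha,\mathbf{d}}\,\bb\pi,
\]
so that $\tr_\bC = \tr_0 \circ E$. It therefore suffices to prove $E$ makes $\cA_n^\bC(F)$ into a Frobenius extension of $F^{\otimes n} \rtimes_\rho S_n$, for then the composition with $\tr_0$ yields the asserted Frobenius structure on $\cA_n^\bC(F)$.

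For the Frobenius extension, I would show that $\{x^\alpha : \alpha \in \{0,\dotsc,d-1\}^n\}$ and a triangular modification of $\{x^{\mathbf{d}-\alpha}\}$ form dual right $F^{\otimes n} \rtimes_\rho S_n$-bases. The key computation is $E(x^\alpha \cdot x^{\mathbf{d}-\beta}) = E(x^{\alpha + \mathbf{d} - \beta})$, which, analyzed via the polynomial-degree filtration and Lemma~\ref{lem:chi_i-leading}, satisfies: (i) if $|\alpha| < |\beta|$, then $|\alpha + \mathbf{d} - \beta| < n(d-1)$, so the basis expansion has polynomial degree strictly less than $n(d-1)$ and hence $E(\cdot) = 0$; (ii) if $|\alpha| = |\beta|$, then either $\alpha = \beta$ (giving $E(x^\mathbf{d}) = 1$) or some coordinate of $\alpha + \mathbf{d} - \beta$ exceeds $d-1$, forcing reduction via $\chi_i \equiv 0$, which strictly decreases polynomial degree and gives $0$; (iii) if $|\alpha| > |\beta|$, the entry may be nonzero. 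Ordering the basis $\{x^\alpha\}$ by increasing $|\alpha|$, the pairing matrix is therefore unitriangular, hence invertible, and inverting this triangular matrix yields a dual basis $\{y_\alpha\}$ with $E(x^\alpha y_\beta) = \delta_{\alpha,\beta}$, establishing the Frobenius extension structure. The main technical obstacle is precisely this triangularity and degree-tracking argument; it crucially uses that reducing $x_i^d$ via $\chi_i \equiv 0$ strictly decreases polynomial degree, which follows by inspecting the top-degree part of $\chi_i$ as a polynomial in $x_1,\dotsc,x_n$.

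For the Nakayama automorphism $\Psi$ asserted in the theorem, I first verify that $\Psi$ is a well-defined algebra automorphism of $\cA_n^\bC(F)$: it preserves the relations of Definition~\ref{def:affine-wreath} (since $\psi^d$ is a $\kk$-algebra automorphism preserving the trace and commuting with $\psi$), and it fixes $\chi_\bC$ because each $\bc^{(k,j)} \in \bF_1^{(k)} \subseteq F_\psi^{\otimes n}$ satisfies $\psi^d(\bc^{(k,j)}) = \bc^{(k,j)}$. Next I verify the Nakayama identity $\tr_\bC(ab) = (-1)^{\bar a \bar b}\,\tr_\bC(b\,\Psi(a))$ on generators $a$. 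For $a \in S_n$ and $a = x_i$ — both $\Psi$-fixed — the identity reduces respectively to the $S_n$-invariance of $\tr^{\otimes n}$ and to $\tr \circ \psi = \tr$. For $a = \bsf \in F^{\otimes n}$, pairing with $b = x^\mathbf{d}\bsg$ and using \eqref{rel:xF-commutation} to commute $\bsf$ past $x^\mathbf{d}$, the identity reduces to
\[
  \tr^{\otimes n}\!\bigl((\psi^{d-1})^{\otimes n}(\bsf)\,\bsg\bigr) = (-1)^{\bar\bsf\,\bar\bsg}\,\tr^{\otimes n}\!\bigl(\bsg\,(\psi^d)^{\otimes n}(\bsf)\bigr),
\]
which is exactly the $n$-fold Frobenius identity for $F$ applied to $h = (\psi^{d-1})^{\otimes n}(\bsf)$ and $\bsg$. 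This identifies $\Psi(\bsf)$ as $(\psi^d)^{\otimes n}(\bsf)$ and completes the identification of the Nakayama automorphism. The final assertion is then immediate: if $\theta \mid d_\bC$, then $\psi^{d_\bC} = \id$, so $\Psi = \id$ on all of $\cA_n^\bC(F)$ and the algebra is symmetric.
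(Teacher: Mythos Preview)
Your nondegeneracy argument is close in spirit to the paper's (both are unitriangularity arguments), though note that your map $E$ is \emph{not} an $(F^{\otimes n}\rtimes_\rho S_n,\,F^{\otimes n}\rtimes_\rho S_n)$-bimodule map: from \eqref{rel:xF-commutation} one computes $E(\bsf\,x^{\mathbf d}) = (\psi^{d-1})^{\otimes n}(\bsf) \neq \bsf = \bsf\,E(x^{\mathbf d})$ in general, so the extension is only a \emph{twisted} Frobenius extension. This is repairable (either work with twisted extensions as in \cite{PS16}, or simply run your unitriangularity argument directly on the $\kk$-valued form $\tr_\bC$), but it should be said.

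The genuine gap is in your Nakayama verification for $a = x_i$. You assert that $\tr_\bC(x_i\,b) = \tr_\bC(b\,x_i)$ ``reduces to $\tr\circ\psi = \tr$'', but that reduction only handles the trivial case where $b = x^\alpha\bsf\pi$ with $\pi(i)=i$ and $\alpha_i < d-1$. Two nontrivial phenomena remain, and they are precisely what occupies most of the paper's proof:
\begin{itemize}
  \item If $\alpha_i = d-1$, then $x_i\,x^\alpha$ contains $x_i^d$, which must be rewritten using the cyclotomic relation. In the paper this is the expansion $x_1^d = \sum_{j<d}\bsf_{(j)}x_1^j$ of \eqref{eq:x_1^d-expansion}, and one must check that the resulting terms contribute identically on both sides.
  \item If $\pi(i)\neq i$, commuting $x_i$ past $\pi$ produces $t_{k,\ell}$-corrections via \eqref{rel:sx-commutation}. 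The paper handles this by reducing to $i=1$, writing $\pi = \pi_1 s_1 \pi_2$ with $\pi_1(1)=\pi_2(1)=1$, and then carrying out a delicate computation (see \eqref{eq:trace-s_1pi-right-x_1}--\eqref{eq:trace-s_1pi-right-x_1-zero}) showing that the $t$-contributions cancel against terms coming from the reduction of $x_2^d$.
\end{itemize}
Neither of these follows from $\tr\circ\psi = \tr$; they require tracking how the correction terms interact with the projection onto the $x^{\mathbf d}$-component. Your proposal does not address them, and without them the identification of $\Psi(x_i)=x_i$ is unjustified.
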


\begin{proof}
  To check that $\tr_\bC$ satisfies the required property that \eqref{eq:trace-nondegen} is an isomorphism, it is enough to verify that the basis of $\cA_n^\bC(F)$ given in Theorem~\ref{theo:basis-cyclotomic} has a left dual basis with respect to $\tr_\bC$.  Consider the strong Bruhat order on $S_n$ and the total order on $\N^n$ given by
  \begin{equation} \label{eq:Nn-partial-order}
    \alpha < \beta \iff \alpha_1 = \beta_1,\dotsc,\alpha_{i-1} = \beta_{i-1}, \alpha_i < \beta_i \text{ for some } 1 \le i \le n.
  \end{equation}
  It suffices to prove that, for each basis element $x^\alpha \bb \pi$, we can find $z \in \cA_n^\bC(F)$ such that $\tr_\bC(z x^\alpha \bb \pi)=1$ and $\tr_\bC(z x^{\alpha'} \bb' \pi') = 0$ for all
  \[
    \pi' \not \le \pi \text{ or } \left( \pi'=\pi,\ \alpha' < \alpha \right) \text{ or } \left( \pi=\pi',\ \alpha=\alpha',\ \bb' \ne \bb \right).
  \]
  Indeed, if this is true, one can find a dual basis by inverting a unitriangular matrix.

  Fix a basis element $x^\alpha \bb \pi$.  By Lemma~\ref{lem:Sn-commutation-mod-lower-terms}, we have
  \[
    \pi^{-1} x^\alpha \bb \pi \in \prescript{\pi^{-1}}{}{\left( x^\alpha \bb \right)} + \sum_{\pi' \ne 1} P_n(F) \pi'.
  \]
  Hence, without loss of generality, we may assume that $\pi = 1$.
  \details{
    Note that $\sigma < \pi^{-1}$ implies $\sigma^{-1} < \pi$.  Thus, if $\pi' \not \le \pi$, we have $\sigma \pi' \ne 1$ for all $\sigma < \pi^{-1}$.  Hence, by Lemma~\ref{lem:Sn-commutation-mod-lower-terms}, we have
    \[
      \tr_\bC \left( \pi^{-1} x^{\alpha'} \bb' \pi' \right) = 0
    \]
    when $\pi' \not \le \pi$.
  }
  Now, let
  \[
    \bsg = \left( \psi^{-\alpha_1} \otimes \psi^{-\alpha_2} \otimes \dotsb \otimes \psi^{-\alpha_n} \right) (\bb^\vee), \quad
    \beta = (d-1-\alpha_1,\dotsc,d-1-\alpha_n).
  \]
  Then we have
  \[
    \tr_\bC \left( x^\beta \bsg x^\alpha \bb \right)
    = \tr_\bC \left( x_1^{d-1} \dotsm x_n^{d-1} \bb^\vee \bb \right)
    = 1
  \]
  It also follows from Lemma~\ref{lem:chi_i-leading} that $\tr_\bC \left( x^\beta \bsg x^{\alpha'} \bb' \right) = 0$ when $\alpha' < \alpha$ or when $\alpha' = \alpha$ and $\bb' \ne \bb$.  This completes the proof that $\tr_\bC$ satisfies the defining property of a trace map.

  That the Nakayama auotomorphism is the given map on $F^{\otimes n}$ and $S_n$ is verified via a straightforward direct computation using Theorem~\ref{theo:basis-cyclotomic}.
  \details{
    Let $\alpha \in \N^n$ with $\alpha_1,\dotsc,\alpha_n < d$, $\bb = b_1 \otimes \dotsb \otimes b_n \in B^{\otimes n}$, $\pi \in S_n$.  Fix $f \in F$ and $1 \le i \le n$.  If $\pi \ne 1$ or $\alpha_k \ne d-1$ for some $1 \le k \le n$, we have
    \[
      \tr^\bC(f_i x^\alpha \bb \pi) = 0 = \tr^\bC(x^\alpha \bb \pi f_i).
    \]
    Now suppose $\alpha = (d-1,\dotsc,d-1)$.  Then
    \begin{align*}
      \tr_\bC(f_i x^\alpha \bb)
      &= \tr_\bC \left( x^\alpha \psi^{d-1}(f)_i \bb \right) \\
      &= (-1)^{(\bar b_1 + \dotsb + \bar b_{i-1})\bar f} \tr(b_1) \dotsm \tr(b_{i-1}) \tr \left( \psi^{d-1}(f) b_i \right) \tr(b_{i+1}) \dotsm \tr(b_n) \\
      &= (-1)^{(\bar b_1 + \dotsb + b_i)\bar f} \tr(b_1) \dotsm \tr(b_{i-1}) \tr \left( b_i \psi^d(f) \right) \tr(b_{i+1}) \dotsm \tr(b_n) \\
      &= (-1)^{\bar \bb \bar f} \tr_\bC \left( x^\alpha \bb \psi^d(f_i) \right).
    \end{align*}
    This proves that the Nakayama automorphism of $\cA_n^\bC(F)$ maps $f_i$ to $\psi^d(f)_i$.  It follows that it maps $\bsf \in F^{\otimes n}$ to $(\psi^d)^{\otimes n}(\bsf)$.

    Since $\tr^\bC$ involves projection onto the highest polynomial degree monomial $x_1^{d-1} \dotsm x_n^{d-1}$ in the $x_i$ with respect to the basis of Theorem~\ref{theo:basis-cyclotomic}, it follows from Lemma~\ref{lem:Sn-commutation-mod-lower-terms} that
    \begin{multline*}
      \tr_\bC \left( s_i x^\alpha \bb \pi \right)
      = \tr_\bC \left( x^\alpha (\prescript{s_i}{}{\bb}) s_i \pi \right)
      = \delta_{s_i \pi, 1} \delta_{\alpha,(d-1,\dotsc,d-1)} \tr^{\otimes n} \left( \prescript{s_i}{}{\bb} \right)
      \\
      = \delta_{\pi s_i, 1} \delta_{\alpha,(d-1,\dotsc,d-1)} \tr^{\otimes n} (\bb)
      = \tr_\bC \left( x^\alpha \bb \pi s_i \right).
    \end{multline*}
    This proves that the Nakayama automorphism of $\cA_n^\bC(F)$ maps $s_i$ to $s_i$.  Since it is an automorphism, it therefore maps $\pi$ to $\pi$ for all $\pi \in S_n$.
  }

  Now suppose that the Nakayama automorphism maps $x_i$ to $x_i$ for some $1 \le i \le n-1$.  Then, by \eqref{rel:sx-commutation}, it maps $x_{i+1}$ to
  \[
    s_i x_i s_i + \left(\psi^d\right)^{\otimes n} (t_{i,i+1})
    = s_i x_i s_i + t_{i,i+1}
    = x_{i+1},
  \]
  where the second equality follows from the fact that the definition of $t_{i,i+1}$ is independent of the chosen basis.  Therefore, to complete the proof of the theorem, it suffices to show that the Nakayama automorphism associated to $\tr_\bC$ maps $x_1$ to $x_1$.  That is, it remains to show that
  \begin{equation} \label{eq:Nakayama-fix-x_1}
    \tr_\bC \left( x^\alpha \bsf \pi x_1 \right)
    = \tr_\bC \left( x_1 x^\alpha \bsf \pi \right)
  \end{equation}
  for all $\bsf \in F^{\otimes n}$, $\pi \in S_n$, and $\alpha \in \N^n$ with $\alpha_1,\dotsc,\alpha_n \le d-1$.

  First suppose $\pi(1) = 1$.  If $\alpha_1 < d-1$, then
  \begin{multline*}
    \tr_\bC \left( x^\alpha \bsf \pi x_1 \right)
    = \tr_\bC \left( x_1 x^\alpha \psi_1(\bsf) \pi \right)
    = \delta_{\alpha_1,d-2} \delta_{\alpha_2,d-1} \dotsm \delta_{\alpha_n,d-1} \delta_{\pi,1} \tr^{\otimes n}(\psi_1(\bsf))
    \\
    = \delta_{\alpha_1,d-2} \delta_{\alpha_2,d-1} \dotsm \delta_{\alpha_n,d-1} \delta_{\pi,1} \tr^{\otimes n}(\bsf)
    = \tr_\bC \left( x_1 x^\alpha \bsf \pi \right).
  \end{multline*}
  Now assume $\alpha_1=d-1$.  It follows from the definition of $\chi_\bC$ that we can write
  \begin{equation} \label{eq:x_1^d-expansion}
    x_1^d = \sum_{i=0}^{d-1} \bsf_{(i)} x_1^i,\quad \bsf_{(i)} \in F_\psi^{\otimes n},\ 0 \le i \le d-1.
  \end{equation}
  Then, if $\alpha' = (0,\alpha_2,\alpha_3,\dotsc,\alpha_n)$, we have
  \begin{align*}
    \tr_\bC \left( x^\alpha \bsf \pi x_1 \right)
    &= \tr_\bC \left( x_1 x^\alpha \psi_1(\bsf) \pi \right) \\
    &= \sum_{i=0}^{d-1} \tr_\bC \left( x_1^i x^{\alpha'} \bsf_{(i)} \psi_1(\bsf) \pi \right) \\
    &= \delta_{\alpha_2,d-1} \dotsm \delta_{\alpha_n,d-1} \delta_{\pi,1} \tr^{\otimes n} \left( \bsf_{(d-1)} \psi_1(\bsf) \right) \\
    &= \delta_{\alpha_2,d-1} \dotsm \delta_{\alpha_n,d-1} \delta_{\pi,1} \tr^{\otimes n} \left( \psi_1( \bsf_{(d-1)}\bsf) \right) & \text{(since $\bsf_{(d-1)} \in F_\psi^{\otimes n}$)} \\
    &= \delta_{\alpha_2,d-1} \dotsm \delta_{\alpha_n,d-1} \delta_{\pi,1} \tr^{\otimes n} \left( \bsf_{(d-1)}\bsf \right) \\
    &= \sum_{i=0}^{d-1} \tr_\bC \left( x_1^i x^{\alpha'} \bsf_{(i)} \bsf \pi \right) \\
    &= \tr_\bC \left( x_1 x^\alpha \bsf \pi \right).
  \end{align*}
  This proves \eqref{eq:Nakayama-fix-x_1} in the case that $\pi (1) = 1$.

  Next we consider the case where $\pi = s_1 \pi'$ for some $\pi' \in S_n$ with $\pi' (1) = 1$.
  Using \eqref{eq:x_1^d-expansion} and the fact that $s_1 \pi' \ne 1$, we see that
  \[
    \tr_\bC \left( x_1 x^\alpha \bsf s_1 \pi' \right)
    = 0.
  \]
  On the other hand,
  \begin{equation} \label{eq:trace-s_1pi-right-x_1}
    \tr_\bC \left( x^\alpha \bsf s_1 \pi' x_1 \right)
    = \tr_\bC \left( x^\alpha \bsf s_1 x_1 \pi' \right)
    \stackrel{\eqref{rel:sx-commutation}}{=} \tr_\bC \left( x^\alpha \bsf x_2 s_1 \pi' \right) - \tr_\bC \left( x^\alpha \bsf t_{1,2} \pi' \right).
  \end{equation}
  This is clearly equal to zero if $\alpha \ne (d-1,\dotsc,d-1)$, so we now assume $\alpha = (d-1,\dotsc,d-1)$.  Now,
  \begin{align*}
    x_2^d
    \ \ &\stackrel{\mathclap{\eqref{eq:sx^k-commutation1}}}{=}\ \ s_1 x_1^d s_1 + t_{1,2}^{(d)} s_1 \\
    &= \sum_{i=0}^{d-1} s_1 \bsf_{(i)} x_1^i s_1 + t_{1,2}^{(d)} s_1 \\
    &= \sum_{i=0}^{d-1} \prescript{s_1}{}{\bsf_{(i)}} s_1 x_1^i s_1 + t_{1,2}^{(d)} s_1 \\
    &\stackrel{\mathclap{\eqref{eq:sx^k-commutation1}}}{=}\ \ \sum_{i=0}^{d-1} \prescript{s_1}{}{\bsf_{(i)}} \left( x_2^i - t_{1,2}^{(i)} s_1 \right) + t_{1,2}^{(d)} s_1.
  \end{align*}
  Thus, if $\beta = (d-1,0,d-1,\dotsc,d-1)$, we have
  \begin{align*}
    \tr_\bC \left( x^\alpha \bsf x_2 s_1 \pi' \right)
    &= \tr_\bC \left( x^\alpha x_2 \psi_2(\bsf) s_1 \pi' \right) \\
    &= \sum_{i=0}^{d-1} \tr_\bC \left( x^{\beta} \left( \prescript{s_1}{}{\bsf_{(i)}} \right) \left( x_2^i - t_{1,2}^{(i)} s_1 \right) \psi_2(\bsf) s_1 \pi' \right) + \tr_\bC \left( x^{\beta} t_{1,2}^{(d)} s_1 \psi_2(\bsf) s_1 \pi' \right) \\
    &= \tr_\bC \left( x^\alpha \prescript{s_1}{}{\bsf_{(d-1)}} \psi_2(\bsf) s_1 \pi' \right) + \tr_\bC \left( x^{\beta} t_{1,2}^{(d)} s_1 \psi_2(\bsf) s_1 \pi' \right) \\
    &= \tr_\bC \left( x^\alpha t_{1,2} \prescript{s_1}{}{\psi_2(\bsf)} \pi' \right) \\
    &\stackrel{\mathclap{\eqref{eq:Ft-commutation}}}{=}\ \ \tr_\bC \left( x^\alpha \bsf t_{1,2} \pi' \right),
  \end{align*}
  where the third equality follows from consideration of the powers of $x_2$ and the fact that the highest power of $x_2$ appearing in $t_{1,2}^{(k)}$ is $x_2^{k-1}$, and the fourth equality follows from the fact that $s_1 \pi' \ne 1$ and $t_{1,2}^{(d)} = x_2^{d-1} t_{1,2}$ up to lower order terms in $x_2$. Combined with \eqref{eq:trace-s_1pi-right-x_1}, this proves that
  \begin{equation} \label{eq:trace-s_1pi-right-x_1-zero}
    \tr_\bC \left( x^\alpha \bsf s_1 \pi' x_1 \right) = 0 \quad \text{when } \pi'(1) = 1.
  \end{equation}
  This completes the proof of \eqref{eq:Nakayama-fix-x_1} in the case that $\pi = s_1 \pi'$, where $\pi' (1) =1$.

  Now consider the general situation where $\pi(1) \ne 1$.  Then we can write $\pi = \pi_1 s_1 \pi_2$, where $\pi_1, \pi_2 \in S_n$ satisfy $\pi_1(1)=\pi_2(1)=1$.
  \details{
    Given $\pi \in S_n$ with $\pi(1) \ne 1$, let $\pi_1 = s_{\pi(1),2}$ (so $\pi_1(1)=1$).  Then $\pi_2 := s_1 \pi_1 \pi$ fixes $1$, and we have $\pi = \pi_1 s_1 \pi_2$.
  }
  Using \eqref{eq:x_1^d-expansion} and the fact that $\pi_1 s_1 \pi_2 \ne 1$, we see that
  \[
    \tr_\bC \left( x_1 x^\alpha \bsf \pi_1 s_1 \pi_2 \right)
    = 0.
  \]
  On the other hand,
  \begin{align*}
    \tr_\bC \left( x^\alpha \bsf \pi_1 s_1 \pi_2 x_1 \right)
    &= \tr_\bC \left( x^\alpha \pi_1 \prescript{\pi_1}{}{\bsf} s_1 \pi_2 x_1 \right) \\
    &= \tr_\bC \left( \pi_1 x^{\pi^{-1}(\alpha)} \prescript{\pi_1}{}{\bsf} s_1 \pi_2 x_1 \right) + \sum_{\sigma < \pi_1} \tr_\bC \left( \sigma p_\sigma \prescript{\pi_1}{}{\bsf} s_1 \pi_2 x_1 \right) & \text{(by Lemma~\ref{lem:Sn-commutation-mod-lower-terms})} \\
    &= \tr_\bC \left( x^{\pi^{-1}(\alpha)} \prescript{\pi_1}{}{\bsf} s_1 \pi_2 x_1 \pi_1 \right) + \sum_{\sigma < \pi_1} \tr_\bC \left( p_\sigma \prescript{\pi_1}{}{\bsf} s_1 \pi_2 x_1 \sigma \right) \\
    &= \tr_\bC \left( x^{\pi^{-1}(\alpha)} \prescript{\pi_1}{}{\bsf} s_1 \pi_2 \pi_1 x_1 \right) + \sum_{\sigma < \pi_1} \tr_\bC \left( p_\sigma \prescript{\pi_1}{}{\bsf} s_1 \pi_2 \sigma x_1 \right) \\
    &\stackrel{\mathclap{\eqref{eq:trace-s_1pi-right-x_1-zero}}}{=}\ \ 0,
  \end{align*}
  where the third equality follows from the fact that the Nakayama automorphism associated to $\tr_\bC$ fixes elements of $S_n$, the fourth equality follows from the fact that $\sigma < \pi_1$ implies $\sigma(1)=1$, and the fifth equality follows from taking $\pi'$ in \eqref{eq:trace-s_1pi-right-x_1-zero} equal to $\pi_2 \pi_1$ and $\pi_2 \sigma$.  This completes the proof of \eqref{eq:Nakayama-fix-x_1} in the remaining case that $\pi(1) \ne 1$.
\end{proof}

When $F = \kk$, Theorem~\ref{theo:cyclotomic-Frobenius-algebra} recovers the result that degenerate cyclotomic Hecke algebras are symmetric algebras, a fact that follows from \cite[Cor.~6.18]{HM10} together with \cite[Th.~1.1]{BK09}.  However, the argument given above is more direct.  In the case where $F$ is the group algebra of a finite cyclic group, we recover \cite[Th.~2.4]{Cui16}.   In other cases, the result seems to be new.  Even in the $F = \Cl$ case, when $\cA_n(\Cl)$ is the affine Sergeev algebra, while it was known that $\cA_n(\Cl)$ is a Frobenius algebra (see, e.g., \cite[Cor.~15.6.4]{Kle05}), the precise form of the Nakayama automorphism does not seem to have appeared in the literature.  In addition, Conjecture~3.22 of \cite{KM15} can be reformulated as the assertion that $\cA_n(F)$ is a symmetric algebra when $F$ is a symmetric algebra concentrated in even parity.

\begin{cor} \label{cor:KM3.22}
  Conjecture 3.22 of \cite{KM15} holds.
\end{cor}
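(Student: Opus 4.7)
The plan is to derive this corollary as an immediate specialization of Theorem~\ref{theo:cyclotomic-Frobenius-algebra}, in exact analogy with how Corollary~\ref{cor:KM3.21} was derived from Theorem~\ref{theo:basis-cyclotomic}. The hypothesis of Kleshchev--Muth's Conjecture 3.22 is that $F$ is a symmetric superalgebra concentrated in even parity; under the dictionary between their setup and ours, symmetric means precisely that the Nakayama automorphism $\psi$ of $F$ is the identity. I will first observe that, since all of Kleshchev--Muth's cyclotomic parameters are assumed to live in $(Z(F)^{\otimes n})^{S_n}$, their cyclotomic quotient coincides with $\cA_n^\bC(F)$ for a suitable choice of $\bC$ with entries in $\bF_1^{(k)}$ (as was already used in the proof of Corollary~\ref{cor:KM3.21}). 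Thus the assertion of Conjecture 3.22 is translated into the statement that $\cA_n^\bC(F)$ is a symmetric algebra whenever $\psi = \id$.

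With this translation in hand, the rest is an immediate read-off from Theorem~\ref{theo:cyclotomic-Frobenius-algebra}. That theorem identifies the Nakayama automorphism of $\cA_n^\bC(F)$ explicitly: it fixes each $x_i$ and each $\pi \in S_n$, and acts on $F^{\otimes n}$ by $(\psi^{d_\bC})^{\otimes n}$. Specializing to $\psi = \id$ gives $(\psi^{d_\bC})^{\otimes n} = \id$, so the Nakayama automorphism of $\cA_n^\bC(F)$ is the identity. An $\N$-graded Frobenius superalgebra whose Nakayama automorphism is the identity is by definition symmetric, which is exactly the conclusion of Conjecture 3.22.

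There is no serious obstacle here: the entire content of the corollary is already encoded in the structural statement of Theorem~\ref{theo:cyclotomic-Frobenius-algebra}, whose second conclusion was formulated with precisely this application in mind. The only care required is to verify the dictionary between the hypotheses of \cite[Conj.~3.22]{KM15} and the hypotheses $\psi = \id$ and $\bC \in (Z(F)^{\otimes n})^{S_n}$ of our setup, which is exactly the same matching used in the proof of Corollary~\ref{cor:KM3.21}. Accordingly, the proof can be presented as a single line invoking Theorem~\ref{theo:cyclotomic-Frobenius-algebra} in the symmetric case.
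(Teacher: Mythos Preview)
Your proposal is correct and follows essentially the same approach as the paper: both derive the corollary as an immediate specialization of Theorem~\ref{theo:cyclotomic-Frobenius-algebra} to the case where $F$ is purely even with $\psi = \id$ and $\bC \in (Z(F)^{\otimes n})^{S_n}$. Your slightly more detailed unpacking of why the Nakayama automorphism becomes the identity is a reasonable elaboration, but the underlying argument is identical.
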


\begin{proof}
  This follows immediately from the special case of Theorem~\ref{theo:cyclotomic-Frobenius-algebra} where $F$ is purely even and symmetric (i.e.\ $\psi = \id$), and $\bC \in (Z(F)^{\otimes n})^{S_n}$.
\end{proof}

\subsection{Cyclotomic Mackey Theorem}

For the remainder of this section, we assume
\begin{equation} \label{eq:c-in-first-factor}
  \bc^{(k,j)} \in F_\psi^{(k)} \quad \text{for all } 1 \le k \le \theta,\ 1 \le j \le e_k,
\end{equation}
where we identify $F$ with the first factor of $F \otimes \kk^{\otimes (n-1)} \subseteq F^{\otimes n}$ for each $n \ge 1$.  Hence $\cA_n^\bC(F)$ is defined for all $n \in \N$.

Theorem~\ref{theo:basis-cyclotomic} implies that the subalgebra of $\cA_{n+1}^\bC(F)$ generated by $x_1,\dotsc,x_n$, $F^{\otimes n} \otimes 1$, and $S_n$ is isomorphic to $\cA_n^\bC(F)$.  Thus we have induction and restriction functors $\prescript{\bC}{}{\Ind_n^{n+1}}$ and $\prescript{\bC}{}{\Res^{n+1}_n}$.\label{ind-res-cyclotomic-def}

\begin{prop} \phantomsection \label{prop:cyclotomic-basis}
  Recall that $d = d_\bC$ is the level of $\bC$ (see \eqref{eq:d_C-def}).
  \begin{enumerate}
    \item \label{prop-item:cyclotomic-right-module-basis} $\cA_{n+1}^\bC(F)$ is a free right $\cA_n^\bC(F)$-module with basis
      \[
        \{x_j^a b_j s_j \dotsm s_n \mid 0 \le a < d,\ b \in B,\ 1 \le j \le n+1\}.
      \]

    \item \label{prop-item:cyclotomic-bimodule-decomp} We have a decomposition of $(\cA_n^\bC(F),\cA_n^\bC(F))$-bimodules
      \[
        \cA_{n+1}^\bC(F) = \cA_n^\bC(F) s_n \cA_n^\bC(F) \oplus \bigoplus_{0 \le a < d,\ b \in B} x_{n+1}^a b_{n+1} \cA_n^\bC(F).
      \]

    \item \label{prop-item:cyclotomic-bimodule-isoms} Suppose $0 \le a < d$ and $f \in F$.  Then we have isomorphisms of $(\cA_n^\bC(F), \cA_n^\bC(F))$-bimodules
      \begin{gather*}
        \cA_n^\bC(F) s_n \cA_n^\bC(F) \simeq \cA_n^\bC(F) \otimes_{\cA_{n-1}^\bC(F)} \cA_n^\bC(F),\\
        x_{n+1}^a f_{n+1} \cA_n^\bC(F) \simeq \Pi^{\bar f} \cA_n^\bC(F).
      \end{gather*}
  \end{enumerate}
\end{prop}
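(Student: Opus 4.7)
I would prove~\ref{prop-item:cyclotomic-right-module-basis} first and then derive~\ref{prop-item:cyclotomic-bimodule-decomp} and~\ref{prop-item:cyclotomic-bimodule-isoms} from it.

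For~\ref{prop-item:cyclotomic-right-module-basis}, my plan is to combine Theorem~\ref{theo:basis-cyclotomic} with the coset decomposition
\[
  S_{n+1} = \bigsqcup_{j=1}^{n+1} (s_j s_{j+1} \dotsm s_n) S_n,
\]
in which $s_j s_{j+1} \dotsm s_n$ (the empty product when $j=n+1$) is the minimal length representative sending $n+1$ to $j$. Every basis element $x^\alpha \bb \pi$ of $\cA_{n+1}^\bC(F)$ from Theorem~\ref{theo:basis-cyclotomic} can thus be rewritten as $x^\alpha \bb (s_j \dotsm s_n) \sigma$ for a unique $j \in \{1,\ldots,n+1\}$ and $\sigma \in S_n$. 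I would then show, by downward induction on $j$, that each such element lies in the right $\cA_n^\bC(F)$-span of the proposed set. For the base case $j = n+1$ one collects the slot-$(n+1)$ factors of $x^\alpha \bb$ to the left using \eqref{rel:xF-commutation} and \eqref{rel:SF-commutation}, which immediately gives an element of $x_{n+1}^{\alpha_{n+1}} b^{(n+1)}_{n+1} \cA_n^\bC(F)$. For the inductive step $j \le n$, push each $x_k$ and $b^{(k)}_k$ with $k \ne j$ through $s_j \dotsm s_n$ using \eqref{rel:xF-commutation}--\eqref{rel:sx-commutation}, \eqref{rel:SF-commutation}, and \eqref{eq:tij-conjugation}; this produces a principal term $x_j^{\alpha_j} b^{(j)}_j (s_j \dotsm s_n) y$ with $y \in \cA_n^\bC(F)$, together with error terms in which the $s$-string has been strictly shortened (because the only non-commuting interactions, governed by \eqref{rel:sx-commutation} and \eqref{rel:sx-commutation2}, absorb an $s_i$ into a $t$-factor). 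The error terms therefore correspond to strictly larger $j'$ and are handled by induction. A $\kk$-rank count against Theorem~\ref{theo:basis-cyclotomic}---the proposed set has $d_\bC |B| (n+1)$ elements and $\dim_\kk \cA_{n+1}^\bC(F)$ equals $d_\bC |B| (n+1) \cdot \dim_\kk \cA_n^\bC(F)$---then promotes spanning to freeness.

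For~\ref{prop-item:cyclotomic-bimodule-decomp}, write $N_j$ for the right $\cA_n^\bC(F)$-summand indexed by $j$ in~\ref{prop-item:cyclotomic-right-module-basis}, so that $N_{n+1}$ is precisely the second summand of the claimed decomposition. Since elements of $\cA_{n-1}^\bC(F)$ touch only the first $n-1$ slots, they commute with $s_n$; expanding $\cA_n^\bC(F)$ using~\ref{prop-item:cyclotomic-right-module-basis} at level $n-1$ and then multiplying on the right by $s_n \cA_n^\bC(F)$ immediately yields $\cA_n^\bC(F) s_n \cA_n^\bC(F) = \bigoplus_{k=1}^{n} N_k$, which is the direct complement of $N_{n+1}$ in $\cA_{n+1}^\bC(F)$ by~\ref{prop-item:cyclotomic-right-module-basis}.

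For~\ref{prop-item:cyclotomic-bimodule-isoms}, the first isomorphism is the natural bimodule map $u \otimes v \mapsto u s_n v$; it is well-defined by the commutation used above, surjects onto $\cA_n^\bC(F) s_n \cA_n^\bC(F)$, and identifies the evident right $\cA_n^\bC(F)$-basis of the source (coming from~\ref{prop-item:cyclotomic-right-module-basis} at level $n-1$) with the basis of $\cA_n^\bC(F) s_n \cA_n^\bC(F)$ described in~\ref{prop-item:cyclotomic-bimodule-decomp}. For the second isomorphism, a direct check using \eqref{rel:xF-commutation}, \eqref{rel:sx-triv-commutation}, and \eqref{rel:SF-commutation} shows that every homogeneous $w \in \cA_n^\bC(F)$ satisfies $w x_{n+1} = x_{n+1} w$ and $w f_{n+1} = (-1)^{\bar w \bar f} f_{n+1} w$ for any homogeneous $f \in F$. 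Hence $y \mapsto x_{n+1}^a f_{n+1} y$ is a bimodule map $\Pi^{\bar f} \cA_n^\bC(F) \to x_{n+1}^a f_{n+1} \cA_n^\bC(F)$, surjective by construction and injective because, upon writing $f$ in terms of $B$, the distinct summands $x_{n+1}^a b_{n+1} \cA_n^\bC(F)$ for $b \in B$ are independent free right $\cA_n^\bC(F)$-modules by~\ref{prop-item:cyclotomic-right-module-basis}. The main obstacle will be the bookkeeping in~\ref{prop-item:cyclotomic-right-module-basis}: ensuring that after the various applications of \eqref{rel:sx-commutation}, \eqref{rel:sx-commutation2}, and \eqref{eq:tij-conjugation} the error $s$-string is genuinely shorter, so that the downward induction terminates.
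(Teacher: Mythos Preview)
Your proposal is correct and follows essentially the same approach as the paper. The paper's proof (which it describes as ``almost identical'' to the corresponding arguments in \cite[Lem.~7.6.1, Lem.~15.5.1]{Kle05}) uses exactly your dimension-count strategy for part~\ref{prop-item:cyclotomic-right-module-basis}: show spanning via the commutation relations, then observe that the number of proposed generators times $\dim_\kk \cA_n^\bC(F)$ equals $\dim_\kk \cA_{n+1}^\bC(F)$, forcing freeness. Your treatments of parts~\ref{prop-item:cyclotomic-bimodule-decomp} and~\ref{prop-item:cyclotomic-bimodule-isoms} (identifying $\cA_n^\bC(F) s_n \cA_n^\bC(F)$ with $\bigoplus_{k=1}^n N_k$ via part~\ref{prop-item:cyclotomic-right-module-basis} at level $n-1$, and checking that $u\otimes v \mapsto u s_n v$ matches the two right-module bases) are likewise the paper's arguments. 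The only real difference is that you spell out the spanning step as a downward induction on $j$, whereas the paper simply attributes it to relation~\eqref{eq:sx^k-commutation1}; your more detailed version is fine, and the bookkeeping concern you flag is resolved by noting that the coset of $\pi'$ in $S_{n+1}/S_n$ is determined by $\pi'(n+1)$, which strictly increases whenever an $s_i$ is absorbed into a $t$-term.
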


\begin{proof}
  The proof is almost identical to those of \cite[Lem.~7.6.1]{Kle05} and \cite[Lem.~15.5.1]{Kle05} and so will be omitted.
  \details{
    \begin{asparaenum}
      \item It follows from Theorem~\ref{theo:basis-cyclotomic} that the $\kk$-dimension of $\cA_n^\bC(F)$ is $n!(d\dim F)^n$.  The number of elements in the proposed basis is $(n+1) d (\dim F)$.  Thus, the dimension of the right $\cA_n^\bC(F)$-submodule of $\cA_{n+1}^\bC(F)$ generated by this set is $\le \dim \cA_{n+1}^\bC(F)$, with equality if and only the elements are independent.  Since it follows from \eqref{eq:sx^k-commutation1} that the given elements generate $\cA_{n+1}^\bC(F)$ as a right $\cA_n^\bC(F)$-module, we are done.

      \item By part \ref{prop-item:cyclotomic-right-module-basis} and \eqref{eq:sx^k-commutation1},
        \[
          \{x_j^a b_j s_j \dotsm s_n \mid 0 \le a < d,\ b \in B,\ 1 \le j \le n\}
        \]
        is a basis of $\cA_n^\bC(F) s_n \cA_n^\bC(F)$ as a free right $\cA_n^\bC(F)$-module.  The result follows.

      \item The second isomorphism follows immediately from part \ref{prop-item:cyclotomic-right-module-basis}.  (We can choose the basis $B$ of $F$ to contain the element $f$.)  To prove the first isomorphism, note that the map
        \[
          \cA_n^\bC(F) \times \cA_n^\bC(F) \to \cA_n^\bC(F) s_n \cA_n^\bC(F),\quad (u,v) \mapsto u s_n v,
        \]
        is $\cA_{n-1}^\bC(F)$-balanced, and hence induces an even homomorphism
        \[
          \Phi \colon \cA_n^\bC(F) \otimes_{\cA_{n-1}^\bC(F)} \cA_n^\bC(F) \to \cA_n^\bC(F) s_n \cA_n^\bC(F)
        \]
        of $(\cA_n^\bC(F), \cA_n^\bC(F))$-bimodules.  By part \ref{prop-item:cyclotomic-right-module-basis}, $\cA_n^\bC(F) \otimes_{\cA_{n-1}^\bC(F)} \cA_n^\bC(F)$ is a free right $\cA_n^\bC(F)$-module with basis
        \[
          \{x_j^a b_j s_j \dotsm s_{n-1} \otimes 1 \mid 0 \le a < d ,\ b \in B,\ 1 \le j \le n\}.
        \]
        Since $\Phi$ maps these elements to the basis for $\cA_n^\bC(F) s_n \cA_n^\bC(F)$ as a right $\cA_n^\bC(F)$-module given in part \ref{prop-item:cyclotomic-bimodule-decomp}, it follows that $\Phi$ is an isomorphism.
    \end{asparaenum}\
  }
\end{proof}

\begin{theo}[Cyclotomic Mackey Theorem] \label{theo:cyclotomic-Mackey}
  Recall that $d = d_\bC$ is the level of $\bC$ (see \eqref{eq:d_C-def}).  For $n \in \N_+$, we have a natural isomorphism of functors
  \[
    \prescript{\bC}{}{\Res^{n+1}_n} \prescript{\bC}{}{\Ind_n^{n+1}}
    \cong \id^{\oplus d \dim F_\even} \oplus \Pi^{\oplus d \dim F_\odd} \oplus \prescript{\bC}{}{\Ind_{n-1}^n} \prescript{\bC}{}{\Res^n_{n-1}},
  \]
  where $F_\even$ and $F_\odd$ are the even and odd parts of $F$, respectively.
\end{theo}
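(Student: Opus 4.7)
The plan is to realize the composition $\prescript{\bC}{}{\Res^{n+1}_n} \prescript{\bC}{}{\Ind_n^{n+1}}$ as tensoring with $\cA_{n+1}^\bC(F)$, viewed as an $(\cA_n^\bC(F), \cA_n^\bC(F))$-bimodule, and then to apply the bimodule decomposition provided by Proposition~\ref{prop:cyclotomic-basis}\ref{prop-item:cyclotomic-bimodule-decomp}. Since tensor product commutes with direct sums, we can split the functor into one summand coming from the ``$s_n$-piece'' and the remaining summands indexed by pairs $(a, b)$ with $0 \le a < d$ and $b \in B$.

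First, I would invoke the isomorphism
\[
  \cA_n^\bC(F) s_n \cA_n^\bC(F) \simeq \cA_n^\bC(F) \otimes_{\cA_{n-1}^\bC(F)} \cA_n^\bC(F)
\]
of $(\cA_n^\bC(F), \cA_n^\bC(F))$-bimodules from Proposition~\ref{prop:cyclotomic-basis}\ref{prop-item:cyclotomic-bimodule-isoms}. Tensoring this piece with a module $M$ over $\cA_n^\bC(F)$ yields $\cA_n^\bC(F) \otimes_{\cA_{n-1}^\bC(F)} M$, which, by associativity of the tensor product and the definitions of the restriction and induction functors via the inclusions $\cA_{n-1}^\bC(F) \subseteq \cA_n^\bC(F)$, is naturally isomorphic to $\prescript{\bC}{}{\Ind_{n-1}^n} \prescript{\bC}{}{\Res^n_{n-1}} M$. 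Naturality in $M$ is automatic since the isomorphism at the bimodule level is independent of $M$.

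Next, for each of the remaining summands, the second isomorphism in Proposition~\ref{prop:cyclotomic-basis}\ref{prop-item:cyclotomic-bimodule-isoms} gives $x_{n+1}^a b_{n+1} \cA_n^\bC(F) \simeq \Pi^{\bar b} \cA_n^\bC(F)$ as $(\cA_n^\bC(F), \cA_n^\bC(F))$-bimodules for each $0 \le a < d$ and $b \in B$. Tensoring with $M$ gives $\Pi^{\bar b} M$. Partitioning $B$ into its even and odd parts, the total contribution over all such $(a, b)$ is $\id^{\oplus d \dim F_\even} \oplus \Pi^{\oplus d \dim F_\odd}$, since we have $d$ choices of $a$, and for each parity the number of basis elements equals the corresponding graded dimension.

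Assembling the two contributions gives the stated decomposition, and the naturality in $M$ of the whole isomorphism follows from naturality of each summand. The main (minor) obstacle will be ensuring that the parity bookkeeping in Proposition~\ref{prop:cyclotomic-basis}\ref{prop-item:cyclotomic-bimodule-isoms} is correctly transferred through the tensor product, in particular that the parity shift $\Pi^{\bar f}$ on the bimodule level produces the expected parity shift on the module level; this is standard, since tensoring with $\Pi \cA_n^\bC(F)$ (with the usual sign conventions) is the parity shift functor on $\cA_n^\bC(F)\md$.
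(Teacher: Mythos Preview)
Your proposal is correct and follows essentially the same approach as the paper, which simply states that the theorem follows from Proposition~\ref{prop:cyclotomic-basis}. You have spelled out explicitly how the bimodule decomposition and isomorphisms of that proposition translate into the claimed functor isomorphism, including the parity bookkeeping, which is exactly what the paper leaves implicit.
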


\begin{proof}
  This follows from Proposition~\ref{prop:cyclotomic-basis}.
\end{proof}

When $F = \kk$ or $F = \Cl$, Theorem~\ref{theo:cyclotomic-Mackey} recovers the Cyclotomic Mackey Theorem for the degenerate cyclotomic Hecke algebras and cyclotomic Sergeev algebras, respectively (see \cite[Th.~7.6.2 and Th.~15.5.2]{Kle05}).

\subsection{Frobenius extension structure}

We continue to make the assumption \eqref{eq:c-in-first-factor}.  Let
\[
  F_{n+1} = 1^{\otimes n} \otimes F = \Span_\kk \{f_{n+1} \mid f \in F\} \subseteq F^{\otimes (n+1)}.
\]
By Proposition~\ref{prop:cyclotomic-basis}\ref{prop-item:cyclotomic-bimodule-decomp}, we have a decomposition of $(\cA_n^\bC(F),\cA_n^\bC(F))$-bimodules
\begin{equation} \label{eq:cyclotomic-trace-bimodule-decomp}
  \cA_{n+1}^\bC(F)
  = x_{n+1}^{d-1} F_{n+1} \cA_n^\bC(F) \oplus \bigoplus_{a=0}^{d-2} x_{n+1}^a F_{n+1} \cA_n^\bC(F) \oplus \cA_n^\bC(F) s_n \cA_n^\bC(F).
\end{equation}
Using Proposition~\ref{prop:cyclotomic-basis}\ref{prop-item:cyclotomic-bimodule-isoms} and the trace map of $F$, we have a homomorphism of $(\cA_n^\bC(F),\cA_n^\bC(F))$-bimodules
\begin{equation} \label{eq:cyclotomic-trace-isom}
  x_{n+1}^{d-1} F_{n+1} \cA_n^\bC(F) \to \cA_n^\bC(F),\quad
  x_{n+1}^{d-1} f_{n+1} y \mapsto \tr(f) y,\qquad f \in F,\ y \in \cA_n^\bC(F).
\end{equation}
Let
\begin{equation} \label{eq:cyclotomic-trace}
  \tr^\bC_{n+1} \colon \cA_{n+1}^\bC(F) \to \cA_n^\bC(F)
\end{equation}
be homomorphism of $(\cA_n^\bC(F),\cA_n^\bC(F))$-bimodules given by the projection onto the first summand in \eqref{eq:cyclotomic-trace-bimodule-decomp} followed by the homomorphism \eqref{eq:cyclotomic-trace-isom}.  Note that the degree of $\tr_{n+1}^\bC$ is $-d \delta$.  Since, in the current setup, the notation $\tr_\bC$ is ambiguous, let $\tr_\bC^n \colon \cA_n(F) \to \kk$ denote the trace map \eqref{eq:full-cyclotomic-trace}.  Then
\[
  \tr_\bC^{n+1}
  = \tr^\bC_1 \circ \tr^\bC_2 \circ \dotsb \circ \tr^\bC_{n+1}
  = \tr_\bC^{n} \circ \tr^\bC_{n+1}.
\]

\begin{theo} \label{theo:cyclotomic-Frobenius-extension}
  The cyclotomic quotient $\cA_{n+1}^\bC(F)$ is a Frobenius extension of $\cA_n^\bC(F)$ with trace map $\tr^\bC_{n+1}$.
\end{theo}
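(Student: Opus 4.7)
My plan to prove that $\cA_{n+1}^\bC(F)$ is a Frobenius extension of $\cA_n^\bC(F)$ with trace $\tr^\bC_{n+1}$ is to verify the two defining conditions: (a) $\cA_{n+1}^\bC(F)$ is finitely generated projective (in fact free) as a right $\cA_n^\bC(F)$-module, and (b) the $(\cA_n^\bC(F), \cA_n^\bC(F))$-bimodule map
\[
\Phi \colon \cA_{n+1}^\bC(F) \to \HOM_{\cA_n^\bC(F)}\bigl( \cA_{n+1}^\bC(F), \cA_n^\bC(F) \bigr), \qquad a \mapsto \bigl( b \mapsto \tr^\bC_{n+1}(ab) \bigr),
\]
is a bijection. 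Condition~(a) will follow immediately from Proposition~\ref{prop:cyclotomic-basis}\ref{prop-item:cyclotomic-right-module-basis}, which supplies an explicit free basis, and that $\Phi$ is a bimodule map is clear from the construction of $\tr^\bC_{n+1}$ via \eqref{eq:cyclotomic-trace-bimodule-decomp} and \eqref{eq:cyclotomic-trace-isom}.

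For condition~(b), the key input will be the trace compatibility $\tr_\bC^{n+1} = \tr_\bC^n \circ \tr^\bC_{n+1}$ noted just before the theorem, combined with Theorem~\ref{theo:cyclotomic-Frobenius-algebra} applied at levels $n$ and $n+1$. I would consider the commutative diagram
\[
\begin{CD}
\cA_{n+1}^\bC(F) @>\Phi>> \HOM_{\cA_n^\bC(F)}\bigl( \cA_{n+1}^\bC(F), \cA_n^\bC(F) \bigr) \\
@| @VV{(\tr_\bC^n)_*}V \\
\cA_{n+1}^\bC(F) @>\Psi>> \HOM_\kk\bigl( \cA_{n+1}^\bC(F), \kk \bigr)
\end{CD}
\]
where $\Psi(a)(b) = \tr_\bC^{n+1}(ab)$ and $(\tr_\bC^n)_*(\phi) = \tr_\bC^n \circ \phi$. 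Theorem~\ref{theo:cyclotomic-Frobenius-algebra} applied to $\cA_{n+1}^\bC(F)$ gives that $\Psi$ is a $\kk$-module isomorphism, so it will suffice to show $(\tr_\bC^n)_*$ is an isomorphism.

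To handle that last step, I would fix the right $\cA_n^\bC(F)$-basis $u_1,\dotsc,u_r$ of $\cA_{n+1}^\bC(F)$ from Proposition~\ref{prop:cyclotomic-basis}\ref{prop-item:cyclotomic-right-module-basis}. Evaluation at this basis identifies $\HOM_{\cA_n^\bC(F)}\bigl( \cA_{n+1}^\bC(F), \cA_n^\bC(F) \bigr)$ with $\bigl( \cA_n^\bC(F) \bigr)^r$ and $\HOM_\kk\bigl( \cA_{n+1}^\bC(F), \kk \bigr)$ with $\HOM_\kk\bigl( \cA_n^\bC(F), \kk \bigr)^r$; under these identifications $(\tr_\bC^n)_*$ becomes the direct sum of $r$ copies of the canonical map $\cA_n^\bC(F) \to \HOM_\kk\bigl( \cA_n^\bC(F), \kk \bigr)$ coming from the Frobenius structure of $\cA_n^\bC(F)$, which is a $\kk$-module isomorphism by Theorem~\ref{theo:cyclotomic-Frobenius-algebra}. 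The main obstacle is really bookkeeping: tracking super signs in the Hom identifications and verifying the trace compatibility, which itself follows directly from the definitions.
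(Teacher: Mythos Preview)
Your argument is correct and takes a somewhat different route from the paper. The paper invokes general results from \cite{PS16} (specifically \cite[Cor.~7.4, Cor.~3.6]{PS16}) which say that when both $\cA_n^\bC(F)$ and $\cA_{n+1}^\bC(F)$ are Frobenius $\kk$-algebras, the extension is automatically Frobenius with trace map
\[
  z \mapsto \sum_{b \in \cB} \tr_\bC^{n+1}(b^\vee z)\, b,
\]
where $\cB$ is a $\kk$-basis of $\cA_n^\bC(F)$ and $b^\vee$ its dual with respect to $\tr_\bC^n$. The paper then simply checks, using the compatibility $\tr_\bC^{n+1} = \tr_\bC^n \circ \tr_{n+1}^\bC$ and the expansion formula~\eqref{eq:f-in-basis}, that this abstractly-given trace coincides with $\tr_{n+1}^\bC$.

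Your approach instead verifies the Frobenius extension axioms directly: freeness from Proposition~\ref{prop:cyclotomic-basis}\ref{prop-item:cyclotomic-right-module-basis}, and bijectivity of $\Phi$ via the factorisation $\Psi = (\tr_\bC^n)_* \circ \Phi$, reducing to the $\kk$-linear Frobenius isomorphisms at both levels. This is more self-contained, as it does not require the external reference, and in effect reproves the relevant piece of \cite{PS16} in this concrete setting. The paper's approach is shorter once one is willing to cite those general results. Both proofs rest on exactly the same two ingredients: Theorem~\ref{theo:cyclotomic-Frobenius-algebra} at levels $n$ and $n+1$, and the trace compatibility identity.
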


\begin{proof}
  Let $\cB$ be a basis of $\cA_n(F)$.  By Theorem~\ref{theo:cyclotomic-Frobenius-algebra}, \cite[Cor.~7.4]{PS16}, and \cite[Cor.~3.6]{PS16}, $\cA_{n+1}(F)$ is a Frobenius extension of $\cA_n(F)$ with trace map
  \[
    z \mapsto \sum_{b \in \cB} \tr_\bC^{n+1}(b^\vee z) b.
  \]
  (Note that $b^\vee$ denotes the \emph{right} dual of $b$ in \cite{PS16}, whereas it denotes the \emph{left} dual in the current paper.)
  \details{
    Let $\psi_k$ denote the Nakayama automorphism of $\cA_k(F)$ described in Theorem~\ref{theo:cyclotomic-Frobenius-algebra}.  Then $\psi_n$ is the restriction of $\psi_{n+1}$ to $\cA_n(F)$.  It follows that
    \[
      \psi_{n+1} \colon \prescript{\psi_n}{\cA_n(F)}{\cA_{n+1}(F)}^{\psi_{n+1}}_{\cA_n(F)} \to {_{\cA_n(F)} \cA_{n+1}(F)}_{\cA_n(F)}
    \]
    (in the notation of \cite{PS16}) is a homomorphism of $(\cA_n(F),\cA_n(F))$-bimodules.  Composing with the trace map of \cite[Prop.~7.3]{PS16} gives a (nondegenerate) trace map
    \[
      \tr \colon \prescript{}{\cA_n(F)}{\cA_{n+1}(F)}_{\cA_n(F)} \to \cA_n(F),\quad
      z \mapsto \sum_{b \in \cB} \tr_\bC^{n+1}(\psi_n(b^\vee) \psi_{n+1}(z)) b
      = \sum_{b \in \cB} \tr_\bC^{n+1}(b^\vee z) b,
    \]
    where we have used the fact that $\tr_\bC^{n+1}$ is invariant under the action of $\psi_{n+1}$.
  }
  Now
  \[
    \sum_{b \in \cB} \tr_\bC^{n+1}(b^\vee z) b
    = \sum_{b \in \cB} \tr_\bC^n \left( \tr_{n+1}^\bC (b^\vee z) \right) b
    = \sum_{b \in \cB} \tr_\bC^n \left( b^\vee \tr_{n+1}^\bC (z) \right) b
    \stackrel{\eqref{eq:f-in-basis}}{=} \tr_{n+1}^\bC(z),
  \]
  completing the proof.
\end{proof}

\begin{cor} \label{cor:biadjoint}
  The exact functor $\prescript{\bC}{}{\Ind_n^{n+1}}$ is left adjoint to $\prescript{\bC}{}{\Res^{n+1}_n}$ and right adjoint to $\prescript{\bC}{}{\Res^{n+1}_n}$ up to degree shift.
\end{cor}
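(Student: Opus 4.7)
The plan is to deduce biadjointness directly from the Frobenius extension structure of Theorem~\ref{theo:cyclotomic-Frobenius-extension}. For the left adjoint half, the standard tensor--hom adjunction gives a natural even isomorphism
\[
  \HOM_{\cA_{n+1}^\bC(F)}\bigl(\prescript{\bC}{}{\Ind_n^{n+1}} M,\ N\bigr) \simeq \HOM_{\cA_n^\bC(F)}\bigl(M,\ \prescript{\bC}{}{\Res_n^{n+1}} N\bigr)
\]
with no degree shift needed; this is a purely formal consequence of the presentation $\prescript{\bC}{}{\Ind_n^{n+1}} = \cA_{n+1}^\bC(F) \otimes_{\cA_n^\bC(F)} (-)$. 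Exactness of $\prescript{\bC}{}{\Ind_n^{n+1}}$ follows from the freeness of $\cA_{n+1}^\bC(F)$ as a right $\cA_n^\bC(F)$-module, established in Proposition~\ref{prop:cyclotomic-basis}\ref{prop-item:cyclotomic-right-module-basis} (and is in any case automatic once biadjointness is proved).

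For the right adjoint, the right adjoint of $\prescript{\bC}{}{\Res^{n+1}_n}$ is on general grounds the coinduction functor $\HOM_{\cA_n^\bC(F)}(\cA_{n+1}^\bC(F), -)$, so the substance of the corollary is an identification of coinduction with induction up to a degree shift of $d_\bC\delta$. I would obtain this by using the trace map $\tr_{n+1}^\bC$ from Theorem~\ref{theo:cyclotomic-Frobenius-extension}: the Frobenius property is precisely the statement that $\tr_{n+1}^\bC$ induces an isomorphism of $(\cA_n^\bC(F), \cA_{n+1}^\bC(F))$-bimodules
\[
  \Phi \colon \cA_{n+1}^\bC(F) \xrightarrow{\simeq} \HOM_{\cA_n^\bC(F)}\bigl(\cA_{n+1}^\bC(F),\ \cA_n^\bC(F)\bigr),
\]
of degree $-d_\bC\delta$, since $\tr_{n+1}^\bC$ has degree $-d_\bC\delta$.

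Tensoring $\Phi$ with $M$ over $\cA_n^\bC(F)$, and using the natural isomorphism
\[
  \HOM_{\cA_n^\bC(F)}\bigl(\cA_{n+1}^\bC(F),\ \cA_n^\bC(F)\bigr) \otimes_{\cA_n^\bC(F)} M \xrightarrow{\simeq} \HOM_{\cA_n^\bC(F)}\bigl(\cA_{n+1}^\bC(F),\ M\bigr)
\]
(valid because $\cA_{n+1}^\bC(F)$ is finitely generated projective as a right $\cA_n^\bC(F)$-module, again by Proposition~\ref{prop:cyclotomic-basis}\ref{prop-item:cyclotomic-right-module-basis}), yields a natural isomorphism $\prescript{\bC}{}{\Ind_n^{n+1}}(M) \simeq \HOM_{\cA_n^\bC(F)}(\cA_{n+1}^\bC(F),\ M)$ of degree $-d_\bC\delta$, which is the desired right adjunction up to a shift by $d_\bC\delta$. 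I do not anticipate a significant obstacle: the only real point to check is that $\Phi$ is an honest bimodule isomorphism (rather than one twisted by the Nakayama automorphism of $\cA_n^\bC(F)$), and this is automatic from the fact that $\tr_{n+1}^\bC$ was constructed as an untwisted bimodule map in the proof of Theorem~\ref{theo:cyclotomic-Frobenius-extension}, so that no module twist intervenes between the two adjoints.
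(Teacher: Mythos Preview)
Your proposal is correct and follows essentially the same approach as the paper: both deduce the left adjunction from the standard tensor--hom adjunction and the right adjunction (up to shift) from the Frobenius extension structure of Theorem~\ref{theo:cyclotomic-Frobenius-extension}. The paper simply cites \cite[Th.~6.2]{PS16} for the latter step, whereas you unpack that reference by exhibiting the bimodule isomorphism $\Phi$ and tensoring; your remark that no twist intervenes corresponds exactly to the paper's parenthetical that the twistings $\alpha$ and $\beta$ of \cite{PS16} are trivial here. One small slip: $\Phi$ is a $(\cA_{n+1}^\bC(F),\cA_n^\bC(F))$-bimodule isomorphism, not $(\cA_n^\bC(F),\cA_{n+1}^\bC(F))$ as you wrote, but this does not affect the argument.
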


\begin{proof}
  Induction is always left adjoint to restriction.  That induction is right adjoint to restriction is a standard property of Frobenius extensions.  In the graded super setting considered in the current paper see, for example, \cite[Th.~6.2]{PS16}.  (Note that the twistings $\alpha$ and $\beta$ of \cite{PS16} are trivial in the setting of the current paper.)
\end{proof}

%
\section{Future directions\label{sec:future}}
%

As mentioned in the introduction, we expect that many of the results for and applications of degenerate affine Hecke algebras and their analogs have natural extensions to the setting of affine wreath product algebras.  We conclude this paper with a brief discussion of some such potential directions of future research.

\subsection{Double affine versions and $q$-deformations\label{subsec:q-deform}}

Degenerate affine Hecke algebras (Example~\ref{eg:daHa}), affine Sergeev algebras (Example~\ref{eg:affine-Sergeev}), and wreath Hecke algebras corresponding to cyclic groups (Example~\ref{eg:wreath-Hecke}) can be viewed as degenerations of affine Hecke algebras, affine Hecke--Clifford algebras, and affine Yokonuma--Hecke algebras, respectively.  It would be interesting to try to construct a $q$-deformation of affine wreath product algebras.  Similarly, it is natural to wonder if there exist natural double affine versions of wreath product algebras generalizing double affine Hecke algebras (also known as Cherednik algebras) and their various degenerations.  This should be related to the algebras introduced in \cite[\S4.2]{CG03}.

\subsection{Heisenberg categorification}

In \cite{Kho14}, Khovanov gave a conjectural categorification of the Heisenberg algebra based on a graphical category motivated by the representation theory of the symmetric group.  This procedure was generalized in \cite{RS17} (see also \cite[\S7]{RS15}), where the group algebra of the symmetric group was replaced by wreath product algebras.  Provided that the Frobenius algebra $F$ is not concentrated in degree zero, it was proved that the corresponding graphical categories do indeed categorify the associated lattice Heisenberg algebras.  On the other hand, Khovanov's construction was generalized in a different direction in \cite{MS17}, where group algebras of symmetric groups were replaced by degenerate cyclotomic Hecke algebras, yielding a conjectural categorification of higher level Heisenberg algebras.  We expect that the cyclotomic wreath product algebras introduced in Section~\ref{sec:cyclotomic} provide a framework for unifying these two generalizations.  In particular, one should be able to define a graphical category based on these cyclotomic quotients that specializes to the categories of \cite{MS17} when $F=\kk$ and to the categories of \cite{RS17} when the quotient is of level one.\footnote{Since the writing of the current paper, this category has been defined in \cite{Sav18}.}  More ambitiously, $q$-deformations as in \S\ref{subsec:q-deform} might allow one to also incorporate the $q$-deformed Heisenberg category of \cite{LS13}.  Considering trace decategorifications of such categories should lead to generalizations of the results of \cite{CLLS15,CLLSS16,LRS16}.

\subsection{Branching rules}

It would be interesting to investigate branching rules for affine wreath product algebras.  For the special cases of degenerate affine Hecke algebras and affine Sergeev algebras, we refer the reader to the exposition in \cite{Kle05}.  When $F$ is the group algebra of a finite group (Example~\ref{eg:wreath-Hecke}), branching rules were obtained in \cite{WW08}.  When the characteristic of the field $\kk$ does not divide the order of the group, the branching rules were identified with crystal graphs of integrable modules for quantum affine algebras (see \cite[\S5.5]{WW08}).  This assumption on the characteristic of $\kk$ implies that $F$ is semisimple, and we expect that the arguments of \cite{WW08} should generalize to the setting where $F$ is an arbitrary semisimple Frobenius algebra.  However, the situation where $F$ is not semisimple would likely be more involved.

\appendix
\section{Notation\label{app:notation}}

We let $\N$ denote the set of nonnegative integers, and let $\N_+$ denote the set of positive integers.  We assume that $\kk$ is a ring whose characteristic is not equal to two.  Any additional assumptions on $\kk$ are stated at the beginning of each section.  We also assume that the action of the Nakayama automorphism on $F$ is diagonalizable and that the characteristic of $\kk$ does not divide the order of the Nakayama automorphism.  We let $\kk^\times$ denote the group of invertible (under multiplication) elements of $\kk$. For the convenience of the reader, we list here some of the most important notation used throughout the paper, in order of appearance.

\medskip

\begin{center}
  \begin{longtable}{lll}
    \toprule
    Notation & Meaning & Definition \\
    \midrule \endfirsthead
    \toprule
    Notation & Meaning & Definition \\
    \midrule \endhead
    \bottomrule \endfoot
    \bottomrule \endlastfoot
    $\bar a$ & parity of $a$ & p.~\pageref{bar-def} \\
    $|a|$ & $\Z$-degree of $a$ & p.~\pageref{degree-notation} \\
    $A^\op$ & opposite algebra of $A$ & \eqref{eq:opposite-alg-def} \\
    $\HOM_A(M,N)$ & graded hom space & \eqref{eq:HOM-def} \\
    $\Hom_A(M,N)$ & homogeneous hom space & \eqref{eq:hom-def} \\
    $\simeq$ & even isomorphism & p.~\pageref{cong-simeq-def} \\
    $\cong$ & isomorphism (not nec.\ preserving degree or parity) & p.~\pageref{cong-simeq-def} \\
    $\Cl$ & 2-dimensional Clifford algebra & p.~\pageref{Cl-def} \\
    $\boxtimes$ & outer tensor product of modules & p.~\pageref{boxtimes-def} \\
    $\irtimes$ & simple tensor product & p.~\pageref{irtimes-def} \\
    $\cS(A)$ & set of even isom.\ classes of $A$-modules up to degree shift & p.~\pageref{cS_simeq-def} \\
    $\prescript{\alpha}{}{V}$ & twisted module & \eqref{eq:twisted-module-def} \\
    $F$ & graded Frobenius superalgebra & p.~\pageref{F-def} \\
    $\tr$ & trace map of $F$ & p.~\pageref{tr-def} \\
    $\psi$ & Nakayama automorphism of $F$ & p.~\pageref{Nakayama-def} \\
    $\delta$ & maximum degree of $F$ & p.~\pageref{delta-def} \\
    $\theta$ & order of $\psi$ & p.~\pageref{theta-def} \\
    $B$ & basis of $F$ & p.~\pageref{B-def} \\
    $b^\vee$ & left dual basis element & p.~\pageref{dual-basis-def} \\
    $\rtimes$, $\ltimes$ & smash product, wreath product algebra & \S\ref{subsec:smash} \\
    $\prescript{\pi}{}{a}$ & action of $\pi \in S_n$ on $a$ by superpermutation & p.~\pageref{perm-notation} \\
    $\rtimes_\rho$ & smash product with action given by superpermutation & p.~\pageref{rtimes_rho-def} \\
    $f_i$, $f \in F$ & $1^{\otimes (i-1)} \otimes f \otimes 1^{\otimes (n-i)}$ & \eqref{eq:f_i-def} \\
    $\psi_i$ & $\id^{\otimes (i-1)} \otimes \psi \otimes \id^{\otimes (n-i)}$ & \eqref{eq:psi_i-def} \\
    $\cA_n(F)$ & affine wreath product algebra & Def.~\ref{def:affine-wreath} \\
    $t_{i,j}$ & $\sum_{b \in B} b_i b^\vee_j$ & \eqref{def:tij} \\
    $t_{i,j}^{(k)}$ & $\sum_{b \in B} b_i \frac{x_i^k - x_j^k}{x_i-x_j} b_j^\vee$ & \eqref{eq:t^k-def} \\
    $P_n$ & $\kk[x_1,\dotsc,x_n]$ & p.~\pageref{P_n-def} \\
    $P_n(F)$ & $(\kk[x] \ltimes F)^{\otimes n}$ & \eqref{eq:P_n(F)-def} \\
    $\Delta_i$ & deformed divided difference operator & \S\ref{subsec:divided-diff} \\
    $x^\alpha$ & $x_1^{\alpha_1} x_2^{\alpha_2} \dotsm x_n^{\alpha_n}$ & \eqref{eq:x^alpha-def} \\
    $F_\psi$ & $\{f \in F \mid \psi(f) = f\}$ & \eqref{eq:F_psi-def} \\
    $F^{(k)}$, $F_\psi^{(k)}$ & see definition & \eqref{eq:F^(k)-def} \\
    $\bF^{(\alpha)}$ & $F^{(\alpha_1)} \otimes \dotsb \otimes F^{(\alpha_n)}$ & \eqref{eq:bF^alpha-def} \\
    $\bF^{(\alpha)}_\psi$ & $F^{(\alpha_1)}_\psi \otimes \dotsb \otimes F^{(\alpha_n)}_\psi$ & \eqref{eq:bF^alpha-def} \\
    $S_\mu$ & Young subgroup of $S_n$ & \eqref{eq:S_mu-def} \\
    $\cA_\mu(F)$ & parabolic subalgebra of $\cA_n(F)$ & p.~\pageref{parabolic-subalg-def} \\
    $\Ind_\mu^n$, $\Res_\mu^n$ & induction and restriction functors for parabolic subalgebras & \eqref{eq:Ind-Res-def} \\
    $\prescript{k}{}{L}$ & twisted module $\prescript{\psi^k}{}{L}$ & p.~\pageref{^kL-def}, \eqref{eq:twisted-module-def} \\
    $r_L$ & smallest positive integer such that $\prescript{r_L}{}{L} \simeq L$ & p.~\pageref{r_L-def} \\
    $m_L$ & smallest positive integer such that $\psi^{m_L}(f)(v) \ \forall\ f \in F$, $v \in L$ & p.~\pageref{m_L-def} \\
    $\tau_L$ & even $F$-module isomorphism $L \xrightarrow{\simeq} \prescript{r_L}{}{L}$ & \eqref{eq:tau_L-def} \\
    $\flip$ & $\flip(v_1 \otimes v_2) = (-1)^{\bar v_1 \bar v_2} v_2 \otimes v_1$ & \eqref{eq:flip-def} \\
    $L(a)$ & see definition & \eqref{eq:L(a)-def} \\
    $\rho_{k,\ell}$ & superpermutation of $k$-th and $\ell$-th factors & \eqref{eq:rho_kl-def} \\
    $N$ & number of $\psi$-orbits in $\cS(F)$ & p.~\pageref{N-def} \\
    $\sL_1,\dotsc,\sL_N$ & representatives of $\psi$-orbits in $\cS(F)$ & \eqref{eq:sL-def} \\
    $r_k$ & $r_{\sL_k}$, smallest positive integer such that $\prescript{r_k}{}{\sL_k} \simeq \sL_k$ & p.~\pageref{r_k-def} \\
    $\cA_n(F)\smod$ & category of f.d.\ $\cA_n(F)$-modules semisimple as $F^{\otimes n}$-modules & p.~\pageref{smod-def} \\
    $\cC_n$ & set of compositions of $n$ of length at most $N$ & \eqref{eq:cC_n-def} \\
    $\ell_k$ & unique integer such that $\mu_1 + \dotsb + \mu_{\ell_k-1} < k \le \mu_1 + \dotsc + \mu_{\ell_k}$ & \eqref{eq:l_k-def} \\
    $\tau_k$ & $\tau_{\sL_k}$, even $F$-module isomorphism $\sL_k \xrightarrow{\simeq} \prescript{r_k}{}{\sL_k}$ & p.~\pageref{tau_k-def} \\
    $\sa_k$ & see definition & p.~\pageref{sa_k-def} \\
    $\sL(\mu)$ & $\sL_1(\sa_1)^{\boxtimes \mu_1} \boxtimes \dotsb \boxtimes \sL_N(\sa_N)^{\boxtimes \mu_N}$ & \eqref{eq:Lmu-def} \\
    $\sE(\mu)$ & $\END_{F^{\otimes n}}^\op \sL(\mu)$ & \eqref{eq:Emu-def} \\
    $\cH_n^\ell$ & see definition & \eqref{eq:H_n^ell-def} \\
    $\cR_\mu$, $\cR_n$ & $\cR_\mu = \cH_{\mu_1}^1 \otimes \dotsb \otimes \cH_{\mu_N}^N$, $\cR_n = \bigoplus_{\mu \in \cC_n} \cR_\mu$ & \eqref{eq:cR_n-def} \\
    $y_1,\dotsc,y_n$ & polynomial generators of $\cR_n$ & p.~\pageref{y_i-def} \\
    $I_\mu V$ & see definition & p.~\pageref{ImuV-def} \\
    $V_\mu$ & $\sum_{\pi \in S_n} \pi(I_\mu V)$ & \eqref{eq:Vmu-def} \\
    $\bF_i^{(k)}$ & see definition & \eqref{eq:bF_i^(k)-def} \\
    $J_\bC$, $\chi_\bC$ & see definition & \eqref{eq:chi_C-def} \\
    $\cA_n^\bC(F)$ & cyclotomic wreath product algebra & \eqref{eq:cyclotomic-WPA} \\
    $d=d_\bC$ & level of $\bC$ & \eqref{eq:d_C-def} \\
    $\tr_\bC$ & trace map for $\cA_n^\bC(F)$ & \eqref{eq:full-cyclotomic-trace} \\
    $\prescript{\bC}{}{\Ind_n^{n+1}}$, $\prescript{\bC}{}{\Res^{n+1}_n}$ & induction and restriction functors for cyclotomic quotients & p.~\pageref{ind-res-cyclotomic-def}
  \end{longtable}
\end{center}


\bibliographystyle{alphaurl}
\bibliography{Savage-AffineWreath}

\end{document}